\documentclass[namedate,webpdf,imamat]{ima-authoring-template}

\graphicspath{{Fig/}}

\usepackage{enumerate}
\usepackage{bm}
\usepackage{wrapfig}
\usepackage{multicol}
\usepackage{verbatim}
\usepackage{color}
\usepackage{exscale}
\usepackage{relsize}
\usepackage{epstopdf}
\usepackage{booktabs}

\usepackage{dutchcal}

\theoremstyle{thmstyletwo}%
\newtheorem{theorem}{Theorem}%
\newtheorem{lemma}{Lemma}%
\newtheorem{rmk}{Remark}%
\newtheorem{assumption}{Assumption}%
\numberwithin{equation}{section}

\begin{document}

\DOI{DOI HERE}
\copyrightyear{2026}
\vol{00}
\pubyear{2026}
\access{Advance Access Publication Date: Day Month Year}
\appnotes{Paper}
\copyrightstatement{Published by Oxford University Press on behalf of the Institute of Mathematics and its Applications. All rights reserved.}
\firstpage{1}


\title[Efficient higher-order multi-scale method]{Efficient higher-order multi-scale method and its convergence estimate for dynamic nonlinear hygro-thermo-mechanical coupling problems of heterogeneous structures}

\author{Yifei Ding
	\address{\orgdiv{School of Mathematics and Statistics}, \orgname{Xidian University}, \orgaddress{\postcode{710071}, \state{Shaanxi}, \country{PR China}}}}
\author{Hao Dong*
	\address{\orgdiv{School of Mathematics and Statistics}, \orgname{Xidian University}, \orgaddress{\postcode{710071}, \state{Shaanxi}, \country{PR China}}}}
\author{Jiale Linghu
	\address{\orgdiv{School of Mathematics and Statistics}, \orgname{Xidian University}, \orgaddress{\postcode{710071}, \state{Shaanxi}, \country{PR China}}}}
\author{Yangshuai Wang
	\address{\orgdiv{Department of Mathematics}, \orgname{National University of Singapore}, \orgaddress{\street{10 Lower Kent Ridge Road}, \postcode{119076}, \country{Singapore}}}}

\authormark{Ding et al.}

\corresp[*]{\href{mailto:donghao@mail.nwpu.edu.cn}{donghao@mail.nwpu.edu.cn}}

\received{Date}{0}{Year}
\revised{Date}{0}{Year}
\accepted{Date}{0}{Year}


\abstract{This paper presents a novel higher-order multi-scale (HOMS) computational framework for efficient, high-accuracy, and low-cost simulation of nonlinear hygro-thermo-mechanical (H-T-M) coupling problems in heterogeneous structures. The inherent nonlinearity in the investigated model stems primarily from temperature- or moisture-dependent material properties, and this model also accounts for temperature-dependent internal heat source and moisture sink terms induced by exothermic, moisture-consuming chemical reactions (e.g., hydration). The main contributions of this work are as follows. First, a high-accuracy multi-scale asymptotic model incorporating higher-order correction terms is constructed for nonlinear H-T-M coupling problems in heterogeneous structures with highly spatial inhomogeneity, using the multi-scale asymptotic approach together with Taylor series expansions. Second, rigorous error estimates in both point-wise and integral senses are derived for the multi-scale asymptotic solutions, which theoretically demonstrate the necessity and superiority of the proposed HOMS method. Third, an efficient two-stage numerical algorithm with off-line and on-line stages is developed, based on finite difference and finite element methods, and its convergence is also proved rigorously. Finally, two- and three-dimensional numerical experiments are performed to assess the computational performance of the proposed HOMS approach, showing excellent numerical accuracy and robustness with low computational overhead.}
\keywords{Nonlinear hygro-thermo-mechanical coupling problems; Highly discontinuous and oscillatory coefficients; Higher-order multi-scale model; Two-stage numerical algorithm; Explicit error estimation.}


\maketitle

\section{Introduction}
	Composite materials, by virtue of the matching and synergy of their constituents, offer high specific stiffness, high specific strength, light weight, and good fatigue resistance. Through rational design, they can also achieve tailorable properties such as corrosion resistance and thermal stability. These exceptional properties and remarkable designability have driven their extensive utilization in aerospace, marine, construction, mechanical manufacturing, electronic packaging, and other cutting-edge engineering fields \citep{R1,R2}. However, for many materials, the physical and mechanical properties typically vary with temperature and moisture, causing the material parameters in the governing equations of heat conduction, moisture diffusion, and elastic-dynamics to become functions of temperature and humidity, thereby rendering the entire system of equations nonlinear \citep{R3,R6,R7,R8}. Owing to the significant practical value and theoretical importance, the development of effective analytical models and efficient numerical methods has become a research focus in the field of nonlinear dynamic hygro-thermo-mechanical coupling problems of heterogeneous structures. In the field of electronic packaging, plastic-encapsulated devices are subjected to thermal, moisture, and mechanical loads during manufacturing (e.g., reflow soldering) and in service. The hygrothermal environment is widely recognized as one of the most critical factors affecting device reliability \citep{R56,R57}. At the microscopic scale, thermal stress and moisture-induced degradation are the two primary mechanisms of interface failure in plastic-encapsulated devices \citep{R56}. Hygroscopic swelling, adhesion degradation, and high-temperature vapor pressure often lead to interfacial delamination, cracking, and even the "popcorn" effect \citep{R6,R57}. Therefore, accurate prediction of nonlinear hygro-thermo-mechanical coupling behavior in heterogeneous structures is essential for improving electronic packaging reliability. These coupled problems have also attracted considerable attention in other fields, including progressive damage of composites under hygrothermal aging in marine and wind energy applications \citep{R8}, cracking risk of early-age concrete bridge piers on plateaus \citep{R3}, hygrothermal debonding of polymeric interfaces in photovoltaic laminates \citep{R7}, flexural fatigue life of GFRP-balsa sandwich bridge decks \citep{R10}, and fatigue life of CFRP/Al hybrid joints in automotive lightweight structures \citep{R9}, etc. However, the presence of temperature- and moisture-dependent coefficients, together with the multi-scale nature of the underlying heterogeneous structures, makes the numerical solution of such coupled problems particularly challenging.
	
	The physical and mechanical properties of composites depend on their constituent materials and microstructural characteristics. To investigate these properties, engineers and mathematicians have developed numerous analytical and numerical methods from different perspectives. In mechanics, two main approaches exist for analyzing composite behavior: macromechanics and micromechanics \citep{R13}. Mathematically, predicting the physical and mechanical response of heterogeneous structures under given loads amounts to solving initial-boundary value problems of partial differential equations (PDEs) with coefficients that are periodic, quasi-periodic, or random oscillatory functions. However, conventional numerical methods such as the finite difference method (FDM), finite element method (FEM), and finite volume method (FVM) face two major challenges in this context: (1) The macroscopic structural size of composites is much larger than the characteristic size of their microstructure. Direct numerical simulation thus requires a sufficiently fine mesh to resolve the microscale information, leading to a sharp increase in mesh generation effort and computational cost, which severely limits solution efficiency. (2) The coexistence of multi-physics coupling (e.g., hygro-thermal-mechanical) and macro-micro coupling introduces strong nonlinearity, high coupling, sharply discontinuous and highly oscillatory coefficients, spatial multi-scale features, and time-dependent behavior, which poses significant challenges to the accuracy and convergence of these classical methods. Therefore, the development of computable models and multi-scale methods for multi-physics coupled and cross-scale problems has become a frontier research need in modern science and technology. In response, mathematicians and mechanicians have proposed various multi-scale methods, including the asymptotic homogenization method (AHM) \citep{R14,R15}, heterogeneous multi-scale method (HMM) \citep{R16}, variational multi-scale method (VMS) \citep{R17}, multi-scale finite element method (MsFEM) \citep{R18}, and more recent approaches such as the local orthogonal decomposition (LOD) \citep{R19}, generalized multi-scale finite element method (GMsFEM) \citep{R20}, and constraint energy minimizing generalized multi-scale finite element method (CEM-GMsFEM) \citep{R21}. Over the past three decades, Cui and his team have systematically established a family of higher-order multi-scale methods for the accurate simulation of thermal, mechanical, and multiphysics behaviors of composites \citep{R22,R23,R24,R25,R26}. Nevertheless, these methods are often restricted to linear problems. With the demand for high-accuracy simulations, attention has turned to nonlinear problems of heterogeneous solids. In references \citep{R27,R28,R29}, the asymptotic homogenization method was employed to handle nonlinear heat conduction with temperature-dependent coefficients. Multi-scale finite element method for nonlinear elliptic and parabolic equations was proposed by Efendiev et al. \citep{R30}. Dong et al. pioneered a higher-order multi-scale approach, accompanied by rigorous convergence analysis, for nonlinear thermo-mechanical coupling problems \citep{R31,R32}.
	
	However, research on nonlinear multiphysics problems in multi-scale modeling of heterogeneous structures remains scarce, particularly for nonlinear hygro-thermo-mechanical (H-T-M) coupling. To date, very few studies have addressed this class of problems using multi-scale methods. Gholami et al. developed a coupled HTM-multi-scale-APFEA algorithm for failure analysis of thick composites under hygrothermal conditions \citep{R33}; however, their approach is based on a first-order computational homogenization ($\text{FE}^2$) framework and lacks a systematic higher-order theoretical framework with rigorous convergence estimates. More recently, Dong et al. proposed a higher-order multi-scale computational method for quasi-periodic composites, incorporating higher-order correction terms with rigorous error analyses \citep{R34}; nevertheless, this work focuses on quasi-periodic heterogeneous structures and does not account for temperature- and moisture-dependent material nonlinearities. Consequently, a systematic higher-order multi-scale method specifically for nonlinear H-T-M coupling problems of heterogeneous structures is still lacking, despite its practical importance and mathematical challenge. This motivates the present work.
	
	To effectively address nonlinear hygro-thermo-mechanical coupling problems in heterogeneous structures with highly discontinuous and oscillatory parameters, this paper proposes a higher-order multi-scale computational model and a corresponding numerical algorithm. The model maintains local equilibrium of the relevant physical quantities, thereby enabling high-fidelity multi-scale simulations. The paper is organized as follows. Section \ref{sec:2} defines the mathematical governing equations for nonlinear dynamic hygro-thermo-mechanical coupling problems of heterogeneous structures with temperature- and moisture-dependent material properties. Using multi-scale asymptotic analysis and Taylor series expansion, a higher-order multi-scale computational model is established for such problems. Section \ref{sec:3} performs a pointwise numerical accuracy comparison between the traditional lower-order multi-scale (LOMS) solution and the proposed higher-order multi-scale (HOMS) solution. Through this comparison, the necessity and importance of the HOMS model in capturing microscopic oscillatory information are theoretically demonstrated. Moreover, under certain simplifications and assumptions, rigorous error estimates with explicit convergence rates are derived for the HOMS solution, validating its approximation performance and convergence. Section \ref{sec:4} develops a two-stage multi-scale numerical algorithm, which consists of an off-line stage for computing microscopic cell problems and an on-line stage for computing the macroscopic homogenized problem and HOMS solutions, and is designed to efficiently simulate nonlinear hygro-thermo-mechanical coupling problems of heterogeneous structures with temperature- and moisture-dependent properties. Section \ref{sec:4.4} presents the corresponding error analysis of the two-stage algorithm. Section \ref{sec:5} presents two and three dimensional numerical examples to verify the computational performance of the proposed HOMS model and its associated numerical algorithm. Section \ref{sec:6} concludes remarks and discusses future research directions.
	
	For notational brevity, we apply the Einstein summation convention to repeated indices throughout this work.
	
\section{Novel higher-order multi-scale computational model}
\label{sec:2}
	\subsection{Problem setting and governing equations}
	\label{sec:21}
	Inspired by the H-T-M coupling model for cementitious materials proposed by Zhang et al. \citep{R3}, we formulate the following governing equations for nonlinear H-T-M coupling problems of heterogeneous structures. The domain $\Omega\in\mathbb{R}^n(n=2,3)$ is a bounded convex domain with Lipschitz continuous boundary $\partial\Omega=\partial\Omega_{T}\cup\partial\Omega_{q}\cup\partial\Omega_{\omega}\cup\partial\Omega_{d}\cup\partial\Omega_{u}\cup\partial\Omega_{\sigma}$, where these boundary parts are pairwise disjoint. The domain is formed by the repetition of periodic unit cell (PUC) $Y$.
	\begin{equation}
		\label{eq:2.1}
		\begin{cases}
			\begin{aligned}
				&\rho^\epsilon(\mathbf{x},T^\epsilon)c^\epsilon(\mathbf{x},T^\epsilon)\frac{\partial T^\epsilon(\mathbf{x},t)}{\partial t} - \frac{\partial}{\partial x_i}\Bigl(k_{ij}^\epsilon(\mathbf{x},T^\epsilon)\frac{\partial T^\epsilon(\mathbf{x},t)}{\partial x_j}\Bigr) \\
				& \quad = h(\mathbf{x},t) + Q_{hyd}^\epsilon(\mathbf{x},T^\epsilon),  \text{in } \Omega \times (0,T^*), \\
				&\frac{\partial \omega^\epsilon(\mathbf{x},t)}{\partial t} - \frac{\partial}{\partial x_i}\Bigl(g_{ij}^\epsilon(\mathbf{x},\omega^\epsilon)\frac{\partial \omega^\epsilon(\mathbf{x},t)}{\partial x_j}\Bigr) = m(\mathbf{x},t) - S_{hyd}^\epsilon(\mathbf{x},T^\epsilon),  \text{in } \Omega \times (0,T^*), \\
				& - \frac{\partial}{\partial x_j}\Bigl(C_{ijkl}^\epsilon(\mathbf{x},T^\epsilon)\frac{\partial u_k^\epsilon(\mathbf{x},t)}{\partial x_l} - \alpha_{ij}^\epsilon(\mathbf{x},T^\epsilon)(T^\epsilon(\mathbf{x},t) - \tilde{T}) - \beta_{ij}^\epsilon(\mathbf{x},T^\epsilon)(\omega^\epsilon(\mathbf{x},t) - \tilde{\omega})\Bigr) \\
				& \quad= f_i(\mathbf{x},t),  \text{in } \Omega \times (0,T^*), \\
				&T^\epsilon(\mathbf{x},t) = \hat{T}(\mathbf{x},t),  \text{on } \partial\Omega_T \times (0,T^*), \\
				&k_{ij}^\epsilon(\mathbf{x},T^\epsilon)\frac{\partial T^\epsilon(\mathbf{x},t)}{\partial x_j}n_i = \bar{q}(\mathbf{x},t), \text{on } \partial\Omega_q \times (0,T^*), \\
				&\omega^\epsilon(\mathbf{x},t) = \hat{\omega}(\mathbf{x},t), \text{on } \partial\Omega_\omega \times (0,T^*), \\
				&g_{ij}^\epsilon(\mathbf{x},\omega^\epsilon)\frac{\partial \omega^\epsilon(\mathbf{x},t)}{\partial x_j}n_i = \bar{d}(\mathbf{x},t), \text{on } \partial\Omega_d \times (0,T^*), \\
				&\bm{u}^\epsilon(\mathbf{x},t) = \hat{\bm{u}}(\mathbf{x},t), \text{on } \partial\Omega_u \times (0,T^*), \\
				&\Bigl(C_{ijkl}^\epsilon(\mathbf{x},T^\epsilon)\frac{\partial u_k^\epsilon(\mathbf{x},t)}{\partial x_l} - \alpha_{ij}^\epsilon(\mathbf{x},T^\epsilon)(T^\epsilon(\mathbf{x},t) - \tilde{T}) - \beta_{ij}^\epsilon(\mathbf{x},T^\epsilon)(\omega^\epsilon(\mathbf{x},t) - \tilde{\omega})\Bigr)n_j \\
				& \quad= \bar{\sigma}_i(\mathbf{x},t), \text{on } \partial\Omega_\sigma \times (0,T^*), \\
				&T^\epsilon(\mathbf{x},0) = \tilde{T}, \omega^\epsilon(\mathbf{x},0) = \tilde{\omega}, \bm{u}^\epsilon(\mathbf{x},0) = \tilde{\bm{u}}, \text{in } \Omega.
			\end{aligned}
		\end{cases}
	\end{equation}
	where $\epsilon$ represents the characteristic size of microscopic cell $Y$; $T^{\epsilon}(\mathbf{x},t)$, $\omega^{\epsilon}(\mathbf{x},t)$ and $\bm{u}^{\epsilon}(\mathbf{x},t)$ are the undetermined temperature, moisture and displacement fields; $\hat{T}(\mathbf{x},t)$, $\hat{\omega}(\mathbf{x},t)$ and $\hat{\bm{u}}(\mathbf{x},t)$ are the prescribed temperature, moisture and displacement on the domain boundaries $\partial\Omega_{T}\cup\partial\Omega_{\omega}\cup\partial\Omega_{u}$ with $meas(\partial\Omega_{T})>0$, $meas(\partial\Omega_{\omega})>0$ and $meas(\partial\Omega_{u})>0$; $\overline{q}(\mathbf{x},t)$, $\overline{d}(\mathbf{x},t)$ and $\overline{\sigma}_{i}(\mathbf{x},t)$ are the prescribed heat flux, moisture flux and traction on the domain boundaries $\partial\Omega_{q}\cup\partial\Omega_{d}\cup\partial\Omega_{\sigma}$, where $n_j$ denotes the $j$-th component of the unit normal vector at a given point on the domain boundaries. $\tilde{T}$, $\tilde{\omega}$, and $\tilde{\bm{u}}$ represent the initial temperature, moisture and displacement of domain $\Omega$. $\rho^\epsilon(\mathbf{x},T^\epsilon)$ and $c^\epsilon(\mathbf{x},T^\epsilon)$ are the mass density and specific heat;  $\displaystyle\{k_{ij}^\epsilon(\mathbf{x},T^\epsilon)\}$ and $\displaystyle\{g_{ij}^\epsilon(\mathbf{x},\omega^\epsilon)\}$ are the second order thermal conductivity tensor and moisture diffusion tensor; $\displaystyle\{C_{ijkl}^\epsilon(\mathbf{x},T^\epsilon)\}$ is the fourth order elastic tensor; $\displaystyle\{\alpha_{ij}^\epsilon(\mathbf{x},T^\epsilon)\}$ and  $\displaystyle\{\beta_{ij}^\epsilon(\mathbf{x},T^\epsilon)\}$ are the second order thermal stress coefficient tensor and moisture stress coefficient tensor. Furthermore, the internal source terms $Q_{hyd}^\epsilon(\mathbf{x},T^\epsilon)$ and $S_{hyd}^\epsilon(\mathbf{x},T^\epsilon)$ represent heat release and moisture consumption due to temperature-dependent chemical reactions, while $h(\mathbf{x},t)$ and $m(\mathbf{x},t)$ denote external heat and moisture sources, respectively. This formulation is motivated by similar reactive source structures observed in cement hydration \citep{R3} and commonly found in thermoset curing processes. $f_{i}(\mathbf{x},t)$ is the body force.
	
	To begin with, following the theoretical framework of AHM, let $\mathbf{x}$ denote the macroscopic coordinates and $\displaystyle \mathbf{y} = {\mathbf{x}}/{\epsilon}$ denote the corresponding microscopic coordinates of the PUC $Y=[0,1]^n$. With this notation, the material coefficients $\rho^\epsilon(\mathbf{x},T^\epsilon)$, $c^\epsilon(\mathbf{x},T^\epsilon)$, $k_{ij}^\epsilon(\mathbf{x},T^\epsilon)$,  $g_{ij}^\epsilon(\mathbf{x},\omega^\epsilon)$, $C_{ijkl}^\epsilon(\mathbf{x},T^\epsilon)$, $\alpha_{ij}^\epsilon(\mathbf{x},T^\epsilon)$, $\beta_{ij}^\epsilon(\mathbf{x},T^\epsilon)$, and the internal source terms $Q_{hyd}^\epsilon(\mathbf{x},T^\epsilon)$ and $S_{hyd}^\epsilon(\mathbf{x},T^\epsilon)$ can be rewritten as $\rho(\mathbf{y},T^\epsilon)$, $c(\mathbf{y},T^\epsilon)$, $k_{ij}(\mathbf{y},T^\epsilon)$,  $g_{ij}(\mathbf{y},\omega^\epsilon)$, $C_{ijkl}(\mathbf{y},T^\epsilon)$, $\alpha_{ij}(\mathbf{y},T^\epsilon)$, $\beta_{ij}(\mathbf{y},T^\epsilon)$, $Q_{hyd}(\mathbf{y},T^\epsilon)$, and $S_{hyd}(\mathbf{y},T^\epsilon)$, which necessitates that these quantities be $1$-periodic functions of the micro-scale variable $\mathbf{y}$. Furthermore, the chain rule for the performed spatial scales is given by
	\begin{equation}
		\label{eq:2.2}
		\frac{\partial \Phi^{\epsilon} (\mathbf{x},t)}{\partial x_i}=\frac{\partial \Phi(\mathbf{x},\mathbf{y},t)}{\partial x_i}+\frac{1}{\epsilon }\frac{\partial \Phi(\mathbf{x},\mathbf{y},t)}{\partial y_i},
	\end{equation}
	which will be employed frequently in the subsequent analysis.
	
	Moreover, similar to the assumptions adopted in \citep{R25,R22, R39, R40, R28, R42}, we make the following assumptions for the governing equations \eqref{eq:2.1}.
	\begin{enumerate}
		\item[(A)] $k_{ij}^\epsilon(\mathbf{x},T^\epsilon)$,  $g_{ij}^\epsilon(\mathbf{x},\omega^\epsilon)$, $C_{ijkl}^\epsilon(\mathbf{x},T^\epsilon)$, $\alpha_{ij}^\epsilon(\mathbf{x},T^\epsilon)$,  $\beta_{ij}^\epsilon(\mathbf{x},T^\epsilon)$ are symmetric, and there exist two positive constants $\underline{\gamma}$ and $\overline{\gamma}$ independent of $\epsilon$ such that
		\begin{displaymath}
			\begin{aligned}
				&k_{ij}^\epsilon = k_{ji}^\epsilon,\;\underline{\gamma} | \bm{\xi} |^2 \leq k_{ij}^\epsilon(\mathbf{x},T^\epsilon)\xi_i\xi_j \leq \overline{\gamma} | \bm{\xi} |^2,\\
				&g_{ij}^\epsilon=g_{ji}^\epsilon,\; \underline{\gamma} | \bm{\xi} |^2 \leq g_{ij}^\epsilon(\mathbf{x},\omega^\epsilon) \xi_i \xi_j \leq \overline{\gamma} | \bm{\xi} |^2,\\		
				&C_{ijkl}^\epsilon = C_{ijlk}^\epsilon = C_{klij}^\epsilon, \;\underline{\gamma} \eta_{ij} \eta_{ij} \leq C_{ijkl}^\epsilon(\mathbf{x},T^\epsilon)  \eta_{ij} \eta_{kl} \leq \overline{\gamma} \eta_{ij} \eta_{ij},\\	
				&\alpha_{ij}^\epsilon=\alpha_{ji}^\epsilon,\;\underline{\gamma}| \bm{\xi} |^2 \leq \alpha_{ij}^\epsilon(\mathbf{x},T^\epsilon) \xi_i \xi_j \leq \overline{\gamma} | \bm{\xi} |^2,\\
				&\beta_{ij}^\epsilon=\beta_{ji}^\epsilon,\;\underline{\gamma}| \bm{\xi} |^2 \leq \beta_{ij}^\epsilon(\mathbf{x},T^\epsilon) \xi_i \xi_j \leq \overline{\gamma} | \bm{\xi} |^2,\\
			\end{aligned}
		\end{displaymath}
		where $\{\eta_{ij}\}$ is an arbitrary symmetric matrix in $\mathbb{R}^{n \times n}$, $\bm{\xi}=(\xi_1, \xi_2,\cdots,\xi_n)$ is an arbitrary vector with real elements in $ \mathbb{R}^n $.
		\item[(B)] $\rho^\epsilon(\mathbf{x},T^\epsilon)$, $c^\epsilon(\mathbf{x},T^\epsilon)$, $k_{ij}^\epsilon(\mathbf{x},T^\epsilon)$,  $g_{ij}^\epsilon(\mathbf{x},\omega^\epsilon)$, $C_{ijkl}^\epsilon(\mathbf{x},T^\epsilon)$, $\alpha_{ij}^\epsilon(\mathbf{x},T^\epsilon)$,  $\beta_{ij}^\epsilon(\mathbf{x},T^\epsilon)$, $Q_{hyd}^\epsilon(\mathbf{x},T^\epsilon)$, and  $S_{hyd}^\epsilon(\mathbf{x},T^\epsilon)$ are scalar functions belonging to $L^\infty(\Omega)$, and there are constants $\rho ^0$, $c^0$, $Q_{hyd}^0$ and $S_{hyd}^0$ such that
		\begin{displaymath}
			0 < \rho^0 \le \rho^\epsilon(\mathbf{x},T^\epsilon), 0 < c^0 \le c^\epsilon(\mathbf{x},T^\epsilon), Q_{hyd}^0 \le Q_{hyd}^\epsilon(\mathbf{x},T^\epsilon), S_{hyd}^0 \le S_{hyd}^\epsilon(\mathbf{x},T^\epsilon).
		\end{displaymath}
		where $\rho^0$, $c^0$, $Q_{hyd}^0$, and $S_{hyd}^0$ are constants irrespective of $\epsilon$.
		\item[(C)] $h(\mathbf{x},t)$, $m(\mathbf{x},t)$ and $f_i(\mathbf{x},t) \in L^2(\Omega \times (0,T^*))$. $\hat{T}(\mathbf{x},t)$ and $\hat{\omega}(\mathbf{x},t) \in L^2(0,T^*;H^1(\Omega))$, and $\hat{\bm{u}}(\mathbf{x},t) \in L^2(0,T^*;(H^1(\Omega))^n)$. $\overline{q}(\mathbf{x},t)$, $\overline{d}(\mathbf{x},t)$ and $\overline{\sigma}_{i}(\mathbf{x},t)\in L^2(\Omega \times (0,T^*))$.
	\end{enumerate}
	
	\subsection{HOMS computational model of governing equations}
	\label{sec:22}
	In this subsection, the HOMS computational model is constructed. For the multi-scale problem \eqref{eq:2.1}, we assume that
	$T^{\epsilon}(\mathbf{x},t)$, $\omega^{\epsilon}(\mathbf{x},t)$ and $u_i^{\epsilon}(\mathbf{x},t)$ can be expanded as the following power series representations.
	\begin{equation}
		\label{eq:2.3}
		\begin{cases}
			\begin{aligned}
				&T^{\epsilon}(\mathbf{x},t) = T^{(0)}(\mathbf{x},\mathbf{y},t) + \epsilon T^{(1)}(\mathbf{x},\mathbf{y},t) + \epsilon^{2} T^{(2)}(\mathbf{x},\mathbf{y},t) + O(\epsilon^{3}), \\
				&\omega^{\epsilon}(\mathbf{x},t) = \omega^{(0)}(\mathbf{x},\mathbf{y},t) + \epsilon \omega^{(1)}(\mathbf{x},\mathbf{y},t) + \epsilon^{2} \omega^{(2)}(\mathbf{x},\mathbf{y},t) + O(\epsilon^{3}), \\
				&u_{i}^{\epsilon}(\mathbf{x},t) = u_{i}^{(0)}(\mathbf{x},\mathbf{y},t) + \epsilon u_{i}^{(1)}(\mathbf{x},\mathbf{y},t) + \epsilon^{2} u_{i}^{(2)}(\mathbf{x},\mathbf{y},t) + O(\epsilon^{3}).
			\end{aligned}
		\end{cases}
	\end{equation}
	where $T^{(0)}$, $\omega^{(0)}$, $u_{i}^{(0)}$ are defined as the zeroth-order expansion terms; $T^{(1)}$, $\omega^{(1)}$, $u_{i}^{(1)}$ as the first-order (lower-order) asymptotic terms; and $T^{(2)}$, $\omega^{(2)}$, $u_{i}^{(2)}$ as the second-order (higher-order) asymptotic terms.
	
	Next, the key idea for handling temperature- or moisture-dependent material coefficients, and temperature-dependent internal source terms is introduced. Specifically, using the Taylor's formula with multi-index notation $\displaystyle f({x_0},{y_0} + h) = f({x_0},{y_0}) + {f_y}({x_0},{y_0})h + \frac{1}{2}{f_{yy}}({x_0},{y_0}){h^2} + O({h^3}) = f(x_0, y_0) + \mathbf{D}^{(0,1)}f(x_0, y_0)h + \frac{1}{2}\mathbf{D}^{(0,2)}f(x_0, y_0)h^2 + O(h^3)$ in \citep{R39}, all such material coefficients and internal source terms in the multi-scale problem \eqref{eq:2.1} can be expanded in the same manner. Taking the temperature-dependent elastic stiffness coefficients as an example, they can be expanded as follows \citep{R39,R40}.
	\begin{equation}
		\label{eq:2.6}
		\begin{aligned}
			& C_{ijkl}^\epsilon(\mathbf{x},T^\epsilon) = C_{ijkl}(\mathbf{y},T^\epsilon) = C_{ijkl}\bigl(\mathbf{y},T^{(0)} + \epsilon T^{(1)} + \epsilon^2 T^{(2)} + \mathrm{O}(\epsilon^3)\bigr)\\
			& = C_{ijkl}(\mathbf{y},T^{(0)}) + \mathbf{D}^{(0,1)}C_{ijkl}(\mathbf{y},T^{(0)})\bigl[\epsilon T^{(1)} + \epsilon^2 T^{(2)} + \mathrm{O}(\epsilon^3)\bigr]\\
			& + \frac{1}{2}\mathbf{D}^{(0,2)}C_{ijkl}(\mathbf{y},T^{(0)})\bigl[\epsilon T^{(1)} + \epsilon^2 T^{(2)} + \mathrm{O}(\epsilon^3)\bigr]^2 + O\Bigl(\bigl[\epsilon T^{(1)} + \epsilon^2 T^{(2)} + \mathrm{O}(\epsilon^3)\bigr]^3\Bigr)\\
			& = C_{ijkl}(\mathbf{y},T^{(0)}) + \epsilon T^{(1)}\mathbf{D}^{(0,1)}C_{ijkl}(\mathbf{y},T^{(0)})\\
			& + \epsilon^2\bigl[T^{(2)}\mathbf{D}^{(0,1)}C_{ijkl}(\mathbf{y},T^{(0)}) + \frac{1}{2}(T^{(1)})^2\mathbf{D}^{(0,2)}C_{ijkl}(\mathbf{y},T^{(0)})\bigr] + \mathrm{O}(\epsilon^3)\\
			& =: C_{ijkl}^{(0)}(\mathbf{y}, T^{(0)}) + \epsilon C_{ijkl}^{(1)}(\mathbf{x}, \mathbf{y}, T^{(0)}) + \epsilon^2 C_{ijkl}^{(2)}(\mathbf{x}, \mathbf{y}, T^{(0)}) + \mathrm{O}(\epsilon^3).
		\end{aligned}
	\end{equation}
	Following the same expansion method as in \eqref{eq:2.6}, the other material parameters $\rho^\epsilon(\mathbf{x},T^\epsilon)$, $c^\epsilon(\mathbf{x},T^\epsilon)$, $k_{ij}^\epsilon(\mathbf{x},T^\epsilon)$,  $g_{ij}^\epsilon(\mathbf{x},\omega^\epsilon)$, $\alpha_{ij}^\epsilon(\mathbf{x},T^\epsilon)$,  $\beta_{ij}^\epsilon(\mathbf{x},T^\epsilon)$, and the internal source terms $Q_{hyd}^\epsilon(\mathbf{x},T^\epsilon)$, $S_{hyd}^\epsilon(\mathbf{x},T^\epsilon)$ are expanded as follows.
	\begin{equation}
		\label{eq:2.7}
		\begin{aligned}
			\rho^\epsilon(\mathbf{x},T^\epsilon)
			&= \rho^{(0)}(\mathbf{y},T^{(0)}) + \epsilon \rho^{(1)}(\mathbf{x}, \mathbf{y}, T^{(0)}) + \epsilon^2 \rho^{(2)}(\mathbf{x}, \mathbf{y}, T^{(0)}) + \mathrm{O}(\epsilon^3).
		\end{aligned}
	\end{equation}
	\begin{equation}
		\label{eq:2.8}
		\begin{aligned}
			c^\epsilon(\mathbf{x},T^\epsilon)
			&= c^{(0)}(\mathbf{y},T^{(0)}) + \epsilon c^{(1)}(\mathbf{x}, \mathbf{y}, T^{(0)}) + \epsilon^2 c^{(2)}(\mathbf{x}, \mathbf{y}, T^{(0)}) + \mathrm{O}(\epsilon^3).
		\end{aligned}
	\end{equation}
	\begin{equation}
		\label{eq:2.9}
		\begin{aligned}
			k_{ij}^\epsilon(\mathbf{x},T^\epsilon)
			&= k_{ij}^{(0)}(\mathbf{y},T^{(0)}) + \epsilon k_{ij}^{(1)}(\mathbf{x}, \mathbf{y}, T^{(0)}) + \epsilon^2 k_{ij}^{(2)}(\mathbf{x}, \mathbf{y}, T^{(0)}) + \mathrm{O}(\epsilon^3).
		\end{aligned}
	\end{equation}
	\begin{equation}
		\label{eq:2.10}
		\begin{aligned}
			g_{ij}^\epsilon(\mathbf{x},\omega^\epsilon)
			& = g_{ij}^{(0)}(\mathbf{y},\omega^{(0)}) + \epsilon g_{ij}^{(1)}(\mathbf{x}, \mathbf{y}, \omega^{(0)}) + \epsilon^2 g_{ij}^{(2)}(\mathbf{x}, \mathbf{y}, \omega^{(0)}) + \mathrm{O}(\epsilon^3).
		\end{aligned}
	\end{equation}
	\begin{equation}
		\label{eq:2.11}
		\begin{aligned}
			\alpha_{ij}^\epsilon(\mathbf{x},T^\epsilon)
			&= \alpha_{ij}^{(0)}(\mathbf{y},T^{(0)}) + \epsilon \alpha_{ij}^{(1)}(\mathbf{x}, \mathbf{y}, T^{(0)}) + \epsilon^2 \alpha_{ij}^{(2)}(\mathbf{x}, \mathbf{y}, T^{(0)}) + \mathrm{O}(\epsilon^3).
		\end{aligned}
	\end{equation}
	\begin{equation}
		\label{eq:2.12}
		\begin{aligned}
			\beta_{ij}^\epsilon(\mathbf{x},T^\epsilon)
			&= \beta_{ij}^{(0)}(\mathbf{y},T^{(0)}) + \epsilon \beta_{ij}^{(1)}(\mathbf{x}, \mathbf{y}, T^{(0)}) + \epsilon^2 \beta_{ij}^{(2)}(\mathbf{x}, \mathbf{y}, T^{(0)}) + \mathrm{O}(\epsilon^3).
		\end{aligned}
	\end{equation}
	\begin{equation}
		\label{eq:2.13}
		\begin{aligned}
			Q_{hyd}^\epsilon(\mathbf{x},T^\epsilon)
			& = Q_{hyd}^{(0)}(\mathbf{y},T^{(0)}) + \epsilon Q_{hyd}^{(1)}(\mathbf{x}, \mathbf{y}, T^{(0)}) + \epsilon^2 Q_{hyd}^{(2)}(\mathbf{x}, \mathbf{y}, T^{(0)}) + \mathrm{O}(\epsilon^3).
		\end{aligned}
	\end{equation}
	\begin{equation}
		\label{eq:2.14}
		\begin{aligned}
			S_{hyd}^\epsilon(\mathbf{x},T^\epsilon)
			& = S_{hyd}^{(0)}(\mathbf{y},T^{(0)}) + \epsilon S_{hyd}^{(1)}(\mathbf{x}, \mathbf{y}, T^{(0)}) + \epsilon^2 S_{hyd}^{(2)}(\mathbf{x}, \mathbf{y}, T^{(0)}) + \mathrm{O}(\epsilon^3).
		\end{aligned}
	\end{equation}
	
	Then substituting \eqref{eq:2.3}-\eqref{eq:2.14} into the multi-scale problem \eqref{eq:2.1} and expanding the derivatives using the chain rule \eqref{eq:2.2}, we obtain a series of equations by equating terms of the same order in the small periodic parameter $\epsilon$.
	\begin{equation}
		\label{eq:2.16}
		O(\epsilon^{-2}):
		\begin{cases}
			\begin{aligned}
				& \frac{\partial }{\partial y_i}\Bigl( k_{ij}^{(0)}\frac{\partial T^{(0)}}{\partial y_j} \Bigr) = 0,\\
				& \frac{\partial }{\partial y_i}\Bigl( g_{ij}^{(0)}\frac{\partial \omega^{(0)}}{\partial y_j} \Bigr) = 0, \\
				& \frac{\partial }{\partial y_j}\Bigl( C_{ijkl}^{(0)}\frac{\partial u_k^{(0)}}{\partial y_l} \Bigr) = 0.
			\end{aligned}
		\end{cases}
	\end{equation}
	\begin{equation}
		\label{eq:2.17}
		O(\epsilon^{-1}):
		\begin{cases}
			\begin{aligned}
				&\frac{\partial }{\partial x_i}\Bigl( k_{ij}^{(0)}\frac{\partial T^{(0)}}{\partial y_j} \Bigr) + \frac{\partial }{\partial y_i}\Bigl[ k_{ij}^{(0)}(\mathbf{y}, T^{(0)})\Bigl( \frac{\partial T^{(0)}}{\partial x_j} + \frac{\partial T^{(1)}}{\partial y_j} \Bigr)\Bigl]+ \frac{\partial }{\partial y_i}\Bigl( k_{ij}^{(1)}\frac{\partial T^{(0)}}{\partial y_j} \Bigr) = 0, \\
				&\frac{\partial }{\partial x_i}\Bigl( g_{ij}^{(0)}\frac{\partial \omega^{(0)}}{\partial y_j} \Bigr) + \frac{\partial }{\partial y_i}\Bigl[ g_{ij}^{(0)}\Bigl( \frac{\partial \omega^{(0)}}{\partial x_j} + \frac{\partial \omega^{(1)}}{\partial y_j} \Bigr)\Bigl] + \frac{\partial }{\partial y_i}\Bigl( g_{ij}^{(1)}\frac{\partial \omega^{(0)}}{\partial y_j} \Bigr) = 0, \\
				&\frac{\partial }{\partial x_j}\Bigl( C_{ijkl}^{(0)}\frac{\partial u_k^{(0)}}{\partial y_l} \Bigr) + \frac{\partial }{\partial y_j}\Bigl[ C_{ijkl}^{(0)}\Bigl( \frac{\partial u_k^{(0)}}{\partial x_l} + \frac{\partial u_k^{(1)}}{\partial y_l} \Bigr)\Bigr] + \frac{\partial }{\partial y_j}\Bigl( C_{ijkl}^{(1)}\frac{\partial u_k^{(0)}}{\partial y_l} \Bigr)\\
				& - \frac{\partial }{\partial y_j}\Bigl( \alpha_{ij}^{(0)}\bigl(T^{(0)} - \tilde{T}\bigr)\Bigr) - \frac{\partial }{\partial y_j}\Bigl( \beta_{ij}^{(0)}\bigl(\omega^{(0)} - \tilde{\omega}\bigr) \Bigr) = 0.
			\end{aligned}
		\end{cases}
	\end{equation}
	\begin{equation}
		\label{eq:2.18}
		O(\epsilon^0):
		\begin{cases}
			\begin{aligned}
				&\rho^{(0)} c^{(0)} \frac{\partial T^{(0)}}{\partial t} = \frac{\partial }{\partial x_i}\Bigl[ k_{ij}^{(0)}\Bigl( \frac{\partial T^{(0)}}{\partial x_j} + \frac{\partial T^{(1)}}{\partial y_j} \Bigr)\Bigr] + \frac{\partial }{\partial x_i}\Bigl( k_{ij}^{(1)}\frac{\partial T^{(0)}}{\partial y_j} \Bigr) + h + Q_{hyd}^{(0)} \\
				&+ \frac{\partial }{\partial y_i}\Bigl[ k_{ij}^{(0)}\Bigl( \frac{\partial T^{(1)}}{\partial x_j} + \frac{\partial T^{(2)}}{\partial y_j} \Bigr)\Bigl] + \frac{\partial }{\partial y_i}\Bigl[ k_{ij}^{(1)}\Bigl( \frac{\partial T^{(0)}}{\partial x_j} + \frac{\partial T^{(1)}}{\partial y_j} \Bigr) + k_{ij}^{(2)}\frac{\partial T^{(0)}}{\partial y_j} \Bigr], \\
				&\frac{\partial \omega^{(0)}}{\partial t} = \frac{\partial }{\partial x_i}\Bigl[ g_{ij}^{(0)}\Bigl( \frac{\partial \omega^{(0)}}{\partial x_j} + \frac{\partial \omega^{(1)}}{\partial y_j} \Bigr) \Bigr] +  \frac{\partial }{\partial y_i}\Bigl[ g_{ij}^{(0)}\Bigl( \frac{\partial \omega^{(1)}}{\partial x_j} + \frac{\partial \omega^{(2)}}{\partial y_j} \Bigr)\Bigr] \\
				&+  \frac{\partial }{\partial x_i} \Bigl( g_{ij}^{(1)}\frac{\partial \omega^{(0)}}{\partial y_j} \Bigr) + \frac{\partial }{\partial y_i}\Bigl[ g_{ij}^{(1)}\Bigl( \frac{\partial \omega^{(0)}}{\partial x_j} + \frac{\partial \omega^{(1)}}{\partial y_j} \Bigr)  + g_{ij}^{(2)}\frac{\partial \omega^{(0)}}{\partial y_j} \Bigr] + m - S_{hyd}^{(0)}, \\
				&\frac{\partial }{\partial x_j}\Bigl[ C_{ijkl}^{(0)}\Bigl( \frac{\partial u_k^{(0)}}{\partial x_l} + \frac{\partial u_k^{(1)}}{\partial y_l} \Bigr)+ C_{ijkl}^{(1)}\frac{\partial u_k^{(0)}}{\partial y_l} \Bigr] \!-\! \frac{\partial }{\partial x_j}\bigl[ \alpha_{ij}^{(0)}\bigl( T^{(0)} \!-\! \tilde{T} \bigr) \!+\! \beta_{ij}^{(0)}\bigl(\omega^{(0)} \!-\! \tilde{\omega}\bigr) \bigr] \\
				&+ \frac{\partial }{\partial y_j}\Bigl[ C_{ijkl}^{(0)}\Bigl( \frac{\partial u_k^{(1)}}{\partial x_l} + \frac{\partial u_k^{(2)}}{\partial y_l} \Bigr)\Bigr] \!+\! \frac{\partial }{\partial y_j}\Bigl[ C_{ijkl}^{(1)}\Bigl( \frac{\partial u_k^{(0)}}{\partial x_l} + \frac{\partial u_k^{(1)}}{\partial y_l} \Bigr)\Bigr] \!+\! \frac{\partial }{\partial y_j}\Bigl( C_{ijkl}^{(2)}\frac{\partial u_k^{(0)}}{\partial y_l} \Bigr) \\
				&- \frac{\partial }{\partial y_j}\bigl[\! \alpha_{ij}^{(0)} T^{(1)} + \alpha_{ij}^{(1)}\bigl( T^{(0)} - \tilde{T}\bigr)\bigl] - \frac{\partial }{\partial y_j}\bigl[ \beta_{ij}^{(0)} \omega^{(1)} + \beta_{ij}^{(1)}\bigl(\omega^{(0)} - \tilde{\omega}\bigr) \bigr] + f_i = 0.
			\end{aligned}
		\end{cases}
	\end{equation}
	
	The $O(\epsilon^{-2})$-order equations \eqref{eq:2.16} imply that $T^{(0)}$, $\omega^{(0)}$, and $u_k^{(0)}$ do not depend on the microscopic variable $\mathbf{y}$, namely
	\begin{equation}
		\label{eq:2.19}
		T^{(0)}(\mathbf{x},\mathbf{y},t) = T^{(0)}(\mathbf{x},t), \quad \omega^{(0)}(\mathbf{x},\mathbf{y},t) = \omega^{(0)}(\mathbf{x},t), \quad u_k^{(0)}(\mathbf{x},\mathbf{y},t) = u_k^{(0)}(\mathbf{x},t).
	\end{equation}
	
	Then, using \eqref{eq:2.19}, the terms $\frac{\partial T^{(0)}}{\partial y_j}$, $\frac{\partial \omega^{(0)}}{\partial y_j}$ and $\frac{\partial u_k^{(0)}}{\partial y_l}$ are all equal to zero. Substituting these into the $O(\epsilon^{-1})$-order equations \eqref{eq:2.17},  the first-order correction terms $T^{(1)}$, $\omega^{(1)}$, and $u_i^{(1)}$ can be expressed as follows.
	\begin{equation}
		\label{eq:2.21}
		\begin{cases}
			\begin{aligned}
				T^{(1)}(\mathbf{x},\mathbf{y},t) &= \mathcal{H}_{\alpha_1}(\mathbf{y}, T^{(0)}) \frac{\partial T^{(0)}(\mathbf{x},t)}{\partial x_{\alpha_1}}, \\
				\omega^{(1)}(\mathbf{x},\mathbf{y},t) &= \mathcal{J}_{\alpha_1}(\mathbf{y}, \omega^{(0)}) \frac{\partial \omega^{(0)}(\mathbf{x},t)}{\partial x_{\alpha_1}}, \\
				u_i^{(1)}(\mathbf{x},\mathbf{y},t) &= \mathcal{X}_{ih}^{\alpha_1}(\mathbf{y}, T^{(0)}) \frac{\partial u_h^{(0)}(\mathbf{x},t)}{\partial x_{\alpha_1}} - \mathcal{M}_i(\mathbf{y}, T^{(0)})\bigl(T^{(0)}(\mathbf{x},t) - \tilde{T}\bigr) \\
				& - \mathcal{N}_i(\mathbf{y}, T^{(0)})\bigl(\omega^{(0)}(\mathbf{x},t) - \tilde{\omega}\bigr).
			\end{aligned}
		\end{cases}
	\end{equation}
	where $\mathcal{H}_{\alpha_1}$, $\mathcal{J}_{\alpha_1}$, $\mathcal{X}_{ih}^{\alpha_1}$, $\mathcal{M}_i$ and $\mathcal{N}_i$ are $1$-periodic functions on the PUC $Y$, known as the first-order cell functions.
	
	After combining \eqref{eq:2.19} and \eqref{eq:2.21} with the $O(\epsilon^{-1})$-order equations \eqref{eq:2.17}, simplification and calculation yield the following equations subject to homogeneous Dirichlet boundary conditions, which are referred to as the first-order cell problems.
	\begin{equation}
		\label{eq:2.22}
		\begin{cases}
			\begin{aligned}
				& \frac{\partial }{\partial y_i}\Bigl( k_{ij}^{(0)}\frac{\partial \mathcal{H}_{\alpha_1}}{\partial y_j} \Bigr) = -\frac{\partial k_{i\alpha_1}^{(0)}}{\partial y_i}, \quad \mathbf{y} \in Y, \\
				& \mathcal{H}_{\alpha_1}(\mathbf{y}, T^{(0)}) = 0, \quad \mathbf{y} \in \partial Y.
			\end{aligned}
		\end{cases}
	\end{equation}
	\begin{equation}
		\label{eq:2.23}
		\begin{cases}
			\begin{aligned}
				& \frac{\partial }{\partial y_i}\Bigl( g_{ij}^{(0)}\frac{\partial \mathcal{J}_{\alpha_1}}{\partial y_j} \Bigr) = -\frac{\partial g_{i\alpha_1}^{(0)}}{\partial y_i}, \quad \mathbf{y} \in Y, \\
				& \mathcal{J}_{\alpha_1}(\mathbf{y}, \omega^{(0)}) = 0, \quad \mathbf{y} \in \partial Y.
			\end{aligned}
		\end{cases}
	\end{equation}
	\begin{equation}
		\label{eq:2.24}
		\begin{cases}
			\begin{aligned}
				& \frac{\partial }{\partial y_j}\Bigl( C_{ijkl}^{(0)}\frac{\partial \mathcal{X}_{kh}^{\alpha_1}}{\partial y_l} \Bigr) = -\frac{\partial C_{ijh\alpha_1}^{(0)}}{\partial y_j}, \quad \mathbf{y} \in Y, \\
				& \mathcal{X}_{kh}^{\alpha_1}(\mathbf{y}, T^{(0)}) = 0, \quad \mathbf{y} \in \partial Y.
			\end{aligned}
		\end{cases}
	\end{equation}
	\begin{equation}
		\label{eq:2.25}
		\begin{cases}
			\begin{aligned}
				& \frac{\partial }{\partial y_j}\Bigl( C_{ijkl}^{(0)}\frac{\partial \mathcal{M}_k}{\partial y_l} \Bigr) = -\frac{\partial \alpha_{ij}^{(0)}}{\partial y_j}, \quad \mathbf{y} \in Y, \\
				& \mathcal{M}_k(\mathbf{y}, T^{(0)}) = 0, \quad \mathbf{y} \in \partial Y.
			\end{aligned}
		\end{cases}
	\end{equation}
	\begin{equation}
		\label{eq:2.26}
		\begin{cases}
			\begin{aligned}
				& \frac{\partial }{\partial y_j}\Bigl( C_{ijkl}^{(0)}\frac{\partial \mathcal{N}_k}{\partial y_l} \Bigr) = -\frac{\partial \beta_{ij}^{(0)}}{\partial y_j}, \quad \mathbf{y} \in Y, \\
				& \mathcal{N}_k(\mathbf{y}, T^{(0)}) = 0, \quad \mathbf{y} \in \partial Y.
			\end{aligned}
		\end{cases}
	\end{equation}
	
	\begin{rmk}
		It should be noted that, unlike the linear periodic setting, the first-order cell functions exhibit quasi-periodicity and explicitly depend on $T^{(0)}$ or $\omega^{(0)}$ as varying parameters.
	\end{rmk}
	
	Subsequently, integrating both sides of the $O(\epsilon^{0})$-order equations \eqref{eq:2.18} over the PUC $Y$ and applying the Gauss theorem to \eqref{eq:2.18} yields the macroscopic homogenized equations associated with the multi-scale problem \eqref{eq:2.1}, given below.
	\begin{equation}
		\label{eq:2.27}
		\begin{cases}
			\begin{aligned}
				& \hat{S}(T^{(0)})\frac{\partial T^{(0)}}{\partial t} - \frac{\partial }{\partial x_i}\Bigl( \hat{k}_{ij}(T^{(0)})\frac{\partial T^{(0)}}{\partial x_j} \Bigr) = h + \hat{Q}_{hyd}(T^{(0)}), \text{in } \Omega \times (0, T^*), \\
				& \frac{\partial \omega^{(0)}}{\partial t} - \frac{\partial }{\partial x_i}\Bigl( \hat{g}_{ij}(\omega^{(0)})\frac{\partial \omega^{(0)}}{\partial x_j} \Bigr) = m - \hat{S}_{hyd}(T^{(0)}), \text{in } \Omega \times (0, T^*), \\
				& \!-\! \frac{\partial }{\partial x_j}\Bigl( \hat{C}_{ijkl}(T^{(0)})\frac{\partial u_k^{(0)}}{\partial x_l} \!-\! \hat{\alpha}_{ij}(T^{(0)})\bigl(T^{(0)} \!-\! \tilde{T}\bigr) \!-\! \hat{\beta}_{ij}(T^{(0)})\bigl(\omega^{(0)} \!-\! \tilde{\omega}\bigr) \Bigr) \!=\! f_i, \text{in } \Omega \times (0, T^*), \\
				& T^{(0)}(\mathbf{x},t) = \hat{T}(\mathbf{x},t), \text{on } \partial \Omega_T \times (0, T^*), \\
				& \hat{k}_{ij}(T^{(0)})\frac{\partial T^{(0)}}{\partial x_j}n_i = \bar{q}(\mathbf{x},t), \text{on } \partial \Omega_q \times (0, T^*), \\
				& \omega^{(0)}(\mathbf{x},t) = \hat{\omega}(\mathbf{x},t),  \text{on } \partial \Omega_\omega \times (0, T^*), \\
				& \hat{g}_{ij}(\omega^{(0)})\frac{\partial \omega^{(0)}}{\partial x_j}n_i = \bar{d}(\mathbf{x},t), \text{on } \partial \Omega_d \times (0, T^*), \\
				& \bm{u}^{(0)}(\mathbf{x},t) = \bm{\hat{u}}(\mathbf{x},t), \text{on } \partial \Omega_u \times (0, T^*), \\
				&\! \!\Bigl(\!\! \hat{C}_{ijkl}(T^{(0)}\!)\frac{\partial u_k^{(0)}}{\partial x_l} \!-\! \hat{\alpha}_{ij}(T^{(0)}\!)\!\bigl(T^{(0)} \!-\! \tilde{T}\bigr)\! \!-\! \hat{\beta}_{ij}(T^{(0)}\!)\!\bigl(\omega^{(0)} \!-\! \tilde{\omega}\bigr)\! \!\!\Bigr)\! n_j \!=\! \bar{\sigma}_i(\mathbf{x},t), \text{on } \partial \Omega_\sigma \!\times \! (0, T^*\!), \\
				& T^{(0)}(\mathbf{x},0) = \tilde{T},  \omega^{(0)}(\mathbf{x},0) = \tilde{\omega},  \bm{u}^{(0)}(\mathbf{x},0) = \bm{\tilde{u}},  \text{in } \Omega.
			\end{aligned}
		\end{cases}
	\end{equation}
	The macroscopic homogenized material parameters and homogenized internal source terms in \eqref{eq:2.27} are defined as follows.
	\begin{equation}
		\label{eq:2.28}
		\begin{aligned}
			& \hat{S}(T^{(0)}) = \frac{1}{|Y|}\int_Y \rho^{(0)}c^{(0)} dY,\\
			& \hat{k}_{ij}(T^{(0)}) = \frac{1}{|Y|}\int_Y \Bigl( k_{ij}^{(0)} + k_{ik}^{(0)}\frac{\partial \mathcal{H}_j}{\partial y_k} \Bigr) dY,\quad \hat{Q}_{hyd}(T^{(0)}) = \frac{1}{|Y|}\int_Y Q_{hyd}^{(0)} dY, \\
			& \hat{g}_{ij}(\omega^{(0)}) = \frac{1}{|Y|}\int_Y \Bigl( g_{ij}^{(0)} + g_{ik}^{(0)}\frac{\partial \mathcal{J}_j}{\partial y_k} \Bigr) dY,\quad \hat{S}_{hyd}(T^{(0)}) = \frac{1}{|Y|}\int_Y S_{hyd}^{(0)} dY, \\
			& \hat{C}_{ijkl}(T^{(0)}) = \frac{1}{|Y|}\int_Y \Bigl( C_{ijkl}^{(0)} + C_{ijmn}^{(0)}\frac{\partial \mathcal{X}_{mk}^l}{\partial y_n} \Bigr) dY, \\
			& \hat{\alpha}_{ij}(T^{(0)}) = \frac{1}{|Y|}\int_Y \Bigl( \alpha_{ij}^{(0)} + C_{ijkl}^{(0)}\frac{\partial \mathcal{M}_k}{\partial y_l} \Bigr) dY, \\
			& \hat{\beta}_{ij}(T^{(0)}) = \frac{1}{|Y|}\int_Y \Bigl( \beta_{ij}^{(0)} + C_{ijkl}^{(0)}\frac{\partial \mathcal{N}_k}{\partial y_l} \Bigr) dY.
		\end{aligned}
	\end{equation}
	
	\begin{rmk}
		All macroscopic homogenized material parameters and homogenized internal source terms depend on the macroscopic homogenized solution $T^{(0)}$ or $\omega^{(0)}$. This dependence has two origins: (1) the temperature- or moisture-dependent material parameters and the temperature-dependent internal source terms in the integrands; (2) the implicit dependence of the first-order cell functions on $T^{(0)}$ or $\omega^{(0)}$. This differs significantly from the case of linear periodic composites.
	\end{rmk}
	
	\begin{rmk}
		Stemming from the approach as outlined in \citep{R50, R60}, it can be proved that $\underline{\varsigma} | \bm{\xi} |^2\leq \hat{k}_{ij}(T^{(0)}) \xi _i\xi_j \leq\overline{\varsigma} | \bm{\xi} |^2$, $\underline{\varsigma} | \bm{\xi} |^2\leq \hat{g}_{ij}(\omega^{(0)}) \xi _i\xi_j \leq\overline{\varsigma} | \bm{\xi} |^2$, $\underline{\varsigma} \eta_{ij} \eta_{ij} \leq \hat{C}_{ijkl} (T^{(0)}) \eta_{ij} \eta_{kl} \leq \overline{\varsigma} \eta_{ij} \eta_{ij}$, $\underline{\varsigma} | \bm{\xi} |^2\leq \hat{\alpha}_{ij}(T^{(0)}) \xi _i\xi_j \leq\overline{\varsigma} | \bm{\xi} |^2$ and $\underline{\varsigma} | \bm{\xi} |^2\leq \hat{\beta}_{ij}(T^{(0)}) \xi _i\xi_j \leq\overline{\varsigma} | \bm{\xi} |^2$, where $\underline{\varsigma}$ and $\overline{\varsigma}$ are two positive constants independent of $\epsilon$.
	\end{rmk}
	
	We now derive the crucial second-order correctors $T^{(2)}$, $\omega^{(2)}$ and $u_i^{(2)}$. By substituting \eqref{eq:2.19} and \eqref{eq:2.21} into \eqref{eq:2.18}, subtracting \eqref{eq:2.18} from \eqref{eq:2.27}, and simplifying, we obtain the following equations.
	\begin{equation}
		\label{eq:2.29a}
		\begin{aligned}
			& \frac{\partial }{\partial y_i}\Bigl( k_{ij}^{(0)}\frac{\partial T^{(2)}}{\partial y_j} \Bigr) = \Bigl[ \frac{\partial \hat{k}_{i\alpha_1}}{\partial x_i} - \frac{\partial k_{i\alpha_1}^{(0)}}{\partial x_i} - \frac{\partial }{\partial x_i}\Bigl( k_{ij}^{(0)}\frac{\partial \mathcal{H}_{\alpha_1}}{\partial y_j} \Bigr) - \frac{\partial }{\partial y_i}\Bigl( k_{ij}^{(0)}\frac{\partial \mathcal{H}_{\alpha_1}}{\partial x_j} \Bigr) \Bigr] \frac{\partial T^{(0)}}{\partial x_{\alpha_1}}\\
			& + \bigl( \rho^{(0)}c^{(0)} - \hat{S} \bigr)\frac{\partial T^{(0)}}{\partial t} + \Bigl[ \hat{k}_{\alpha_1 \alpha_2} - k_{\alpha_1 \alpha_2}^{(0)} - k_{\alpha_1 j}^{(0)}\frac{\partial \mathcal{H}_{\alpha_2}}{\partial y_j} - \frac{\partial }{\partial y_i}\bigl( k_{i\alpha_1}^{(0)}\mathcal{H}_{\alpha_2} \bigr) \Bigr]\frac{\partial^2 T^{(0)}}{\partial x_{\alpha_1} \partial x_{\alpha_2}} \\
			& - \frac{\partial }{\partial y_i}\Bigl[ \mathcal{H}_{\alpha_1}\mathbf{D}^{(0,1)}k_{i\alpha_2}^{(0)} + \mathcal{H}_{\alpha_1}\mathbf{D}^{(0,1)}k_{ij}^{(0)}\frac{\partial \mathcal{H}_{\alpha_2}}{\partial y_j} \Bigr]\frac{\partial T^{(0)}}{\partial x_{\alpha_1}}\frac{\partial T^{(0)}}{\partial x_{\alpha_2}} + \hat{Q}_{hyd} - Q_{hyd}^{(0)}.
		\end{aligned}
	\end{equation}
	\begin{equation}
		\label{eq:2.29b}
		\begin{aligned}
			& \frac{\partial }{\partial y_i}\Bigl( g_{ij}^{(0)}\frac{\partial \omega^{(2)}}{\partial y_j} \Bigr) = \Bigl[ \frac{\partial \hat{g}_{i\alpha_1}}{\partial x_i} - \frac{\partial g_{i\alpha_1}^{(0)}}{\partial x_i} - \frac{\partial }{\partial x_i}\Bigl( g_{ij}^{(0)}\frac{\partial \mathcal{J}_{\alpha_1}}{\partial y_j} \Bigr) - \frac{\partial }{\partial y_i}\Bigl( g_{ij}^{(0)}\frac{\partial \mathcal{J}_{\alpha_1}}{\partial x_j} \Bigr) \Bigr]\frac{\partial \omega^{(0)}}{\partial x_{\alpha_1}} \\
			& + \Bigl[ \hat{g}_{\alpha_1 \alpha_2} - g_{\alpha_1 \alpha_2}^{(0)} - g_{\alpha_1 j}^{(0)}\frac{\partial \mathcal{J}_{\alpha_2}}{\partial y_j} - \frac{\partial }{\partial y_i}\bigl( g_{i\alpha_1}^{(0)}\mathcal{J}_{\alpha_2} \bigr) \Bigr]\frac{\partial^2 \omega^{(0)}}{\partial x_{\alpha_1} \partial x_{\alpha_2}} \\
			& - \frac{\partial }{\partial y_i}\Bigl[ \mathcal{J}_{\alpha_1}\mathbf{D}^{(0,1)}g_{i\alpha_2}^{(0)} + \mathcal{J}_{\alpha_1}\mathbf{D}^{(0,1)}g_{ij}^{(0)}\frac{\partial \mathcal{J}_{\alpha_2}}{\partial y_j} \Bigr]\frac{\partial \omega^{(0)}}{\partial x_{\alpha_1}}\frac{\partial \omega^{(0)}}{\partial x_{\alpha_2}} + S_{hyd}^{(0)} - \hat{S}_{hyd}.
		\end{aligned}
	\end{equation}
	\begin{equation}
		\label{eq:2.29c}
		\begin{aligned}
			& \frac{\partial }{\partial y_j}\Bigl( C_{ijkl}^{(0)}\frac{\partial u_k^{(2)}}{\partial y_l} \Bigr) = \Bigl[ \frac{\partial \hat{C}_{ijm\alpha_1}}{\partial x_j} - \frac{\partial C_{ijm\alpha_1}^{(0)}}{\partial x_j} - \frac{\partial }{\partial x_j}\Bigl( C_{ijkl}^{(0)}\frac{\partial \mathcal{X}_{km}^{\alpha_1}}{\partial y_l} \Bigr) - \frac{\partial }{\partial y_j}\Bigl( C_{ijkl}^{(0)}\frac{\partial \mathcal{X}_{km}^{\alpha_1}}{\partial x_l} \Bigr) \Bigr]\frac{\partial u_m^{(0)}}{\partial x_{\alpha_1}}\\
			& + \Bigl[ \hat{C}_{i\alpha_1 m\alpha_2} - C_{i\alpha_1 m\alpha_2}^{(0)} - C_{i\alpha_1 kl}^{(0)}\frac{\partial \mathcal{X}_{km}^{\alpha_2}}{\partial y_l} - \frac{\partial }{\partial y_j}\bigl( C_{ijk\alpha_1}^{(0)}\mathcal{X}_{km}^{\alpha_2} \bigr) \Bigr]\frac{\partial^2 u_m^{(0)}}{\partial x_{\alpha_1} \partial x_{\alpha_2}} \\
			& - \frac{\partial }{\partial y_j}\Bigl[ \mathcal{H}_{\alpha_1}\mathbf{D}^{(0,1)}C_{ijm\alpha_2}^{(0)} + \mathcal{H}_{\alpha_1}\mathbf{D}^{(0,1)}C_{ijkl}^{(0)}\frac{\partial \mathcal{X}_{km}^{\alpha_2}}{\partial y_l} \Bigr]\frac{\partial T^{(0)}}{\partial x_{\alpha_1}}\frac{\partial u_m^{(0)}}{\partial x_{\alpha_2}} \\
			& - \Bigl[ \frac{\partial \hat{\alpha}_{ij}}{\partial x_j} - \frac{\partial \alpha_{ij}^{(0)}}{\partial x_j} - \frac{\partial }{\partial x_j}\Bigl( C_{ijkl}^{(0)}\frac{\partial \mathcal{M}_k}{\partial y_l} \Bigr) - \frac{\partial }{\partial y_j}\Bigl( C_{ijkl}^{(0)}\frac{\partial \mathcal{M}_k}{\partial x_l} \Bigr) \Bigr]\bigl( T^{(0)} - \tilde{T} \bigr) \\
			& - \Bigl[ \hat{\alpha}_{i\alpha_1} - \alpha_{i\alpha_1}^{(0)} - C_{i\alpha_1 kl}^{(0)}\frac{\partial \mathcal{M}_k}{\partial y_l} - \frac{\partial }{\partial y_j}\bigl( \alpha_{ij}^{(0)}\mathcal{H}_{\alpha_1} \bigr) - \frac{\partial }{\partial y_j}\bigl( C_{ijk\alpha_1}^{(0)}\mathcal{M}_k \bigr) \Bigr]\frac{\partial T^{(0)}}{\partial x_{\alpha_1}} \\
			& + \frac{\partial }{\partial y_j}\Bigl[ \mathcal{H}_{\alpha_1}\mathbf{D}^{(0,1)}C_{ijkl}^{(0)}\frac{\partial \mathcal{M}_k}{\partial y_l} + \mathcal{H}_{\alpha_1}\mathbf{D}^{(0,1)}\alpha_{ij}^{(0)} \Bigr]\frac{\partial T^{(0)}}{\partial x_{\alpha_1}}\bigl( T^{(0)} - \tilde{T} \bigr) \\
			& + \frac{\partial }{\partial y_j}\Bigl[ \mathcal{H}_{\alpha_1}\mathbf{D}^{(0,1)}C_{ijkl}^{(0)}\frac{\partial \mathcal{N}_k}{\partial y_l} + \mathcal{H}_{\alpha_1}\mathbf{D}^{(0,1)}\beta_{ij}^{(0)} \Bigr]\frac{\partial T^{(0)}}{\partial x_{\alpha_1}}\bigl( \omega^{(0)} - \tilde{\omega} \bigr) \\
			& - \Bigl[ \frac{\partial \hat{\beta}_{ij}}{\partial x_j} - \frac{\partial \beta_{ij}^{(0)}}{\partial x_j} - \frac{\partial }{\partial x_j}\Bigl( C_{ijkl}^{(0)}\frac{\partial \mathcal{N}_k}{\partial y_l} \Bigr) - \frac{\partial }{\partial y_j}\Bigl( C_{ijkl}^{(0)}\frac{\partial \mathcal{N}_k}{\partial x_l} \Bigr) \Bigr]\bigl( \omega^{(0)} - \tilde{\omega} \bigr) \\
			& - \Bigl[ \hat{\beta}_{i\alpha_1} - \beta_{i\alpha_1}^{(0)} - C_{i\alpha_1 kl}^{(0)}\frac{\partial \mathcal{N}_k}{\partial y_l} - \frac{\partial }{\partial y_j}\bigl( \beta_{ij}^{(0)}\mathcal{J}_{\alpha_1} \bigr) - \frac{\partial }{\partial y_j}\bigl( C_{ijk\alpha_1}^{(0)}\mathcal{N}_k \bigr) \Bigl]\frac{\partial \omega^{(0)}}{\partial x_{\alpha_1}}.
		\end{aligned}
	\end{equation}
	
	Using \eqref{eq:2.29a}-\eqref{eq:2.29c}, we obtain the explicit expressions for the second-order correctors $T^{(2)}$, $\omega^{(2)}$ and $u_i^{(2)}$ as
	\begin{equation}
		\label{eq:2.30}
		\begin{cases}
			\begin{aligned}
				& T^{(2)}(\mathbf{x}, \mathbf{y}, t) = \mathcal{S}(\mathbf{y}, T^{(0)})\frac{\partial T^{(0)}(\mathbf{x}, t)}{\partial t} + \mathcal{H}_{\alpha_1 \alpha_2}(\mathbf{y}, T^{(0)})\frac{\partial^2 T^{(0)}(\mathbf{x}, t)}{\partial x_{\alpha_1} \partial x_{\alpha_2}} \\
				& + \mathcal{R}_{\alpha_1}(\mathbf{y}, T^{(0)})\frac{\partial T^{(0)}(\mathbf{x}, t)}{\partial x_{\alpha_1}} - \mathcal{E}_{\alpha_1 \alpha_2}(\mathbf{y}, T^{(0)})\frac{\partial T^{(0)}(\mathbf{x}, t)}{\partial x_{\alpha_1}}\frac{\partial T^{(0)}(\mathbf{x}, t)}{\partial x_{\alpha_2}} + \mathbb{Q} (\mathbf{y}, T^{(0)}), \\
				& \omega^{(2)}(\mathbf{x}, \mathbf{y}, t) = \mathcal{J}_{\alpha_1 \alpha_2}(\mathbf{y}, \omega^{(0)})\frac{\partial^2 \omega^{(0)}(\mathbf{x}, t)}{\partial x_{\alpha_1} \partial x_{\alpha_2}} + \mathcal{I}_{\alpha_1}(\mathbf{y}, \omega^{(0)})\frac{\partial \omega^{(0)}(\mathbf{x}, t)}{\partial x_{\alpha_1}} \\
				& - \mathcal{F}_{\alpha_1\alpha_2}(\mathbf{y}, \omega^{(0)})\frac{\partial \omega^{(0)}(\mathbf{x}, t)}{\partial x_{\alpha_1}}\frac{\partial \omega^{(0)}(\mathbf{x}, t)}{\partial x_{\alpha_2}} -\mathbb{S} (\mathbf{y}, T^{(0)}), \\
				& u_i^{(2)}(\mathbf{x}, \mathbf{y}, t) = \mathcal{X}_{ih}^{\alpha_1 \alpha_2}(\mathbf{y}, T^{(0)})\frac{\partial^2 u_h^{(0)}(\mathbf{x}, t)}{\partial x_{\alpha_1} \partial x_{\alpha_2}} + \mathcal{Q}_{ih}^{\alpha_1}(\mathbf{y}, T^{(0)})\frac{\partial u_h^{(0)}(\mathbf{x}, t)}{\partial x_{\alpha_1}} \\
				& - \mathcal{P}_{ih}^{\alpha_1 \alpha_2}(\mathbf{y}, T^{(0)})\frac{\partial T^{(0)}(\mathbf{x}, t)}{\partial x_{\alpha_1}}\frac{\partial u_h^{(0)}(\mathbf{x}, t)}{\partial x_{\alpha_2}} - \mathcal{W}_i(\mathbf{y}, T^{(0)})(T^{(0)}(\mathbf{x}, t) - \tilde{T}) \\
				& - \mathcal{Z}_i^{\alpha_1}(\mathbf{y}, T^{(0)})\frac{\partial T^{(0)}(\mathbf{x}, t)}{\partial x_{\alpha_1}} + \mathcal{A}_i^{\alpha_1}(\mathbf{y}, T^{(0)})\frac{\partial T^{(0)}(\mathbf{x}, t)}{\partial x_{\alpha_1}}(T^{(0)}(\mathbf{x}, t) - \tilde{T}) \\
				& - \mathcal{V}_i(\mathbf{y}, T^{(0)})(\omega^{(0)}(\mathbf{x}, t) - \tilde{\omega}) - \mathcal{G}_i^{\alpha_1}(\mathbf{y}, T^{(0)}, \omega^{(0)})\frac{\partial \omega^{(0)}(\mathbf{x}, t)}{\partial x_{\alpha_1}} \\
				& + \mathcal{B}_i^{\alpha_1}(\mathbf{y}, T^{(0)})\frac{\partial T^{(0)}(\mathbf{x}, t)}{\partial x_{\alpha_1}}(\omega^{(0)}(\mathbf{x}, t) - \tilde{\omega}).
			\end{aligned}
		\end{cases}
	\end{equation}
	where $\mathcal{S}$, $\mathcal{H}_{\alpha_1 \alpha_2}$, $\mathcal{R}_{\alpha_1}$, $\mathcal{E}_{\alpha_1 \alpha_2}$, $\mathbb{Q}$, $\mathcal{J}_{\alpha_1 \alpha_2}$, $\mathcal{I}_{\alpha_1}$, $\mathcal{F}_{\alpha_1\alpha_2}$, $\mathbb{S}$, $\mathcal{X}_{ih}^{\alpha_1 \alpha_2}$, $\mathcal{Q}_{ih}^{\alpha_1}$, $\mathcal{P}_{ih}^{\alpha_1 \alpha_2}$, $\mathcal{W}_i$, $\mathcal{Z}_i^{\alpha_1}$, $\mathcal{A}_i^{\alpha_1}$, $\mathcal{V}_i$, $\mathcal{G}_i^{\alpha_1}$ and $\mathcal{B}_i^{\alpha_1}$ are $1$-periodic functions defined in PUC $Y$, which are defined as second-order auxiliary cell functions.
	
	Substituting \eqref{eq:2.30} into \eqref{eq:2.29a}-\eqref{eq:2.29c}, we derive a series of equations subject to the homogeneous Dirichlet boundary conditions, as follows.
	\begin{equation}
		\label{eq:2.31}
		\begin{cases}
			\begin{aligned}
				& \frac{\partial }{\partial y_i}\Bigl( k_{ij}^{(0)}\frac{\partial \mathcal{S}}{\partial y_j} \Bigr) = \rho^{(0)}c^{(0)} - \hat{S}, \quad \mathbf{y} \in Y, \\
				& \mathcal{S}(\mathbf{y}, T^{(0)}) = 0, \quad \mathbf{y} \in \partial Y.
			\end{aligned}
		\end{cases}
	\end{equation}
	\begin{equation}
		\label{eq:2.32}
		\begin{cases}
			\begin{aligned}
				& \frac{\partial }{\partial y_i}\Bigl( k_{ij}^{(0)}\frac{\partial \mathcal{H}_{\alpha_1 \alpha_2}}{\partial y_j} \Bigr) = \hat{k}_{\alpha_1\alpha_2} - k_{\alpha_1\alpha_2}^{(0)} - k_{\alpha_1 j}^{(0)}\frac{\partial \mathcal{H}_{\alpha_2}}{\partial y_j} - \frac{\partial }{\partial y_i}\bigl( k_{i\alpha_1}^{(0)} \mathcal{H}_{\alpha_2} \bigr), \quad \mathbf{y} \in Y, \\
				& \mathcal{H}_{\alpha_1 \alpha_2}(\mathbf{y}, T^{(0)}) = 0, \quad \mathbf{y} \in \partial Y.
			\end{aligned}
		\end{cases}
	\end{equation}
	\begin{equation}
		\label{eq:2.33}
		\begin{cases}
			\begin{aligned}
				& \frac{\partial }{\partial y_i}\Bigl( k_{ij}^{(0)}\frac{\partial \mathcal{R}_{\alpha_1}}{\partial y_j} \Bigr)= \frac{\partial \hat{k}_{i\alpha_1}}{\partial x_i} - \frac{\partial k_{i\alpha_1}^{(0)}}{\partial x_i} - \frac{\partial }{\partial x_i}\Bigl( k_{ij}^{(0)}\frac{\partial \mathcal{H}_{\alpha_1}}{\partial y_j} \Bigr) - \frac{\partial }{\partial y_i}\Bigl( k_{ij}^{(0)}\frac{\partial \mathcal{H}_{\alpha_1}}{\partial x_j} \Bigr), \quad \mathbf{y} \in Y, \\
				& \mathcal{R}_{\alpha_1}(\mathbf{y}, T^{(0)}) = 0, \quad \mathbf{y} \in \partial Y.
			\end{aligned}
		\end{cases}
	\end{equation}
	\begin{equation}
		\label{eq:2.34}
		\begin{cases}
			\begin{aligned}
				& \frac{\partial }{\partial y_i}\Bigl( k_{ij}^{(0)}\frac{\partial \mathcal{E}_{\alpha_1 \alpha_2}}{\partial y_j} \Bigr) = \frac{\partial }{\partial y_i}\Bigl( \mathcal{H}_{\alpha_1}\mathbf{D}^{(0,1)}k_{i\alpha_2}^{(0)} + \mathcal{H}_{\alpha_1}\mathbf{D}^{(0,1)}k_{ij}^{(0)}\frac{\partial \mathcal{H}_{\alpha_2}}{\partial y_j} \Bigr), \quad \mathbf{y} \in Y, \\
				& \mathcal{E}_{\alpha_1 \alpha_2}(\mathbf{y}, T^{(0)}) = 0, \quad \mathbf{y} \in \partial Y.
			\end{aligned}
		\end{cases}
	\end{equation}
	\begin{equation}
		\label{eq:2.35}
		\begin{cases}
			\begin{aligned}
				& \frac{\partial }{\partial y_i}\Bigl( k_{ij}^{(0)}\frac{\partial \mathbb{Q}}{\partial y_j} \Bigr) = \hat{Q}_{hyd} - Q_{hyd}^{(0)}, \quad \mathbf{y} \in Y, \\
				& \mathbb{Q}(\mathbf{y}, T^{(0)}) = 0, \quad \mathbf{y} \in \partial Y.
			\end{aligned}
		\end{cases}
	\end{equation}
	\begin{equation}
		\label{eq:2.36}
		\begin{cases}
			\begin{aligned}
				& \frac{\partial }{\partial y_i}\Bigl( g_{ij}^{(0)}\frac{\partial \mathcal{J}_{\alpha_1 \alpha_2}}{\partial y_j} \Bigr) = \hat{g}_{\alpha_1\alpha_2} - g_{\alpha_1\alpha_2}^{(0)} - g_{\alpha_1 j}^{(0)}\frac{\partial \mathcal{J}_{\alpha_2}}{\partial y_j} - \frac{\partial }{\partial y_i}\bigl( g_{i\alpha_1}^{(0)} \mathcal{J}_{\alpha_2} \bigr), \quad \mathbf{y} \in Y, \\
				& \mathcal{J}_{\alpha_1 \alpha_2}(\mathbf{y}, \omega^{(0)}) = 0, \quad \mathbf{y} \in \partial Y.
			\end{aligned}
		\end{cases}
	\end{equation}
	\begin{equation}
		\label{eq:2.37}
		\begin{cases}
			\begin{aligned}
				& \frac{\partial }{\partial y_i}\Bigl( g_{ij}^{(0)}\frac{\partial \mathcal{I}_{\alpha_1}}{\partial y_j} \Bigr) = \frac{\partial \hat{g}_{i\alpha_1}}{\partial x_i} - \frac{\partial g_{i\alpha_1}^{(0)}}{\partial x_i} - \frac{\partial }{\partial x_i}\Bigl( g_{ij}^{(0)}\frac{\partial \mathcal{J}_{\alpha_1}}{\partial y_j} \Bigr) - \frac{\partial }{\partial y_i}\Bigl( g_{ij}^{(0)}\frac{\partial \mathcal{J}_{\alpha_1}}{\partial x_j} \Bigr), \quad \mathbf{y} \in Y, \\
				& \mathcal{I}_{\alpha_1}(\mathbf{y}, \omega^{(0)}) = 0, \quad \mathbf{y} \in \partial Y.
			\end{aligned}
		\end{cases}
	\end{equation}
	\begin{equation}
		\label{eq:2.38}
		\begin{cases}
			\begin{aligned}
				& \frac{\partial }{\partial y_i}\Bigl( g_{ij}^{(0)}\frac{\partial \mathcal{F}_{\alpha_1\alpha_2}}{\partial y_j} \Bigr) = \frac{\partial }{\partial y_i}\Bigl( \mathcal{J}_{\alpha_1}\mathbf{D}^{(0,1)}g_{i\alpha_2}^{(0)} + \mathcal{J}_{\alpha_1}\mathbf{D}^{(0,1)}g_{ij}^{(0)}\frac{\partial \mathcal{J}_{\alpha_2}}{\partial y_j} \Bigr), \quad \mathbf{y} \in Y, \\
				& \mathcal{F}_{\alpha_1\alpha_2}(\mathbf{y}, \omega^{(0)}) = 0, \quad \mathbf{y} \in \partial Y.
			\end{aligned}
		\end{cases}
	\end{equation}
	\begin{equation}
		\label{eq:2.39}
		\begin{cases}
			\begin{aligned}
				& \frac{\partial }{\partial y_i}\Bigl( g_{ij}^{(0)}\frac{\partial \mathbb{S}}{\partial y_j} \Bigr) = \hat{S}_{hyd} - S_{hyd}^{(0)}, \quad \mathbf{y} \in Y, \\
				& \mathbb{S}(\mathbf{y}, T^{(0)}) = 0, \quad \mathbf{y} \in \partial Y.
			\end{aligned}
		\end{cases}
	\end{equation}
	\begin{equation}
		\label{eq:2.40}
		\begin{cases}
			\begin{aligned}
				& \frac{\partial }{\partial y_j}\Bigl( C_{ijkl}^{(0)}\frac{\partial \mathcal{X}_{km}^{\alpha_1 \alpha_2}}{\partial y_l} \Bigr) = \hat{C}_{i\alpha_1 m\alpha_2} - C_{i\alpha_1 m\alpha_2}^{(0)} - C_{i\alpha_1 kl}^{(0)}\frac{\partial \mathcal{X}_{km}^{\alpha_2}}{\partial y_l} - \frac{\partial }{\partial y_j}\bigl( C_{ijk\alpha_1}^{(0)} \mathcal{X}_{km}^{\alpha_2} \bigr), \quad \mathbf{y} \in Y, \\
				& \mathcal{X}_{km}^{\alpha_1 \alpha_2}(\mathbf{y}, T^{(0)}) = 0, \quad \mathbf{y} \in \partial Y.
			\end{aligned}
		\end{cases}
	\end{equation}
	\begin{equation}
		\label{eq:2.41}
		\begin{cases}
			\begin{aligned}
				& \frac{\partial }{\partial y_j}\Bigl( C_{ijkl}^{(0)}\frac{\partial \mathcal{Q}_{km}^{\alpha_1}}{\partial y_l} \Bigr) \!=\! \frac{\partial \hat{C}_{ijm\alpha_1}}{\partial x_j} \!-\! \frac{\partial C_{ijm\alpha_1}^{(0)}}{\partial x_j} \!-\! \frac{\partial }{\partial x_j}\Bigl( C_{ijkl}^{(0)}\frac{\partial \mathcal{X}_{km}^{\alpha_1}}{\partial y_l} \Bigr) \!-\! \frac{\partial }{\partial y_j}\Bigl( C_{ijkl}^{(0)}\frac{\partial \mathcal{X}_{km}^{\alpha_1}}{\partial x_l} \Bigr), \mathbf{y} \in Y , \\
				& \mathcal{Q}_{km}^{\alpha_1}(\mathbf{y}, T^{(0)}) = 0, \quad \mathbf{y} \in \partial Y.
			\end{aligned}
		\end{cases}
	\end{equation}
	\begin{equation}
		\label{eq:2.42}
		\begin{cases}
			\begin{aligned}
				& \frac{\partial }{\partial y_j}\Bigl( C_{ijkl}^{(0)}\frac{\partial \mathcal{P}_{km}^{\alpha_1 \alpha_2}}{\partial y_l} \Bigr) = \frac{\partial }{\partial y_j}\Bigl( \mathcal{H}_{\alpha_1}\mathbf{D}^{(0,1)}C_{ijm\alpha_2}^{(0)} + \mathcal{H}_{\alpha_1}\mathbf{D}^{(0,1)}C_{ijkl}^{(0)}\frac{\partial \mathcal{X}_{km}^{\alpha_2}}{\partial y_l} \Bigr), \quad \mathbf{y} \in Y, \\
				& \mathcal{P}_{km}^{\alpha_1 \alpha_2}(\mathbf{y}, T^{(0)}) = 0, \quad \mathbf{y} \in \partial Y.
			\end{aligned}
		\end{cases}
	\end{equation}
	\begin{equation}
		\label{eq:2.43}
		\begin{cases}
			\begin{aligned}
				& \frac{\partial }{\partial y_j}\Bigl( C_{ijkl}^{(0)}\frac{\partial \mathcal{W}_k}{\partial y_l} \Bigr) = \frac{\partial \hat{\alpha}_{ij}}{\partial x_j} - \frac{\partial \alpha_{ij}^{(0)}}{\partial x_j} - \frac{\partial }{\partial x_j}\Bigl( C_{ijkl}^{(0)}\frac{\partial \mathcal{M}_k}{\partial y_l} \Bigr) - \frac{\partial }{\partial y_j}\Bigl( C_{ijkl}^{(0)}\frac{\partial \mathcal{M}_k}{\partial x_l} \Bigr), \quad \mathbf{y} \in Y, \\
				& \mathcal{W}_k(\mathbf{y}, T^{(0)}) = 0, \quad \mathbf{y} \in \partial Y.
			\end{aligned}
		\end{cases}
	\end{equation}
	\begin{equation}
		\label{eq:2.44}
		\begin{cases}
			\begin{aligned}
				& \frac{\partial }{\partial y_j}\Bigl( C_{ijkl}^{(0)}\frac{\partial \mathcal{Z}_k^{\alpha_1}}{\partial y_l} \Bigr) \!=\! \hat{\alpha}_{i\alpha_1} \!-\! \alpha_{i\alpha_1}^{(0)} \!-\! C_{i\alpha_1 kl}^{(0)}\frac{\partial \mathcal{M}_k}{\partial y_l} \!-\! \frac{\partial }{\partial y_j}\bigl( \alpha_{ij}^{(0)} \mathcal{H}_{\alpha_1} \bigr) \!-\! \frac{\partial }{\partial y_j}\bigl( C_{ijk\alpha_1}^{(0)} \mathcal{M}_k \bigr), \mathbf{y} \in  Y, \\
				& \mathcal{Z}_k^{\alpha_1}(\mathbf{y}, T^{(0)}) = 0, \quad \mathbf{y} \in \partial Y.
			\end{aligned}
		\end{cases}
	\end{equation}
	\begin{equation}
		\label{eq:2.45}
		\begin{cases}
			\begin{aligned}
				& \frac{\partial }{\partial y_j}\Bigl( C_{ijkl}^{(0)}\frac{\partial \mathcal{A}_k^{\alpha_1}}{\partial y_l} \Bigr) = \frac{\partial }{\partial y_j}\Bigl( \mathcal{H}_{\alpha_1}\mathbf{D}^{(0,1)}C_{ijkl}^{(0)}\frac{\partial \mathcal{M}_k}{\partial y_l} + \mathcal{H}_{\alpha_1}\mathbf{D}^{(0,1)}\alpha_{ij}^{(0)} \Bigr), \quad \mathbf{y} \in Y, \\
				& \mathcal{A}_k^{\alpha_1}(\mathbf{y}, T^{(0)}) = 0, \quad \mathbf{y} \in \partial Y.
			\end{aligned}
		\end{cases}
	\end{equation}
	\begin{equation}
		\label{eq:2.46}
		\begin{cases}
			\begin{aligned}
				& \frac{\partial }{\partial y_j}\Bigl( C_{ijkl}^{(0)}\frac{\partial \mathcal{V}_k}{\partial y_l} \Bigr) = \frac{\partial \hat{\beta}_{ij}}{\partial x_j} - \frac{\partial \beta_{ij}^{(0)}}{\partial x_j} - \frac{\partial }{\partial x_j}\Bigl( C_{ijkl}^{(0)}\frac{\partial \mathcal{N}_k}{\partial y_l} \Bigr) - \frac{\partial }{\partial y_j}\Bigl( C_{ijkl}^{(0)}\frac{\partial \mathcal{N}_k}{\partial x_l} \Bigr), \quad \mathbf{y} \in Y, \\
				& \mathcal{V}_k(\mathbf{y}, T^{(0)}) = 0, \quad \mathbf{y} \in \partial Y.
			\end{aligned}
		\end{cases}
	\end{equation}
	\begin{equation}
		\label{eq:2.47}
		\begin{cases}
			\begin{aligned}
				& \frac{\partial }{\partial y_j}\Bigl( C_{ijkl}^{(0)}\frac{\partial \mathcal{G}_k^{\alpha_1}}{\partial y_l} \Bigr) = \hat{\beta}_{i\alpha_1} - \beta_{i\alpha_1}^{(0)} - C_{i\alpha_1 kl}^{(0)}\frac{\partial \mathcal{N}_k}{\partial y_l} - \frac{\partial }{\partial y_j}\bigl( \beta_{ij}^{(0)} \mathcal{J}_{\alpha_1} \bigr) - \frac{\partial }{\partial y_j}\bigl( C_{ijk\alpha_1}^{(0)} \mathcal{N}_k \bigr), \mathbf{y} \in Y, \\
				& \mathcal{G}_k^{\alpha_1}(\mathbf{y}, T^{(0)}, \omega^{(0)}) = 0, \quad \mathbf{y} \in \partial Y.
			\end{aligned}
		\end{cases}
	\end{equation}
	\begin{equation}
		\label{eq:2.48}
		\begin{cases}
			\begin{aligned}
				& \frac{\partial }{\partial y_j}\Bigl( C_{ijkl}^{(0)}\frac{\partial \mathcal{B}_k^{\alpha_1}}{\partial y_l} \Bigr) = \frac{\partial }{\partial y_j}\Bigl( \mathcal{H}_{\alpha_1}\mathbf{D}^{(0,1)}C_{ijkl}^{(0)}\frac{\partial \mathcal{N}_k}{\partial y_l} + \mathcal{H}_{\alpha_1}\mathbf{D}^{(0,1)}\beta_{ij}^{(0)} \Bigr), \quad \mathbf{y} \in Y, \\
				& \mathcal{B}_k^{\alpha_1}(\mathbf{y}, T^{(0)}) = 0, \quad \mathbf{y} \in \partial Y.
			\end{aligned}
		\end{cases}
	\end{equation}
	
	\begin{rmk}	
		According to Refs. \citep{R23,R54,R43}, the homogeneous Dirichlet boundary condition may substitute for the classical periodic boundary condition for the first-order cell problems \eqref{eq:2.22}-\eqref{eq:2.26} and second-order cell problems \eqref{eq:2.31}-\eqref{eq:2.48}, when the material property parameters satisfy geometric symmetry and regularity assumptions.
	\end{rmk}
	
	\begin{rmk}
		For any fixed macroscopic temperature $T^{(0)}$ or any fixed macroscopic moisture $\omega^{(0)}$, the existence and uniqueness of solutions to the auxiliary cell problems \eqref{eq:2.22}-\eqref{eq:2.26} and \eqref{eq:2.31}-\eqref{eq:2.48} can be obtained from the Lax-Milgram theorem together with assumptions (A)-(B).
	\end{rmk}
	
	\begin{rmk}
		From the auxiliary cell problems \eqref{eq:2.22}-\eqref{eq:2.26} and \eqref{eq:2.31}-\eqref{eq:2.48}, one can prove that all cell functions are continuous with respect to $T^{(0)}$ and $\omega^{(0)}$.
	\end{rmk}
	
	In conclusion, we establish the LOMS solutions for the nonlinear dynamic hygro-thermo-mechanical problems \eqref{eq:2.1} as below.
	\begin{equation}
		\label{eq:2.49}
		T^{(1, \epsilon)}(\mathbf{x}, t) = T^{(0)}(\mathbf{x}, t) + \epsilon \mathcal{H}_{\alpha_1}(\mathbf{y}, T^{(0)}) \frac{\partial T^{(0)}(\mathbf{x}, t)}{\partial x_{\alpha_1}}.
	\end{equation}
	\begin{equation}
		\label{eq:2.50}
		\omega^{(1,\epsilon)}(\mathbf{x}, t) = \omega^{(0)}(\mathbf{x}, t) + \epsilon \mathcal{J}_{\alpha_1}(\mathbf{y}, \omega^{(0)}) \frac{\partial \omega^{(0)}(\mathbf{x}, t)}{\partial x_{\alpha_1}}.
	\end{equation}
	\begin{equation}
		\label{eq:2.51}
		\begin{aligned}
			u_i^{(1,\epsilon)}(\mathbf{x}, t) & = u_i^{(0)}(\mathbf{x}, t) + \epsilon \Bigl[  \mathcal{X}_{ih}^{\alpha_1}(\mathbf{y}, T^{(0)}) \frac{\partial u_h^{(0)}(\mathbf{x}, t)}{\partial x_{\alpha_1}}\\
			& - \mathcal{M}_i(\mathbf{y}, T^{(0)}) \bigl( T^{(0)}(\mathbf{x}, t) - \tilde{T} \bigr) - \mathcal{N}_i(\mathbf{y}, T^{(0)}) \bigl( \omega^{(0)}(\mathbf{x}, t) - \tilde{\omega} \bigr) \Bigr].
		\end{aligned}
	\end{equation}
	
	Furthermore, the HOMS solutions for the multi-scale problem \eqref{eq:2.1} as follows.
	\begin{equation}
		\label{eq:2.52}
		\begin{aligned}
			T^{(2,\epsilon)}(\mathbf{x}, t) & = T^{(0)}(\mathbf{x}, t) + \epsilon \mathcal{H}_{\alpha_1}(\mathbf{y}, T^{(0)}) \frac{\partial T^{(0)}(\mathbf{x}, t)}{\partial x_{\alpha_1}} \\
			& + \epsilon^2 \Bigl[ \mathcal{S}(\mathbf{y}, T^{(0)}) \frac{\partial T^{(0)}(\mathbf{x}, t)}{\partial t} + \mathcal{H}_{\alpha_1 \alpha_2}(\mathbf{y}, T^{(0)}) \frac{\partial^2 T^{(0)}(\mathbf{x}, t)}{\partial x_{\alpha_1} \partial x_{\alpha_2}}\\
			& + \mathcal{R}_{\alpha_1}(\mathbf{y}, T^{(0)}) \frac{\partial T^{(0)}(\mathbf{x}, t)}{\partial x_{\alpha_1}} - \mathcal{E}_{\alpha_1 \alpha_2}(\mathbf{y}, T^{(0)}) \frac{\partial T^{(0)}(\mathbf{x}, t)}{\partial x_{\alpha_1}} \frac{\partial T^{(0)}(\mathbf{x}, t)}{\partial x_{\alpha_2}} + \mathbb{Q}(\mathbf{y}, T^{(0)}) \Bigr].
		\end{aligned}
	\end{equation}
	\begin{equation}
		\label{eq:2.53}
		\begin{aligned}
			\omega^{(2,\epsilon)}(\mathbf{x}, t) & = \omega^{(0)}(\mathbf{x}, t) + \epsilon \mathcal{J}_{\alpha_1}(\mathbf{y}, \omega^{(0)}) \frac{\partial \omega^{(0)}(\mathbf{x}, t)}{\partial x_{\alpha_1}} + \epsilon^2 \Bigl[ \mathcal{J}_{\alpha_1 \alpha_2}(\mathbf{y}, \omega^{(0)}) \frac{\partial^2 \omega^{(0)}(\mathbf{x}, t)}{\partial x_{\alpha_1} \partial x_{\alpha_2}}\\
			&  + \mathcal{I}_{\alpha_1}(\mathbf{y}, \omega^{(0)}) \frac{\partial \omega^{(0)}(\mathbf{x}, t)}{\partial x_{\alpha_1}} - \mathcal{F}_{\alpha_1\alpha_2}(\mathbf{y}, \omega^{(0)}) \frac{\partial \omega^{(0)}(\mathbf{x}, t)}{\partial x_{\alpha_1}} \frac{\partial \omega^{(0)}(\mathbf{x}, t)}{\partial x_{\alpha_2}} - \mathbb{S}(\mathbf{y}, T^{(0)}) \Bigr].
		\end{aligned}
	\end{equation}
	\begin{equation}
		\label{eq:2.54}
		\begin{aligned}
			u_i^{(2,\epsilon)}(\mathbf{x}, t) &= u_i^{(0)}(\mathbf{x}, t) + \epsilon \Bigl[ \mathcal{X}_{ih}^{\alpha_1}(\mathbf{y}, T^{(0)}) \frac{\partial u_h^{(0)}(\mathbf{x}, t)}{\partial x_{\alpha_1}} - \mathcal{M}_i(\mathbf{y}, T^{(0)}) ( T^{(0)}(\mathbf{x}, t) - \tilde{T} ) \\
			& - \mathcal{N}_i(\mathbf{y}, T^{(0)}) ( \omega^{(0)}(\mathbf{x}, t) - \tilde{\omega} ) \Bigr] + \epsilon^2 \Bigl[ \mathcal{X}_{ih}^{\alpha_1 \alpha_2}(\mathbf{y}, T^{(0)}) \frac{\partial^2 u_h^{(0)}(\mathbf{x}, t)}{\partial x_{\alpha_1} \partial x_{\alpha_2}}\\
			& + \mathcal{Q}_{ih}^{\alpha_1}(\mathbf{y}, T^{(0)}) \frac{\partial u_h^{(0)}(\mathbf{x}, t)}{\partial x_{\alpha_1}} - \mathcal{P}_{ih}^{\alpha_1 \alpha_2}(\mathbf{y}, T^{(0)}) \frac{\partial T^{(0)}(\mathbf{x}, t)}{\partial x_{\alpha_1}} \frac{\partial u_h^{(0)}(\mathbf{x}, t)}{\partial x_{\alpha_2}} \\
			& - \mathcal{W}_i(\mathbf{y}, T^{(0)}) ( T^{(0)}(\mathbf{x}, t) - \tilde{T} ) - \mathcal{Z}_i^{\alpha_1}(\mathbf{y}, T^{(0)}) \frac{\partial T^{(0)}(\mathbf{x}, t)}{\partial x_{\alpha_1}} \\
			& + \mathcal{A}_i^{\alpha_1}(\mathbf{y}, T^{(0)}) \frac{\partial T^{(0)}(\mathbf{x}, t)}{\partial x_{\alpha_1}} ( T^{(0)}(\mathbf{x}, t) - \tilde{T} ) - \mathcal{V}_i(\mathbf{y}, T^{(0)}) ( \omega^{(0)}(\mathbf{x}, t) - \tilde{\omega} )\\
			& - \mathcal{G}_i^{\alpha_1}(\mathbf{y}, T^{(0)}, \omega^{(0)}) \frac{\partial \omega^{(0)}(\mathbf{x}, t)}{\partial x_{\alpha_1}} + \mathcal{B}_i^{\alpha_1}(\mathbf{y}, T^{(0)}) \frac{\partial T^{(0)}(\mathbf{x}, t)}{\partial x_{\alpha_1}} ( \omega^{(0)}(\mathbf{x}, t) - \tilde{\omega} ) \Bigr].
		\end{aligned}
	\end{equation}

\section{The error analyses of multi-scale asymptotic solutions}
\label{sec:3}
	This section conducts rigorous point-wise and integral error analyses pertaining to multi-scale asymptotic solutions. Before giving the detailed error analyses, we define the residual functions $T_\Delta^{(1,\epsilon)}$, $\omega_\Delta^{(1,\epsilon)}$ and $u_{\Delta i}^{(1,\epsilon)}$ for LOMS solutions as below.
	
	\begin{equation}
		\label{eq:3.1}
		T_\Delta^{(1, \epsilon)}(\mathbf{x}, t) = T^\epsilon - T^{(1,\epsilon)}, \quad \omega_\Delta^{(1, \epsilon)}(\mathbf{x}, t) = \omega^\epsilon - \omega^{(1, \epsilon)}, \quad u_{\Delta i}^{(1, \epsilon)}(\mathbf{x}, t) = u_i^\epsilon - u_i^{(1, \epsilon)}.
	\end{equation}
	
	Additionally, the residual functions $T_\Delta^{(2,\epsilon)}$, $\omega_\Delta^{(2,\epsilon)}$ and $u_{\Delta i}^{(2,\epsilon)}$ are denoted for HOMS solutions as below.
	\begin{equation}
		\label{eq:3.2}
		T_\Delta^{(2, \epsilon)}(\mathbf{x}, t) = T^\epsilon - T^{(2, \epsilon)}, \quad \omega_\Delta^{(2, \epsilon)}(\mathbf{x}, t) = \omega^\epsilon - \omega^{(2, \epsilon)}, \quad u_{\Delta i}^{(2, \epsilon)}(\mathbf{x}, t) = u_i^\epsilon - u_i^{(2, \epsilon)}.
	\end{equation}
	
	\subsection{The error analysis in the point-wise sense}
	\label{sec:31}
	Firstly, substituting the residual functions \eqref{eq:3.1} into the multi-scale equations \eqref{eq:2.1} yields the residual equations for LOMS solutions presented below.
	\begin{equation}
		\label{eq:3.3}
		\begin{cases}
			\begin{aligned}
				& \rho^\epsilon c^\epsilon \frac{\partial T_\Delta^{(1,\epsilon)}}{\partial t} - \frac{\partial }{\partial x_i}\Bigl( k_{ij}^\epsilon \frac{\partial T_\Delta^{(1,\epsilon)}}{\partial x_j} \Bigr) = \Phi_0(\mathbf{x}, \mathbf{y}, t) + \epsilon \Phi_1(\mathbf{x}, \mathbf{y}, t), \text{in } \Omega \times (0, T^*), \\
				& \frac{\partial \omega_\Delta^{(1,\epsilon)}}{\partial t} - \frac{\partial }{\partial x_i}\Bigl( g_{ij}^\epsilon \frac{\partial \omega_\Delta^{(1,\epsilon)}}{\partial x_j} \Bigr)  = \Psi_0(\mathbf{x}, \mathbf{y}, t) + \epsilon \Psi_1(\mathbf{x}, \mathbf{y}, t), \text{in } \Omega \times (0, T^*), \\
				& - \frac{\partial }{\partial x_j}\Bigl( C_{ijkl}^\epsilon \frac{\partial u_{\Delta k}^{(1,\epsilon)}}{\partial x_l} - \alpha_{ij}^\epsilon\! T_\Delta^{(1,\epsilon)} - \beta_{ij}^\epsilon \omega_\Delta^{(1,\epsilon)} \Bigr) = \Theta_{0i}(\mathbf{x}, \mathbf{y}, t) + \epsilon \Theta_{1i}(\mathbf{x}, \mathbf{y}, t), \text{in } \Omega \times (0, T^*).
			\end{aligned}
		\end{cases}
	\end{equation}
	Secondly, by substituting the residual functions \eqref{eq:3.2} into multi-scale equations \eqref{eq:2.1}, we derive the residual equations for HOMS solutions as below.
	\begin{equation}
		\label{eq:3.4}
		\begin{cases}
			\begin{aligned}
				& \rho^\epsilon c^\epsilon \frac{\partial T_\Delta^{(2,\epsilon)}}{\partial t} - \frac{\partial }{\partial x_i}\Bigl( k_{ij}^\epsilon \frac{\partial T_\Delta^{(2,\epsilon)}}{\partial x_j} \Bigr) = \epsilon \varphi(\mathbf{x}, \mathbf{y}, t), \text{in } \Omega \times (0, T^* ), \\
				& \frac{\partial \omega_\Delta^{(2,\epsilon)}}{\partial t} - \frac{\partial }{\partial x_i}\Bigl( g_{ij}^\epsilon \frac{\partial \omega_\Delta^{(2,\epsilon)}}{\partial x_j} \Bigr) = \epsilon \psi(\mathbf{x}, \mathbf{y}, t), \text{in } \Omega \times (0, T^* ), \\
				& -\frac{\partial }{\partial x_j}\Bigl( C_{ijkl}^\epsilon \frac{\partial u_{\Delta k}^{(2,\epsilon)}}{\partial x_l} - \alpha_{ij}^\epsilon T_\Delta^{(2,\epsilon)} - \beta_{ij}^\epsilon \omega_\Delta^{(2,\epsilon)} \Bigr) = \epsilon \theta_i(\mathbf{x}, \mathbf{y}, t), \text{in } \Omega \times (0, T^*).
			\end{aligned}
		\end{cases}
	\end{equation}
	
	In residual equations \eqref{eq:3.3} and \eqref{eq:3.4}, the specific expressions of functions $\Phi_0$, $\Phi_1$, $\Psi_0$, $\Psi_1$, $\Theta_{0i}$, $\Theta_{1i}$, $\varphi$, $\psi$ and $\theta_i$ are exhibited in Appendix~\ref{app:A} of the present study because of their lengthy forms.
	
	The point-wise error analysis leads to the following key findings. For the LOMS solutions, the residuals are of order $O(1)$ as seen from \eqref{eq:3.3}, because the terms $\Phi_0$, $\Psi_0$ and $\Theta_{0i}$ do not tend to zero with $\epsilon$. Hence, the LOMS solutions fail to maintain the local physical balance of the original multi-scale equations. By contrast, the HOMS solutions, owing to the introduction of higher-order correction terms, yield residuals of order $O(\epsilon)$ from \eqref{eq:3.4}. They therefore rigorously maintain local physical balance while achieving point-wise convergence of order $O(\epsilon)$. Thus, even for small but finite $\epsilon$, the HOMS solutions provide sufficient accuracy for engineering practice and correctly capture microscopic oscillations in heterogeneous structures. This advantage motivates the development of the HOMS method presented in this work.
	
	\subsection{The error analysis in the integral sense}
	\label{sec:32}
	In order to obtain the optimal error estimations in the integral sense, we postulate three fundamental assumptions regarding the multi-scale problem \eqref{eq:2.1} as follows:
	\begin{enumerate}
		\item[(i)] Assume that $\Omega \subset \mathbb{R}^n$ is a bounded domain and the union of entire periodic cells, i.e. $\bar{\Omega}=\cup_{\mathbf{z}\in I_\epsilon}\epsilon(\mathbf{z}+\bar{Y})$ , where the index set  $I_{\epsilon}=\{\mathbf{z}=(z_{1},\ldots,z_{n})\in Z^{n},\epsilon(\mathbf{z}+\bar{Y})\subset\bar{\Omega}\}$. Besides, let $E_z=\epsilon(\mathbf{z}+\bar{Y})$ be translational unit cell and $\partial E_z$ be the boundary of $E_z$.
		\item[(ii)] $\rho(\mathbf{y}, T^\epsilon)$, $c(\mathbf{y}, T^\epsilon)$, $k_{ij}(\mathbf{y}, T^\epsilon)$, $g_{ij}(\mathbf{y}, \omega^\epsilon)$, $C_{ijkl}(\mathbf{y}, T^\epsilon)$, $\alpha_{ij}(\mathbf{y}, T^\epsilon)$, $\beta_{ij}(\mathbf{y}, T^\epsilon)$, $Q_{hyd}(\mathbf{y}, T^\epsilon)$ and  $S_{hyd}(\mathbf{y}, T^\epsilon)$ are piecewise constant functions on $Y$, taking constant values on the matrix region $Y_a$ and the inclusion region $Y_b$ , respectively.
		\item[(iii)] Let $\Delta_1,\cdots,\Delta_n(n=2,3)$ denote the middle hyperplanes of the PUC $Y$. We assume reference cell $Y$ are symmetric with respect to $\Delta_1,\cdots,\Delta_n$.
	\end{enumerate}
	
	Based on the above simplifications and hypotheses, we further derive the following initial-boundary conditions for the residual equations \eqref{eq:3.4} of the HOMS solutions when applied to nonlinear dynamic hygro-thermo-mechanical coupling problems \eqref{eq:2.1} with pure Dirichlet boundary conditions, which will be employed for global error estimation.
	\begin{equation}
		\label{eq:3.5}
		\begin{cases}
			\begin{aligned}
				& T_\Delta^{(2,\epsilon)}(\mathbf{x}, t) =- \epsilon \mathcal{H}_{\alpha_1} \frac{\partial T^{(0)}}{\partial x_{\alpha_1}} - \epsilon^2 \Bigl( \mathcal{S} \frac{\partial T^{(0)}}{\partial t} + \mathcal{H}_{\alpha_1 \alpha_2} \frac{\partial^2 T^{(0)}}{\partial x_{\alpha_1} \partial x_{\alpha_2}} + \mathcal{R}_{\alpha_1} \frac{\partial T^{(0)}}{\partial x_{\alpha_1}}\\
				& - \mathcal{E}_{\alpha_1 \alpha_2} \frac{\partial T^{(0)}}{\partial x_{\alpha_1}} \frac{\partial T^{(0)}}{\partial x_{\alpha_2}} + \mathbb{Q} \Bigr) = 0,\quad \text{on } \partial\Omega_T \times (0, T^*), \\
				& \omega_\Delta^{(2,\epsilon)}(\mathbf{x}, t) = - \epsilon \mathcal{J}_{\alpha_1} \frac{\partial \omega^{(0)}}{\partial x_{\alpha_1}} - \epsilon^2 \Bigl( \mathcal{J}_{\alpha_1 \alpha_2} \frac{\partial^2 \omega^{(0)}}{\partial x_{\alpha_1} \partial x_{\alpha_2}} + \mathcal{I}_{\alpha_1} \frac{\partial \omega^{(0)}}{\partial x_{\alpha_1}} \\
				& - \mathcal{F}_{\alpha_1\alpha_2} \frac{\partial \omega^{(0)}}{\partial x_{\alpha_1}} \frac{\partial \omega^{(0)}}{\partial x_{\alpha_2}} - \mathbb{S} \Bigr) = 0, \quad \text{on } \partial\Omega_\omega \times (0, T^*), \\
				& u_{\Delta i}^{(2,\epsilon)}(\mathbf{x}, t) = - \epsilon \Bigl( \mathcal{X}_{ih}^{\alpha_1} \frac{\partial u_h^{(0)}}{\partial x_{\alpha_1}} - \mathcal{M}_i ( T^{(0)} - \tilde{T} ) - \mathcal{N}_i ( \omega^{(0)} - \tilde{\omega} ) \Bigr) \\
				& - \epsilon^2 \Bigl( \mathcal{X}_{ih}^{\alpha_1 \alpha_2} \frac{\partial^2 u_h^{(0)}}{\partial x_{\alpha_1} \partial x_{\alpha_2}} + \mathcal{Q}_{ih}^{\alpha_1} \frac{\partial u_h^{(0)}}{\partial x_{\alpha_1}} - \mathcal{P}_{ih}^{\alpha_1 \alpha_2} \frac{\partial T^{(0)}}{\partial x_{\alpha_1}} \frac{\partial u_h^{(0)}}{\partial x_{\alpha_2}} \\
				& - \mathcal{W}_i ( T^{(0)} - \tilde{T} ) - \mathcal{Z}_i^{\alpha_1} \frac{\partial T^{(0)}}{\partial x_{\alpha_1}} + \mathcal{A}_i^{\alpha_1} \frac{\partial T^{(0)}}{\partial x_{\alpha_1}} ( T^{(0)} - \tilde{T} ) - \mathcal{V}_i ( \omega^{(0)} - \tilde{\omega} )\\
				& - \mathcal{G}_i^{\alpha_1} \frac{\partial \omega^{(0)}}{\partial x_{\alpha_1}} + \mathcal{B}_i^{\alpha_1} \frac{\partial T^{(0)}}{\partial x_{\alpha_1}} ( \omega^{(0)} - \tilde{\omega} ) \Bigr) = 0, \quad \text{on } \partial\Omega_u \times (0, T^*), \\
				& T_\Delta^{(2,\epsilon)}(\mathbf{x}, 0) =: \epsilon \hat{\varphi}(\mathbf{x}), \omega_\Delta^{(2,\epsilon)}(\mathbf{x}, 0) =: \epsilon \hat{\psi}(\mathbf{x}), u_{\Delta i}^{(2,\epsilon)}(\mathbf{x}, 0) =: \epsilon \hat{\theta}_i(\mathbf{x}), \quad \text{in } \Omega.
			\end{aligned}
		\end{cases}
	\end{equation}
	\begin{lemma}
		\label{lem:3.1}
		Defining three differential operators $\displaystyle\sigma_{TY}(\chi)=n_i k_{ij}(\mathbf{y}, T^\epsilon)\frac{\partial \chi}{\partial y_j}$,  $\displaystyle\sigma_{\omega Y}(\chi)=n_i g_{ij}(\mathbf{y}, \omega^\epsilon)\frac{\partial \chi}{\partial y_j}$ and $\displaystyle\sigma_{iY}(\bm{\phi})=n_j C_{ijkl}(\mathbf{y}, T^\epsilon)\frac{\partial \phi_{k}}{\partial x_{l}}$, and given assumptions (A)-(B) and (ii)-(iii), the normal derivatives $\sigma_{TY}(\mathcal{H}_{\alpha_1})$, $\sigma_{TY}(\mathcal{S})$, $\sigma_{TY}(\mathcal{H}_{\alpha_1 \alpha_2})$, $\sigma_{TY}(\mathcal{R}_{\alpha_1})$, $\sigma_{TY}(\mathcal{E}_{\alpha_1 \alpha_2})$,
		$\sigma_{TY}(\mathbb{Q})$, $\sigma_{\omega Y}(\mathcal{J}_{\alpha_1})$, $\sigma_{\omega Y}(\mathcal{J}_{\alpha_1 \alpha_2})$, $\sigma_{\omega Y}(\mathcal{I}_{\alpha_1})$, $\sigma_{\omega Y}(\mathcal{F}_{\alpha_1\alpha_2})$, $\sigma_{\omega Y}(\mathbb{S})$ and $\sigma_{iY}(\mathbcal{X}_h^{\alpha_1})$, $\sigma_{iY}(\bm{\mathbcal{M}})$, $\sigma_{iY}(\mathbcal{N})$, $\sigma_{iY}(\mathbcal{X}_h^{\alpha_1\alpha_2})$, $\sigma_{iY}(\mathbcal{Q}_h^{\alpha_1})$, $\sigma_{iY}(\mathbcal{P}_h^{\alpha_1\alpha_2})$, $\sigma_{iY}(\mathbcal{W})$, $\sigma_{iY}(\mathbcal{Z}^{\alpha_1})$, $\sigma_{iY}(\mathbcal{A}^{\alpha_1})$, $\sigma_{iY}(\mathbcal{V})$, $\sigma_{iY}(\mathbcal{G}^{\alpha_1})$, $\sigma_{iY}(\mathbcal{B}^{\alpha_1})$
		 can be proved to be continuous on the boundary of the PUC $Y$ by using the same method as in Refs. \citep{R22,R43,R44}. This method comprises three main parts. First, the symmetry or anti-symmetry of the components of the microscopic cell functions about the middle hyperplanes is verified. Second, the absolute and uniform convergence of the Fourier series of the transformed functions on $\bar Y$ is established for any fixed $T^{(\epsilon)} \in [T_{\min}, T_{\max} + C_*]$ and $\omega^{(\epsilon)} \in [\omega_{\min}, \omega_{\max} + C^*]$. Third, several essential equalities are proved to ensure the completeness of the function family, and the continuity of the cell functions on the boundary of PUC $Y$ is obtained.
	\end{lemma}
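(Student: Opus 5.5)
We follow the three-part strategy of Refs. \citep{R22,R43,R44}. Throughout, $T^{(0)}$ (or $\omega^{(0)}$) is held fixed as a parameter in the admissible interval, so each cell problem \eqref{eq:2.22}--\eqref{eq:2.26}, \eqref{eq:2.31}--\eqref{eq:2.48} is a linear elliptic problem on $Y$. By assumption (ii) its coefficients are piecewise constant on $Y_a\cup Y_b$.

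\textbf{Parity of the cell functions.} For each middle hyperplane $\Delta_m$, let $R_m$ be the reflection $y_m\mapsto 1-y_m$. By (iii), $k^{(0)}_{ij}\circ R_m$ equals $\pm k^{(0)}_{ij}$, with the sign $(-1)^{\delta_{im}+\delta_{jm}}$, and the same holds for $g^{(0)}$, $C^{(0)}$, $\alpha^{(0)}$, $\beta^{(0)}$. The scalars $\rho^{(0)},c^{(0)},Q^{(0)}_{hyd},S^{(0)}_{hyd}$ are even under every $R_m$.

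We proceed hierarchically. Uniqueness from the Lax--Milgram remark shows that $\mathcal{H}_{\alpha_1}\circ R_m=(-1)^{\delta_{\alpha_1 m}}\mathcal{H}_{\alpha_1}$. Analogous parities follow for $\mathcal{J}_{\alpha_1}$, for the components $\mathcal{X}^{\alpha_1}_{kh}$ (sign $(-1)^{\delta_{km}+\delta_{hm}+\delta_{\alpha_1 m}}$), and for $\mathcal{M}_k,\mathcal{N}_k$ (sign $(-1)^{\delta_{km}}$). These first-order parities are then inserted into the right-hand sides of \eqref{eq:2.31}--\eqref{eq:2.48}, which yields the parities of the second-order functions. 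For example, $\mathcal{S},\mathbb{Q},\mathbb{S}$ are even, and $\mathcal{H}_{\alpha_1\alpha_2}$ carries the sign $(-1)^{\delta_{\alpha_1 m}+\delta_{\alpha_2 m}}$.

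Two points need care here. First, the products $\mathcal{H}_{\alpha_1}\mathbf{D}^{(0,1)}k^{(0)}_{i\alpha_2}$ appearing in \eqref{eq:2.34}, \eqref{eq:2.42}, \eqref{eq:2.45} and \eqref{eq:2.48} have parities obtained by multiplying the factor parities, since $\mathbf{D}^{(0,1)}$ acts only on the $T$-variable and preserves the $y$-parity. Second, the $x$-derivative terms in \eqref{eq:2.33}, \eqref{eq:2.41}, \eqref{eq:2.43} and \eqref{eq:2.46} reduce via the chain rule to $T$-derivatives of first-order cell functions. Differentiating the first-order cell problems in $T^{(0)}$ shows that these derivatives inherit the same parity.

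\textbf{Fourier representation.} For each component we extend the function to $[-1,1]^n$ by even or odd reflection according to its parity, and then expand it in the corresponding cosine/sine series. Piecewise constant coefficients with Lipschitz interfaces give $H^1$ regularity, plus piecewise higher regularity away from the interface. The parity structure of the normal flux, obtained by differentiating the parity relation, makes the extended normal flux $\sigma_{TY}(\cdot)$, $\sigma_{\omega Y}(\cdot)$ or $\sigma_{iY}(\cdot)$ a function whose series converges absolutely and uniformly on $\bar Y$. The bound on the Fourier coefficients must hold uniformly for $T^{(0)}\in[T_{\min},T_{\max}+C_*]$ and $\omega^{(0)}\in[\omega_{\min},\omega_{\max}+C^*]$. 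Assumptions (A)--(B) give this, because the ellipticity constants are independent of the parameter.

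\textbf{Boundary continuity.} Finally we verify the required identities: the traces of the normal flux on opposite faces $y_m=0$ and $y_m=1$ coincide, with the normal direction taken into account, and the family of sine/cosine functions used is complete in $L^2$ of the reflected cell. Together these give continuity of all listed normal derivatives across $\partial Y$, including across neighbouring cells.

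The main obstacle is the first step for the second-order functions driven by nonlinear terms, in particular $\mathcal{E}_{\alpha_1\alpha_2}$, $\mathcal{P}^{\alpha_1\alpha_2}_{km}$, $\mathcal{A}^{\alpha_1}_k$, $\mathcal{B}^{\alpha_1}_k$ and the $x$-derivative sources. The parity bookkeeping there must be carried through index by index. I would first check it on $\mathcal{E}_{\alpha_1\alpha_2}$, where the source has parity $(-1)^{\delta_{\alpha_1 m}+\delta_{\alpha_2 m}}$ in its divergence form, and then replicate the argument.
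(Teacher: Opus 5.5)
Your three steps follow the method the statement itself outlines, but the final matching step fails, and the failure is substantive rather than technical.

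\textbf{Where the matching step breaks.} Let $\chi$ be a cell function, or a component of one, with $\chi\circ R_m=s\,\chi$ for $s=\pm1$, and $\chi=0$ on $\partial Y$.
\begin{itemize}
\item On the faces $y_m=0$ and $y_m=1$ every tangential derivative of $\chi$ vanishes. So the flux in the fixed direction $\mathbf{e}_m$ is $F=k_{mm}\,\partial\chi/\partial y_m$, and $k_{mm}$ takes equal values on the two faces.
\item Differentiating the parity relation in $y_m$ gives $F(\mathbf{y}',1)=-s\,F(\mathbf{y}',0)$.
\item The cancellation in \eqref{eq:3.12}, across the face shared by $E_{\mathbf z}$ and $E_{\mathbf z+\mathbf e_m}$, requires $F(\mathbf{y}',1)=F(\mathbf{y}',0)$.
\end{itemize}
For odd components ($s=-1$) this holds automatically. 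For even components ($s=+1$) it forces $F\equiv0$ on both faces. The even cases include $\mathcal{H}_{\alpha_1}$ on the faces with $m\neq\alpha_1$, the analogous elasticity components, and $\mathcal{S}$, $\mathbb{Q}$, $\mathbb{S}$ on every face. For these components your assertion that the traces on opposite faces coincide is unjustified. Neither the Fourier step nor the completeness step supplies the missing information, because both only give regularity and a series representation of the same function.

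\textbf{Why no argument can close it here.} In the homogeneous Dirichlet setting, $F\equiv0$ is false whenever the phases genuinely differ. Take $n=2$, isotropic phases, and a connected matrix $Y_a$ adjacent to $\partial Y$.
\begin{itemize}
\item In $Y_a$ the function $\mathcal{H}_1$ satisfies $k_a\Delta\mathcal{H}_1=0$, because $\partial k_{i1}/\partial y_i$ is supported on the interface.
\item If $\mathcal{H}_1$ and $\partial\mathcal{H}_1/\partial y_2$ both vanished on $\{y_2=0\}$, unique continuation would give $\mathcal{H}_1\equiv0$ in $Y_a$.
\item The transmission conditions would then give $\mathcal{H}_1\equiv0$ in $Y_b$ and $(k_b-k_a)n_1=0$ on $\partial Y_b$. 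This forces $k_a=k_b$.
\end{itemize}
For $\mathcal{S}$, apply the same reasoning near a corner to $\mathcal{S}-\frac{\rho_ac_a-\hat S}{2k_a}y_2^2$. It shows $F\not\equiv0$ whenever $\rho_ac_a\neq\hat S$. What is missing is a zero-flux condition on the faces where a component is even. The periodic cell problem supplies exactly that: under (iii) it amounts to zero Dirichlet data where a component is odd and zero flux where it is even, and cross-cell flux continuity is built in. The argument therefore works for periodic cell functions, but not for the homogeneous Dirichlet problems \eqref{eq:2.22}--\eqref{eq:2.48} as posed.

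\textbf{Two smaller points.}
\begin{itemize}
\item Your reflection rule $k_{ij}\circ R_m=(-1)^{\delta_{im}+\delta_{jm}}k_{ij}$ does not follow from (ii)--(iii) alone. Piecewise-constant phases on a symmetric geometry give $k_{ij}\circ R_m=k_{ij}$, so the rule is an extra assumption that the phases are axis-aligned orthotropic.
\item The chain-rule sources in \eqref{eq:2.33}, \eqref{eq:2.41}, \eqref{eq:2.43} and \eqref{eq:2.46} contain terms such as $\mathbf{D}^{(0,1)}\bigl(k^{(0)}_{ij}\,\partial\mathcal{H}_{\alpha_1}/\partial y_j\bigr)\,\partial T^{(0)}/\partial x_i$. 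Their $y$-parity $(-1)^{\delta_{im}+\delta_{\alpha_1 m}}$ varies with the summed index $i$. So a function like $\mathcal{R}_{\alpha_1}$ has no single parity until you split it along $\partial T^{(0)}/\partial x_{\alpha_2}$.
\end{itemize}
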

	
	\begin{theorem}
		\label{thm:1}
		Suppose that $\Omega \subset \mathbb{R}^n$ is the union of entire periodic cells, i.e. $\bar{\Omega}=\cup_{\mathbf{z}\in I_\epsilon}\epsilon(\mathbf{z}+\bar{Y})$ , where the index set  $I_{\epsilon}=\{\mathbf{z}=(z_{1},\ldots,z_{n})\in Z^{n},\epsilon(\mathbf{z}+\bar{Y})\subset\bar{\Omega}\}$. Let $T^{\epsilon}(\mathbf{x}, t)$, $\omega^{\epsilon}(\mathbf{x}, t)$, $\bm{u}^{\epsilon}(\mathbf{x}, t)$ and $T^{(0)}(\mathbf{x}, t)$, $\omega^{(0)}(\mathbf{x}, t)$, $\bm{u}^{(0)}(\mathbf{x}, t)$ be the weak solutions of multi-scale problem \eqref{eq:2.1} and corresponding homogenized problem \eqref{eq:2.27}, respectively. The specific forms of HOMS solutions are defined in \eqref{eq:2.52}-\eqref{eq:2.54}. Under the assumptions (A)-(C), (i)-(iii), and Lemma \ref{lem:3.1}, if $T^{(0)} \in L^\infty(0,T^*; H^4(\Omega))$, $\frac{\partial T^{(0)}}{\partial t} \in L^\infty(0,T^*; H^2(\Omega))$, $\omega^{(0)} \in L^\infty(0,T^*; H^4(\Omega))$, $\frac{\partial \omega^{(0)}}{\partial t} \in L^\infty(0,T^*; H^2(\Omega))$ and  $\bm{u}^{(0)} \in L^\infty(0,T^*; (H^4(\Omega))^n)$, then we derive the following error estimation of HOMS solutions.
		\begin{equation}
			\label{eq:3.6a}
			\bigl\| T_\Delta^{(2, \epsilon)} \bigr\|_{L^\infty(0,T^*; L^2(\Omega))} + \bigl\| T_\Delta^{(2, \epsilon)} \bigr\|_{L^2(0,T^*; H_0^1(\Omega))} \le C(\Omega, T^*)\epsilon,
		\end{equation}
		\begin{equation}
			\label{eq:3.6b}
			\bigl\| \omega_\Delta^{(2, \epsilon)} \bigr\|_{L^\infty(0,T^*; L^2(\Omega))} + \bigl\| \omega_\Delta^{(2, \epsilon)} \bigr\|_{L^2(0,T^*; H_0^1(\Omega))} \le C(\Omega, T^*)\epsilon,
		\end{equation}
		\begin{equation}
			\label{eq:3.6c}
			\bigl\| \bm{u}_\Delta^{(2, \epsilon)} \bigr\|_{L^\infty(0,T^*; (H_0^1(\Omega))^n)} \le C(\Omega, T^*)\epsilon.
		\end{equation}
		where $C(\Omega, T^*)$ is a positive constant independent of $\epsilon$, but dependent of $\Omega$ and $T^*$.
	\end{theorem}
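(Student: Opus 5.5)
The plan is a standard energy argument applied to the residual system \eqref{eq:3.4} with the initial-boundary data \eqref{eq:3.5}. It is carried out in the triangular order dictated by the coupling: first temperature, then moisture, then displacement.

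\textbf{Step 1: the residual right-hand sides.} I first check that the right-hand sides $\epsilon\varphi$, $\epsilon\psi$, $\epsilon\theta_i$ from Appendix~\ref{app:A} are $O(\epsilon)$ in $L^2(0,T^*;H^{-1}(\Omega))$, with constants independent of $\epsilon$. Each of them is a finite sum of products of three kinds of factor:
\begin{itemize}
\item cell functions or their $\mathbf{y}$-gradients, evaluated at $T^{(0)}$ or $\omega^{(0)}$;
\item derivatives of the coefficients with respect to $T$ or $\omega$ (the $\mathbf{D}^{(0,1)}$, $\mathbf{D}^{(0,2)}$ terms);
\item derivatives of $T^{(0)},\omega^{(0)},\bm{u}^{(0)}$ up to third order in $\mathbf{x}$, or mixed second order in $(\mathbf{x},t)$.
\end{itemize}
By assumption (ii) the coefficients are piecewise constant, so the cell problems \eqref{eq:2.22}--\eqref{eq:2.48} have solutions whose $W^{1,\infty}(Y)$ norms are bounded uniformly for $T^{(0)},\omega^{(0)}$ in the bounded range allowed by Lemma~\ref{lem:3.1}. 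Continuity in $T^{(0)}$, $\omega^{(0)}$ (the remark preceding \eqref{eq:2.49}) then bounds the factors arising from $\mathbf{x}$-derivatives of cell functions via the chain rule.

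The regularity hypotheses $T^{(0)},\omega^{(0)},\bm{u}^{(0)}\in L^\infty(0,T^*;H^4)$ and $\partial_tT^{(0)},\partial_t\omega^{(0)}\in L^\infty(0,T^*;H^2)$ control the macroscopic factors. The embedding $H^4\hookrightarrow W^{2,\infty}$ ($n\le3$) handles the nonlinear products such as $\partial T^{(0)}\partial T^{(0)}$ and $\partial T^{(0)}\partial u^{(0)}$.

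The main obstacle is the divergence-form terms at order $\epsilon^{0}$ hidden in $\varphi,\psi,\theta_i$, i.e. $\partial_{y_i}$ of fluxes that are discontinuous across cell boundaries. For these I invoke Lemma~\ref{lem:3.1}: the normal fluxes $\sigma_{TY},\sigma_{\omega Y},\sigma_{iY}$ of all cell functions are continuous across $\partial E_z$. Hence, after summing over the cells $E_z$ given by (i), the interface contributions cancel when tested against $H_0^1$ functions. This leaves terms bounded by $C\epsilon\|\nabla v\|_{L^2}$, and the estimate $\|\varphi\|_{L^2(0,T^*;H^{-1})}\le C$ (likewise for $\psi,\theta_i$) follows.

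\textbf{Step 2: the temperature.} Next I bound the boundary and initial data. On $\partial\Omega_T$ the Dirichlet trace of $T_\Delta^{(2,\epsilon)}$ vanishes by \eqref{eq:3.5}, because the cell functions vanish on $\partial Y$ and $\partial\Omega$ is a union of cell faces. The initial residual is $\epsilon\hat\varphi$ with $\|\hat\varphi\|_{L^2}\le C$. I test the first equation of \eqref{eq:3.4} with $T_\Delta^{(2,\epsilon)}$ and use:
\begin{itemize}
\item $\rho^\epsilon c^\epsilon\ge\rho^0c^0$;
\item the ellipticity in (A), $k^\epsilon_{ij}\xi_i\xi_j\ge\underline\gamma|\bm\xi|^2$;
\item Young's inequality on $\epsilon\langle\varphi,T_\Delta\rangle$.
\end{itemize}
The time derivative of the variable coefficient $\rho^\epsilon c^\epsilon$ generates a lower-order term, which I treat as a perturbation (assuming $\partial_T(\rho c)$ bounded and $\partial_tT^\epsilon$ controlled). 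Gronwall's inequality then gives \eqref{eq:3.6a}. I expect this time-derivative term to need the most care, and I would accept the paper's simplifying assumptions to absorb it.

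\textbf{Step 3: moisture and displacement.} The moisture equation has unit capacity and is decoupled from $T_\Delta$ on the left-hand side, so the same argument yields \eqref{eq:3.6b} directly. For the quasi-static elasticity equation at each fixed $t$, I test with $\bm{u}_\Delta^{(2,\epsilon)}\in (H_0^1)^n$. Korn's inequality together with the coercivity of $C^\epsilon_{ijkl}$ gives
\[
\|\bm{u}_\Delta\|_{H^1}\le C\bigl(\|T_\Delta\|_{L^2}+\|\omega_\Delta\|_{L^2}+\epsilon\|\theta\|_{H^{-1}}\bigr),
\]
with the coupling terms bounded by $\overline\gamma$. Taking the supremum in $t$ and inserting the $L^\infty(0,T^*;L^2)$ bounds from \eqref{eq:3.6a} and \eqref{eq:3.6b} yields \eqref{eq:3.6c}.
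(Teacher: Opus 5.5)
Your proposal is correct and is essentially the paper's argument: energy estimates for \eqref{eq:3.4}--\eqref{eq:3.5} in the order temperature, moisture, displacement, with Lemma~\ref{lem:3.1} removing the cell-boundary contributions, Young's inequality and Gronwall closing the scalar estimates under the same tacit $L^\infty$ bound on $\partial_t(\rho^\epsilon c^\epsilon)$ that enters \eqref{eq:3.22}, and coercivity together with \eqref{eq:3.30}--\eqref{eq:3.31} giving \eqref{eq:3.6c}. The differences are bookkeeping only: the paper uses Lemma~\ref{lem:3.1} to cancel $\sum_{\mathbf{z}\in I_\epsilon}\int_{\partial E_z}\sigma_{TY}(T^{(2,\epsilon)})T_\Delta^{(2,\epsilon)}\,d\Gamma_{\mathbf{y}}$ after a cell-wise Green formula and then bounds $\varphi,\psi,\theta_i$ in $L^2$, whereas you bound the residuals in $H^{-1}$, absorb them into the gradient term, and invoke Korn's inequality explicitly; your choice is the more careful one, since pieces such as $\partial_{y_i}(\mathcal{S}\,\mathbf{D}^{(0,1)}k_{i\alpha_1}^{(0)})$ have single-layer parts on the matrix--inclusion interface and are in general not $L^2$ functions (note also that the lemma is genuinely needed for the leading flux of $T^{(2,\epsilon)}$, built from $\sigma_{TY}(\mathcal{H}_{\alpha_1})$, rather than for the $\mathbf{y}$-divergence terms inside $\varphi$, most of whose fluxes carry a cell-function factor already vanishing on $\partial Y$).
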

	\begin{proof}
		To facilitate the error estimation, $\sigma_{TY}(T^{(2, \epsilon)})$, $\sigma_{\omega Y}(\omega^{(2, \epsilon)})$, and $\sigma_{iY}(\bm{u}^{(2, \epsilon)})$ are first derived from the HOMS solutions \eqref{eq:2.52}-\eqref{eq:2.54}. Their explicit expressions are provided in the Appendix~\ref{app:B}.
		
		Next, we perform the error estimation using the residual equations \eqref{eq:3.4} and associated initial-boundary conditions \eqref{eq:3.5}. Multiplying each equation in \eqref{eq:3.4} by the corresponding error variable $T_\Delta^{(2, \epsilon)}$, $\omega_\Delta^{(2, \epsilon)}$, and $u_{\Delta i}^{(2, \epsilon)}$, respectively, integrating over $\Omega$, applying Green's formula, and substituting the corresponding boundary condition in \eqref{eq:3.5} yields the following equations.
		\begin{equation}
			\label{eq:3.11}
			\begin{cases}
				\begin{aligned}
					& \int_\Omega \rho^\epsilon c^\epsilon \frac{\partial T_\Delta^{(2, \epsilon)}}{\partial t} T_\Delta^{(2, \epsilon)} \, d\Omega + \int_\Omega k_{ij}^\epsilon \frac{\partial T_\Delta^{(2, \epsilon)}}{\partial x_j} \frac{\partial T_\Delta^{(2, \epsilon)}}{\partial x_i} \, d\Omega \\
					& = \int_\Omega \epsilon \varphi(\mathbf{x}, \mathbf{y}, t) T_\Delta^{(2, \epsilon)} \, d\Omega + \sum_{\mathbf{z} \in I_\epsilon} \int_{\partial E_z} \sigma_{TY}(T_\Delta^{(2, \epsilon)}) T_\Delta^{(2, \epsilon)} \, d\Gamma_{\mathbf{y}},\\
					& \int_\Omega \frac{\partial \omega_\Delta^{(2, \epsilon)}}{\partial t} \omega_\Delta^{(2, \epsilon)} \, d\Omega + \int_\Omega g_{ij}^\epsilon \frac{\partial \omega_\Delta^{(2, \epsilon)}}{\partial x_j} \frac{\partial \omega_\Delta^{(2, \epsilon)}}{\partial x_i} \, d\Omega \\
					& = \int_\Omega \epsilon \psi(\mathbf{x}, \mathbf{y}, t) \omega_\Delta^{(2, \epsilon)} \, d\Omega + \sum_{\mathbf{z} \in I_\epsilon} \int_{\partial E_z} \sigma_{\omega Y}(\omega_\Delta^{(2, \epsilon)}) \omega_\Delta^{(2, \epsilon)} \, d\Gamma_{\mathbf{y}},\\
					& \int_\Omega C_{ijkl}^\epsilon \frac{\partial u_{\Delta k}^{(2, \epsilon)}}{\partial x_l} \frac{\partial u_{\Delta i}^{(2, \epsilon)}}{\partial x_j} \, d\Omega - \int_\Omega \alpha_{ij}^\epsilon T_\Delta^{(2, \epsilon)} \frac{\partial u_{\Delta i}^{(2, \epsilon)}}{\partial x_j} \, d\Omega - \int_\Omega \beta_{ij}^\epsilon \omega_\Delta^{(2, \epsilon)} \frac{\partial u_{\Delta i}^{(2, \epsilon)}}{\partial x_j} \, d\Omega \\
					& = \int_\Omega \epsilon \theta_i(\mathbf{x}, \mathbf{y}, t) u_{\Delta i}^{(2, \epsilon)} \, d\Omega + \sum_{\mathbf{z} \in I_\epsilon} \int_{\partial E_z} \sigma_{iY}(\bm{u}_{\Delta}^{(2, \epsilon)}) u_{\Delta i}^{(2, \epsilon)} \, d\Gamma_{\mathbf{y}}.
				\end{aligned}
			\end{cases}
		\end{equation}
		where $\sigma_{TY}(T_\Delta^{(2, \epsilon)})$, $\sigma_{\omega Y}(\omega_\Delta^{(2, \epsilon)})$ and $\sigma_{iY}(\bm{u}_{\Delta}^{(2, \epsilon)})$ are obtained by applying Green's formula over $\partial E_z$.
		
		Recalling the expressions for $\sigma_{TY}(T^{(2, \epsilon)})$, $\sigma_{\omega Y}(\omega^{(2, \epsilon)})$, and $\sigma_{iY}(\bm{u}^{(2, \epsilon)})$ given in the Appendix~\ref{app:B}, together with Lemma \ref{lem:3.1}, we can obtain the following:
		\begin{equation}
			\label{eq:3.12}
			\begin{cases}
				\begin{aligned}
					& \sum_{\mathbf{z} \in I_\epsilon} \int_{\partial E_z} \sigma_{TY}(T_\Delta^{(2, \epsilon)}) T_\Delta^{(2, \epsilon)} \, d\Gamma_{\mathbf{y}} = \sum_{\mathbf{z} \in I_\epsilon} \int_{\partial E_z} \sigma_{TY}(T^\epsilon - T^{(2, \epsilon)}) T_\Delta^{(2, \epsilon)} \, d\Gamma_{\mathbf{y}} \\
					& = -\sum_{\mathbf{z} \in I_\epsilon} \int_{\partial E_z} \sigma_{TY}(T^{(2, \epsilon)}) T_\Delta^{(2, \epsilon)} \, d\Gamma_{\mathbf{y}} = 0,\\
					& \sum_{\mathbf{z} \in I_\epsilon} \int_{\partial E_z} \sigma_{\omega Y}(\omega_\Delta^{(2, \epsilon)}) \omega_\Delta^{(2, \epsilon)} \, d\Gamma_{\mathbf{y}} = \sum_{\mathbf{z} \in I_\epsilon} \int_{\partial E_z} \sigma_{\omega Y}(\omega^\epsilon - \omega^{(2, \epsilon)}) \omega_\Delta^{(2, \epsilon)} \, d\Gamma_{\mathbf{y}} \\
					& = -\sum_{\mathbf{z} \in I_\epsilon} \int_{\partial E_z} \sigma_{\omega Y}(\omega^{(2, \epsilon)}) \omega_\Delta^{(2, \epsilon)} \, d\Gamma_{\mathbf{y}} = 0,\\
					& \sum_{\mathbf{z} \in I_\epsilon} \int_{\partial E_z} \sigma_{iY}(\bm{u}_{\Delta}^{(2, \epsilon)}) u_{\Delta i}^{(2, \epsilon)} \, d\Gamma_{\mathbf{y}} = \sum_{\mathbf{z} \in I_\epsilon} \int_{\partial E_z} \sigma_{iY}(\bm{u}^\epsilon - \bm{u}^{(2, \epsilon)}) u_{\Delta i}^{(2, \epsilon)} \, d\Gamma_{\mathbf{y}} \\
					& = -\sum_{\mathbf{z} \in I_\epsilon} \int_{\partial E_z} \sigma_{iY}(\bm{u}^{(2, \epsilon)}) u_{\Delta i}^{(2, \epsilon)} \, d\Gamma_{\mathbf{y}} = 0.
				\end{aligned}
			\end{cases}
		\end{equation}
		
		Then, substituting \eqref{eq:3.12} into \eqref{eq:3.11} and following similar calculations as in \citep{R47, R48}, we readily obtain the following three equations.
		\begin{equation}
			\label{eq:3.13}
			\begin{aligned}
				& \frac{1}{2} \frac{\partial}{\partial t} \int_\Omega \rho^\epsilon(\mathbf{x}, T^\epsilon) c^\epsilon(\mathbf{x}, T^\epsilon) \bigl( T_\Delta^{(2, \epsilon)} \bigr)^2 \, d\Omega + \int_\Omega k_{ij}^\epsilon(\mathbf{x}, T^\epsilon) \frac{\partial T_\Delta^{(2, \epsilon)}}{\partial x_j} \frac{\partial T_\Delta^{(2, \epsilon)}}{\partial x_i} \, d\Omega\\
				& - \frac{1}{2} \int_\Omega \frac{\partial \rho^\epsilon(\mathbf{x}, T^\epsilon)}{\partial t} c^\epsilon(\mathbf{x}, T^\epsilon) \bigl( T_\Delta^{(2\epsilon)} \bigr)^2 d\Omega - \frac{1}{2} \int_\Omega \rho^\epsilon(\mathbf{x}, T^\epsilon) \frac{\partial c^\epsilon(\mathbf{x}, T^\epsilon)}{\partial t} \bigl( T_\Delta^{(2\epsilon)} \bigr)^2 d\Omega\\
				& = \int_\Omega \epsilon \varphi(\mathbf{x}, \mathbf{y}, t) T_\Delta^{(2, \epsilon)} \, d\Omega.
			\end{aligned}
		\end{equation}
		\begin{equation}
			\label{eq:3.14}
			\frac{1}{2} \frac{\partial}{\partial t} \int_\Omega \bigl( \omega_\Delta^{(2, \epsilon)} \bigr)^2 \, d\Omega + \int_\Omega g_{ij}^\epsilon(\mathbf{x}, \omega^\epsilon) \frac{\partial \omega_\Delta^{(2, \epsilon)}}{\partial x_j} \frac{\partial \omega_\Delta^{(2, \epsilon)}}{\partial x_i} \, d\Omega
			= \int_\Omega \epsilon \psi(\mathbf{x}, \mathbf{y}, t) \omega_\Delta^{(2, \epsilon)} \, d\Omega.
		\end{equation}
		\begin{equation}
			\label{eq:3.15}
			\begin{aligned}
				& \int_\Omega C_{ijkl}^\epsilon(\mathbf{x}, T^\epsilon) \frac{\partial u_{\Delta k}^{(2, \epsilon)}}{\partial x_l} \frac{\partial u_{\Delta i}^{(2, \epsilon)}}{\partial x_j} \, d\Omega - \int_\Omega \alpha_{ij}^\epsilon(\mathbf{x}, T^\epsilon) T_\Delta^{(2, \epsilon)} \frac{\partial u_{\Delta i}^{(2, \epsilon)}}{\partial x_j} \, d\Omega \\
				& - \int_\Omega \beta_{ij}^\epsilon(\mathbf{x}, T^\epsilon) \omega_\Delta^{(2, \epsilon)} \frac{\partial u_{\Delta i}^{(2, \epsilon)}}{\partial x_j} \, d\Omega = \int_\Omega \epsilon \theta_i(\mathbf{x}, \mathbf{y}, t) u_{\Delta i}^{(2, \epsilon)} \, d\Omega.
			\end{aligned}
		\end{equation}
		
		We first derive the convergence of the temperature and moisture fields from \eqref{eq:3.13} and \eqref{eq:3.14}. Then, using these results, we derive the convergence of the displacement field.
		
		To begin, we integrate both sides of \eqref{eq:3.13} and \eqref{eq:3.14} over the time interval $[0, t]$ $(0 < t \le T^*)$, substitute the initial conditions from \eqref{eq:3.5}, and rearrange to obtain the following equations.
		\begin{equation}
			\label{eq:3.18}
			\begin{aligned}
				& \int_\Omega \rho^\epsilon c^\epsilon \bigl( T_\Delta^{(2, \epsilon)}(\mathbf{x}, t) \bigr)^2 \, d\Omega + \int_0^t \int_\Omega 2 k_{ij}^\epsilon \frac{\partial T_\Delta^{(2, \epsilon)}(\mathbf{x}, \tau)}{\partial x_j} \frac{\partial T_\Delta^{(2, \epsilon)}(\mathbf{x}, \tau)}{\partial x_i} \, d\Omega \, d\tau \\
				& = \int_0^t \int_\Omega 2 \epsilon \varphi(\mathbf{x}, \mathbf{y}, \tau) T_\Delta^{(2, \epsilon)}(\mathbf{x}, \tau) \, d\Omega \, d\tau + \int_\Omega \rho^\epsilon c^\epsilon \bigl( \epsilon \hat{\varphi}(\mathbf{x}) \bigr)^2 \, d\Omega \\
				& + \int_0^t \int_\Omega \frac{\partial \rho^\epsilon}{\partial \tau} c^\epsilon \bigl( T_\Delta^{(2\epsilon)}(\mathbf{x}, \tau) \bigr)^2 d\Omega d\tau + \int_0^t \int_\Omega \rho^\epsilon \frac{\partial c^\epsilon}{\partial \tau} \bigl( T_\Delta^{(2\epsilon)}(\mathbf{x}, \tau) \bigr)^2 d\Omega d\tau.
			\end{aligned}
		\end{equation}
		\begin{equation}
			\label{eq:3.19}
			\begin{aligned}
				& \int_\Omega \bigl( \omega_\Delta^{(2, \epsilon)}(\mathbf{x}, t) \bigr)^2 \, d\Omega + \int_0^t \int_\Omega 2 g_{ij}^\epsilon \frac{\partial \omega_\Delta^{(2, \epsilon)}(\mathbf{x}, \tau)}{\partial x_j} \frac{\partial \omega_\Delta^{(2, \epsilon)}(\mathbf{x}, \tau)}{\partial x_i} \, d\Omega \, d\tau \\
				& = \int_0^t \int_\Omega 2 \epsilon \psi(\mathbf{x}, \mathbf{y}, \tau) \omega_\Delta^{(2, \epsilon)}(\mathbf{x}, \tau) \, d\Omega \, d\tau + \int_\Omega \bigl( \epsilon \hat{\psi}(\mathbf{x}) \bigr)^2 \, d\Omega.
			\end{aligned}
		\end{equation}
		
		By virtue of assumptions (A) and (B) and applying the Poincar$\rm{\acute{e}}$-Friedrichs inequality, the following inequalities are readily obtained after transforming the left-hand sides of \eqref{eq:3.18} and \eqref{eq:3.19}.
		\begin{equation}
			\label{eq:3.20}
			\begin{aligned}
				& \int_\Omega \rho^\epsilon c^\epsilon \bigl( T_\Delta^{(2, \epsilon)}(\mathbf{x}, t) \bigr)^2 \, d\Omega + \int_0^t \int_\Omega 2 k_{ij}^\epsilon \frac{\partial T_\Delta^{(2, \epsilon)}(\mathbf{x}, \tau)}{\partial x_j} \frac{\partial T_\Delta^{(2, \epsilon)}(\mathbf{x}, \tau)}{\partial x_i} \, d\Omega \, d\tau \\
				& \ge \rho^0 c^0 \bigl\| T_\Delta^{(2, \epsilon)} \bigr\|_{L^2(\Omega)}^2 + C_1(\Omega) \int_0^t \bigl\| T_\Delta^{(2, \epsilon)} \bigr\|_{H_0^1(\Omega)}^2 \, d\tau \\
				& \ge \lambda_1(\Omega) \Bigl( \bigl\| T_\Delta^{(2, \epsilon)} \bigr\|_{L^2(\Omega)}^2 + \int_0^t \bigl\| T_\Delta^{(2, \epsilon)} \bigr\|_{H_0^1(\Omega)}^2 \, d\tau \Bigr).
			\end{aligned}
		\end{equation}
		\begin{equation}
			\label{eq:3.21}
			\begin{aligned}
				& \int_\Omega \bigl( \omega_\Delta^{(2, \epsilon)}(\mathbf{x}, t) \bigr)^2 \, d\Omega + \int_0^t \int_\Omega 2 g_{ij}^\epsilon \frac{\partial \omega_\Delta^{(2, \epsilon)}(\mathbf{x}, \tau)}{\partial x_j} \frac{\partial \omega_\Delta^{(2, \epsilon)}(\mathbf{x}, \tau)}{\partial x_i} \, d\Omega \, d\tau \\
				& \ge \bigl\| \omega_\Delta^{(2, \epsilon)} \bigr\|_{L^2(\Omega)}^2 + C_2(\Omega) \int_0^t \bigl\| \omega_\Delta^{(2, \epsilon)} \bigr\|_{H_0^1(\Omega)}^2 \, d\tau \\
				& \ge \lambda_2(\Omega) \Bigl( \bigl\| \omega_\Delta^{(2, \epsilon)} \bigr\|_{L^2(\Omega)}^2 + \int_0^t \bigl\| \omega_\Delta^{(2, \epsilon)} \bigr\|_{H_0^1(\Omega)}^2 \, d\tau \Bigr).
			\end{aligned}
		\end{equation}
		where $\lambda_1 = \min \left\{ \rho^0 c^0, C_1 \right\}$, $\lambda_2 = \min \left\{ 1, C_2 \right\}$.
		
		Subsequently, applying Young's inequality $ab \le \frac{1}{2} \left( \lambda a^2 + \frac{1}{\lambda} b^2 \right), \forall \lambda \in \mathbb{R}^+$, and transforming the right-hand sides of \eqref{eq:3.18} and \eqref{eq:3.19} yields the following inequalities:
		\begin{equation}
			\label{eq:3.22}
			\begin{aligned}
				& \int_0^t \int_\Omega 2 \epsilon \varphi(\mathbf{x}, \mathbf{y}, \tau) T_\Delta^{(2, \epsilon)}(\mathbf{x}, \tau) \, d\Omega \, d\tau + \int_\Omega \rho^\epsilon c^\epsilon \bigl( \epsilon \hat{\varphi}(\mathbf{x}) \bigr)^2 \, d\Omega \\
				& + \int_0^t \int_\Omega \frac{\partial \rho^\epsilon}{\partial \tau} c^\epsilon \bigl( T_\Delta^{(2\epsilon)}(\mathbf{x}, \tau) \bigr)^2 d\Omega d\tau + \int_0^t \int_\Omega \rho^\epsilon \frac{\partial c^\epsilon}{\partial \tau} \bigl( T_\Delta^{(2\epsilon)}(\mathbf{x}, \tau) \bigr)^2 d\Omega d\tau \\
				& \le 2 \int_0^t \int_\Omega \Bigl( \frac{(\epsilon \varphi)^2}{2} + \frac{\bigl( T_\Delta^{(2, \epsilon)} \bigr)^2}{2} \Bigr) \, d\Omega \, d\tau + C_3(\Omega) \epsilon^2 + C_4(T^*) \int_0^t \bigl\| T_\Delta^{(2\epsilon)} \bigr\|_{L^2(\Omega)}^2 d\tau \\
				& \le C_5(\Omega,T^*) \Bigl( \frac{1}{2} \epsilon^2 + \frac{1}{2} \int_0^t \bigl\| T_\Delta^{(2, \epsilon)} \bigr\|_{L^2(\Omega)}^2 \, d\tau \Bigr) + C_3(\Omega) \epsilon^2 + C_4(T^*) \int_0^t \bigl\| T_\Delta^{(2\epsilon)} \bigr\|_{L^2(\Omega)}^2 d\tau  \\
				& \le C_T(\Omega,T^*) \epsilon^2 \!+\! C_T(\Omega,T^*) \Bigl( \frac{1}{2} \int_0^t \bigl\| T_\Delta^{(2, \epsilon)}(\mathbf{x}, \tau) \bigr\|_{L^2(\Omega)}^2 d\tau \!+\! \frac{1}{2} \int_0^t \int_0^\tau \bigl\| T_\Delta^{(2, \epsilon)}(\mathbf{x}, s) \bigr\|_{H_0^1(\Omega)}^2 ds d\tau \Bigr).
			\end{aligned}
		\end{equation}
		\begin{equation}
			\label{eq:3.23}
			\begin{aligned}
				& \int_0^t \int_\Omega 2 \epsilon \psi(\mathbf{x}, \mathbf{y}, \tau) \omega_\Delta^{(2, \epsilon)}(\mathbf{x}, \tau) \, d\Omega \, d\tau + \int_\Omega \bigl( \epsilon \hat{\psi}(\mathbf{x}) \bigr)^2 \, d\Omega \\
				& \le 2 \int_0^t \int_\Omega \Bigl( \frac{(\epsilon \psi)^2}{2} + \frac{\bigl( \omega_\Delta^{(2, \epsilon)} \bigr)^2}{2} \Bigr) \, d\Omega \, d\tau + C_6(\Omega) \epsilon^2 \\
				& \le C_7(\Omega,T^*) \Bigl( \frac{1}{2} \epsilon^2 + \frac{1}{2} \int_0^t \bigl\| \omega_\Delta^{(2, \epsilon)} \bigr\|_{L^2(\Omega)}^2 \, d\tau \Bigr) + C_6(\Omega) \epsilon^2 \\
				& \le C_{\omega}(\Omega,T^*) \epsilon^2 \!+\! C_{\omega}(\Omega,T^*) \!\Bigl(\! \frac{1}{2} \int_0^t \bigl\| \omega_\Delta^{(2, \epsilon)}(\mathbf{x}, \tau) \bigr\|_{L^2(\Omega)}^2 d\tau \!+\! \frac{1}{2} \int_0^t \int_0^\tau \bigl\| \omega_\Delta^{(2, \epsilon)}(\mathbf{x}, s) \bigr\|_{H_0^1(\Omega)}^2 ds d\tau \!\Bigr)\!.
			\end{aligned}
		\end{equation}
		where $C_T=\max(C_3,C_4,C_5)$, $C_{\omega}=\max(C_6,C_7)$.
		
		Combining \eqref{eq:3.20} with \eqref{eq:3.22} and setting $C = \frac{C_T}{\lambda_1}$ without loss of generality, we define $\Theta_1(t)$ as $\Theta_1(t) = \bigl\| T_\Delta^{(2, \epsilon)}(\mathbf{x}, t) \bigr\|_{L^2(\Omega)}^2 + \int_0^t \bigl\| T_\Delta^{(2, \epsilon)}(\mathbf{x}, \tau) \bigr\|_{H_0^1(\Omega)}^2 \, d\tau$, and we obtain inequality \eqref{eq:3.26}. Likewise, combining \eqref{eq:3.21} with \eqref{eq:3.23} and setting $C = \frac{C_{\omega}}{\lambda_2}$, we define $\Theta_2(t)$ as $\Theta_2(t) = \bigl\| \omega_\Delta^{(2, \epsilon)}(\mathbf{x}, t) \bigr\|_{L^2(\Omega)}^2 + \int_0^t \bigl\| \omega_\Delta^{(2, \epsilon)}(\mathbf{x}, \tau) \bigr\|_{H_0^1(\Omega)}^2 \, d\tau$, which yields inequality \eqref{eq:3.27}.
		\begin{equation}
			\label{eq:3.26}
			\Theta_1(t) \le C(\Omega,T^*) \Bigl( \epsilon^2 + \int_0^t \Theta_1(\tau) \, d\tau \Bigr).
		\end{equation}
		\begin{equation}
			\label{eq:3.27}
			\Theta_2(t) \le C(\Omega,T^*) \Bigl( \epsilon^2 + \int_0^t \Theta_2(\tau) \, d\tau \Bigl).
		\end{equation}
		
		By the Gronwall inequality, we have $\Theta_1(t) \le C(\Omega,T^*) \epsilon^2 e^{C(\Omega,T^*) T^*} \le C(\Omega, T^*) \epsilon^2$ and $\Theta_2(t) \le C(\Omega,T^*) \epsilon^2 e^{C(\Omega,T^*) T^*} \le C(\Omega, T^*) \epsilon^2$, which yield the following inequalities.
		\begin{equation}
			\label{eq:3.28}
			\bigl\| T_\Delta^{(2, \epsilon)}(\mathbf{x}, t) \bigr\|_{L^2(\Omega)}^2 + \int_0^t \bigl\| T_\Delta^{(2, \epsilon)}(\mathbf{x}, \tau) \bigr\|_{H_0^1(\Omega)}^2 \, d\tau \le C(\Omega, T^*) \epsilon^2.
		\end{equation}
		\begin{equation}
			\label{eq:3.29}
			\bigl\| \omega_\Delta^{(2, \epsilon)}(\mathbf{x}, t) \bigr\|_{L^2(\Omega)}^2 + \int_0^t \bigl\| \omega_\Delta^{(2, \epsilon)}(\mathbf{x}, \tau) \bigr\|_{H_0^1(\Omega)}^2 \, d\tau \le C(\Omega, T^*) \epsilon^2.
		\end{equation}
		
		Then, applying the inequality between the arithmetic mean and the quadratic mean, i.e., $\frac{a + b}{2} \le \sqrt{\frac{a^2 + b^2}{2}}$,  to the left-hand sides of \eqref{eq:3.28}-\eqref{eq:3.29} and taking the square root of both sides yields the following inequalities.
		\begin{equation}
			\label{eq:3.30}
			\bigl\| T_\Delta^{(2, \epsilon)} \bigr\|_{L^2(\Omega)} + \bigl\| T_\Delta^{(2, \epsilon)} \bigr\|_{L^2(0, t; H_0^1(\Omega))} \le C(\Omega, T^*) \epsilon.
		\end{equation}
		\begin{equation}
			\label{eq:3.31}
			\bigl\| \omega_\Delta^{(2, \epsilon)} \bigr\|_{L^2(\Omega)} + \bigl\| \omega_\Delta^{(2, \epsilon)} \bigr\|_{L^2(0, t; H_0^1(\Omega))} \le C(\Omega, T^*) \epsilon.
		\end{equation}
		
		Taking advantage of the arbitrariness of the time variable $t$ in \eqref{eq:3.30} and \eqref{eq:3.31}, we obtain the estimates \eqref{eq:3.6a} and \eqref{eq:3.6b} for the temperature and moisture fields, respectively.
		
		Next, we prove \eqref{eq:3.6c}. To this end, rewrite \eqref{eq:3.15} as the following equation:
		\begin{equation}
			\label{eq:3.34}
			\int_\Omega C_{ijkl}^\epsilon \frac{\partial u_{\Delta k}^{(2, \epsilon)}}{\partial x_l} \frac{\partial u_{\Delta i}^{(2, \epsilon)}}{\partial x_j} \, d\Omega = \int_\Omega \bigl( \alpha_{ij}^\epsilon T_\Delta^{(2, \epsilon)} + \beta_{ij}^\epsilon \omega_\Delta^{(2, \epsilon)} \bigr) \frac{\partial u_{\Delta i}^{(2, \epsilon)}}{\partial x_j} \, d\Omega + \epsilon \int_\Omega \theta_i u_{\Delta i}^{(2, \epsilon)} \, d\Omega.
		\end{equation}
		
		By virtue of assumption (A) and applying the Poincar$\rm{\acute{e}}$-Friedrichs inequality, a straightforward transformation of the left-hand side of \eqref{eq:3.34} readily yields the following inequality:
		\begin{equation}
			\label{eq:3.35}
			\int_\Omega C_{ijkl}^\epsilon \frac{\partial u_{\Delta k}^{(2, \epsilon)}}{\partial x_l} \frac{\partial u_{\Delta i}^{(2, \epsilon)}}{\partial x_j} \, d\Omega \ge \lambda_3(\Omega) \bigl\| u_{\Delta i}^{(2, \epsilon)} \bigr\|_{H_0^1(\Omega)}^2.
		\end{equation}
		
		Subsequently, by applying the Schwarz inequality, assumption (B), the convergence estimate \eqref{eq:3.30} for the temperature field, and the convergence estimate \eqref{eq:3.31} for the moisture field, and by transforming the right-hand side of \eqref{eq:3.34}, we obtain the following inequality:
		\begin{equation}
			\label{eq:3.36}
			\begin{aligned}
				& \int_\Omega \bigl( \alpha_{ij}^\epsilon T_\Delta^{(2, \epsilon)} + \beta_{ij}^\epsilon \omega_\Delta^{(2, \epsilon)} \bigr) \frac{\partial u_{\Delta i}^{(2, \epsilon)}}{\partial x_j} \, d\Omega + \epsilon \int_\Omega \theta_i u_{\Delta i}^{(2, \epsilon)} \, d\Omega \\
				& \le C_8 \Bigl( \bigl\| T_\Delta^{(2, \epsilon)} \bigr\|_{L^2(\Omega)} + \bigl\| \omega_\Delta^{(2, \epsilon)} \bigr\|_{L^2(\Omega)} \Bigr) \bigl\| u_{\Delta i}^{(2, \epsilon)} \bigr\|_{H_0^1(\Omega)} + \epsilon \bigl\| \theta_i \bigr\|_{L^2(\Omega)} \bigl\| u_{\Delta i}^{(2, \epsilon)} \bigr\|_{L^2(\Omega)} \\
				& \le C_9(\Omega, T^*) \epsilon \bigl\| u_{\Delta i}^{(2, \epsilon)} \bigr\|_{H_0^1(\Omega)} + C_{10}(\Omega) \epsilon \bigl\| u_{\Delta i}^{(2, \epsilon)} \bigr\|_{H_0^1(\Omega)} \\
				& \le C_u(\Omega, T^*) \epsilon \bigl\| u_{\Delta i}^{(2, \epsilon)} \bigr\|_{H_0^1(\Omega)}.
			\end{aligned}
		\end{equation}
		where $C_u(\Omega, T^*)=\max(C_9,C_{10})$.
		
		Finally, combining \eqref{eq:3.35} and \eqref{eq:3.36} and using the arbitrariness of the time variable $t$, we obtain the estimate \eqref{eq:3.6c} for the displacement field.
	\end{proof}
	
\section{Multi-scale numerical algorithm}
\label{sec:4}
	In this section, we detail the multi-scale algorithm for the nonlinear dynamic hygro-thermo-mechanical coupling problems \eqref{eq:2.1}. The proposed framework consists of microscopic cell models, a macroscopic homogenized model, and HOMS solutions, forming a closed system. Since all microscopic cell functions in \eqref{eq:2.22}-\eqref{eq:2.26} and \eqref{eq:2.31}-\eqref{eq:2.48} depend on the macroscopic temperature $T^{(0)}$ or moisture $\omega^{(0)}$, their continuity can be established using an idea similar to that in \citep{R51}. By this continuity, we need only evaluate these functions at a few representative macroscopic temperature and moisture values, rather than at all possible temperature and moisture points, and then use interpolation to obtain the required auxiliary cell functions during simulation \citep{R31, R39, R32}. In the following, we present a two-stage numerical algorithm that comprises off-line and on-line stages for the efficient simulation of the nonlinear dynamic hygro-thermo-mechanical coupling problems \eqref{eq:2.1} of heterogeneous structures, as depicted in Fig.~\ref{f1:algorithm}.
	\begin{figure}[!htb]
		\centering
		\includegraphics[width=0.9\textwidth]{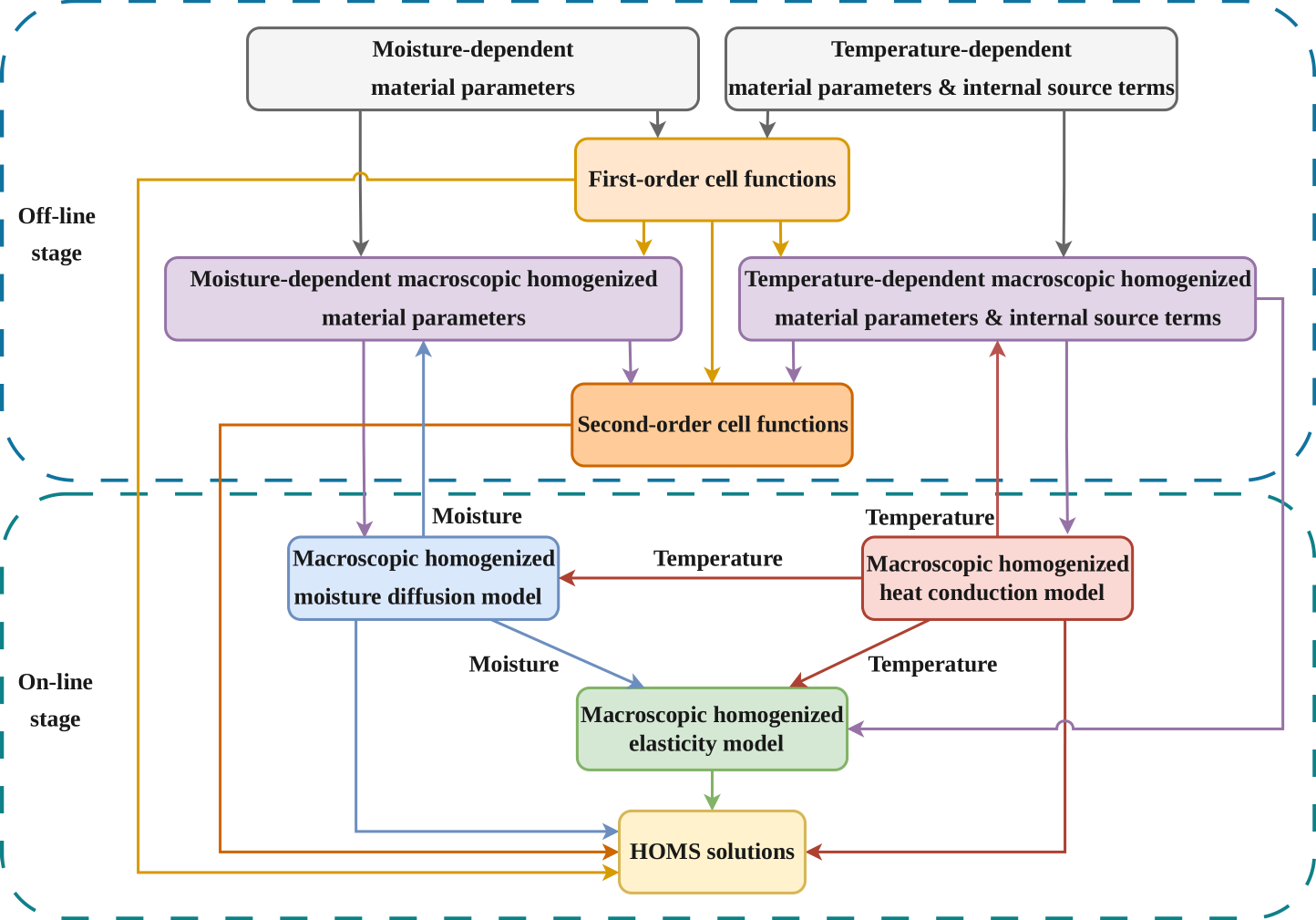}\\
		\caption{Flowchart of the two-stage multi-scale algorithm.}\label{f1:algorithm}
	\end{figure}
	
	\subsection{Off-line stage: solve the microscopic cell problems.}
	\label{sec:41}
	\begin{enumerate}
		\item[(1)] Determine the geometric configuration of PUC $Y=[0,1]^n$ in $\mathbb{R}^n (n=2,3)$, and generate a family of triangular $(n=2)$ or tetrahedral $(n=3)$ finite element meshes $J^{h_0}(Y) = \{K\}$ of $Y$, where $h_0=\max_K \{ h_K\}$. Then denote the linear conforming finite element space $V_{h_0}(Y)=\{\nu\in C^{0}(\bar{Y}):\nu|_{\partial Y}=0,\nu|_{K}\in P_{1}(K)\}\subset H_{0}^{1}(Y)$ for auxiliary cell problems.
		\item[(2)] Define computational temperature range $[T_{\min}, T_{\max}]$ and moisture range $[\omega_{\min}, \omega_{\max}]$. Then, choose a set of representative macroscopic temperatures $\bar{T}_{s_1}$ within the temperature range, and representative macroscopic moisture values $\bar{\omega}_{s_2}$ within the moisture range. Next, employ FEM to solve the first-order cell problems defined by \eqref{eq:2.22}-\eqref{eq:2.26} on $V_{h_0}(Y)$ corresponding to distinct representative macroscopic temperatures $\bar{T}_{s_1}$ or moisture values $\bar{\omega}_{s_2}$.
		\item[(3)] The macroscopic homogenized material parameters $\hat{S}(T^{(0)})$, $\hat{k}_{ij}(T^{(0)})$, $\hat{g}_{ij}(\omega^{(0)})$, $\hat{C}_{ijkl}(T^{(0)})$, $\hat{\alpha}_{ij}(T^{(0)})$, $\hat{\beta}_{ij}(T^{(0)})$, and homogenized internal source terms $\hat{Q}_{hyd}(T^{(0)})$ and $\hat{S}_{hyd}(T^{(0)})$ are evaluated by formula \eqref{eq:2.28} associated with distinct macroscopic temperatures $\bar{T}_{s_1}$ and moisture values $\bar{\omega}_{s_2}$, respectively.
		\item[(4)] Employing the same mesh as first-order cell problems, the second-order cell problems defined by \eqref{eq:2.31}-\eqref{eq:2.48}, which correspond to distinct macroscopic representative macroscopic temperatures $\bar{T}_{s_1}$ or moisture values $\bar{\omega}_{s_2}$, are evaluated on $V_{h_0}(Y)$ using FEM.
	\end{enumerate}
	
	\subsection{On-line stage: solve the macroscopic homogenized problem and compute the HOMS solutions.}
	\label{sec:42}
	\begin{enumerate}
		\item[(1)] Let $J^{h_1}(\Omega)= \{e\}$ be a triangular or tetrahedral  finite element mesh of the macroscopic homogenized region $\Omega$, where $h_1=\max_e \{ h_e \}$. Then define the linear conforming finite element spaces $V^T_{h_1}(\Omega)=\{\nu\in C^0(\bar{\Omega}):\nu|_{\partial\Omega_{T}}=0,\nu|_e\in P_1(e)\}\subset H^1(\Omega)$, $V^\omega_{h_1}(\Omega)=\{\nu\in C^0(\bar{\Omega}):\nu|_{\partial\Omega_{\omega}}=0,\nu|_e\in P_1(e)\}\subset H^1(\Omega)$ and $V^u_{h_1}(\Omega)=\{\nu\in C^0(\bar{\Omega}):\nu|_{\partial\Omega_u}=0,\nu|_e\in P_1(e)\}\subset H^1(\Omega)$ for temperature, moisture and displacement fields, respectively. Moreover, the macroscopic homogenized material parameters and homogenized internal source terms can be calculated by an interpolation approach on each node $\mathbf{x}$ of $V^T_{h_1}(\Omega)$, $V^\omega_{h_1}(\Omega)$ and $V^u_{h_1}(\Omega)$.
		\item[(2)] Solve the macroscopic homogenized equations \eqref{eq:2.27} without oscillatory coefficients by mixed FDM-FEM proposed in reference \citep{R52} on a coarse mesh and with a larger time step on the computational domain $\Omega \times (0, T^*)$, which means FEM is employed in spatial discretization and FDM is used to discretize time-domain. Using the equidistant time step $\Delta t = \frac{T^*}{M}$ to discretize time-domain $(0, T^*)$ as $0 = t_0 < t_1 < \cdots < t_M = T^*$ and $t_N = N \Delta t \, (N = 0, \cdots, M)$, we denote the value of any function $F(\mathbf{x}, t)$ at time $t_N$ by $F^{N} = F(\mathbf{x}, t_N)$. Below, we describe in detail the FDM-FEM scheme for the macroscopic homogenized problem \eqref{eq:2.27}.
		\begin{equation}
			\label{eq:4.1}
			\begin{cases}
				\begin{aligned}
					& \int_\Omega \hat{S}(T^{(0), N+1}) \frac{T^{(0), N+1} - T^{(0), N}}{\Delta t} \varphi^{h_1} d\Omega + \int_\Omega \hat{k}_{ij}(T^{(0), N+1}) \frac{\partial T^{(0), N+1}}{\partial x_j} \frac{\partial \varphi^{h_1}}{\partial x_i} d\Omega \\
					& = \int_\Omega \bigl[ h^{N+1} + \hat{Q}_{hyd}(T^{(0), N+1}) \bigr] \varphi^{h_1} \, d\Omega + \int_{\partial \Omega_q} \bar{q}^{N+1} \varphi^{h_1} \, ds,\, \forall \varphi^{h_1} \in V_{h_1}^T(\Omega), \\
					&T^{(0), N+1} = \hat{T}(\mathbf{x}, t_{N+1}), \quad \text{on } \partial \Omega_T.
				\end{aligned}
			\end{cases}
		\end{equation}
		\begin{equation}
			\label{eq:4.2}
			\begin{cases}
				\begin{aligned}
					& \int_\Omega \frac{\omega^{(0), N+1} - \omega^{(0), N}}{\Delta t} \psi^{h_1} \, d\Omega + \int_\Omega \hat{g}_{ij}(\omega^{(0), N+1}) \frac{\partial \omega^{(0), N+1}}{\partial x_j} \frac{\partial \psi^{h_1}}{\partial x_i} \, d\Omega \\
					& = \int_\Omega \bigl[ m^{N+1} - \hat{S}_{hyd}(T^{(0), N+1}) \bigr] \psi^{h_1} d\Omega + \int_{\partial \Omega_d} \bar{d}^{N+1} \psi^{h_1} ds,\, \forall \psi^{h_1} \in V_{h_1}^\omega(\Omega), \\
					& \omega^{(0), N+1} = \hat{\omega}(\mathbf{x}, t_{N+1}), \quad \text{on } \partial \Omega_\omega.
				\end{aligned}
			\end{cases}
		\end{equation}
		\begin{equation}
			\label{eq:4.3}
			\begin{cases}
				\begin{aligned}
					& \int_\Omega \hat{C}_{ijkl}(T^{(0), N+1}) \frac{\partial u_k^{(0), N+1}}{\partial x_l} \frac{\partial v_i^{h_1}}{\partial x_j} \, d\Omega \\
					& - \int_\Omega \bigl[ \hat{\alpha}_{ij}(T^{(0), N+1})(T^{(0), N+1} - \tilde{T}) + \hat{\beta}_{ij}(T^{(0), N+1})(\omega^{(0), N+1} - \tilde{\omega}) \bigr] \frac{\partial v_i^{h_1}}{\partial x_j} d\Omega \\
					& = \int_\Omega f_i^{N+1} v_i^{h_1} \, d\Omega + \int_{\partial \Omega_\sigma} \bar{\sigma}_i^{N+1} v_i^{h_1} \, ds, \quad \forall \bm{v}^{h_1} \in (V_{h_1}^u(\Omega))^n, \\
					& \bm{u}^{(0), N+1} = \hat{\bm{u}}(\mathbf{x}, t_{N+1}), \quad \text{on } \partial \Omega_u.
				\end{aligned}
			\end{cases}
		\end{equation}
		
		Next, we present a direct iteration method to simulate the nonlinear systems \eqref{eq:4.1}-\eqref{eq:4.3}.
		\begin{enumerate}
			\item[Step 1: ] Initialize the macroscopic temperature and moisture as $\breve{T}^{(0)}(\mathbf{x})$ and $\breve{\omega}^{(0)}(\mathbf{x})$. Denote by $\breve{T}_\lambda(\mathbf{x})$, $\breve{\omega}_\lambda(\mathbf{x})$ and $\breve{\bm{u}}_\lambda(\mathbf{x})$ the solutions at the $\lambda$-th iteration ($\lambda \ge 1$). Set the iteration thresholds $E_{tol}^T$, $E_{tol}^\omega$ and $E_{tol}^u$ for temperature, moisture and displacement, respectively, and start the iteration.			
			\item[Step 2: ] In the $\lambda$-th iteration, employ $\breve{T}_{\lambda-1}(\mathbf{x})$ and $\breve{\omega}_{\lambda-1}(\mathbf{x})$ to linearize the nonlinear systems \eqref{eq:4.1}-\eqref{eq:4.3} as follows. First, compute the macroscopic homogenized material parameters $\hat{S}$, $\hat{k}_{ij}$ and homogenized internal source term $\hat{Q}_{hyd}$ using $\breve{T}_{\lambda-1}$ and solve the linearized temperature equation:
			\begin{equation}
				\label{eq:4.4}
				\begin{cases}
					\begin{aligned}
						& \int_\Omega \hat{S}(\breve{T}_{\lambda-1}) \frac{\breve{T}_\lambda - T^{(0), N}}{\Delta t} \varphi^{h_1} \, d\Omega + \int_\Omega \hat{k}_{ij}(\breve{T}_{\lambda-1}) \frac{\partial \breve{T}_\lambda}{\partial x_j} \frac{\partial \varphi^{h_1}}{\partial x_i} \, d\Omega \\
						& = \int_\Omega \bigl[ h^{N+1} + \hat{Q}_{hyd}(\breve{T}_{\lambda-1}) \bigr] \varphi^{h_1} d\Omega + \int_{\partial \Omega_q} \bar{q}^{N+1} \varphi^{h_1} ds, \forall \varphi^{h_1} \in V_{h_1}^T(\Omega), \\
						& \breve{T}_\lambda = \hat{T}(\mathbf{x}, t_{N+1}), \quad \text{on } \partial \Omega_T.
					\end{aligned}
				\end{cases}
			\end{equation}
			Then, using $\breve{T}_{\lambda}$ and $\breve{\omega}_{\lambda-1}$ compute the macroscopic homogenized material parameter $\hat{g}_{ij}$ and homogenized internal source term $\hat{S}_{hyd}$, and solve the linearized moisture equation:
			\begin{equation}
				\label{eq:4.5}
				\begin{cases}
					\begin{aligned}
						& \int_\Omega \frac{\breve{\omega}_\lambda - \omega^{(0), N}}{\Delta t} \psi^{h_1} \, d\Omega + \int_\Omega \hat{g}_{ij}(\breve{\omega}_{\lambda-1}) \frac{\partial \breve{\omega}_\lambda}{\partial x_j} \frac{\partial \psi^{h_1}}{\partial x_i} \, d\Omega \\
						& = \int_\Omega \bigl[ m^{N+1} - \hat{S}_{hyd}(\breve{T}_\lambda) \bigr] \psi^{h_1} \, d\Omega + \int_{\partial \Omega_d} \bar{d}^{N+1} \psi^{h_1} \, ds, \forall \psi^{h_1} \in V_{h_1}^\omega(\Omega), \\
						& \breve{\omega}_\lambda = \hat{\omega}(\mathbf{x}, t_{N+1}), \quad \text{on } \partial \Omega_\omega.
					\end{aligned}
				\end{cases}
			\end{equation}
			Finally, using $\breve{T}_{\lambda}$ and $\breve{\omega}_{\lambda}$ compute the macroscopic homogenized material parameters $\hat{C}_{ijkl}$, $\hat{\alpha}_{ij}$ and $\hat{\beta}_{ij}$, and solve the linearized displacement equation:
			\begin{equation}
				\label{eq:4.6}
				\begin{cases}
					\begin{aligned}
						& \int_\Omega \hat{C}_{ijkl}(\breve{T}_\lambda) \frac{\partial \breve{u}_{\lambda, k}}{\partial x_l} \frac{\partial v_i^{h_1}}{\partial x_j} d\Omega - \int_\Omega \bigl[ \hat{\alpha}_{ij}(\breve{T}_\lambda)(\breve{T}_\lambda - \tilde{T}) + \hat{\beta}_{ij}(\breve{T}_\lambda)(\breve{\omega}_\lambda - \tilde{\omega})\bigr] \frac{\partial v_i^{h_1}}{\partial x_j} d\Omega \\
						& = \int_\Omega f_i^{N+1} v_i^{h_1} \, d\Omega + \int_{\partial \Omega_\sigma} \bar{\sigma}_i^{N+1} v_i^{h_1} \, ds, \quad \forall v^{h_1} \in (V_{h_1}^u(\Omega))^n, \\
						& \breve{\bm{u}}_\lambda = \bm{\hat{u}}(\mathbf{x}, t_{N+1}), \quad \text{on } \partial \Omega_u.
					\end{aligned}
				\end{cases}
			\end{equation}
			\item[Step 3: ] if $\left\| \breve{T}_\lambda - \breve{T}_{\lambda-1} \right\|_{L^\infty(\Omega)} \le E_{tol}^T$, $\left\| \breve{\omega}_\lambda - \breve{\omega}_{\lambda-1} \right\|_{L^\infty(\Omega)} \le E_{tol}^\omega$ and $\left\| \breve{\bm{u}}_\lambda - \breve{\bm{u}}_{\lambda-1} \right\|_{L^\infty(\Omega)} \le E_{tol}^u$, terminate the iteration; otherwise set $\lambda=\lambda+1$, and return to Step 2.
			\item[Step 4: ] Set $T^{(0), N+1} = T_{sat}$, $\omega^{(0), N+1} = \omega_{sat}$ and $\bm{u}^{(0), N+1} = \bm{u}_{sat}$, where $T_{sat}$, $\omega_{sat}$ and $\bm{u}_{sat}$ are the solutions of linear systems \eqref{eq:4.4}-\eqref{eq:4.6} that satisfy the respective iteration thresholds $E_{tol}^T$, $E_{tol}^\omega$ and $E_{tol}^u$.
		\end{enumerate}
		\item[(3)] At any point $(\mathbf{x}, t) \in \Omega \times (0, T^*)$, we compute the first‑order cell functions, second‑order cell functions and macroscopic homogenized solutions by interpolation.
		\item[(4)] In the HOMS formulas \eqref{eq:2.52}-\eqref{eq:2.54}, the spatial derivatives $\frac{\partial T^{(0)}}{\partial x_{\alpha_1}}$, $\frac{\partial^2 T^{(0)}}{\partial x_{\alpha_1} \partial x_{\alpha_2}}$, $\frac{\partial \omega^{(0)}}{\partial x_{\alpha_1}}$, $\frac{\partial^2 \omega^{(0)}}{\partial x_{\alpha_1} \partial x_{\alpha_2}}$, $\frac{\partial u_h^{(0)}}{\partial x_{\alpha_1}}$ and $\frac{\partial^2 u_h^{(0)}}{\partial x_{\alpha_1} \partial x_{\alpha_2}}$ are evaluated by the average technique on relative elements \citep{R43, R44, R54}, and the temporal derivative $\frac{\partial T^{(0)}}{\partial t}$ is evaluated using difference schemes at each time step.
		\item[(5)] Finally, the temperature field $T^{(2, \epsilon)}(\mathbf{x}, t)$, moisture field $\omega^{(2, \epsilon)}(\mathbf{x}, t)$ and displacement field $u_i^{(2, \epsilon)}(\mathbf{x}, t)$ are obtained from formulas \eqref{eq:2.52}-\eqref{eq:2.54}, respectively. Moreover, high‑accuracy HOMS solutions can be achieved by further employing higher‑order interpolation and post‑processing techniques \citep{R54, R55}.
	\end{enumerate}
	
\section{Error estimate for the multi-scale numerical algorithm}
\label{sec:4.4}
	The total error of the proposed multi-scale algorithm comprises not only the error arising from multi-scale modeling, but also two additional components: the numerical error from solving auxiliary cell problems \eqref{eq:2.22}-\eqref{eq:2.26} and \eqref{eq:2.31}-\eqref{eq:2.48}, and that from solving the macroscopic homogenized problem \eqref{eq:2.27}, using FEM. Before giving the detailed error estimation, we prepare some lemmas in advance.
	\begin{lemma}
		\label{lem:4.1}
		Let $\mathcal{H}_{\alpha_1}^{h_0}$, $\mathcal{J}_{\alpha_1}^{h_0}$, $\mathcal{X}_{ih}^{\alpha_1, h_0}$, $\mathcal{M}_{i}^{h_0}$, $\mathcal{N}_{i}^{h_0}$,$\mathcal{S}^{h_0}$,  $\mathcal{H}_{\alpha_1\alpha_2}^{h_0}$, $\mathcal{R}_{\alpha_1}^{h_0}$, $\mathcal{E}_{\alpha_1\alpha_2}^{h_0}$, $\mathbb{Q}^{h_0}$, $\mathcal{J}_{\alpha_1\alpha_2}^{h_0}$,  $\mathcal{I}_{\alpha_1}^{h_0}$, $\mathcal{F}_{\alpha_1\alpha_2}^{h_0}$, $\mathbb{S}^{h_0}$,  $\mathcal{X}_{ih}^{\alpha_1\alpha_2, h_0}$, $\mathcal{Q}_{ih}^{\alpha_1, h_0}$, $\mathcal{P}_{ih}^{\alpha_1\alpha_2, h_0}$, $\mathcal{W}_{i}^{h_0}$,  $\mathcal{Z}_i^{\alpha_1, h_0}$, $\mathcal{A}_i^{\alpha_1, h_0}$, $\mathcal{V}_{i}^{h_0}$, $\mathcal{G}_i^{\alpha_1, h_0}$ and $\mathcal{B}_i^{\alpha_1,h_0}$ denote the finite element solutions of the first-order and second-order cell functions, respectively. If, for any fixed macroscopic temperature $T^{(0)}$ and moisture $\omega^{(0)}$, all microscopic cell functions belong to $H^2({Y})$, then the following inequality holds:
		\begin{equation}
			\label{eq:5.1}
			\begin{aligned}
				&\bigl\|\mathcal{J}_{\alpha_1}^{h_0}(\mathbf{y},\omega^{(0)})-\mathcal{J}_{\alpha_1}(\mathbf{y},\omega^{(0)})\bigr\|_{H^m(Y)}\leq Ch_0^{2-m}\bigl\|\mathcal{J}_{\alpha_1}(\mathbf{y},\omega^{(0)})\bigr\|_{H^2(Y)},
			\end{aligned}
		\end{equation}
		where $m=0,1$ and $C$ denotes the finite element estimate constant independent of $h_0$ and dependent on $Y$. Moreover, other microscopic cell functions have the similar error estimates to the $\mathcal{J}_{\alpha_1}^{h_0}$.
	\end{lemma}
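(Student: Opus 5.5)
The estimate is the standard finite element error bound for linear elliptic problems with frozen coefficients, and I will prove it parameter by parameter. Fix a macroscopic moisture value $\omega^{(0)}$. Then \eqref{eq:2.23} is a linear, uniformly elliptic Dirichlet problem in $\mathbf{y}$ alone: its coefficient $g_{ij}^{(0)}(\mathbf{y},\omega^{(0)})$ is bounded and coercive by assumption (A), uniformly in $\omega^{(0)}$. Its weak form reads: find $\mathcal{J}_{\alpha_1}\in H_0^1(Y)$ with
\[
a(\mathcal{J}_{\alpha_1},v):=\int_Y g_{ij}^{(0)}\frac{\partial \mathcal{J}_{\alpha_1}}{\partial y_j}\frac{\partial v}{\partial y_i}\,dY=-\int_Y g_{i\alpha_1}^{(0)}\frac{\partial v}{\partial y_i}\,dY \qquad \forall v\in H_0^1(Y).
\]
The discrete solution $\mathcal{J}^{h_0}_{\alpha_1}\in V_{h_0}(Y)$ satisfies the same identity for all $v\in V_{h_0}(Y)$. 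This gives Galerkin orthogonality $a(\mathcal{J}_{\alpha_1}-\mathcal{J}^{h_0}_{\alpha_1},v_h)=0$.

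\textbf{Step 1: the case $m=1$.} Céa's lemma, with constants $\underline{\gamma},\overline{\gamma}$ from (A), gives
\[
\|\mathcal{J}_{\alpha_1}-\mathcal{J}^{h_0}_{\alpha_1}\|_{H^1(Y)}\le C(\overline{\gamma}/\underline{\gamma})\inf_{v_h\in V_{h_0}}\|\mathcal{J}_{\alpha_1}-v_h\|_{H^1(Y)}.
\]
I take $v_h=I_{h_0}\mathcal{J}_{\alpha_1}$, the nodal Lagrange interpolant. It is well defined because $H^2(Y)\subset C(\bar Y)$ for $n\le 3$, and it vanishes on $\partial Y$. The standard interpolation estimate on a shape-regular mesh then yields $Ch_0\|\mathcal{J}_{\alpha_1}\|_{H^2(Y)}$.

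\textbf{Step 2: the case $m=0$.} I use the Aubin--Nitsche duality argument. Let $e=\mathcal{J}_{\alpha_1}-\mathcal{J}^{h_0}_{\alpha_1}$, and let $z\in H_0^1(Y)$ solve $a(v,z)=(e,v)$ for all $v$; the form is symmetric since $g_{ij}$ is. Then
\[
\|e\|_{L^2}^2=a(e,z-I_{h_0}z)\le C\|e\|_{H^1}h_0|z|_{H^2}.
\]
This requires the elliptic regularity bound $|z|_{H^2(Y)}\le C\|e\|_{L^2(Y)}$.

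\textbf{Main obstacle.} That regularity bound is delicate. Under assumption (ii) the coefficients are only piecewise constant, so full $H^2$ regularity of the dual problem on $Y$ does not follow in general. I would handle this the same way the hypothesis of the lemma treats the primal problem: assume, as is implicit in the statement, that the cell configuration (symmetry (iii), smooth interfaces) gives $H^2$-regularity of the frozen-coefficient cell operator, with a constant uniform in the parameter. Uniformity in $\omega^{(0)}$ holds because the coercivity and boundedness constants do not depend on it, and the regularity constant depends only on these and on the geometry. If that is not available, I would restrict the $m=0$ claim to the regular case and note that the $m=1$ bound needs only the stated hypothesis.

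\textbf{Step 3: the remaining cell functions.} Every other cell function solves a problem of the identical form. For \eqref{eq:2.24}--\eqref{eq:2.26} and \eqref{eq:2.40}--\eqref{eq:2.48} the operator is the elasticity operator with $C_{ijkl}^{(0)}$; there coercivity on $(H_0^1(Y))^n$ follows from (A) and Korn's inequality with zero boundary data. The second-order problems have right-hand sides that involve first-order cell functions, but these are fixed data in the frozen-parameter problem whose exact solution is being approximated. Steps 1--2 therefore apply verbatim, with constants depending only on $Y$, $\underline{\gamma}$, $\overline{\gamma}$ and the mesh regularity.
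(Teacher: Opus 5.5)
Your argument is correct and is the ``classical finite element theory'' that the paper cites as its entire one-line proof: C\'ea's lemma with the Lagrange interpolant gives $m=1$, and the Aubin--Nitsche duality argument gives $m=0$. The caveat you raise is genuine: the $m=0$ bound needs $H^2$-regularity of the dual cell problem, which the stated hypothesis (that the particular cell functions lie in $H^2(Y)$) does not provide, and the paper's appeal to classical theory tacitly assumes this regularity too.
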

	\begin{proof}
		By employing the classical finite element theory, the above inequalities are easily obtained.
	\end{proof}
	
	\begin{lemma}
		\label{lem:4.2}
		Let $\hat k_{ij}^{h_0}(T^{(0)})$, $\hat g_{ij}^{h_0}(\omega^{(0)})$, $\hat C_{ijkl}^{h_0}(T^{(0)})$, $\hat \alpha_{ij}^{h_0}(T^{(0)})$ and $\hat \beta_{ij}^{h_0}(T^{(0)})$ be the FE approximations of the corresponding macroscopic homogenized parameters, the following results hold.
		\begin{equation}
			\label{eq:5.2}
			\begin{aligned}
				& \bigl|\hat k_{ij}^{h_0}(T^{(0)})-\hat k_{ij}(T^{(0)})\bigr|\leq Ch_0^2\bigl\| \mathcal{H}_{i}(\mathbf{y}, T^{(0)})\bigr\|_{H^2(Y)}\bigl\| \mathcal{H}_{j}(\mathbf{y}, T^{(0)})\bigr\|_{H^2(Y)},\\
				& \underline{\kappa} | \bm{\xi} |^2 \leq \hat k_{ij}^{h_0}(T^{(0)}) \xi_i \xi_j \leq \overline{\kappa} | \bm{\xi} |^2,
			\end{aligned}
		\end{equation}
		\begin{equation}
			\label{eq:5.3}
			\begin{aligned}
				& \bigl|\hat g_{ij}^{h_0}(\omega^{(0)})-\hat g_{ij}(\omega^{(0)})\bigr|\leq Ch_0^2\bigl\| \mathcal{J}_{i}(\mathbf{y}, \omega^{(0)})\bigr\|_{H^2(Y)}\bigl\| \mathcal{J}_{j}(\mathbf{y}, \omega^{(0)})\bigr\|_{H^2(Y)},\\
				& \underline{\kappa} | \bm{\xi} |^2 \leq \hat g_{ij}^{h_0}(\omega^{(0)}) \xi_i \xi_j \leq \overline{\kappa} | \bm{\xi} |^2,
			\end{aligned}
		\end{equation}
		\begin{equation}
			\label{eq:5.4}
			\begin{aligned}
				& \bigl|\hat C_{ijkl}^{h_0}(T^{(0)})-\hat C_{ijkl}(T^{(0)})\bigr|\leq Ch_0^2\bigl\| \mathbcal{X}_{i}^{j}(\mathbf{y}, T^{(0)})\bigr\|_{(H^2(Y))^n}\bigl\| \mathbcal{X}_{k}^{l}(\mathbf{y}, T^{(0)})\bigr\|_{(H^2(Y))^n},\\
				& \underline{\kappa} \eta_{ij} \eta_{ij}  \leq \hat C_{ijkl}^{h_0}(T^{(0)}) \eta_{ij} \eta_{kl} \leq \overline{\kappa} \eta_{ij} \eta_{ij},
			\end{aligned}
		\end{equation}
		\begin{equation}
			\label{eq:5.5}
			\begin{aligned}
				& \bigl|\hat \alpha_{ij}^{h_0}(T^{(0)})-\hat \alpha_{ij}(T^{(0)})\bigr|\leq Ch_0^2\bigl\| \mathbcal{M}(\mathbf{y}, T^{(0)})\bigr\|_{(H^2(Y))^n} \bigl\| \mathbcal{X}_{i}^{j}(\mathbf{y}, T^{(0)})\bigr\|_{(H^2(Y))^n},\\
				& \underline{\kappa} | \bm{\xi} |^2 \leq \hat \alpha_{ij}^{h_0}(T^{(0)})  \xi_i \xi_j \leq \overline{\kappa} | \bm{\xi} |^2,
			\end{aligned}
		\end{equation}
		\begin{equation}
			\label{eq:5.6}
			\begin{aligned}
				& \bigl|\hat \beta_{ij}^{h_0}(T^{(0)})-\hat \beta_{ij}(T^{(0)})\bigr|\leq Ch_0^2\bigl\| \mathbcal{N}(\mathbf{y}, T^{(0)})\bigr\|_{(H^2(Y))^n} \bigl\| \mathbcal{X}_{i}^{j}(\mathbf{y}, T^{(0)})\bigr\|_{(H^2(Y))^n},\\
				& \underline{\kappa} | \bm{\xi} |^2 \leq \hat \beta_{ij}^{h_0}(T^{(0)}) \xi_i \xi_j \leq \overline{\kappa} | \bm{\xi} |^2,
			\end{aligned}
		\end{equation}
		where $C$ is a constant independent of $h_0$.
	\end{lemma}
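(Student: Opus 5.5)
The plan is to reduce every homogenized-coefficient error to products of finite element errors of the first-order cell problems, using the classical energy (symmetrization) identity for homogenized tensors. Fix $T^{(0)}$ (or $\omega^{(0)}$) and suppress it from the notation. We treat $\hat k_{ij}$ in detail; $\hat g_{ij}$ is identical with $g^{(0)}$ and $\mathcal J$ in place of $k^{(0)}$ and $\mathcal H$, and $\hat C_{ijkl}$ is the vector-valued analogue.

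\textbf{Step 1: symmetric energy form of $\hat k_{ij}$.} Set $\hat k^{h_0}_{ij}=\frac{1}{|Y|}\int_Y\bigl(k^{(0)}_{ij}+k^{(0)}_{ik}\partial_{y_k}\mathcal H^{h_0}_j\bigr)dY$. Test the cell problem \eqref{eq:2.22} (and its Galerkin version on $V_{h_0}(Y)$) with $\mathcal H_i$ (resp.\ $\mathcal H^{h_0}_i$). This gives
\[
\hat k_{ij}=\frac{1}{|Y|}\int_Y k^{(0)}_{lm}\bigl(\delta_{li}+\partial_l\mathcal H_i\bigr)\bigl(\delta_{mj}+\partial_m\mathcal H_j\bigr)dY .
\]
The same identity holds for $\hat k^{h_0}_{ij}$ with $\mathcal H^{h_0}$ in place of $\mathcal H$, because Galerkin orthogonality plays the role of the weak equation.

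\textbf{Step 2: the error identity.} Subtract the two symmetric forms and use Galerkin orthogonality, $\int_Y k^{(0)}\nabla(\mathcal H_j-\mathcal H^{h_0}_j)\cdot\nabla v=0$ for all $v\in V_{h_0}(Y)$. The cross terms cancel and we obtain
\[
\hat k^{h_0}_{ij}-\hat k_{ij}=\frac{1}{|Y|}\int_Y k^{(0)}_{lm}\,\partial_l(\mathcal H^{h_0}_i-\mathcal H_i)\,\partial_m(\mathcal H^{h_0}_j-\mathcal H_j)\,dY .
\]
By Cauchy--Schwarz, the boundedness in (A), and Lemma~\ref{lem:4.1} with $m=1$, this is bounded by $Ch_0\|\mathcal H_i\|_{H^2}\cdot Ch_0\|\mathcal H_j\|_{H^2}$. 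That is the quadratic rate $h_0^2$ claimed in \eqref{eq:5.2}. The same computation with $C^{(0)}$ and $\mathcal X$ gives \eqref{eq:5.4}.

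\textbf{Step 3: the mixed tensors $\hat\alpha_{ij}$ and $\hat\beta_{ij}$.} This is the main obstacle, since $\hat\alpha_{ij}$ is not itself a quadratic form. Using the weak forms of \eqref{eq:2.24} and \eqref{eq:2.25}, rewrite
\[
\int_Y C^{(0)}_{ijkl}\,\partial_l\mathcal M_k\,dY=-\int_Y \bigl(\alpha^{(0)}_{kl}+C^{(0)}_{klmn}\,\partial_n\mathcal M_m\bigr)\,\partial_l\mathcal X^{j}_{ki}\,dY ,
\]
and do the same for the discrete pair, with both $\mathcal M^{h_0}$ and $\mathcal X^{h_0}$ discrete. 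Subtracting, and applying Galerkin orthogonality for both the $\mathcal M$- and the $\mathcal X$-problems, the error should reduce to the bilinear term $\int_Y C^{(0)}\,\nabla(\mathcal M-\mathcal M^{h_0})\cdot\nabla(\mathcal X-\mathcal X^{h_0})$. Bounding this as in Step 2 gives $Ch_0^2\|\mathcal M\|_{H^2}\|\mathcal X^{j}_{i}\|_{H^2}$. If this duality bookkeeping turns out to be delicate, we will check carefully which discrete test functions are admissible, since both corrector spaces are $V_{h_0}(Y)$. The argument for $\hat\beta_{ij}$ is identical with $\mathcal N$ in place of $\mathcal M$.

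\textbf{Step 4: uniform bounds on the discrete tensors.} For $\hat k^{h_0}$, the symmetric form of Step 1 gives
\[
\hat k^{h_0}_{ij}\xi_i\xi_j=\frac{1}{|Y|}\int_Y k^{(0)}(\xi+\nabla w)\cdot(\xi+\nabla w)\,dY,\qquad w=\xi_i\mathcal H^{h_0}_i\in H^1_0(Y).
\]
The upper bound $\overline\kappa$ follows by taking $w=0$ as a competitor, since the discrete corrector minimizes this energy over $V_{h_0}(Y)$. The lower bound follows from $\int_Y\nabla w=0$ by the zero boundary trace, so that $\int_Y|\xi+\nabla w|^2\ge|\xi|^2$.

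For $\hat\alpha^{h_0}$ and $\hat\beta^{h_0}$ no such variational structure is available. There we combine the corresponding continuous bounds from the Remark on homogenized coefficients with the error estimate of Step 3. For $h_0$ sufficiently small this yields constants $\underline\kappa,\overline\kappa$ that are uniform in $T^{(0)}$, using the continuity of the cell functions in $T^{(0)}$ on the compact temperature range.
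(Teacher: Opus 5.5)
Your Steps 1 and 2, and the first half of Step 4 (for $\hat k,\hat g,\hat C$), are fine. Step 3, which you rightly single out as the crux, rests on a false identity. By the weak form of \eqref{eq:2.25}, $\int_Y(\alpha^{(0)}_{kl}+C^{(0)}_{klmn}\partial_{y_n}\mathcal M_m)\,\partial_{y_l}v_k\,dY=0$ for every $v\in(H^1_0(Y))^n$. Since $\mathcal X^{j}_{\cdot i}=(\mathcal X^j_{ki})_k$ has zero trace, the right-hand side of your displayed formula vanishes identically. Your formula therefore asserts $\int_Y C^{(0)}_{ijkl}\partial_{y_l}\mathcal M_k\,dY=0$, i.e.\ $\hat\alpha_{ij}=\frac{1}{|Y|}\int_Y\alpha^{(0)}_{ij}\,dY$, which is false in general. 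The discrete counterpart vanishes for the same reason: it is the Galerkin equation for $\mathcal M^{h_0}$ tested with $\mathcal X^{j,h_0}_{\cdot i}\in(V_{h_0}(Y))^n$. So the subtraction you propose produces no error term at all. The bilinear term you are aiming for is the right one, but the duality identity must not contain $\alpha^{(0)}$. Testing \eqref{eq:2.24} (with $h=i$, $\alpha_1=j$) against $\mathcal M$ and using $C^{(0)}_{ijkl}=C^{(0)}_{klij}$ gives $\int_Y C^{(0)}_{ijkl}\partial_{y_l}\mathcal M_k\,dY=-\int_Y C^{(0)}_{pqmn}\partial_{y_n}\mathcal X^j_{mi}\,\partial_{y_q}\mathcal M_p\,dY$.

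The shortest repair copies the paper's computation \eqref{eq:5.7} for $\hat g_{ij}$; the paper itself only asserts \eqref{eq:5.5}--\eqref{eq:5.6} ``by the same argument''. Put $a(u,v)=\int_Y C^{(0)}_{pqmn}\partial_{y_n}u_m\,\partial_{y_q}v_p\,dY$, $e_{\mathcal M}=\mathcal M^{h_0}-\mathcal M$ and $e_{\mathcal X}=\mathcal X^{j,h_0}_{\cdot i}-\mathcal X^{j}_{\cdot i}$. Testing \eqref{eq:2.24} with $e_{\mathcal M}\in(H^1_0(Y))^n$ gives $|Y|(\hat\alpha^{h_0}_{ij}-\hat\alpha_{ij})=\int_Y C^{(0)}_{ijkl}\partial_{y_l}(e_{\mathcal M})_k\,dY=-a(\mathcal X^j_{\cdot i},e_{\mathcal M})$. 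Galerkin orthogonality of the $\mathcal M$-problem gives $a(\mathcal X^{j,h_0}_{\cdot i},e_{\mathcal M})=0$. Adding the two yields $|Y|(\hat\alpha^{h_0}_{ij}-\hat\alpha_{ij})=a(e_{\mathcal X},e_{\mathcal M})$, and \eqref{eq:5.5} follows from Lemma~\ref{lem:4.1}. Only the $\mathcal M$-orthogonality is needed, and admissibility is automatic because both correctors lie in $(V_{h_0}(Y))^n$. The case of $\hat\beta_{ij}$ is identical, using \eqref{eq:2.26}.

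With this fix your proof agrees with the paper's except in Step 4. The paper gets all the bounds by perturbation: it takes $h_0$ so small that the $O(h_0^2)$ error is at most $\underline\varsigma/2$, giving $\underline\kappa=\underline\varsigma/2$ and $\overline\kappa=\overline\varsigma+\underline\varsigma/2$. Your minimization argument instead gives $\underline\gamma|\xi|^2\le\hat k^{h_0}_{ij}\xi_i\xi_j\le\overline\gamma|\xi|^2$, and likewise for $\hat g^{h_0}$ and $\hat C^{h_0}$, for every $h_0$. It needs no smallness condition and no uniform $H^2$ control of the cell functions in $T^{(0)}$. For $\hat\alpha^{h_0}$ and $\hat\beta^{h_0}$ you necessarily fall back on the paper's perturbation argument.
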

	\begin{proof}
		Using the definitions of the macroscopic homogenized material parameters in \eqref{eq:2.28}, together with assumption (A) and Lemma \ref{lem:4.1}, we obtain
		\begin{equation}
			\label{eq:5.7}
			\begin{aligned}
				&\bigl|\hat g_{ij}^{h_0}(\omega^{(0)})-\hat g_{ij}(\omega^{(0)})\bigr|\\
				&=\biggl|\frac{1}{|{Y}|}{\int_{{Y}}}\big(g_{ij}^{(0)} + g_{ik}^{(0)} \frac{\partial \mathcal{J}_{j}^{h_0}}{\partial y_{k}}\big)d{Y}-\frac{1}{|{Y}|}{\int_{{Y}}}\big(g_{ij}^{(0)} + g_{ik}^{(0)} \frac{\partial \mathcal{J}_{j}}{\partial y_{k}}\big)d{Y}\biggr|\\
				&=\biggl|\frac{1}{|{Y}|}{\int_{{Y}}}{g_{ik}^{(0)}{\frac{\partial \big(\mathcal{J}_j^{h_0}-\mathcal{J}_j\big)}{\partial y_{k}}}}d{Y}\biggr|\\
				&=\frac{1}{|{Y}|}\biggl|-{\int_{{Y}}}\frac{\partial \mathcal{J}_i}{\partial y_{\alpha_1}}g_{\alpha_1\alpha_2}^{(0)}{\frac{\partial }{\partial y_{\alpha_2}}\big(\mathcal{J}_j^{h_0}-\mathcal{J}_j\big)}d{Y}\biggr|\\
				&=\frac{1}{|{Y}|}\biggl|{\int_{{Y}}}\frac{\partial \mathcal{J}_i^{h_0}}{\partial y_{\alpha_1}}g_{\alpha_1\alpha_2}^{(0)}{\frac{\partial }{\partial y_{\alpha_2}}\big(\mathcal{J}_j^{h_0}-\mathcal{J}_j\big)}d{Y}-{\int_{{Y}}}\frac{\partial \mathcal{J}_i}{\partial y_{\alpha_1}}g_{\alpha_1\alpha_2}^{(0)}{\frac{\partial }{\partial y_{\alpha_2}}\big(\mathcal{J}_j^{h_0}-\mathcal{J}_j\big)}d{Y}\biggr|\\
				&=\frac{1}{|{Y}|}\biggl|{\int_{{Y}}}\frac{\partial}{\partial y_{\alpha_1}}\big(\mathcal{J}_i^{h_0}-\mathcal{J}_i\big)g_{\alpha_1\alpha_2}^{(0)}{\frac{\partial }{\partial y_{\alpha_2}}\big(\mathcal{J}_j^{h_0}-\mathcal{J}_j\big)}d{Y}\biggr|\\
				&\leq C\bigl\|\mathcal{J}_i^{h_0}-\mathcal{J}_i\bigr\|_{H^1({Y})}\bigl\|\mathcal{J}_j^{h_0}-\mathcal{J}_j\bigr\|_{H^1({Y})}\leq Ch_0^2\bigl\|\mathcal{J}_i\bigr\|_{H^2({Y})}\bigl\|\mathcal{J}_j\bigr\|_{H^2({Y})}.
			\end{aligned}
		\end{equation}
		Furthermore, choosing a sufficiently small $h_0>0$ satisfies
		\begin{equation}
			\label{eq:5.8}
			Ch_0^2\bigl\|\mathcal{J}_i(\mathbf{y},\omega^{(0)})\bigr\|_{H^2({Y})}\bigl\| \mathcal{J}_j(\mathbf{y},\omega^{(0)})\bigr\|_{H^2({Y})}\leq\underline{\varsigma}/2.
		\end{equation}
		Hence, it can be verified that the lower bound in \eqref{eq:5.3} holds
		\begin{equation}
			\label{eq:5.9}
			\hat g_{ij}^{h_0}(\omega^{(0)})\xi_i\xi_j=\hat g_{ij}(\omega^{(0)})\xi_i\xi_j+\bigl(\hat g_{ij}^{h_0}(\omega^{(0)})-\hat g_{ij}(\omega^{(0)})\bigr)\xi_i\xi_j\geq(\underline{\varsigma}-\underline{\varsigma}/2)\xi_i\xi_i=\underline{\kappa}|\bm{\xi}|^2,
		\end{equation}
		where $\underline{\kappa}=\underline{\varsigma}/2$ is a constant independent of $h_0$. Moreover, the upper bound in \eqref{eq:5.3} can be readily obtained by taking $\overline{\kappa}=\overline{\varsigma}+\underline{\varsigma}/2$. Finally, following the same argument, we arrive at the results \eqref{eq:5.2} and \eqref{eq:5.4}-\eqref{eq:5.6}.
	\end{proof}
	
	According to Lemmas \ref{lem:4.1} and \ref{lem:4.2}, the values of the macroscopic homogenized material parameters $\hat k_{ij}^{h_0}$, $\hat g_{ij}^{h_0}$, $\hat C_{ijkl}^{h_0}$, $\hat \alpha_{ij}^{h_0}$ and $\hat \beta_{ij}^{h_0}$ are determined by the finite element computation of the first-order cell functions $\mathcal{H}_{\alpha_1}^{h_0}$, $\mathcal{J}_{\alpha_1}^{h_0}$, $\mathcal{X}_{ih}^{\alpha_1, h_0}$, $\mathcal{M}_{i}^{h_0}$ and $\mathcal{N}_{i}^{h_0}$. Therefore, in practice, we need to numerically solve the modified homogenized equations as below.
	\begin{equation}
		\label{eq:5.10}
		\begin{cases}
			\begin{aligned}
				& \hat S(T^{(0,h_0)}) \frac{\partial T^{(0,h_0)}}{\partial t} -\frac{\partial}{\partial x_i}\Bigl(\hat{k}_{ij}^{h_0}(T^{(0,h_0)}) \frac{\partial T^{(0, h_0)}}{\partial x_j}\Bigr) = h +\hat Q_{hyd}(T^{(0,h_0)}), \text{in } \Omega \times (0, T^* ),\\
				& \frac{\partial \omega^{(0,h_0)}}{\partial t}-\frac{\partial}{\partial x_i}\Bigl(\hat{g}_{ij}^{h_0}(\omega^{(0,h_0)}) \frac{\partial \omega^{(0, h_0)}}{\partial x_j}\Bigr) = m -\hat S_{hyd}(T^{(0,h_0)}),\text{in } \Omega \times (0, T^* ),\\
				& - \frac{\partial}{\partial x_j}\Bigl(\hat{C}_{ijkl}^{h_0}(T^{(0,h_0)}) \frac{\partial u_{k}^{(0, h_0)}}{\partial x_l}
				- \hat{\alpha}_{ij}^{h_0}( T^{(0,h_0)}) ( T^{(0, h_0)}-\tilde{T})
				- \hat{\beta}_{ij}^{h_0}( T^{(0,h_0)}) (\omega^{(0, h_0)}-\tilde{\omega})\Bigr) \\
				&\quad = f_i,\text{in } \Omega \times (0, T^*),\\
				& T^{(0, h_0)}(\mathbf{x},t) = \hat{T}(\mathbf{x},t),\;\text{on }\;\partial \Omega_{T} \times (0,T^*),\\
				& \hat{k}_{ij}^{h_0}(T^{(0, h_0)}) \frac{\partial T^{(0, h_0)}}{\partial x_{j}}n_i=\overline{q}(\mathbf{x},t),\;\text{on }\;\partial \Omega_{q} \times (0,T^*),\\
				& \omega^{(0, h_0)}(\mathbf{x},t) = \hat{\omega}(\mathbf{x},t),\;\text{on }\;\partial \Omega_{
					\omega} \times (0,T^*),\\
				& \hat{g}_{ij}^{h_0}(\omega^{(0, h_0)}) \frac{\partial \omega^{(0, h_0)}}{\partial x_{j}}n_i=\overline{d}(\mathbf{x},t),\;\text{on }\;\partial \Omega_{d} \times (0,T^*),\\
				& \bm{u}^{(0, h_0)}(\mathbf{x},t) = \hat{\bm{u}}(\mathbf{x},t),\;\text{on }\;\partial \Omega_{u} \times (0,T^*),\\
				& \Bigl[\hat{C}_{ijkl}^{h_0}( T^{(0, h_0)}) \frac{\partial u_{k}^{(0, h_0)}}{\partial x_l}
				- \hat{\alpha}_{ij}^{h_0}( T^{(0, h_0)}) ( T^{(0, h_0)}-\tilde{T})
				- \hat{\beta}_{ij}^{h_0}( T^{(0, h_0)}) (\omega^{(0, h_0)}-\tilde{\omega})\Bigr] n_{j}\\
				&\quad =\overline{\sigma}_{i}(\mathbf{x},t),\;\text{on }\;\partial \Omega_{\sigma} \times (0,T^*),\\
				& T^{(0, h_0)}(\mathbf{x},0) = \tilde T,\quad
				\omega^{(0, h_0)}(\mathbf{x},0) = \tilde \omega,\quad
				\bm{u}^{(0, h_0)}(\mathbf{x},0) = \tilde{\bm{u}},\quad \text{in } \Omega.
			\end{aligned}
		\end{cases}
	\end{equation}
	\begin{lemma}
		\label{lem:4.3}
		Let $T^{(0, h_0)}$, $\omega^{(0, h_0)}$ and $u_{i}^{(0, h_0)}$ represent the exact solutions of the revised macroscopic homogenized equations \eqref{eq:5.10}, the following estimates hold
		\begin{equation}
			\label{eq:5.11}
			\begin{aligned}
				\| T^{(0,h_0)}-T^{(0)} \|_{L^\infty (0,T^*;L^2(\Omega))} + \| T^{(0,h_0)}-T^{(0)} \|_{L^2 (0,T^*;H^1(\Omega))} \leq Ch_0^2,
			\end{aligned}
		\end{equation}
		\begin{equation}
			\label{eq:5.12}
			\begin{aligned}
				\| \omega^{(0,h_0)}-\omega^{(0)} \|_{L^\infty (0,T^*;L^2(\Omega))} + \| \omega^{(0,h_0)}-\omega^{(0)} \|_{L^2 (0,T^*;H^1(\Omega))} \leq Ch_0^2,
			\end{aligned}
		\end{equation}
		\begin{equation}
			\label{eq:5.13}
			\|\bm{u}^{(0,h_0)}-\bm{u}^{(0)}\|_{L^\infty (0,T^*;(H^1(\Omega))^n)}\leq Ch_0^2,
		\end{equation}
		where $C$ is a constant independent of $h_0$.
	\end{lemma}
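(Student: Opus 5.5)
Let $e_T=T^{(0,h_0)}-T^{(0)}$, $e_\omega=\omega^{(0,h_0)}-\omega^{(0)}$ and $\bm e_u=\bm u^{(0,h_0)}-\bm u^{(0)}$. The plan is to subtract the homogenized problem \eqref{eq:2.27} from the modified problem \eqref{eq:5.10} and run energy estimates, in the same order as the proof of Theorem \ref{thm:1}: temperature first, then moisture (which is driven by temperature only through $\hat S_{hyd}$), and finally the quasi-static displacement. All three errors vanish initially and on the Dirichlet boundaries, and the flux data coincide. So the only sources are the coefficient perturbations controlled by Lemma \ref{lem:4.2} and the nonlinear coefficient differences at two different arguments. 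For the temperature, the flux difference splits as
\begin{equation*}
\hat k^{h_0}_{ij}(T^{(0,h_0)})\partial_j T^{(0,h_0)}-\hat k_{ij}(T^{(0)})\partial_jT^{(0)}
=\hat k^{h_0}_{ij}(T^{(0,h_0)})\partial_j e_T+\bigl[\hat k^{h_0}_{ij}(T^{(0,h_0)})-\hat k_{ij}(T^{(0,h_0)})\bigr]\partial_jT^{(0)}+\bigl[\hat k_{ij}(T^{(0,h_0)})-\hat k_{ij}(T^{(0)})\bigr]\partial_jT^{(0)} .
\end{equation*}
The capacity term $\hat S(\cdot)\partial_t(\cdot)$ and the sources $\hat Q_{hyd},\hat S_{hyd}$ are split the same way.

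The first bracket is bounded by $Ch_0^2$ using \eqref{eq:5.2}, provided $\sup_{T}\|\mathcal H_i(\cdot,T)\|_{H^2(Y)}$ is finite over the temperature range $[T_{\min},T_{\max}+C_*]$. This should follow from the $H^2$ assumption of Lemma \ref{lem:4.1} together with the continuity of the cell functions in $T^{(0)}$ and compactness of the range. For the second bracket I will assume that $\hat k_{ij},\hat S,\hat Q_{hyd},\hat g_{ij},\hat S_{hyd},\hat C_{ijkl},\hat\alpha_{ij},\hat\beta_{ij}$ are Lipschitz in their argument. This is stronger than the continuity stated in the remarks, but it is natural because the cell problems depend smoothly on the parameter through $\mathbf D^{(0,1)}$ of the coefficients. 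It gives the bound $L|e_T|\,|\nabla T^{(0)}|$, with $\nabla T^{(0)},\partial_tT^{(0)}\in L^\infty$ by the $H^4$ and $H^2$ regularity in Theorem \ref{thm:1} and Sobolev embedding ($n\le3$).

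Testing with $e_T$ and using the uniform ellipticity of $\hat k^{h_0}$ from \eqref{eq:5.2}, the lower bound on $\hat S$, and Young's inequality gives
\begin{equation*}
\frac{d}{dt}\int_\Omega \hat S(T^{(0,h_0)})e_T^2\,d\Omega+\underline\kappa\|\nabla e_T\|^2_{L^2}\le C\|e_T\|_{L^2}^2+Ch_0^4 ,
\end{equation*}
where the term involving $\partial_t\hat S(T^{(0,h_0)})$ is handled as in \eqref{eq:3.13}. Gronwall's inequality, as in \eqref{eq:3.26}, then yields \eqref{eq:5.11}. The moisture equation has a constant capacity. Its right-hand side contains $\hat S_{hyd}(T^{(0,h_0)})-\hat S_{hyd}(T^{(0)})$, bounded by $L\|e_T\|_{L^2}\le Ch_0^2$. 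Hence the same argument with \eqref{eq:5.3} gives \eqref{eq:5.12}.

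For the displacement at each fixed $t$, test the difference of the elasticity equations with $\bm e_u\in H^1$, which vanishes on $\partial\Omega_u$. Korn's inequality and \eqref{eq:5.4} give coercivity. The remaining terms are:
\begin{itemize}
\item the coefficient-perturbation terms $(\hat C^{h_0}-\hat C)$, $(\hat\alpha^{h_0}-\hat\alpha)$, $(\hat\beta^{h_0}-\hat\beta)$, each $O(h_0^2)$ by \eqref{eq:5.4}--\eqref{eq:5.6};
\item the Lipschitz terms, proportional to $|e_T|$ times $|\nabla\bm u^{(0)}|+|T^{(0)}-\tilde T|+|\omega^{(0)}-\tilde\omega|$, which are bounded in $L^\infty$;
\item the terms $\hat\alpha^{h_0}e_T$ and $\hat\beta^{h_0}e_\omega$.
\end{itemize}
This gives $\|\bm e_u(t)\|_{H^1}\le C(h_0^2+\|e_T(t)\|_{L^2}+\|e_\omega(t)\|_{L^2})$, and taking the supremum in $t$ with \eqref{eq:5.11}--\eqref{eq:5.12} gives \eqref{eq:5.13}.

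The main obstacle is the nonlinear coefficient difference at two arguments. It needs a Lipschitz bound on the homogenized coefficients with respect to $T^{(0)}$ and $\omega^{(0)}$, and an a priori guarantee that $T^{(0,h_0)}$ stays in the range where this bound and the uniform $H^2(Y)$ bounds of the cell functions hold. The first route is to derive the Lipschitz bound from the cell problems: differentiating \eqref{eq:2.22} in $T^{(0)}$ gives an elliptic problem for $\partial_{T}\mathcal H_{\alpha_1}$ whose data involve $\mathbf D^{(0,1)}k^{(0)}$, so $|\partial_T\hat k_{ij}|\le C$. If an $L^\infty$ bound on $T^{(0,h_0)}$ is not available a priori, I will close the argument by continuity in time: assume $\|e_T\|_{L^\infty}\le C_*$, obtain the $O(h_0^2)$ bound, and recover the assumption for small $h_0$ via an inverse or regularity estimate.
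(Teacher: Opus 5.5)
Your proposal follows the paper's proof: subtract \eqref{eq:2.27} from \eqref{eq:5.10}, test with the errors, control the $h_0$-perturbation of the coefficients by Lemma~\ref{lem:4.2} and the two-argument coefficient differences by an assumed Lipschitz bound, close the parabolic estimates with Gronwall, and then obtain \eqref{eq:5.13} from a fixed-time coercivity estimate combined with \eqref{eq:5.11}--\eqref{eq:5.12}. The only difference is the splitting. The paper evaluates $\hat k_{ij}-\hat k^{h_0}_{ij}$ (and its analogues) at $T^{(0)}$ and places the Lipschitz assumption on $\hat k^{h_0}_{ij}$, so the $O(h_0^2)$ bound of Lemma~\ref{lem:4.2} is only needed along the exact solution $T^{(0)}$. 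Your additional discussion, justifying the Lipschitz property and controlling the range of $T^{(0,h_0)}$, addresses points that the paper simply assumes.
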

	\begin{proof}
		Through subtracting macroscopic homogenized equations in \eqref{eq:2.27} from corresponding equations in \eqref{eq:5.10}, one can directly check that
		\begin{equation}
			\begin{aligned}
				\label{eq:5.14}
				& \hat S(T^{(0,h_0)}) \frac{\partial (T^{(0,h_0)} - T^{(0)})}{\partial t} - \frac{\partial}{\partial x_i} \Bigl( \hat k_{ij}^{h_0}(T^{(0,h_0)})
				\frac{\partial (T^{(0,h_0)} - T^{(0)})}{\partial x_j} \Bigr) \\
				&= \bigl(\hat S(T^{(0)}) - \hat S(T^{(0,h_0)}) \bigr)
				\frac{\partial T^{(0)}}{\partial t}  -\frac{\partial}{\partial x_i}\Bigl[ \bigl( \hat k_{ij}(T^{(0)}) - \hat k_{ij}^{h_0}(T^{(0)}) \bigr) \frac{\partial T^{(0)}}{\partial x_j} \Bigr] \\
				& -\frac{\partial}{\partial x_i}\Bigl[ \bigl( \hat k_{ij}^{h_0}(T^{(0)}) - \hat k_{ij}^{h_0}(T^{(0,h_0)}) \bigr) \frac{\partial T^{(0)}}{\partial x_j} \Bigr] + \hat Q_{hyd}(T^{(0,h_0)}) - \hat Q_{hyd}(T^{(0)}).
			\end{aligned}
		\end{equation}
		\begin{equation}
			\begin{aligned}
				\label{eq:5.15}
				& \frac{\partial ( \omega^{(0,h_0)} - \omega^{(0)} )}{\partial t} - \frac{\partial}{\partial x_i} \Bigl( \hat{g}_{ij}^{h_0}(\omega^{(0,h_0)}) \frac{\partial ( \omega^{(0,h_0)} - \omega^{(0)} )}{\partial x_j} \Bigr)\\
				& = - \frac{\partial}{\partial x_i} \Bigl[ \bigl( \hat{g}_{ij}(\omega^{(0)}) - \hat{g}_{ij}^{h_0}(\omega^{(0)}) \bigr) \frac{\partial \omega^{(0)}}{\partial x_j}\Bigr] - \frac{\partial}{\partial x_i} \Bigl[ \bigl( \hat{g}_{ij}^{h_0}(\omega^{(0)}) - \hat{g}_{ij}^{h_0}(\omega^{(0,h_0)}) \bigr) \frac{\partial \omega^{(0)}}{\partial x_j} \Bigr] \\
				& + \hat{S}_{hyd}(T^{(0)}) - \hat{S}_{hyd}(T^{(0,h_0)}).
			\end{aligned}
		\end{equation}
		\begin{equation}
			\begin{aligned}
				\label{eq:5.16}
				& -\frac{\partial}{\partial x_j} \biggl( \hat{C}_{ijkl}^{h_0}(T^{(0,h_0)}) \frac{\partial ( u_k^{(0,h_0)} - u_k^{(0)} )}{\partial x_l} \biggr)\\
				&= - \frac{\partial}{\partial x_j} \Bigl[ \hat{\alpha}_{ij}^{h_0}(T^{(0,h_0)}) \bigl( T^{(0,h_0)} - T^{(0)} \bigr) \Bigr] - \frac{\partial}{\partial x_j} \Bigl[ \hat{\beta}_{ij}^{h_0}(T^{(0,h_0)}) \bigl( \omega^{(0,h_0)} - \omega^{(0)} \bigr) \Bigr] \\
				& -\frac{\partial}{\partial x_j} \Bigl[ \bigl( \hat{C}_{ijkl}(T^{(0)}) - \hat{C}_{ijkl}^{h_0}(T^{(0)}) \bigr) \frac{\partial u_k^{(0)}}{\partial x_l} \Bigr] - \frac{\partial}{\partial x_j} \Bigl[ \bigl( \hat{C}_{ijkl}^{h_0}(T^{(0)}) - \hat{C}_{ijkl}^{h_0}(T^{(0,h_0)}) \bigr) \frac{\partial u_k^{(0)}}{\partial x_l} \Bigr] \\
				& + \frac{\partial}{\partial x_j} \Bigl[ \bigl( \hat{\alpha}_{ij}(T^{(0)}) - \hat{\alpha}_{ij}^{h_0}(T^{(0)}) \bigr) \bigl( T^{(0)} - \tilde{T} \bigr) \Bigr] + \frac{\partial}{\partial x_j} \Bigl[ \bigl( \hat{\alpha}_{ij}^{h_0}(T^{(0)}) - \hat{\alpha}_{ij}^{h_0}(T^{(0,h_0)}) \bigr) \bigl( T^{(0)} - \tilde{T} \bigr) \Bigr]\\
				& + \frac{\partial}{\partial x_j} \Bigl[ \bigl( \hat{\beta}_{ij}(T^{(0)}) - \hat{\beta}_{ij}^{h_0}(T^{(0)}) \bigr) \bigl( \omega^{(0)} - \tilde{\omega} \bigr) \Bigr] + \frac{\partial}{\partial x_j} \Bigl[ \bigl( \hat{\beta}_{ij}^{h_0}(T^{(0)}) - \hat{\beta}_{ij}^{h_0}(T^{(0,h_0)}) \bigr) \bigl( \omega^{(0)} - \tilde{\omega} \bigr) \Bigr].
			\end{aligned}
		\end{equation}
		
		Furthermore, multiplying on both sides of equalities \eqref{eq:5.14}, \eqref{eq:5.15}, and \eqref{eq:5.16} by $T^{(0,h_0)}-T^{(0)}$, $\omega^{(0,h_0)}-\omega^{(0)}$ and $u_i^{(0,h_0)}-u_i^{(0)}$, respectively, and integrating on $\Omega$, it follows that
		\begin{equation}
			\begin{aligned}
				\label{eq:5.17}				
				&\frac{1}{2}\frac{\partial}{\partial t}\int_\Omega \hat S( T^{(0,h_0)}) \bigl( T^{(0,h_0)} - T^{(0)} \bigr)^2 d\Omega
				+ \int_\Omega \hat k_{ij}^{h_0}( T^{(0,h_0)})
				\frac{\partial \bigl( T^{(0,h_0)} - T^{(0)} \bigr)}{\partial x_j}
				\frac{\partial \bigl( T^{(0,h_0)} - T^{(0)} \bigr)}{\partial x_i} d\Omega \\
				& = \frac{1}{2}\int_\Omega \frac{\partial \hat S( T^{(0,h_0)})}{\partial t}
				\bigl( T^{(0,h_0)} - T^{(0)} \bigr)^2 d\Omega
				+ \int_\Omega \bigl(\hat S( T^{(0)}) - \hat S( T^{(0,h_0)}) \bigr)
				\frac{\partial T^{(0)}}{\partial t} \bigl( T^{(0,h_0)} - T^{(0)} \bigr) d\Omega \\
				& + \int_\Omega \bigl( \hat k_{ij}(T^{(0)}) - \hat k_{ij}^{h_0}(T^{(0)}) \bigr)
				\frac{\partial T^{(0)}}{\partial x_j}
				\frac{\partial (T^{(0,h_0)} - T^{(0)})}{\partial x_i} d\Omega \\
				& + \int_\Omega \bigl( \hat k_{ij}^{h_0}(T^{(0)}) - \hat k_{ij}^{h_0}(T^{(0,h_0)}) \bigr)
				\frac{\partial T^{(0)}}{\partial x_j}
				\frac{\partial (T^{(0,h_0)} - T^{(0)})}{\partial x_i} d\Omega \\
				& + \int_\Omega \bigl( \hat Q_{hyd}(T^{(0,h_0)}) - \hat Q_{hyd}(T^{(0)}) \bigr)
				\bigl( T^{(0,h_0)} - T^{(0)} \bigr) d\Omega.
			\end{aligned}
		\end{equation}
		\begin{equation}
			\begin{aligned}
				\label{eq:5.18}
				& \frac{1}{2}\frac{\partial}{\partial t}\int_\Omega \bigl( \omega^{(0,h_0)} - \omega^{(0)} \bigr)^2 d\Omega + \int_\Omega  \hat{g}_{ij}^{h_0}(\omega^{(0,h_0)}) \frac{\partial \bigl( \omega^{(0,h_0)} - \omega^{(0)} \bigr)}{\partial x_j} \frac{\partial \bigl( \omega^{(0,h_0)} - \omega^{(0)} \bigr)}{\partial x_i} d\Omega \\
				& = \int_\Omega \bigl( \hat{g}_{ij}(\omega^{(0)}) - \hat{g}_{ij}^{h_0}(\omega^{(0)}) \bigr) \frac{\partial \omega^{(0)}}{\partial x_j} \frac{\partial \bigl( \omega^{(0,h_0)} - \omega^{(0)} \bigr)}{\partial x_i} d\Omega \\
				& + \int_\Omega \bigl( \hat{g}_{ij}^{h_0}(\omega^{(0)}) - \hat{g}_{ij}^{h_0}(\omega^{(0,h_0)}) \bigr) \frac{\partial \omega^{(0)}}{\partial x_j} \frac{\partial \bigl( \omega^{(0,h_0)} - \omega^{(0)} \bigr)}{\partial x_i} d\Omega\\
				& + \int_\Omega \bigl( \hat{S}_{hyd}(T^{(0)}) - \hat{S}_{hyd}(T^{(0,h_0)}) \bigr) \bigl( \omega^{(0,h_0)} - \omega^{(0)} \bigr) d\Omega.
			\end{aligned}
		\end{equation}
		\begin{equation}
			\begin{aligned}
				\label{eq:5.19}
				& \int_\Omega \hat{C}_{ijkl}^{h_0}(T^{(0,h_0)}) \frac{\partial \bigl( u_k^{(0,h_0)} - u_k^{(0)} \bigr)}{\partial x_l} \frac{\partial \bigl( u_i^{(0,h_0)} - u_i^{(0)} \bigr)}{\partial x_j} d\Omega \\
				& = \int_\Omega \hat{\alpha}_{ij}^{h_0}(T^{(0,h_0)}) \bigl( T^{(0,h_0)} - T^{(0)} \bigr) \frac{\partial \bigl( u_i^{(0,h_0)} - u_i^{(0)} \bigr)}{\partial x_j} d\Omega \\
				& + \int_\Omega \hat{\beta}_{ij}^{h_0}(T^{(0,h_0)}) \bigl( \omega^{(0,h_0)} - \omega^{(0)} \bigr) \frac{\partial \bigl( u_i^{(0,h_0)} - u_i^{(0)} \bigr)}{\partial x_j} d\Omega \\
				& + \int_\Omega \bigl( \hat{C}_{ijkl}(T^{(0)}) - \hat{C}_{ijkl}^{h_0}(T^{(0)}) \bigr) \frac{\partial u_k^{(0)}}{\partial x_l} \frac{\partial \bigl( u_i^{(0,h_0)} - u_i^{(0)} \bigr)}{\partial x_j} d\Omega \\
				& + \int_\Omega \bigl( \hat{C}_{ijkl}^{h_0}(T^{(0)}) - \hat{C}_{ijkl}^{h_0}(T^{(0,h_0)}) \bigr) \frac{\partial u_k^{(0)}}{\partial x_l} \frac{\partial \bigl( u_i^{(0,h_0)} - u_i^{(0)} \bigr)}{\partial x_j} d\Omega \\
				& - \int_\Omega \bigl( \hat{\alpha}_{ij}(T^{(0)}) - \hat{\alpha}_{ij}^{h_0}(T^{(0)}) \bigr) \bigl( T^{(0)} - \tilde{T} \bigr) \frac{\partial \bigl( u_i^{(0,h_0)} - u_i^{(0)} \bigr)}{\partial x_j} d\Omega \\
				& - \int_\Omega \bigl( \hat{\alpha}_{ij}^{h_0}(T^{(0)}) - \hat{\alpha}_{ij}^{h_0}(T^{(0,h_0)}) \bigr) \bigl( T^{(0)} - \tilde{T} \bigr) \frac{\partial \bigl( u_i^{(0,h_0)} - u_i^{(0)} \bigr)}{\partial x_j} d\Omega \\
				& - \int_\Omega \bigl( \hat{\beta}_{ij}(T^{(0)}) - \hat{\beta}_{ij}^{h_0}(T^{(0)}) \bigr) \bigl( \omega^{(0)} - \tilde{\omega} \bigr) \frac{\partial \bigl( u_i^{(0,h_0)} - u_i^{(0)} \bigr)}{\partial x_j} d\Omega \\
				& - \int_\Omega \bigl( \hat{\beta}_{ij}^{h_0}(T^{(0)}) - \hat{\beta}_{ij}^{h_0}(T^{(0,h_0)}) \bigr) \bigl( \omega^{(0)} - \tilde{\omega} \bigr) \frac{\partial \bigl( u_i^{(0,h_0)} - u_i^{(0)} \bigr)}{\partial x_j} d\Omega.
			\end{aligned}
		\end{equation}
		
		Relying on \eqref{eq:5.2}, the Cauchy–Schwarz inequality and Young inequality, we directly obtain from \eqref{eq:5.17} the inequality below, assuming $| \hat S(T^{(0)}) - \hat S(T^{(0,h_0)}) | \le C| T^{(0)} - T^{(0,h_0)}|$, $| \hat k_{ij}^{h_0}(T^{(0)}) - \hat k_{ij}^{h_0}(T^{(0,h_0)})| \le C| T^{(0)} - T^{(0,h_0)}|$ and $| \hat Q_{hyd}(T^{(0)}) - \hat Q_{hyd}(T^{(0,h_0)}) | \le C | T^{(0)} - T^{(0,h_0)} |$.
		\begin{equation}
			\begin{aligned}
				\label{eq:5.20}
				&\frac{\partial}{\partial t}\Bigl( C \| T^{(0,h_0)} - T^{(0)} \|_{L^2(\Omega)}^2 \Bigr)
				+ C \| T^{(0,h_0)} - T^{(0)} \|_{H^1(\Omega)}^2\\
				& \le C \| T^{(0,h_0)} - T^{(0)} \|_{L^2(\Omega)}^2 + C h_0^4.
			\end{aligned}
		\end{equation}
		Then, integrating both sides of \eqref{eq:5.20} from $0$ to $t$ $(0<t \le T^*)$ yields the following inequality.
		\begin{equation}
			\begin{aligned}
				\label{eq:5.21}
				& C \bigl\| T^{(0, h_0)} - T^{(0)} \bigr\|_{L^2(\Omega)}^2 + C \int_0^t \bigl\| T^{(0, h_0)} - T^{(0)} \bigr\|_{H^1(\Omega)}^2 d\tau \\
				& \le C \int_0^t \bigl\| T^{(0, h_0)} - T^{(0)} \bigr\|_{L^2(\Omega)}^2 d\tau + \int_0^t C h_0^4 d\tau.
			\end{aligned}
		\end{equation}
		When setting $\Upsilon (t) = \bigl\| T^{(0, h_0)} - T^{(0)} \bigr\|_{L^2(\Omega)}^2 + \int_0^t \bigl\| T^{(0, h_0)} - T^{(0)} \bigr\|_{H^1(\Omega)}^2 d\tau$, then we can derive $\Upsilon(t) \le C h_0^4 + C \int_0^t \Upsilon(t) d\tau$ from \eqref{eq:5.21}.
		Consequently, applying Gronwall inequality and exploiting the arbitrariness of $t$, we obtain the estimate \eqref{eq:5.11}.
		
		Relying on the inequalities in \eqref{eq:5.3} and \eqref{eq:5.11}, together with the Cauchy–Schwarz and Young inequalities, we directly obtain from \eqref{eq:5.18} the following inequality, assuming $|\hat g_{ij}^{{h_0}}({\omega ^{(0)}}) - \hat g_{ij}^{{h_0}}({\omega ^{(0,{h_0})}})| \le C|{\omega ^{(0)}} - {\omega ^{(0,{h_0})}}|$ and $|{\hat S_{hyd}}({T^{(0)}}) - {\hat S_{hyd}}({T^{(0,{h_0})}})| \le C|{T^{(0)}} - {T^{(0,{h_0})}}|$.
		\begin{equation}
			\begin{aligned}
				\label{eq:5.22}
				& \frac{\partial}{\partial t} \Bigl( C \bigl\| \omega^{(0,h_0)} - \omega^{(0)} \bigr\|_{L^2(\Omega)}^2 \Bigr) + C \bigl\| \omega^{(0,h_0)} - \omega^{(0)} \bigr\|_{H^1(\Omega)}^2 \\
				& \leq C \bigl\| \omega^{(0,h_0)} - \omega^{(0)} \bigr\|_{L^2(\Omega)}^2 + C h_0^4.
			\end{aligned}
		\end{equation}
		Similarly, we obtain that inequality \eqref{eq:5.12} holds.
		
		From the inequalities \eqref{eq:5.4}-\eqref{eq:5.6}, \eqref{eq:5.11}-\eqref{eq:5.12} and the Cauchy–Schwarz and Young inequalities, we directly obtain the following inequality from equality \eqref{eq:5.19}, assuming $|\hat C_{ijkl}^{{h_0}}({T^{(0)}}) - \hat C_{ijkl}^{{h_0}}({T^{(0,{h_0})}})| \le C|{T^{(0)}} - {T^{(0,{h_0})}}|$, $|\hat \alpha _{ij}^{{h_0}}({T^{(0)}}) - \hat \alpha _{ij}^{{h_0}}({T^{(0,{h_0})}})| \le C|{T^{(0)}} - {T^{(0,{h_0})}}|$ and $|\hat \beta _{ij}^{{h_0}}({T^{(0)}}) - \hat \beta _{ij}^{{h_0}}({T^{(0,{h_0})}})| \le C|{T^{(0)}} - {T^{(0,{h_0})}}|$. Similarly, we obtain that inequality \eqref{eq:5.13} holds.
	\end{proof}
	
	\begin{assumption}
		\label{ass:4.1}
		Let $T^{(0, h_0)}$, $\omega^{(0, h_0)}$ and $u_{i}^{(0, h_0)}$ represent the exact solutions of the revised macroscopic homogenized equations \eqref{eq:5.10}, and $T^{(0, h_0, h_1)}$, $\omega^{(0, h_0, h_1)}$ and $u_{i}^{(0, h_0, h_1)}$ denote the corresponding finite element solutions of the revised macroscopic homogenized equations \eqref{eq:5.10}. 
		There exists a constant $C$ independent of $h_0$ and $\Delta t$ such that
		\begin{equation}
			\label{eq:5.23}
			\max_{1 \leq k \leq M} \| T^{(0,h_0,h_1)}(\mathbf{x}, t_k) - T^{(0,h_0)}(\mathbf{x}, t_k) \|_{L^2(\Omega)} \leq C \Delta t + C h_1^2,
		\end{equation}
		\begin{equation}
			\label{eq:5.24}
			\max_{1 \leq k \leq M} \| \omega^{(0,h_0,h_1)}(\mathbf{x}, t_k) - \omega^{(0,h_0)}(\mathbf{x}, t_k) \|_{L^2(\Omega)} \leq C \Delta t + C h_1^2,
		\end{equation}
		\begin{equation}
			\label{eq:5.25}
			\max_{1 \leq k \leq M} \|\bm{u}^{(0,h_0,h_1)}(\mathbf{x}, t_k)-\bm{u}^{(0,h_0)}(\mathbf{x}, t_k)\|_{(H^1(\Omega))^n}\leq C h_1^2.
		\end{equation}
		Estimates \eqref{eq:5.23} and \eqref{eq:5.24} are the optimal error bounds for the backward Euler--Galerkin discretization of nonlinear parabolic equations, while \eqref{eq:5.25} follows from the standard finite element error estimate for linear elliptic problems. 
		Detailed proofs of analogous estimates for closely related coupled systems can be found in, e.g., \cite{R52,R58,R59}.
	\end{assumption}
	
	\begin{theorem}
		\label{thm:2}
		Denote by $T^{(0, h_0, h_1)}$, $\omega^{(0, h_0, h_1)}$ and $u_{i}^{(0, h_0, h_1)}$ the finite element solutions of the modified homogenized equations \eqref{eq:5.10}, and by $T^{(0)}$, $\omega^{(0)}$ and $u_{i}^{(0)}$ the exact solutions of the homogenized equations \eqref{eq:2.27}. Then the following estimate holds
		\begin{equation}
			\label{eq:5.26}
			\max_{1 \leq k \leq M} \bigl\| T^{(0,h_0,h_1)}(\mathbf{x}, t_k) - T^{(0)}(\mathbf{x}, t_k) \bigr\|_{L^2(\Omega)} \leq C \Delta t + C h_1^2 + C h_0^2,
		\end{equation}
		\begin{equation}
			\label{eq:5.27}
			\max_{1 \leq k \leq M} \bigl\| \omega^{(0,h_0,h_1)}(\mathbf{x}, t_k) - \omega^{(0)}(\mathbf{x}, t_k) \bigr\|_{L^2(\Omega)} \leq C \Delta t + C h_1^2 + C h_0^2,
		\end{equation}
		\begin{equation}
			\label{eq:5.28}
			\max_{1 \leq k \leq M} \bigl\| \bm{u}^{(0,h_0,h_1)}(\mathbf{x}, t_k) - \bm{u}^{(0)}(\mathbf{x}, t_k) \bigr\|_{(H^1(\Omega))^n} \leq C h_1^2 + C h_0^2,
		\end{equation}
		where $C$ is a constant independent of $h_0$, $h_1$ and $\Delta t$.
	\end{theorem}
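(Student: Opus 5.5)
The plan is to use the triangle inequality, splitting each error through the intermediate solution of the modified homogenized equations \eqref{eq:5.10}. For every time level $t_k$ we write
\begin{equation*}
T^{(0,h_0,h_1)}(\mathbf{x},t_k)-T^{(0)}(\mathbf{x},t_k)=\bigl(T^{(0,h_0,h_1)}-T^{(0,h_0)}\bigr)(\mathbf{x},t_k)+\bigl(T^{(0,h_0)}-T^{(0)}\bigr)(\mathbf{x},t_k),
\end{equation*}
and the analogous splittings for $\omega$ and $\bm{u}$. The first difference is the discretization error of the FDM-FEM scheme applied to \eqref{eq:5.10}. The second is the modelling error caused by replacing the exact cell functions by their finite element approximations on $V_{h_0}(Y)$.

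For the first term we invoke Assumption \ref{ass:4.1} directly. Estimates \eqref{eq:5.23} and \eqref{eq:5.24} bound it by $C\Delta t+Ch_1^2$ uniformly in $k$, and \eqref{eq:5.25} bounds the displacement part in $(H^1(\Omega))^n$ by $Ch_1^2$. One point needs checking: the constants in Assumption \ref{ass:4.1} must not depend on $h_0$. This holds because Lemma \ref{lem:4.2} gives ellipticity bounds $\underline{\kappa},\overline{\kappa}$ for $\hat k^{h_0}_{ij},\hat g^{h_0}_{ij},\hat C^{h_0}_{ijkl},\hat\alpha^{h_0}_{ij},\hat\beta^{h_0}_{ij}$ that are uniform in $h_0$ once $h_0$ is small enough. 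So the standard backward Euler--Galerkin constants, which depend only on these bounds and on the data, are independent of $h_0$.

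For the second term we use Lemma \ref{lem:4.3}. Estimates \eqref{eq:5.11} and \eqref{eq:5.12} control $\|T^{(0,h_0)}-T^{(0)}\|_{L^\infty(0,T^*;L^2(\Omega))}$ and the corresponding moisture quantity by $Ch_0^2$. Since the $L^\infty$-in-time norm dominates the value at each $t_k$ (the solutions are continuous in time with values in $L^2$, as parabolic weak solutions), we get $\max_k\|(T^{(0,h_0)}-T^{(0)})(\cdot,t_k)\|_{L^2(\Omega)}\le Ch_0^2$. The same reasoning applies to $\omega$. For the displacement, \eqref{eq:5.13} gives the $(H^1(\Omega))^n$ bound $Ch_0^2$ at every $t_k$.

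Adding the two contributions and taking the maximum over $1\le k\le M$ yields \eqref{eq:5.26}--\eqref{eq:5.28}, with $C$ the larger of the two constants. The only genuinely delicate step is the $h_0$-uniformity of the constants in Assumption \ref{ass:4.1}, addressed above via Lemma \ref{lem:4.2}. Everything else is a direct combination of earlier results.
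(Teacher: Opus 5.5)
Your proposal is correct and matches the paper's proof: the paper also splits each error through the exact solution of the modified homogenized problem \eqref{eq:5.10} by the triangle inequality, and then inserts Assumption~\ref{ass:4.1} and Lemma~\ref{lem:4.3}. Your two extra remarks are sound refinements that the paper leaves implicit: the $h_0$-uniformity of the constants, which Assumption~\ref{ass:4.1} in fact already asserts, and the use of time continuity to pass from the $L^\infty$-in-time bound to the values at the nodes $t_k$.
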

	
	\begin{proof}
		Using the triangle inequality, we obtain the following inequalities:
		\begin{equation}
			\label{eq:5.29}
			\begin{aligned}
				& \max_{1 \leq k \leq M} \bigl\| T^{(0,h_0,h_1)}(\mathbf{x}, t_k) - T^{(0)}(\mathbf{x}, t_k) \bigr\|_{L^2(\Omega)} \\
				& \leq \max_{1 \leq k \leq M} \bigl\| T^{(0,h_0,h_1)}(\mathbf{x}, t_k) - T^{(0, h_0)}(\mathbf{x}, t_k) \bigr\|_{L^2(\Omega)} + \max_{1 \leq k \leq M} \bigl\| T^{(0,h_0)}(\mathbf{x}, t_k) - T^{(0)}(\mathbf{x}, t_k) \bigr\|_{L^2(\Omega)}.
			\end{aligned}
		\end{equation}
		\begin{equation}
			\label{eq:5.30}
			\begin{aligned}
				& \max_{1 \leq k \leq M} \bigl\| \omega^{(0,h_0,h_1)}(\mathbf{x}, t_k) - \omega^{(0)}(\mathbf{x}, t_k) \bigr\|_{L^2(\Omega)}\\
				& \leq \max_{1 \leq k \leq M} \bigl\| \omega^{(0,h_0,h_1)}(\mathbf{x}, t_k) - \omega^{(0, h_0)}(\mathbf{x}, t_k) \bigr\|_{L^2(\Omega)} + \max_{1 \leq k \leq M} \bigl\| \omega^{(0,h_0)}(\mathbf{x}, t_k) - \omega^{(0)}(\mathbf{x}, t_k) \bigr\|_{L^2(\Omega)}.
			\end{aligned}
		\end{equation}
		\begin{equation}
			\label{eq:5.31}
			\begin{aligned}
				& \max_{1 \leq k \leq M} \bigl\| \bm{u}^{(0,h_0,h_1)}(\mathbf{x}, t_k) - \bm{u}^{(0)}(\mathbf{x}, t_k) \bigr\|_{(H^1(\Omega))^n}\\
				& \leq \max_{1 \leq k \leq M} \bigl\| \bm{u}^{(0,h_0,h_1)}(\mathbf{x}, t_k) - \bm{u}^{(0, h_0)}(\mathbf{x}, t_k) \bigr\|_{(H^1(\Omega))^n} + \max_{1 \leq k \leq M} \bigl\| \bm{u}^{(0,h_0)}(\mathbf{x}, t_k) - \bm{u}^{(0)}(\mathbf{x}, t_k) \bigr\|_{(H^1(\Omega))^n}.
			\end{aligned}
		\end{equation}
		
		By applying Lemma~\ref{lem:4.3} and Assumption~\ref{ass:4.1}, we substitute \eqref{eq:5.11} and \eqref{eq:5.23} into \eqref{eq:5.29}, \eqref{eq:5.12} and \eqref{eq:5.24} into \eqref{eq:5.30}, and \eqref{eq:5.13} and \eqref{eq:5.25} into \eqref{eq:5.31}, respectively. This directly yields the estimates \eqref{eq:5.26}-\eqref{eq:5.28}, which completes the proof.
	\end{proof}
	
	To summarize, the theoretical analysis presented above rigorously guarantees convergence of the two-stage numerical algorithm at both the microscopic and macroscopic levels.

\section{Numerical examples and results}
\label{sec:5}
	This section presents several numerical examples to validate the proposed HOMS computational model along with its numerical algorithm. All numerical experiments are conducted on the same computer equipped with an Intel Core i7-10700 processor (2.90 GHz) and 16.0 GB RAM, and all numerical simulations are executed using the Freefem++ software.
	
	Given the difficulty in obtaining exact solutions for the multi-scale problem \eqref{eq:2.1}, we substitute $T^{\epsilon}(\mathbf{x},t), \omega^{\epsilon}(\mathbf{x},t)$ and $\bm{u}^{\epsilon}(\mathbf{x},t)$ with corresponding high-resolution FEM solutions $T_e(\mathbf{x},t), \omega_e(\mathbf{x},t)$ and $\bm{u}_e(\mathbf{x},t)$, which serve as reference solutions. Moreover, let $\|\cdot\|_{L^{2}}$ and $\mid\cdot\mid _{H^{1}}$ denote the $L^2$ norm and $H^1$ semi-norm, respectively. The $L^2$ norm reflects the global error in the solution and supports validation of the macroscopic response, while the $H^1$ semi-norm measures gradient errors and is essential for assessing the resolution of microscopic oscillations. Then, the relative errors in the $L^2$ norm for $T^{(0)}$, $T^{(1,\epsilon)}$, $T^{(2,\epsilon)}$, $\omega^{(0)}$, $\omega^{(1,\epsilon)}$, $\omega^{(2,\epsilon)}$, $\bm{u}^{(0)}$, $\bm{u}^{(1,\epsilon)}$ and $\bm{u}^{(2,\epsilon)}$ are defined as $e_{T,L^2}^0(t)$, $e_{T,L^2}^1(t)$, $e_{T,L^2}^2(t)$, $e_{\omega,L^2}^0(t)$, $e_{\omega,L^2}^1(t)$, $e_{\omega,L^2}^2(t)$, $e_{\bm{u},L^2}^0(t)$, $e_{\bm{u},L^2}^1(t)$ and $e_{\bm{u},L^2}^2(t)$, respectively. The relative errors in the $H^1$ semi-norm are defined as $e_{T,H^1}^0(t)$, $e_{T,H^1}^1(t)$, $e_{T,H^1}^2(t)$, $e_{\omega,H^1}^0(t)$, $e_{\omega,H^1}^1(t)$, $e_{\omega,H^1}^2(t)$, $e_{\bm{u},H^1}^0(t)$, $e_{\bm{u},H^1}^1(t)$ and $e_{\bm{u},H^1}^2(t)$, respectively.
	
	\subsection{Example 1: nonlinear hygro-thermo-mechanical coupling simulation of 2D heterogeneous structure}
	\label{sec:51}
	In this example, the nonlinear dynamic hygro-thermo-mechanical coupling behavior of 2D heterogeneous structure is simulated, which is modeled as a periodic array of microscopic cells, each comprising of matrix and inclusion constituents. Here, the investigated heterogeneous structure $\Omega$ is defined as $\Omega= (x_1,x_2)= [0,1]^2 \mathrm{cm}^2$ and small periodic parameter $\epsilon=1/10$.(see Fig.~\ref{f1:2D})
	\begin{figure}[!htb]
		\centering
		\begin{minipage}[c]{0.32\textwidth}
			\centering
			\includegraphics[width=40mm]{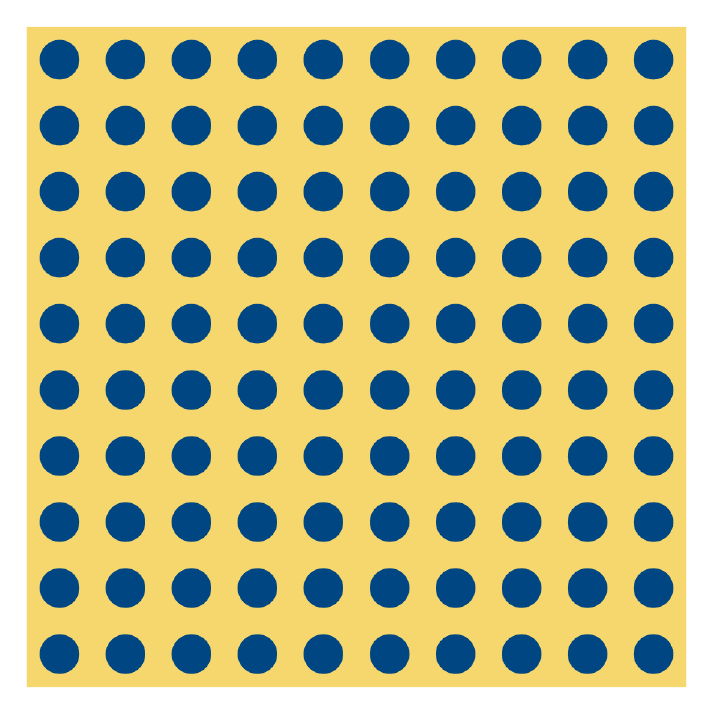}\\
			(a)
		\end{minipage}
		\begin{minipage}[c]{0.3\textwidth}
			\centering
			\includegraphics[width=40mm]{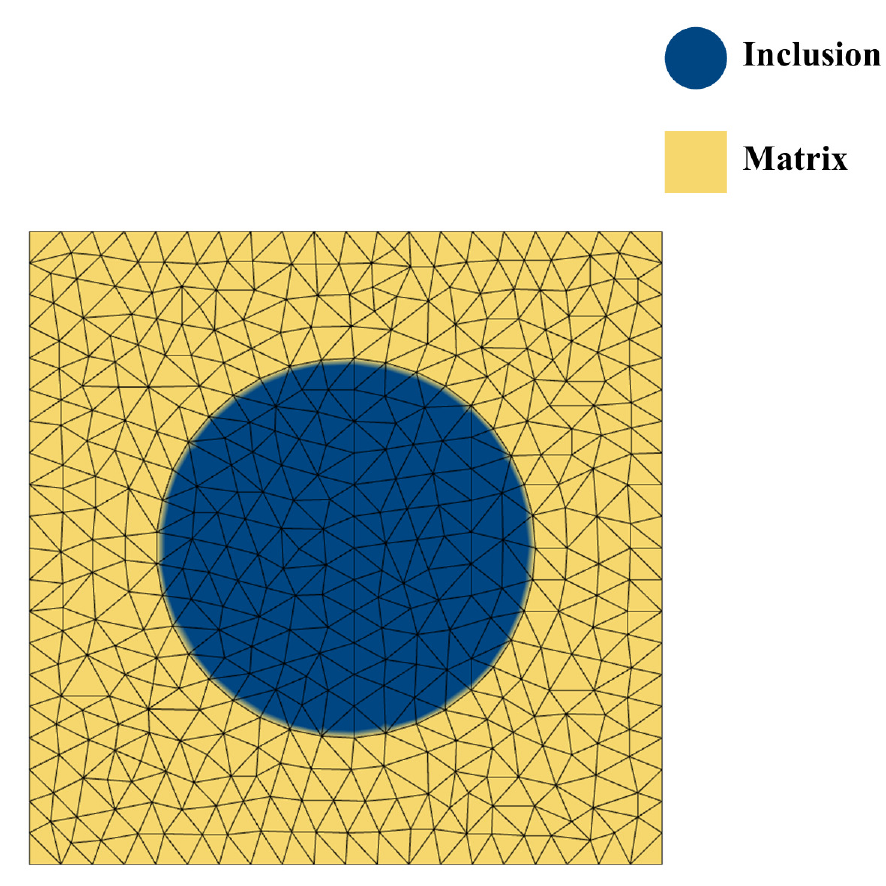}\\
			(b)
		\end{minipage}
		\begin{minipage}[c]{0.3\textwidth}
			\centering
			\includegraphics[width=40mm]{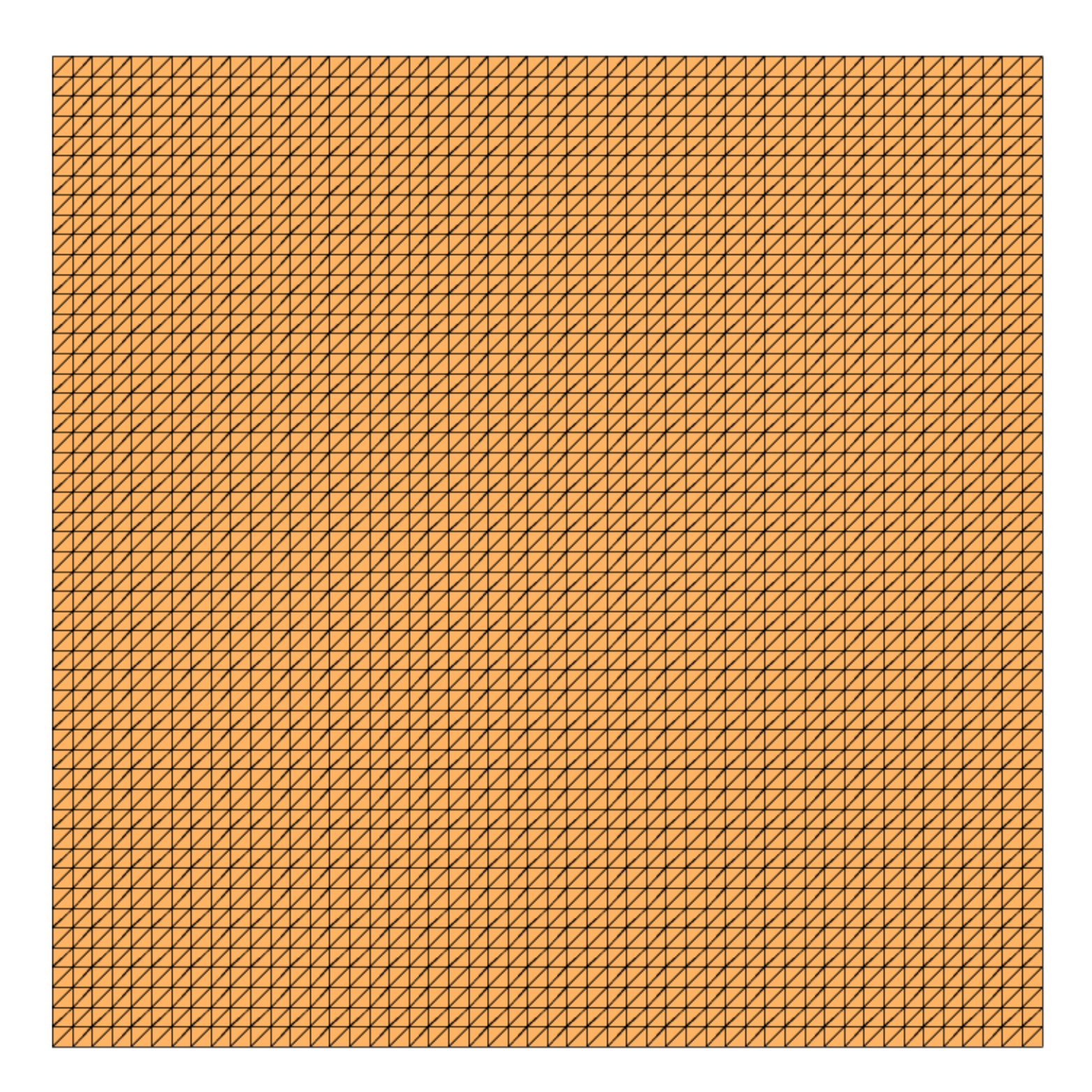}\\
			(c)
		\end{minipage}
		\caption{(a) The 2D heterogeneous structure $\Omega$; (b) PUC $Y$; (c) homogenized structure $\Omega$.}\label{f1:2D}
	\end{figure}
	
	The material parameters of heterogeneous structure are given in Table~\ref{t1}.
	\begin{table}[!t]
		\caption{Material property parameters ($T$ represents temperature and $\omega$ represents moisture).\label{t1}}
		\centering
		\small
		\begin{tabular*}{\columnwidth}{@{\extracolsep\fill}lcc@{\extracolsep\fill}}
			\toprule
			Property & Matrix & Inclusion \\
			\midrule
			Mass density $\rho^{\epsilon}$ (kg/m$^3$) & 7000.0 & 5000.0 \\
			Specific heat $c^{\epsilon}$ (J/(kg$\cdot$K)) & $900.0+1.5T+0.015T^2$ & $620.0+0.7T+0.007T^2$ \\
			Young's modulus $E^{\epsilon}$ (GPa) & $300.0-0.05T-5.0\times10^{-7}T^2$ & $1.5-5.0\times10^{-4}T-5.0\times10^{-9}T^2$ \\
			Poisson's ratio $\nu^{\epsilon}$ & 0.30 & 0.25 \\
			Thermal conductivity $k^{\epsilon}_{ij}$ (W/(m$\cdot$K)) & $1500.0+0.5T+5.0\times10^{-5}T^2$ & $15+5.0\times10^{-3}T+5.0\times10^{-7}T^2$ \\
			Moisture diffusion $g^{\epsilon}_{ij}$ (m$^2$/s) & $1.5\times10^{-5}+5.0\times10^{-9}\omega+5.0\times10^{-13}\omega^2$ & $1.5\times10^{-7}+5.0\times10^{-11}\omega+5.0\times10^{-15}\omega^2$ \\
			Thermal stress coefficient $\alpha^{\epsilon}_{ij}$ (MPa/K) & $0.5-1.0\times10^{-4}T-1.0\times10^{-8}T^2$ & $0.05-1.0\times10^{-5}T-1.0\times10^{-9}T^2$ \\
			Moisture stress coefficient $\beta_{ij}^{\epsilon}$ (MPa) & $0.05-1.0\times10^{-5}T-1.0\times10^{-9}T^2$ & $5.0\times10^{-3}-1.0\times10^{-6}T-1.0\times10^{-10}T^2$ \\
			\bottomrule
		\end{tabular*}
	\end{table}
	
	In addition, the source terms, boundary conditions, and initial conditions for the multi-scale nonlinear problem \eqref{eq:2.1} in this example are specified below.
	\begin{equation}
		\label{eq:6.1}
		\begin{aligned}
			& h(\mathbf{x},t)=2000.0\ \mathrm{J/(cm^{3}\cdot s)}, \ m(\mathbf{x},t)=0.03\ \mathrm{s^{-1}}, \\
			& f_1(\mathbf{x},t)=f_2(\mathbf{x},t)=-5000\ \mathrm{N/cm^3}, \ \text{in } \Omega \times (0,T^*),\\
			& Q_{hyd}^\epsilon(\mathbf{x},T^\epsilon) =
			\begin{cases}
				2000.0 + 0.001T + 1.0\times10^{-8}T^2 \; \mathrm{J/(cm^3\cdot s)}, & \text{in Matrix},\\
				0.0, & \text{in Inclusion},
			\end{cases} \\
			& S_{hyd}^\epsilon(\mathbf{x},T^\epsilon) =
			\begin{cases}
				0.08+1.0\times10^{-7}T+1.0\times10^{-12}T^2 \; \mathrm{s^{-1}}, & \text{in Matrix},\\
				0.0, & \text{in Inclusion},
			\end{cases}\\
			& \hat{T}(\mathbf{x},t)=293.15\ \mathrm{K},\ \hat{\omega}(\mathbf{x},t)=0.8,\  \hat{\bm{u}}(\mathbf{x},t)=0.0\ \mathrm{cm}, \ \text{on } \partial\Omega \times (0,T^*),\\
			& \tilde{T}=293.15\ \mathrm{K},\ \tilde{\omega}=0.8,\  \tilde{\bm{u}}=0.0\ \mathrm{cm}, \ \text{in } \Omega.
		\end{aligned}
	\end{equation}
	
	In Table~\ref{t2}, triangular meshes are generated for the multi-scale problem \eqref{eq:2.1}, the auxiliary cell problems, and the corresponding homogenized problem \eqref{eq:2.27}. The computational cost of the precise FEM and the HOMS method is detailed, including the numbers of elements and nodes, as well as the CPU times for the precise finite element simulation and the multi-scale simulation.
	\begin{table}[!t]
		\caption{Comparison of computational cost ($\Delta t=0.01\,$s, $t\in[0,1.0]\,$s).\label{t2}}
		\centering
		\begin{tabular*}{\columnwidth}{@{\extracolsep\fill}cccc@{\extracolsep\fill}}
			\toprule
			& Cell equations & Homogenized equations & Multi-scale equations \\
			\midrule
			FEM nodes & 461 & 2601 & 35761 \\
			FEM elements & 840 & 5000 & 70800 \\
			\midrule
			& \multicolumn{2}{c}{HOMS method} & precise FEM \\
			\midrule
			Computational time & \multicolumn{2}{c}{817.613\,s} & 2295.457\,s \\
			\bottomrule
		\end{tabular*}
	\end{table}
	
	As shown in Table~\ref{t2}, the computational cost of the HOMS method is significantly lower than that of the precise FEM. The superiority of the proposed HOMS method over the precise FEM is obvious since a highly fine mesh is demanded to capture the microscopic oscillatory behaviors in this heterogeneous structure. In comparison, the proposed HOMS approach significantly accelerates the simulation of the multi-scale nonlinear problem \eqref{eq:2.1}, saving about $64.38\%$ of the computational time. Moreover, the time savings achieved by the HOMS method become increasingly significant as the simulation duration grows.
	
	For this example, $10$ equidistant interpolation points for macroscopic temperature and another $10$ for macroscopic moisture are prescribed within a single unit cell. It is worth noting that the auxiliary cell problems are solved off‑line, prior to the on‑line multi‑scale computation, and the resulting solutions remain applicable to different heterogeneous structures. The nonlinear hygro‑thermo‑mechanical response of the 2D heterogeneous structure is simulated over $t \in [0, 1.0] \mathrm{s}$. With a time step $\Delta t = 0.01 \mathrm{s}$, we solve the macroscopic homogenized equations \eqref{eq:2.27} and the multi‑scale nonlinear equations \eqref{eq:2.1} in the on‑line stage. The final temperature, moisture and displacement fields at $t = 1.0 \mathrm{s}$ are presented in Figs.~\ref{f2}-\ref{f4}, respectively. Furthermore, Fig.~\ref{f6} shows the evolution of the relative errors of these fields in the $L^2$ norm and $H^1$ semi-norm.
	\begin{figure}[!htb]
		\centering
		\begin{minipage}[c]{0.24\textwidth}
			\centering
			\includegraphics[width=\linewidth]{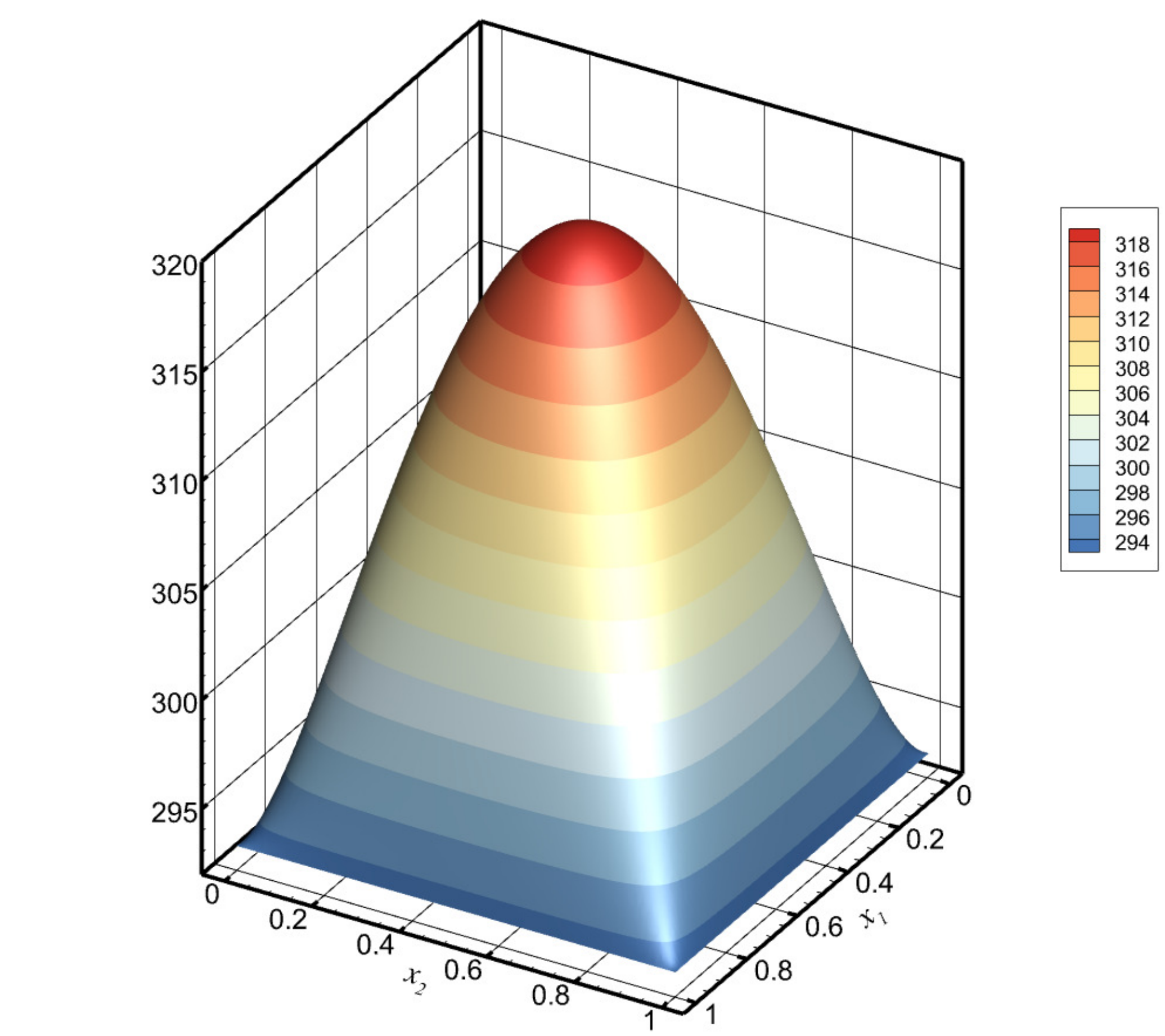}
			(a)
		\end{minipage}
		\hfill
		\begin{minipage}[c]{0.24\textwidth}
			\centering
			\includegraphics[width=\linewidth]{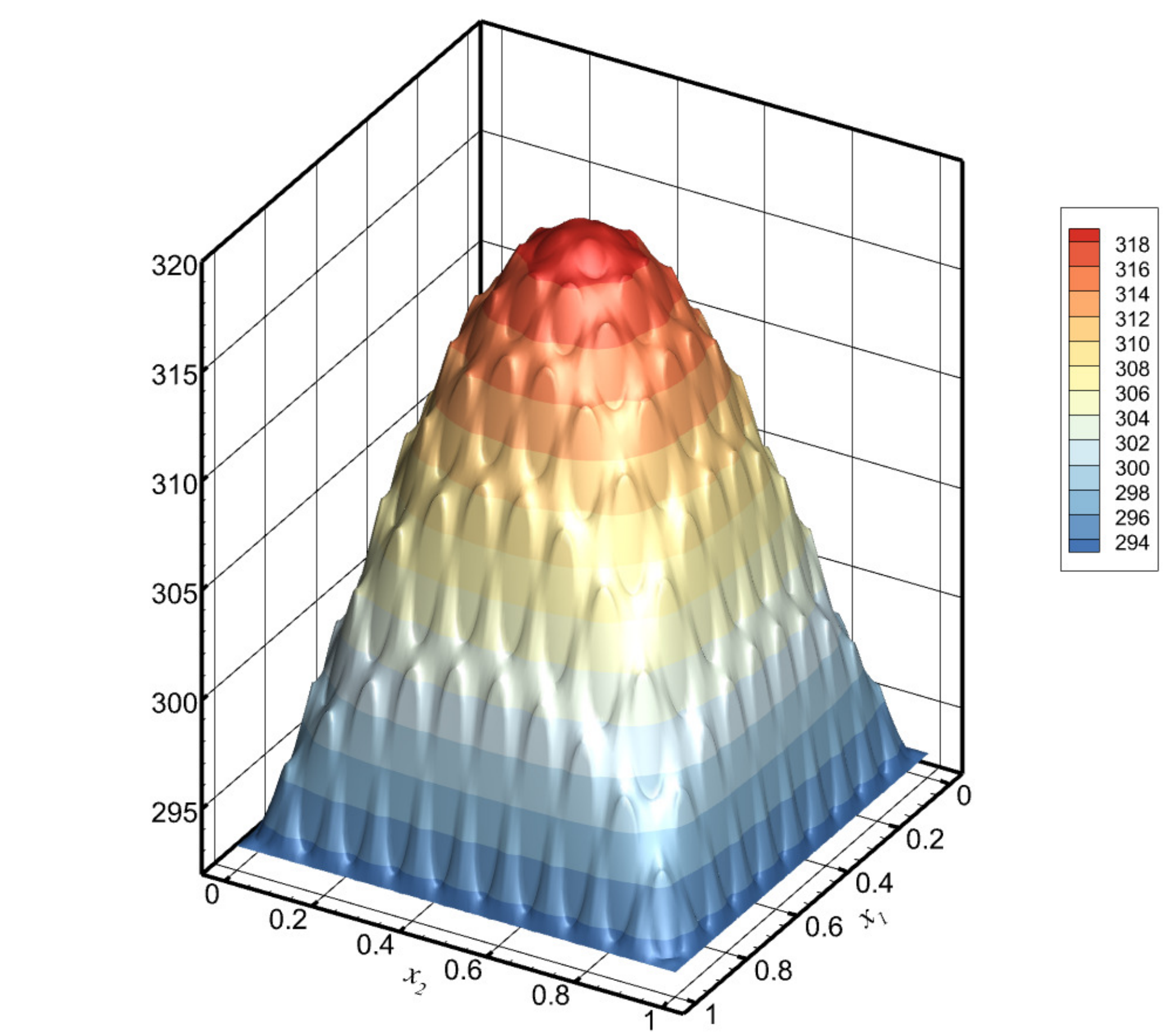}
			(b)
		\end{minipage}
		\hfill
		\begin{minipage}[c]{0.24\textwidth}
			\centering
			\includegraphics[width=\linewidth]{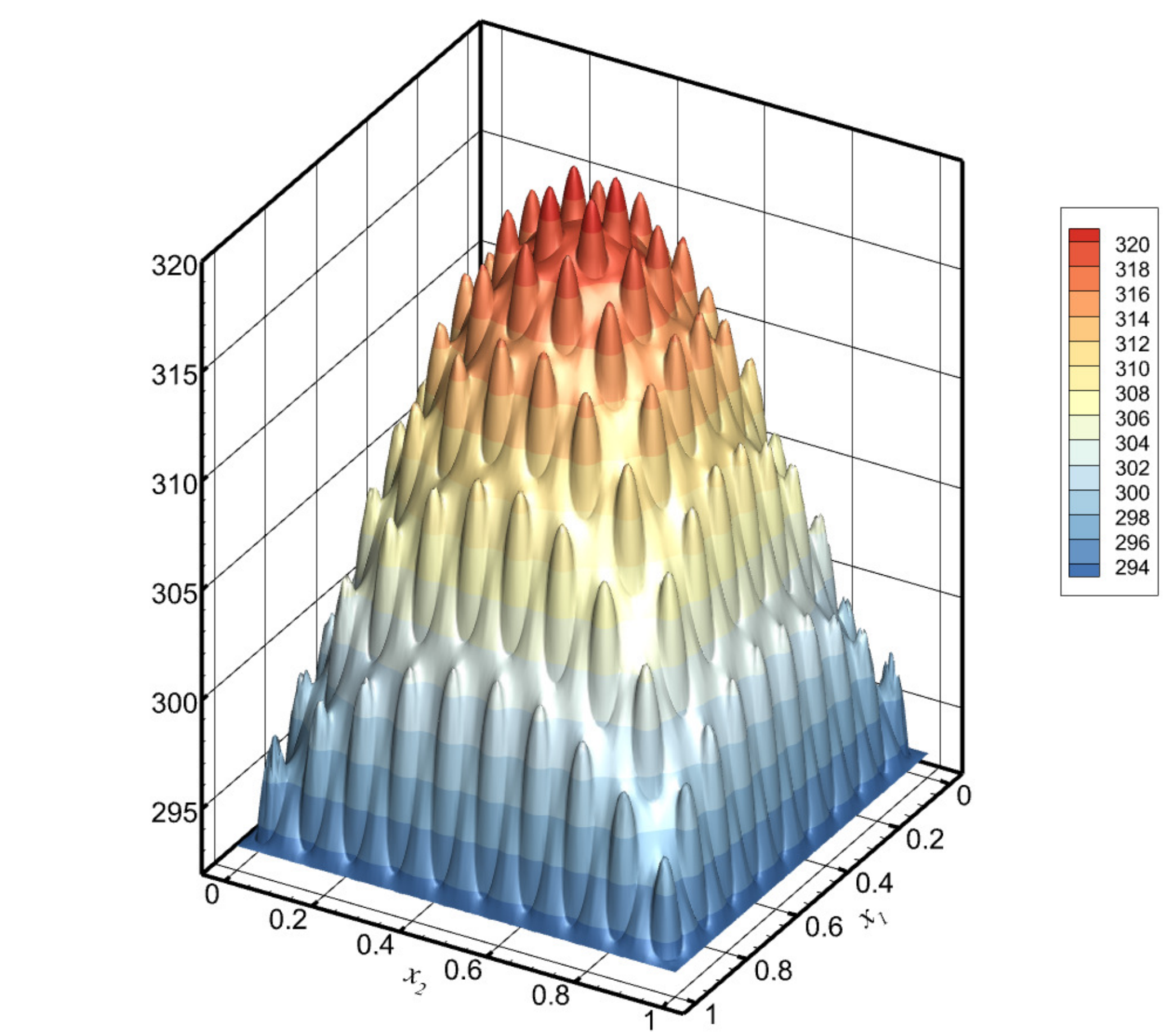}
			(c)
		\end{minipage}
		\hfill
		\begin{minipage}[c]{0.24\textwidth}
			\centering
			\includegraphics[width=\linewidth]{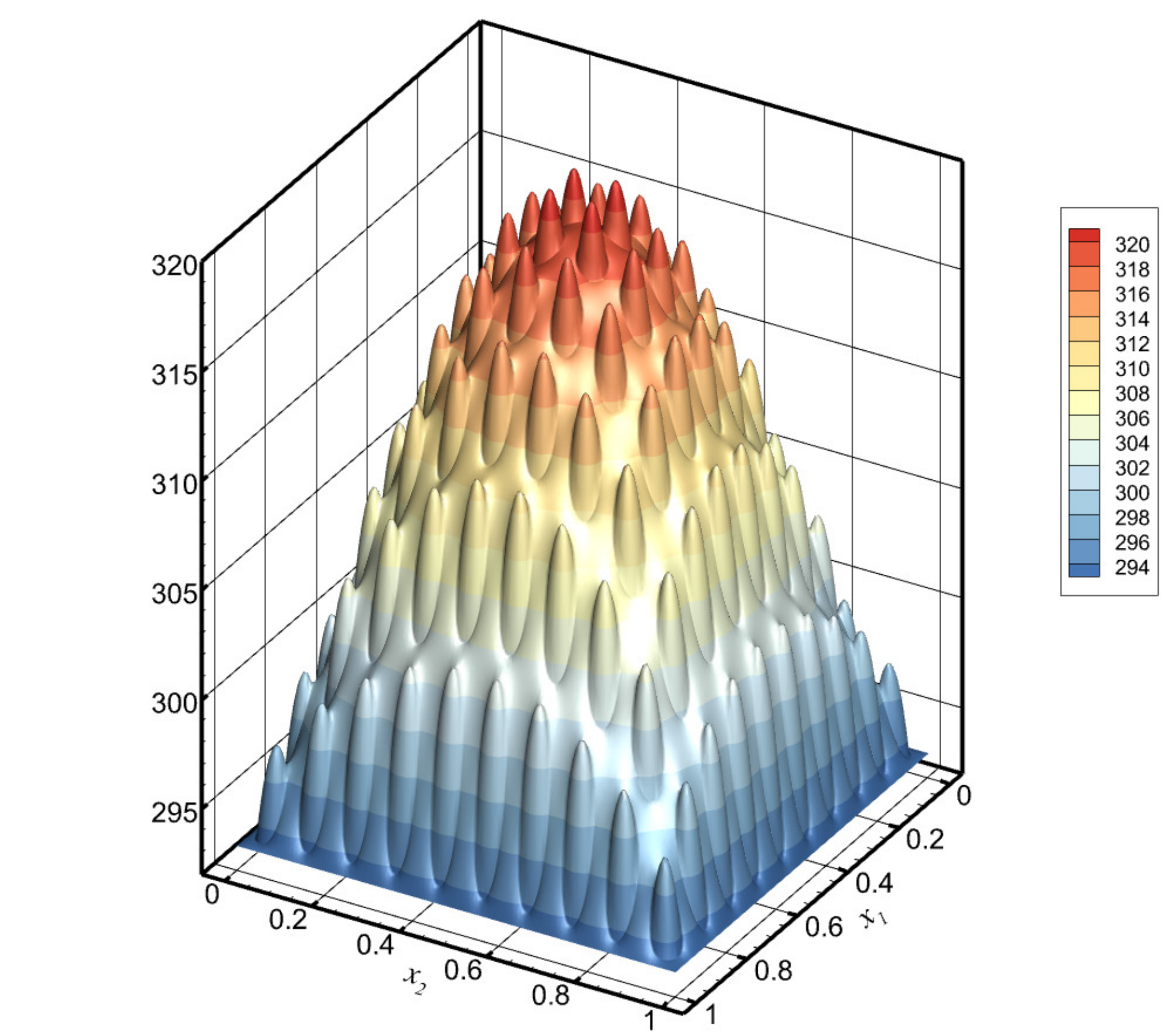}
			(d)
		\end{minipage}
		\caption{The temperature field at time $t=1.0\mathrm{s}$: (a) $T^{(0)}$; (b) $T^{(1,\epsilon)}$; (c) $T^{(2,\epsilon)}$; (d) $T_e$.}\label{f2}
	\end{figure}
	\begin{figure}[!htb]
		\centering
		\begin{minipage}[c]{0.24\textwidth}
			\centering
			\includegraphics[width=\linewidth]{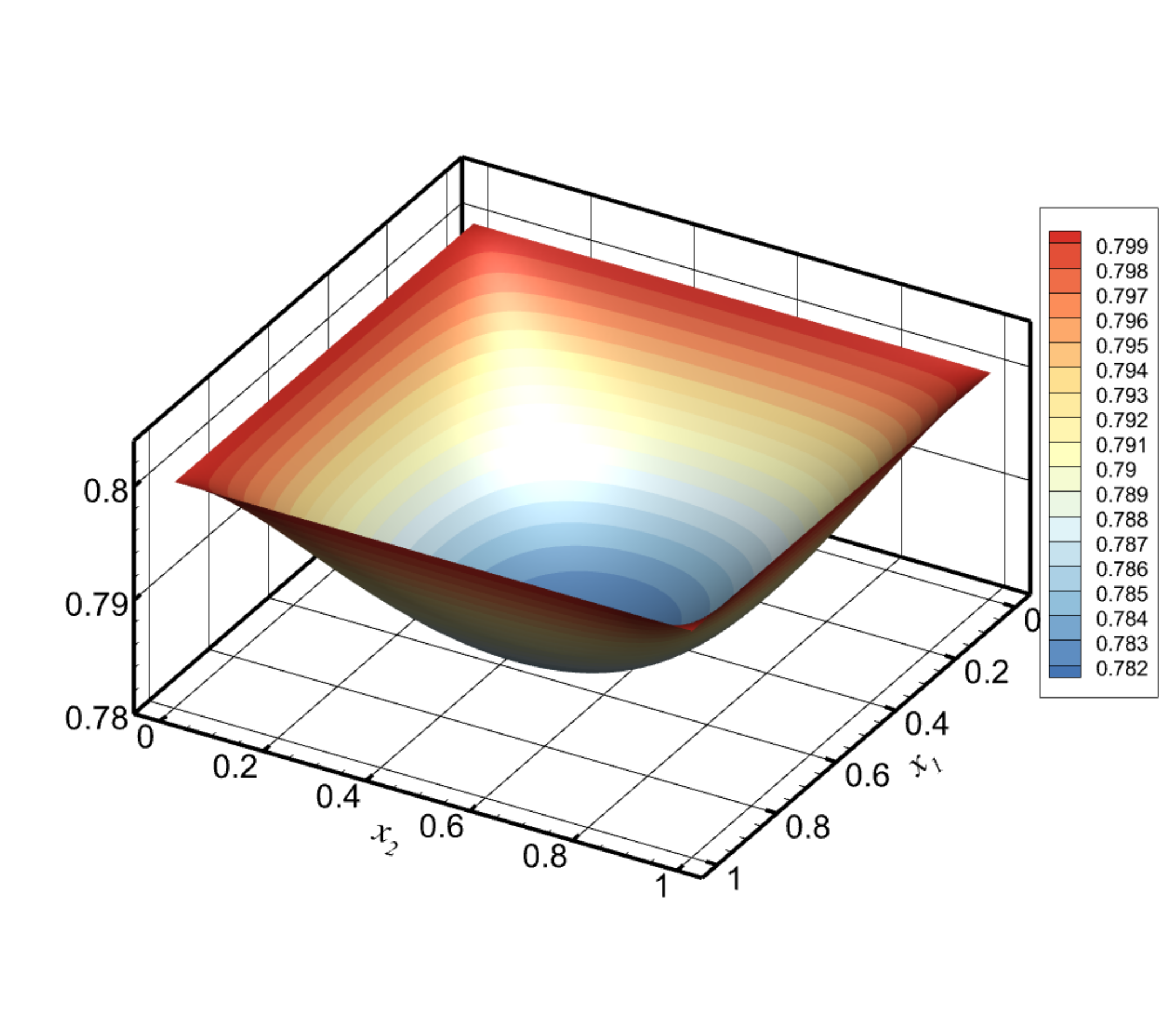}
			(a)
		\end{minipage}
		\hfill
		\begin{minipage}[c]{0.24\textwidth}
			\centering
			\includegraphics[width=\linewidth]{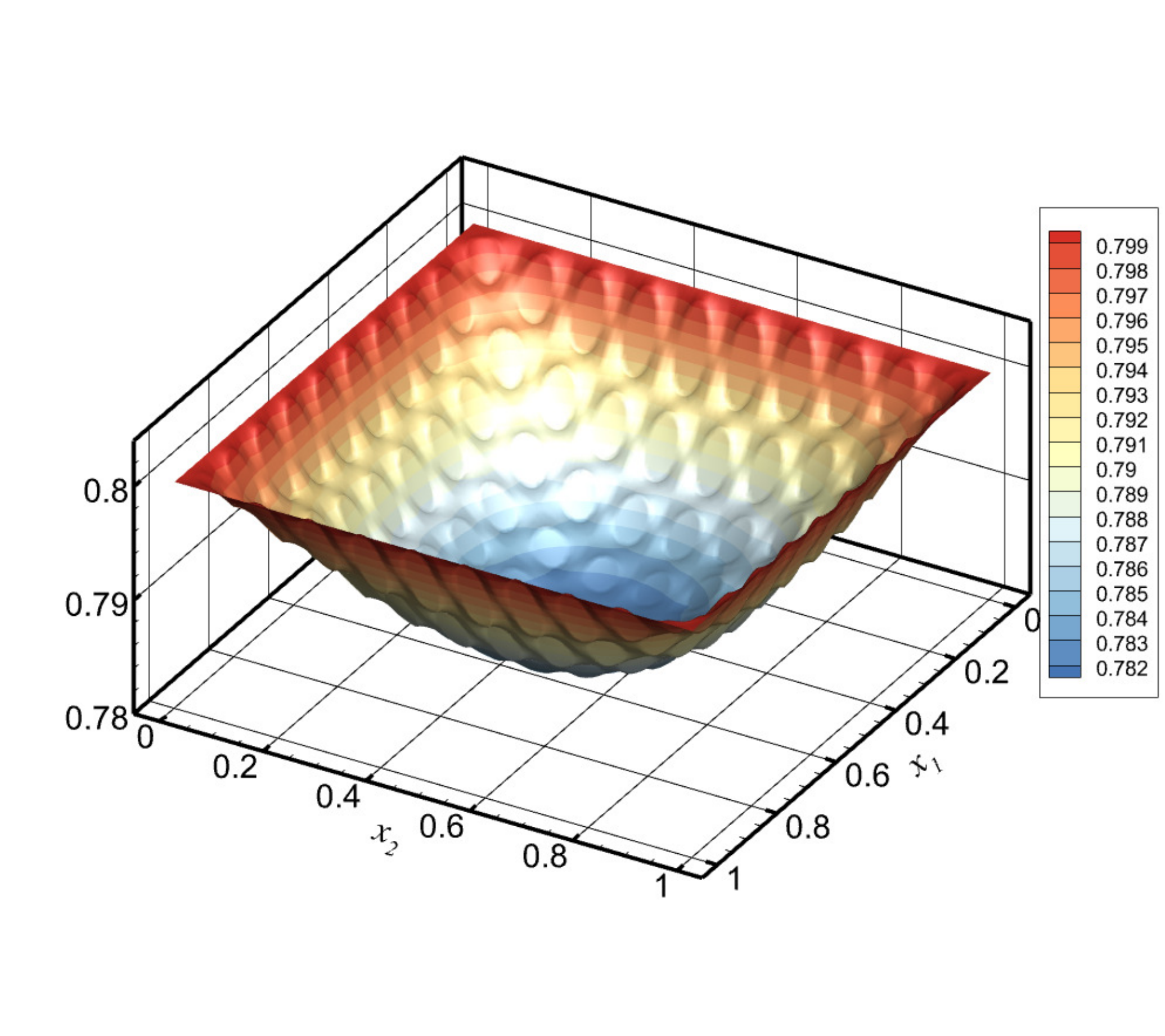}
			(b)
		\end{minipage}
		\hfill
		\begin{minipage}[c]{0.24\textwidth}
			\centering
			\includegraphics[width=\linewidth]{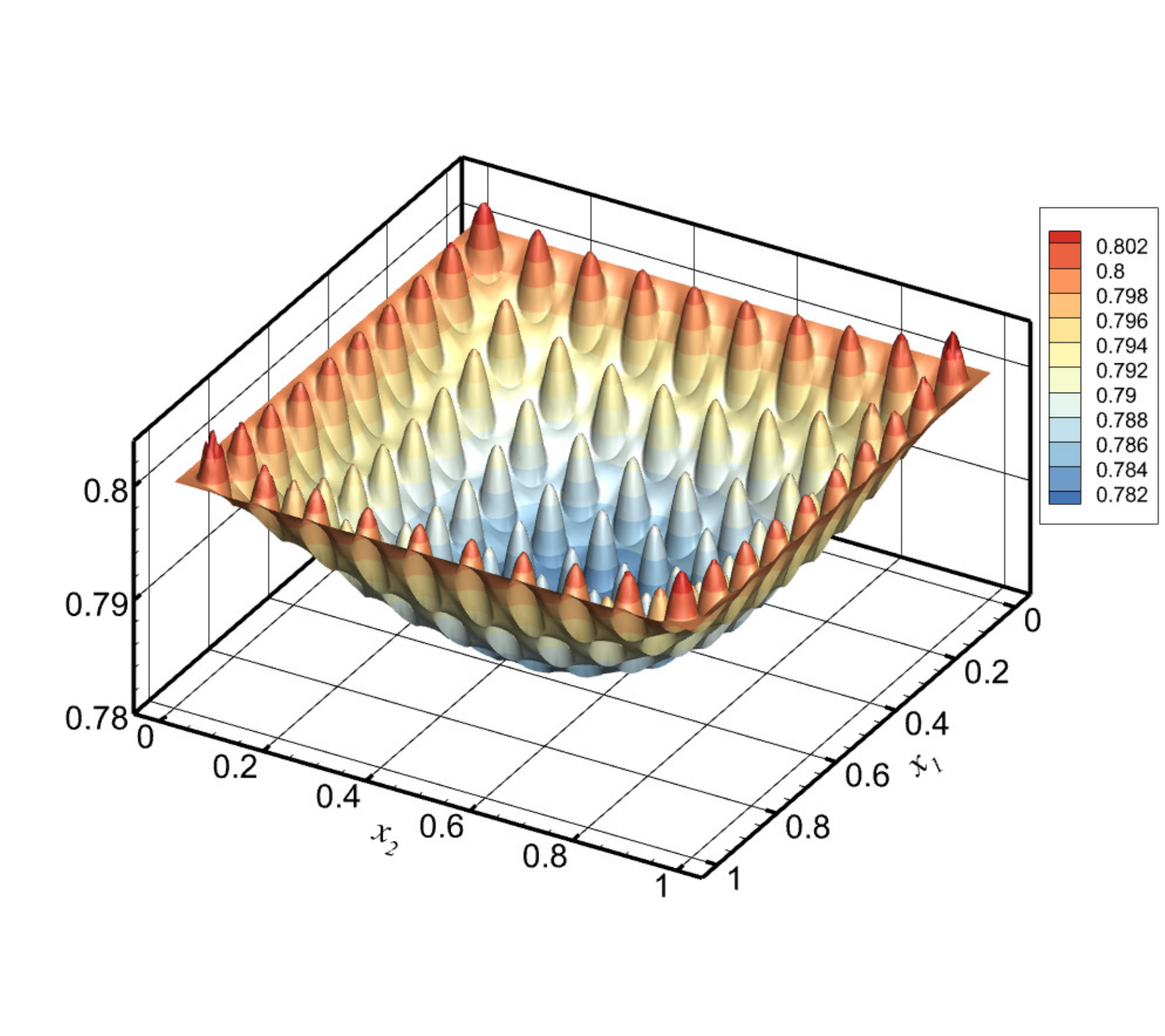}
			(c)
		\end{minipage}
		\hfill
		\begin{minipage}[c]{0.24\textwidth}
			\centering
			\includegraphics[width=\linewidth]{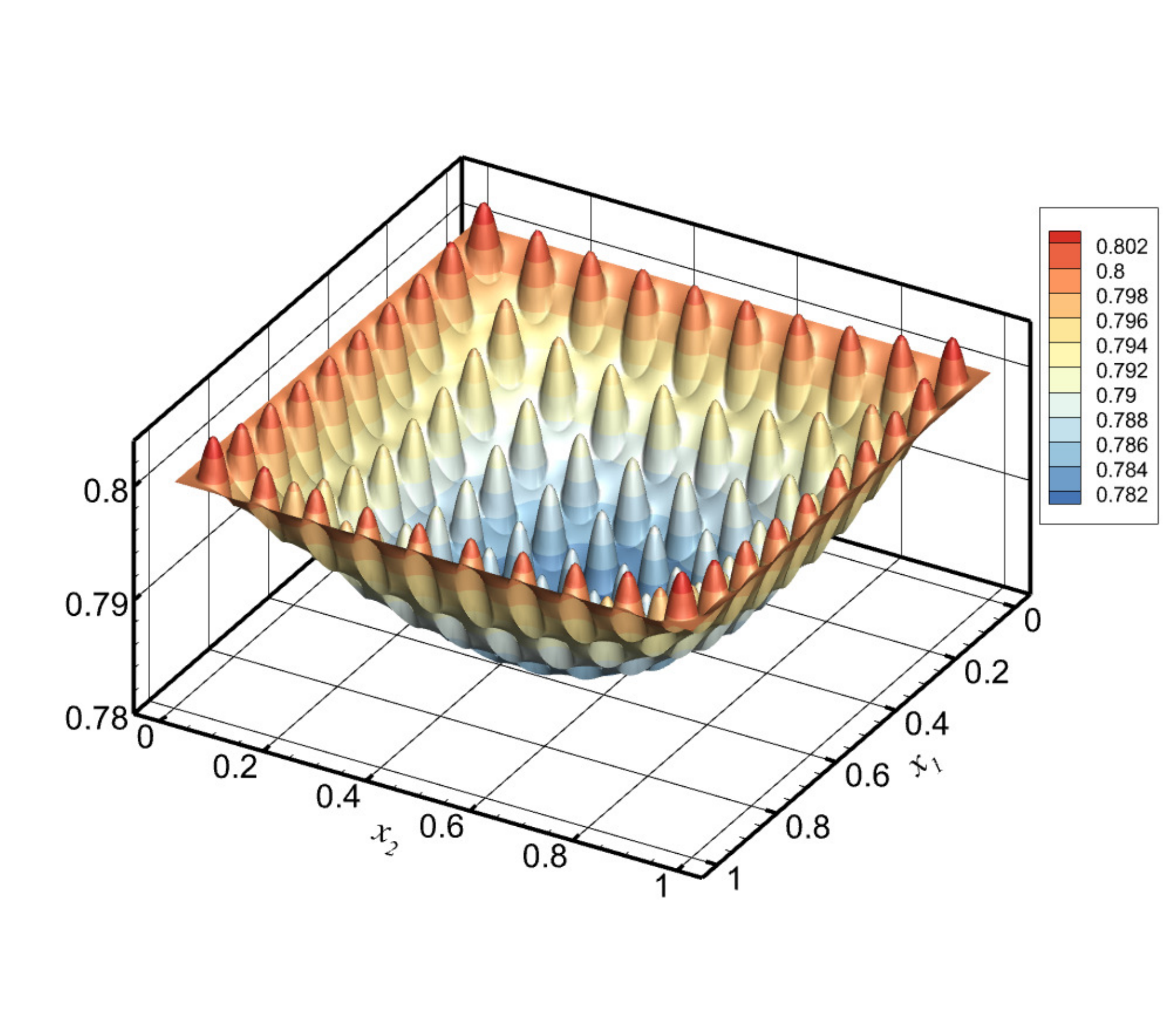}
			(d)
		\end{minipage}
		\caption{The moisture field at time $t=1.0\mathrm{s}$: (a) $\omega^{(0)}$; (b) $\omega^{(1,\epsilon)}$; (c) $\omega^{(2,\epsilon)}$; (d) $\omega_e$.}\label{f3}
	\end{figure}
	\begin{figure}[!htb]
		\centering
		\begin{minipage}[c]{0.24\textwidth}
			\centering
			\includegraphics[width=\linewidth]{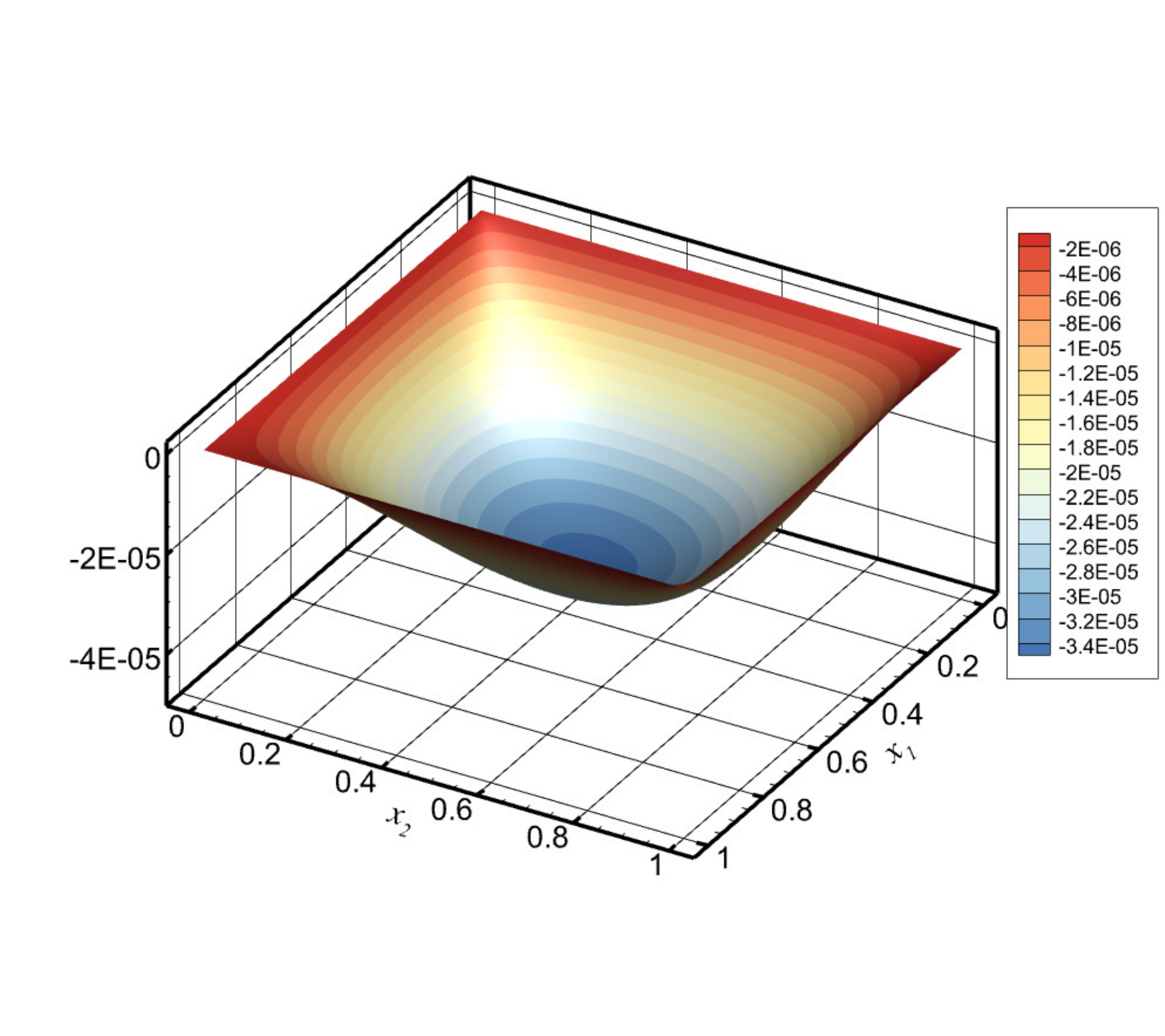}
			(a)
		\end{minipage}
		\hfill
		\begin{minipage}[c]{0.24\textwidth}
			\centering
			\includegraphics[width=\linewidth]{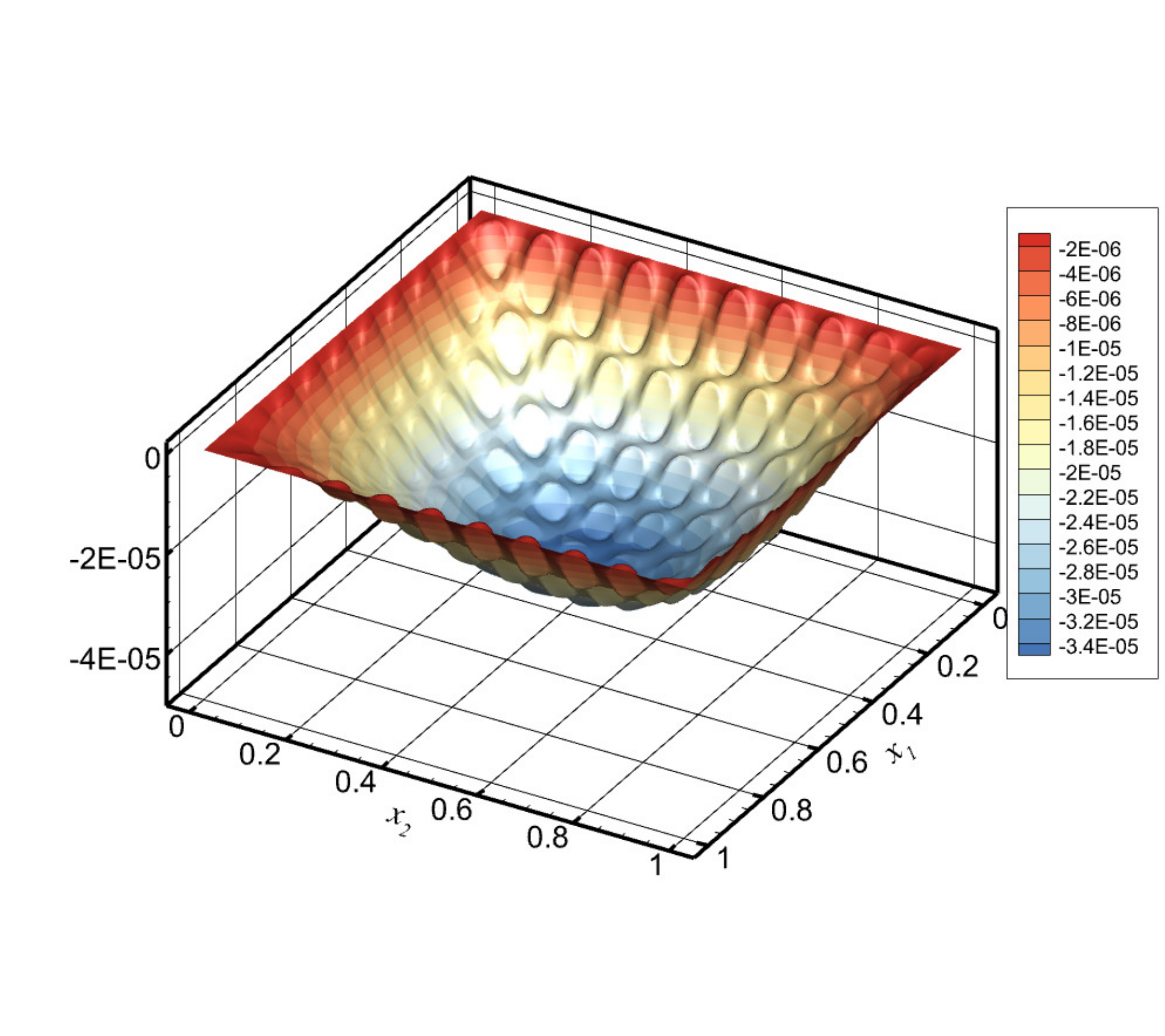}
			(b)
		\end{minipage}
		\hfill
		\begin{minipage}[c]{0.24\textwidth}
			\centering
			\includegraphics[width=\linewidth]{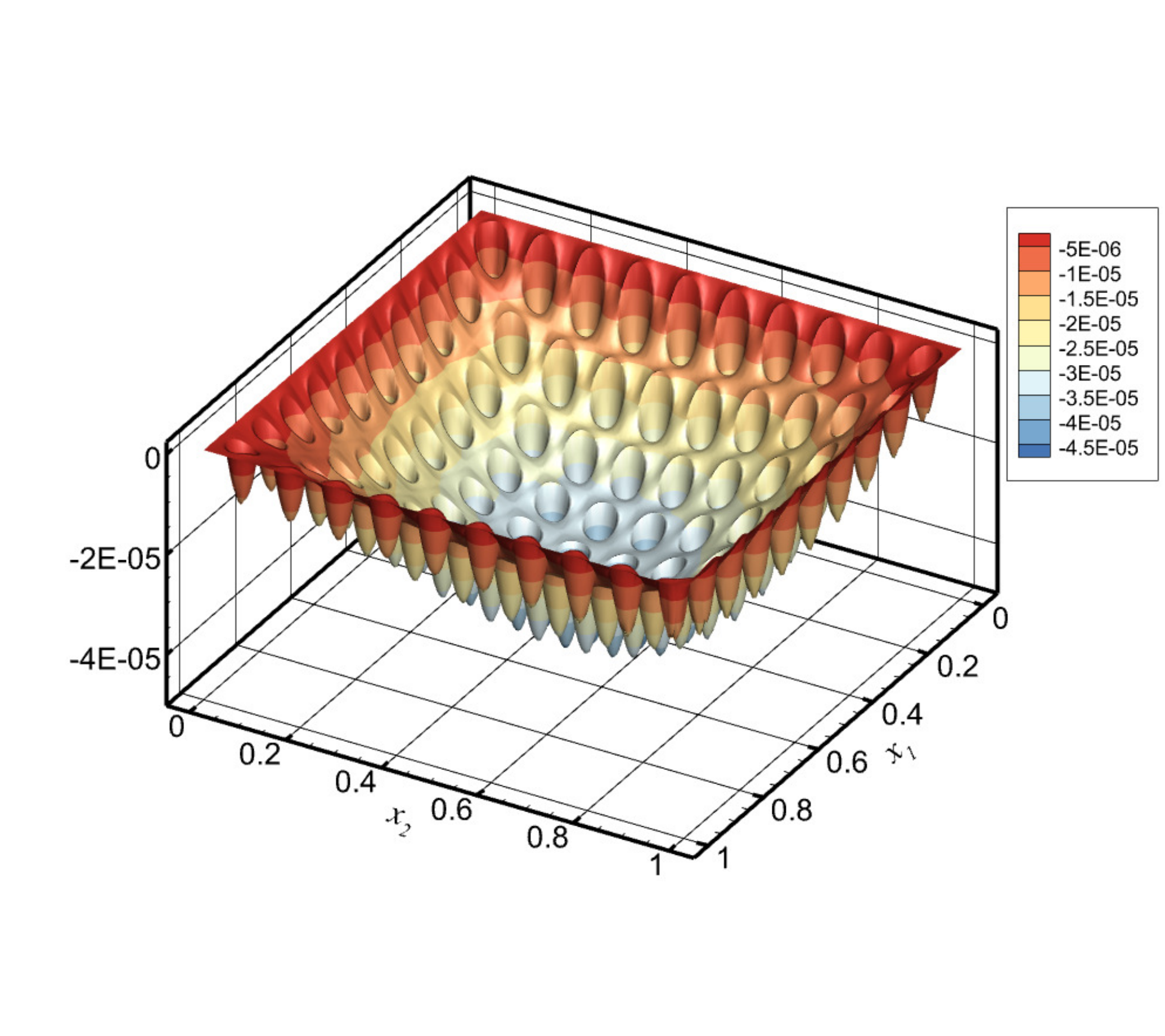}
			(c)
		\end{minipage}
		\hfill
		\begin{minipage}[c]{0.24\textwidth}
			\centering
			\includegraphics[width=\linewidth]{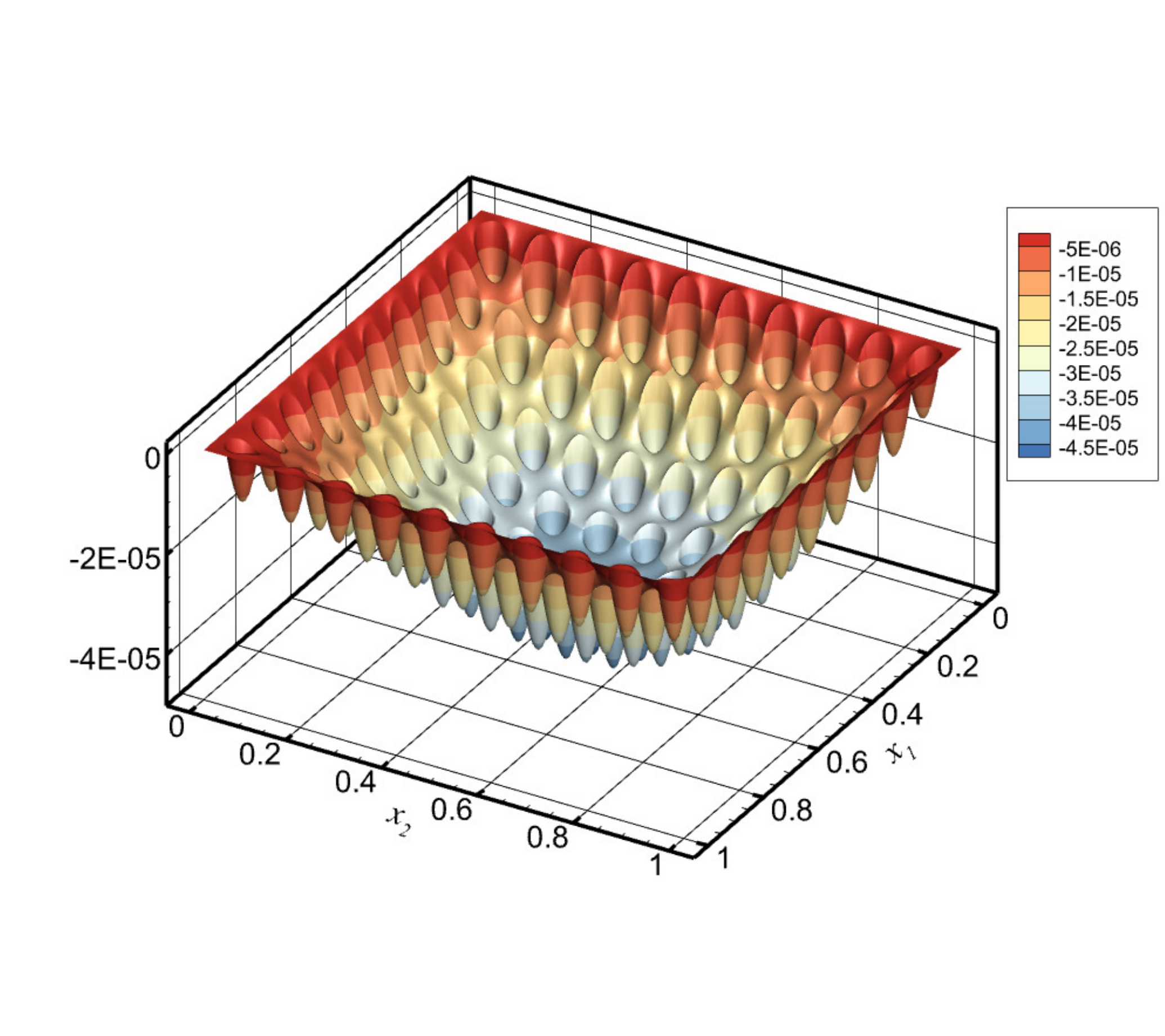}
			(d)
		\end{minipage}
		\caption{The first component of the displacement field at time $t=1.0\mathrm{s}$: (a) $u_1^{(0)}$; (b) $u_1^{(1,\epsilon)}$; (c) $u_1^{(2,\epsilon)}$; (d) $u_{1e}$.}\label{f4}
	\end{figure}
	\begin{figure}[!htb]
		\centering
		\begin{minipage}[c]{0.3\textwidth}
			\centering
			\includegraphics[width=\linewidth]{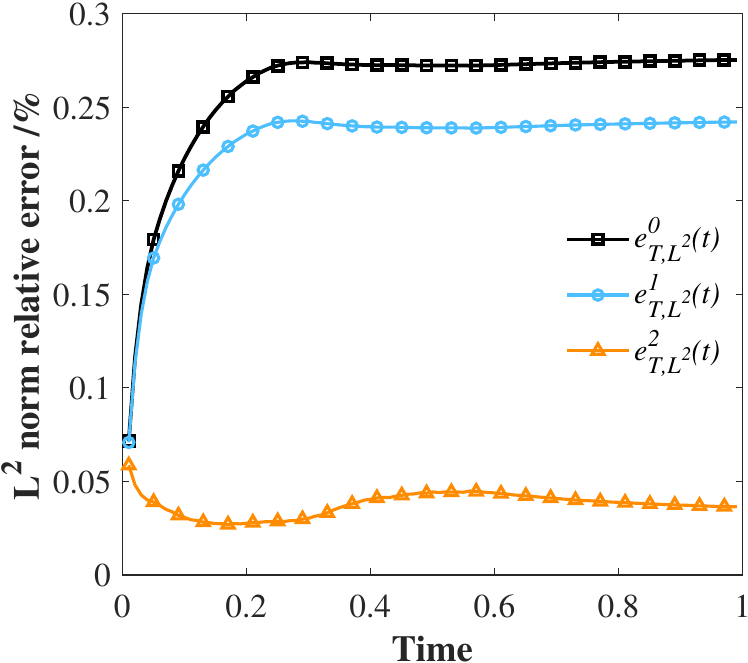}
			(a)
		\end{minipage}
		\hfill
		\begin{minipage}[c]{0.3\textwidth}
			\centering
			\includegraphics[width=\linewidth]{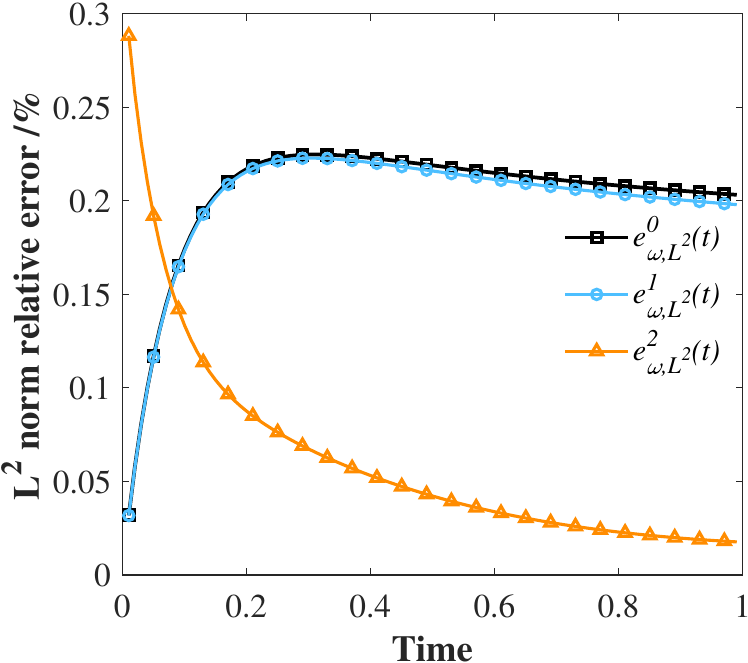}
			(b)
		\end{minipage}
		\hfill
		\begin{minipage}[c]{0.3\textwidth}
			\centering
			\includegraphics[width=\linewidth]{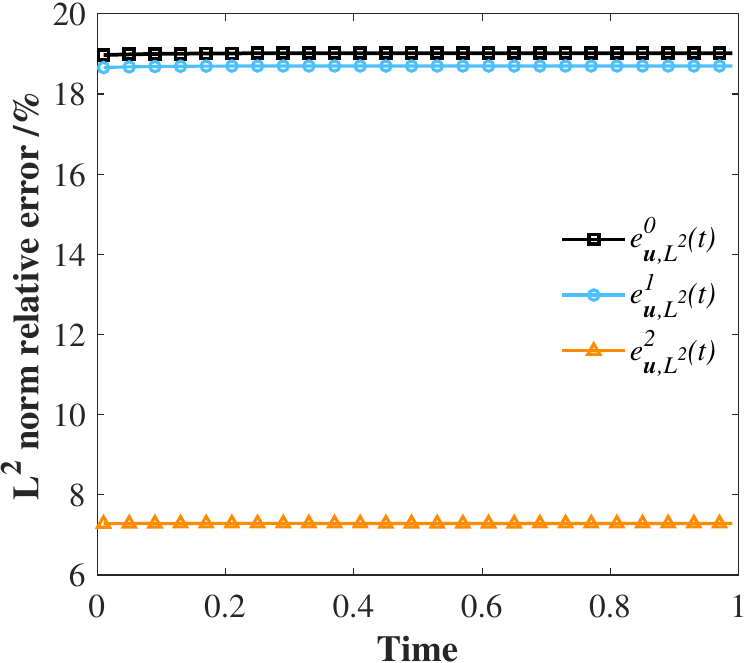}
			(c)
		\end{minipage}
		\hfill
		\begin{minipage}[c]{0.3\textwidth}
			\centering
			\includegraphics[width=\linewidth]{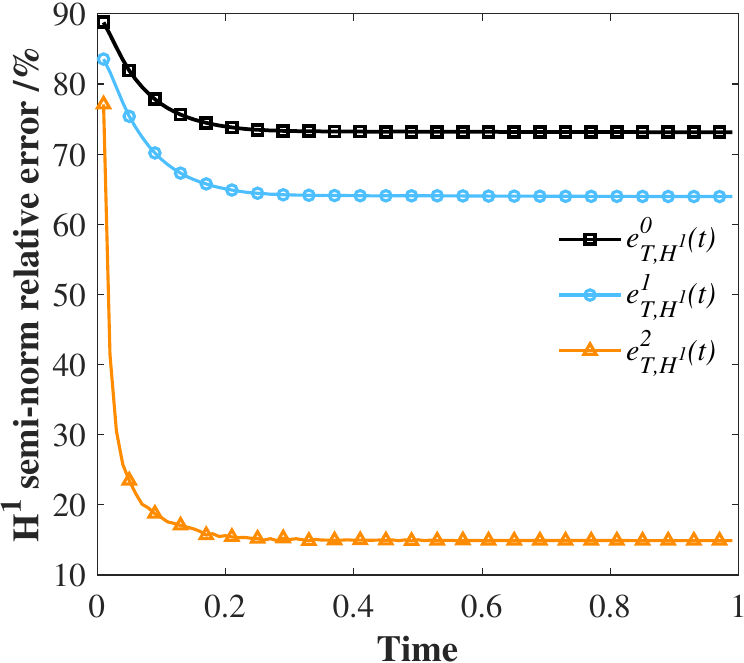}
			(d)
		\end{minipage}
		\hfill
		\begin{minipage}[c]{0.3\textwidth}
			\centering
			\includegraphics[width=\linewidth]{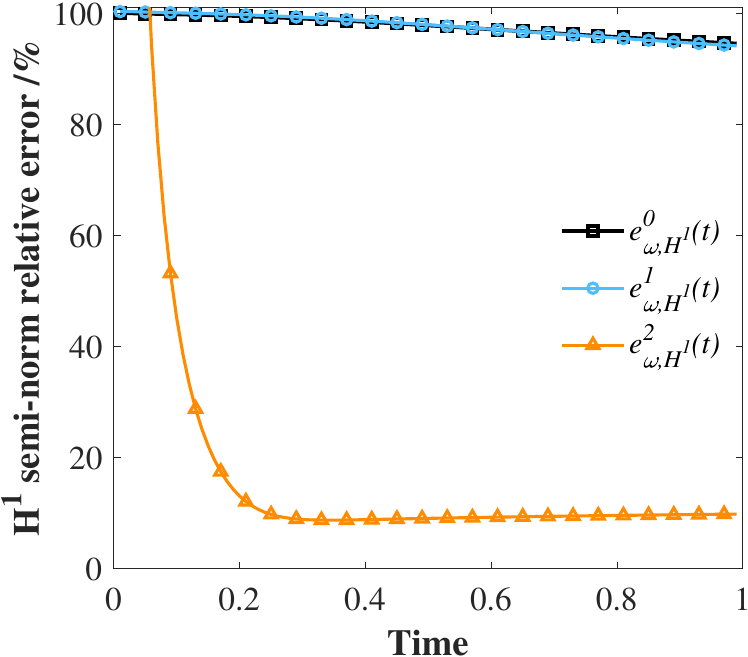}
			(e)
		\end{minipage}
		\hfill
		\begin{minipage}[c]{0.3\textwidth}
			\centering
			\includegraphics[width=\linewidth]{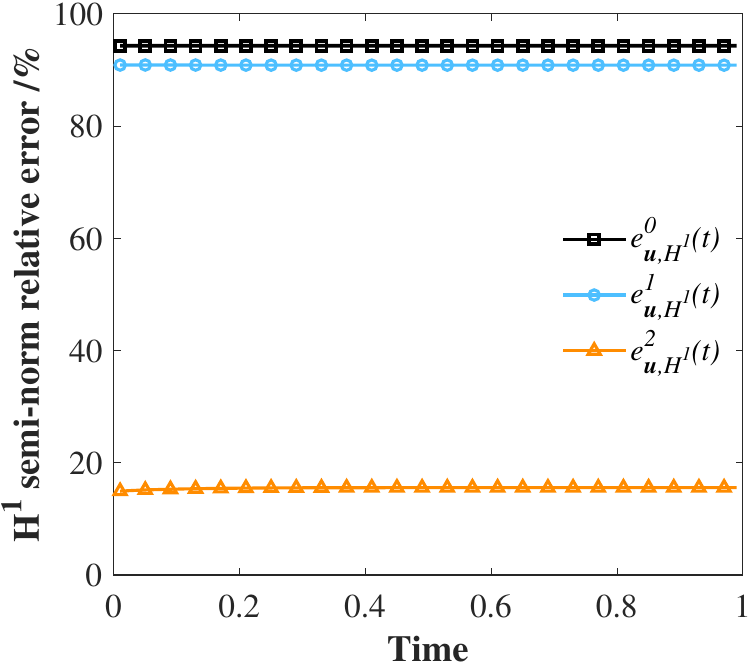}
			(f)
		\end{minipage}
		\caption{The evolutive relative errors of temperature, moisture and displacement fields: (a) $e_{T,L^2}(t)$; (b) $e_{\omega,L^2}(t)$; (c) $e_{\bm{u},L^2}(t)$; (d) $e_{T,H^1}(t)$; (E) $e_{\omega,H^1}(t)$; (F) $e_{\bm{u},H^1}(t)$.}\label{f6}
	\end{figure}
	
	As illustrated in Figs.~\ref{f2}-\ref{f4}, the HOMS approximate solutions agree well with the precise FEM solutions, and the numerical accuracy of the HOMS method is considerably superior to that of the homogenized and LOMS solutions. Moreover, the HOMS solutions can accurately capture the microscopic oscillatory behavior of the material. In contrast, the homogenized approach only captures the macroscopic response, while the LOMS method can only capture limited microscopic responses. It is noteworthy that Fig.~\ref{f6} reveals that the HOMS solutions for temperature, moisture and displacement produce substantially smaller relative errors with respect to the precise FEM solutions than those of the homogenized and LOMS solutions, both in the $L^2$ norm and the $H^1$ semi-norm. In addition, the proposed two-stage algorithm maintains stability and efficiency even over long simulation times, exhibiting no blow-up or numerical deterioration up to the final time $t=1.0 \mathrm{s}$. In practical engineering applications, where direct FEM simulations become computationally prohibitive for large-scale heterogeneous structures containing a large number of unit cells, the HOMS method offers an efficient and robust alternative with low computational cost, making it well-suited for dynamic hygro-thermo-mechanical coupling simulations.
	
	\subsection{Example 2: nonlinear hygro-thermo-mechanical coupling simulation of 3D heterogeneous structure}
	\label{sec:52}
	This example examines the nonlinear dynamic hygro‑thermo‑mechanical response of a 3D heterogeneous structure consisting of periodic unit cells with $\epsilon=1/5$. The whole domain $\Omega= (x_1,x_2,x_3) = [0,1]^3 \mathrm{cm}^3$ and PUC $Y$ are displayed in Fig.~\ref{f1:3D}.
	\begin{figure}[!htb]
		\centering
		\begin{minipage}[c]{0.32\textwidth}
			\centering
			\includegraphics[width=40mm]{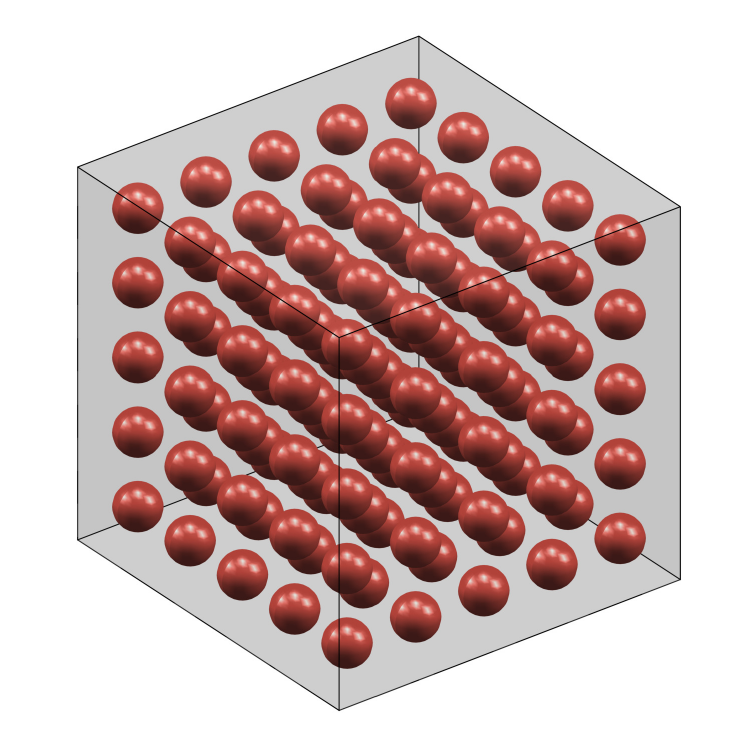}\\
			(a)
		\end{minipage}
		\begin{minipage}[c]{0.3\textwidth}
			\centering
			\includegraphics[width=40mm]{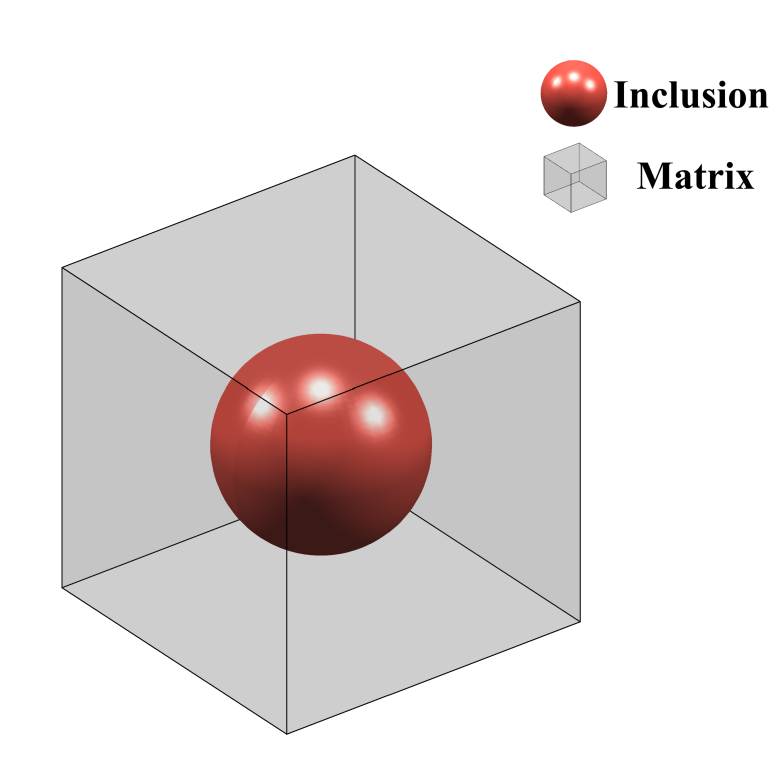}\\
			(b)
		\end{minipage}
		\begin{minipage}[c]{0.3\textwidth}
			\centering
			\includegraphics[width=40mm]{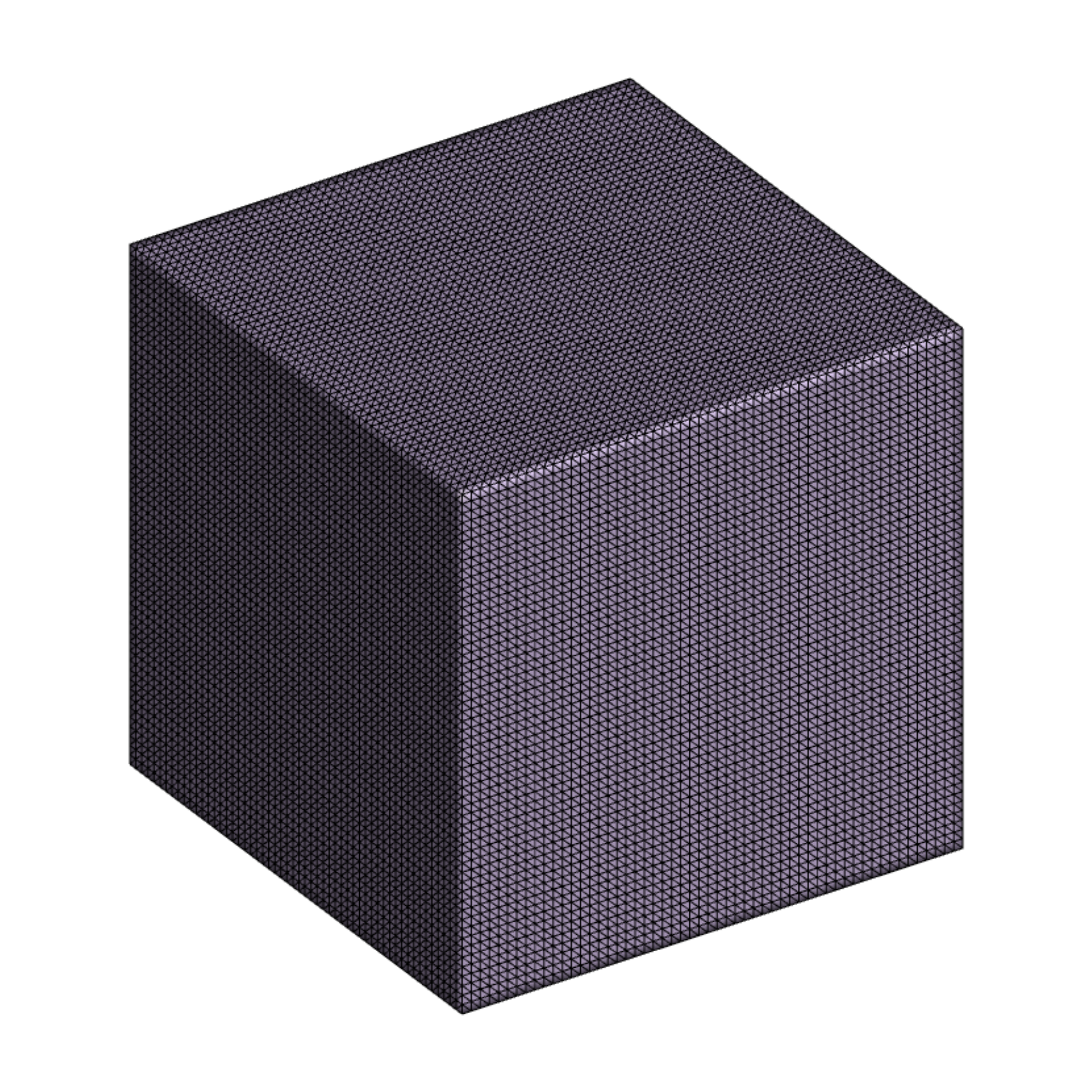}\\
			(c)
		\end{minipage}
		\caption{(a) The 3D heterogeneous structure $\Omega$; (b) PUC $Y$; (c) homogenized structure $\Omega$.}\label{f1:3D}
	\end{figure}
	
	For this example, the material parameters of the studied 3D periodic heterogeneous structure are given in Table~\ref{t3}.
	\begin{table}[!t]
		\caption{Material property parameters ($T$ represents temperature and $\omega$ represents moisture).\label{t3}}
		\centering
		\small
		\begin{tabular*}{\columnwidth}{@{\extracolsep\fill}lcc@{\extracolsep\fill}}
			\toprule
			Property & Matrix & Inclusion \\
			\midrule
			Mass density $\rho^{\epsilon}$ (kg/m$^3$) & 7500.0 & 2500.0 \\
			Specific heat $c^{\epsilon}$ (J/(kg$\cdot$K)) & $3600.0+6.0T-6.0\times10^{-4}T^2$ & $1080.0+1.8T-1.8\times10^{-4}T^2$ \\
			Young's modulus $E^{\epsilon}$ (GPa) & $1050.0-0.030435T-1.0\times10^{-12}T^2$ & $220.0-1.1018\times10^{-4}T-1.0\times10^{-14}T^2$ \\
			Poisson's ratio $\nu^{\epsilon}$ & 0.25 & 0.20 \\
			Thermal conductivity $k^{\epsilon}_{ij}$ (W/(m$\cdot$K)) & $800.0+0.5T+0.0025$ & $50.0+0.005T+2.5\times10^{-5}T^2$ \\
			Moisture diffusion $g^{\epsilon}_{ij}$ (m$^2$/s) & $1.5\times10^{-5}+5.0\times10^{-10}\omega+2.5\times10^{-14}\omega^2$ & $1.5\times10^{-6}+5.0\times10^{-11}\omega+2.5\times10^{-15}\omega^2$ \\
			Thermal stress coefficient $\alpha^{\epsilon}_{ij}$ (MPa/K) & $1.0-1.0\times10^{-4}T-1.0\times10^{-7}T^2$ & $0.05-1.0\times10^{-5}T-1.0\times10^{-8}T^2$ \\
			Moisture stress coefficient $\beta_{ij}^{\epsilon}$ (MPa) & $0.1-1.0\times10^{-5}T-1.0\times10^{-8}T^2$ & $0.005-1.0\times10^{-6}T-1.0\times10^{-9}T^2$ \\
			\bottomrule
		\end{tabular*}
	\end{table}
	
	Moreover, except that the heat source and body forces are given by $h=1500.0 \ \mathrm{J/(cm^{3}\cdot s)}$ and $(f_1,f_2,f_3)=(0,0,-30000) \ \mathrm{N/cm^3}$, all other source items, boundary conditions and initial conditions are the same as in Section~\ref{sec:51}.
	
	To avoid the prohibitive computational cost of directly resolving the fine-scale features, separate tetrahedral meshes are constructed for the multi-scale nonlinear problem \eqref{eq:2.1}, the auxiliary cell problems, and the corresponding homogenized equations \eqref{eq:2.27}. Table~\ref{t4} provides a detailed breakdown of the mesh resolution and execution time.
	\begin{table}[!t]
		\caption{Comparison of computational cost ($\Delta t=0.01\,$s, $t\in[0,1.0]\,$s).\label{t4}}
		\centering
		\begin{tabular*}{\columnwidth}{@{\extracolsep\fill}cccc@{\extracolsep\fill}}
			\toprule
			& Cell equations & Homogenized equations & Multi-scale equations \\
			\midrule
			FEM nodes & 5978 & 175616 & 759403 \\
			FEM elements & 33492 & 998250 & 4762528 \\
			\midrule
			& \multicolumn{2}{c}{HOMS method} & precise FEM \\
			\midrule
			Computational time & \multicolumn{2}{c}{186566.598\,s} & 603471.170\,s \\
			\bottomrule
		\end{tabular*}
	\end{table}
	
	According to Table~\ref{t4}, compared with the precise FEM approach, the proposed HOMS method greatly reduces memory usage and computational time, cutting runtime by about $69.08\%$. Furthermore, the computational time saved by the HOMS method increases with the duration of the multi-scale simulation.
	
	For this example, $10$ equidistant interpolation points each for macroscopic temperature and moisture are prescribed within a single unit cell. The nonlinear dynamic hygro‑thermo‑mechanical behavior of the 3D heterogeneous structure is simulated over the time interval $t \in [0, 1.0] \mathrm{s}$. With a time step $\Delta t = 0.01 \mathrm{s}$, the macroscopic homogenized equations \eqref{eq:2.27} and the multi‑scale nonlinear equations \eqref{eq:2.1} are solved on‑line, respectively. The final temperature, moisture and displacement fields at $t = 1.0 \mathrm{s}$ are presented in Figs.~\ref{f8}-\ref{f12}. Moreover, Fig.~\ref{f13} illustrates the evolution of relative errors of temperature, moisture and displacement fields.
	\begin{figure}[!htb]
		\centering
		\begin{minipage}[c]{0.24\textwidth}
			\centering
			\includegraphics[width=\linewidth]{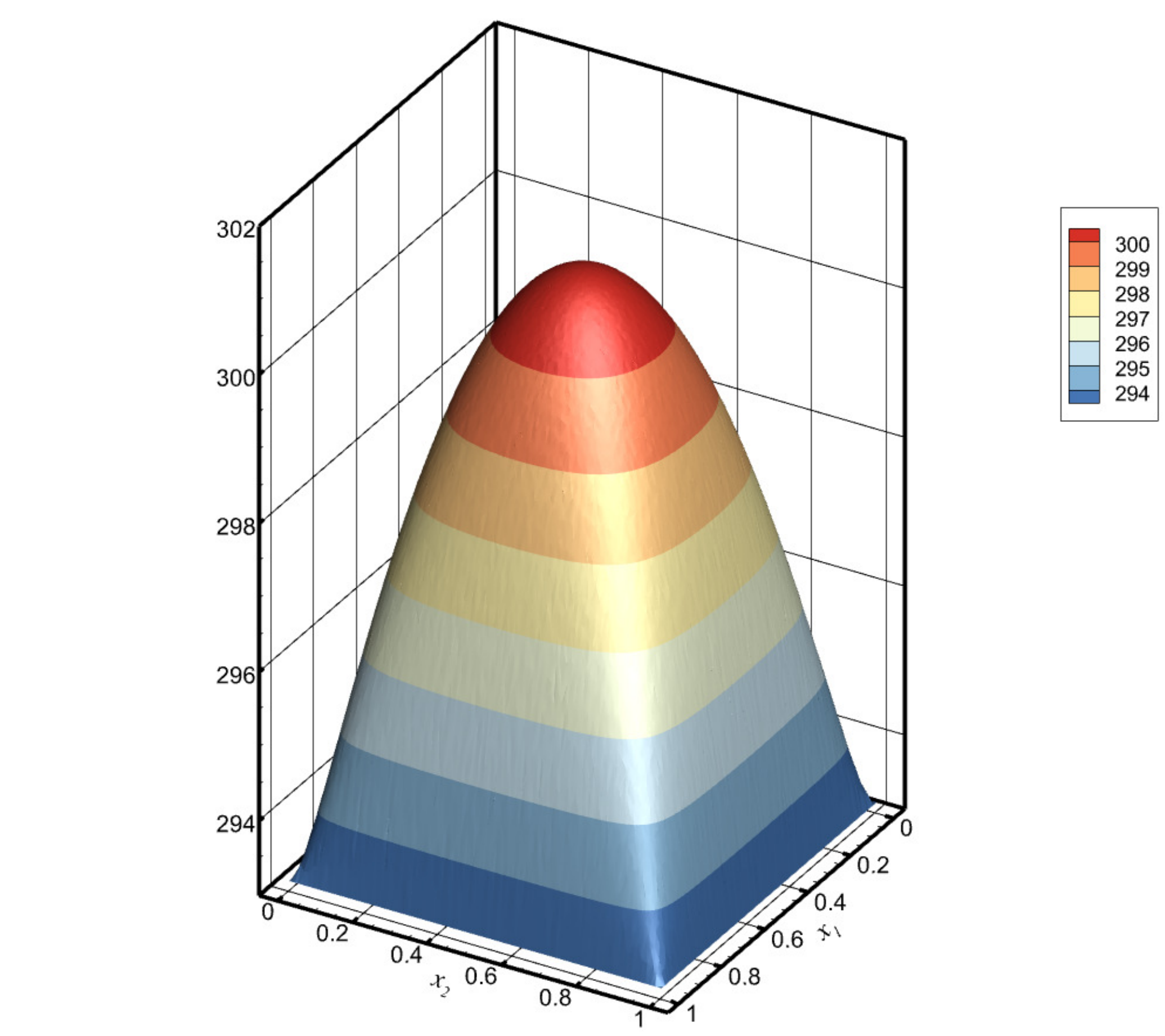}
			(a)
		\end{minipage}
		\hfill
		\begin{minipage}[c]{0.24\textwidth}
			\centering
			\includegraphics[width=\linewidth]{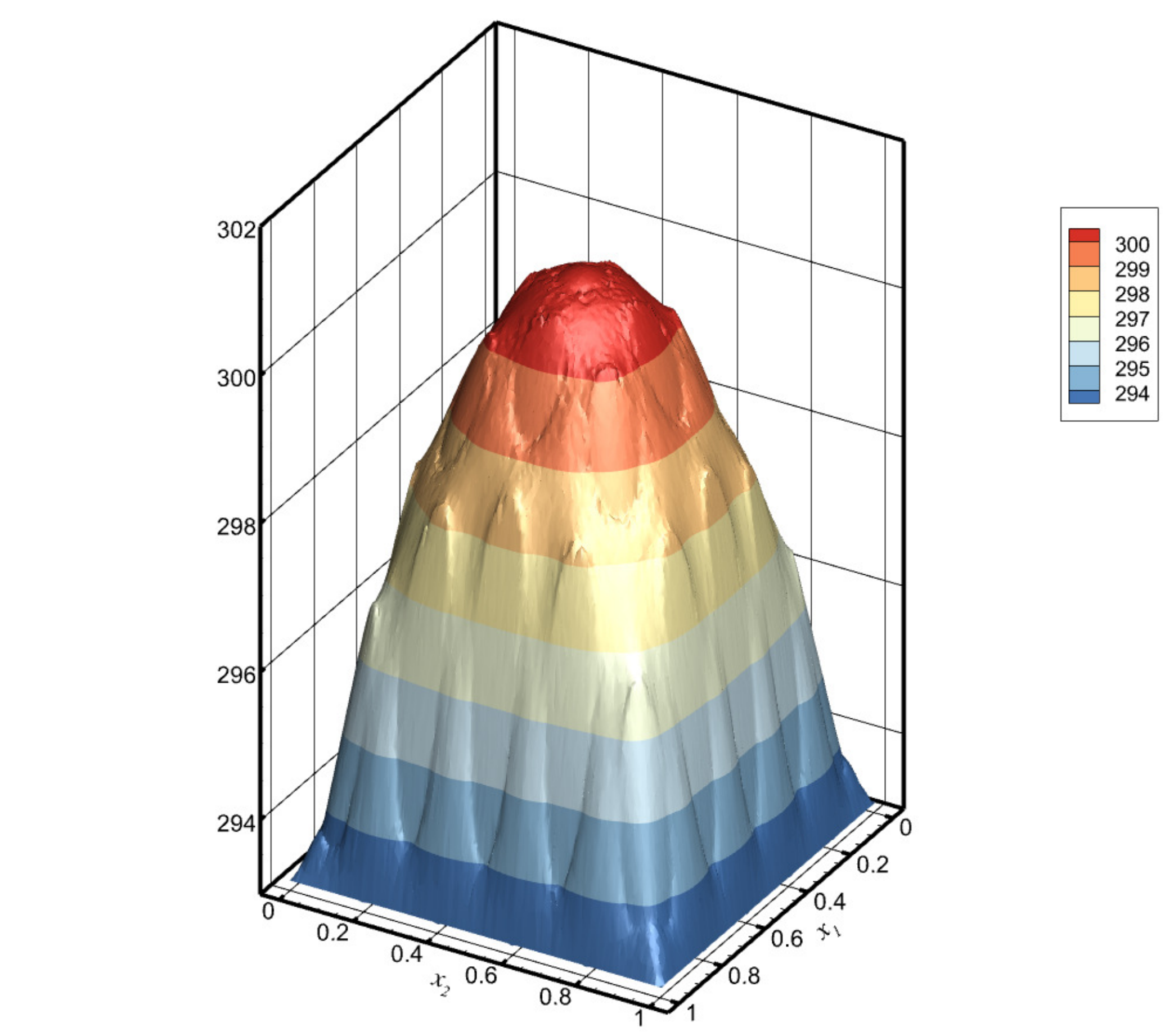}
			(b)
		\end{minipage}
		\hfill
		\begin{minipage}[c]{0.24\textwidth}
			\centering
			\includegraphics[width=\linewidth]{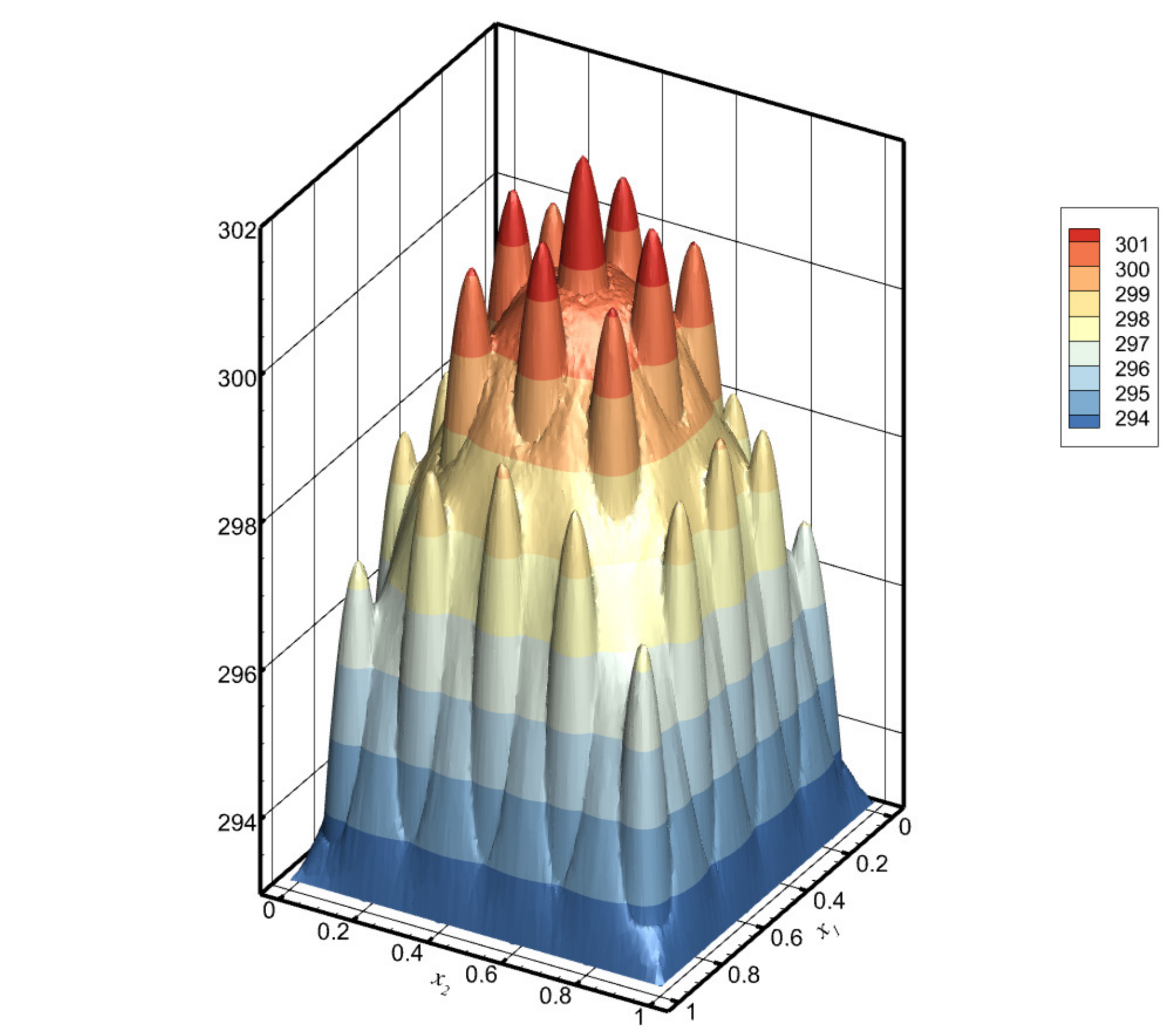}
			(c)
		\end{minipage}
		\hfill
		\begin{minipage}[c]{0.24\textwidth}
			\centering
			\includegraphics[width=\linewidth]{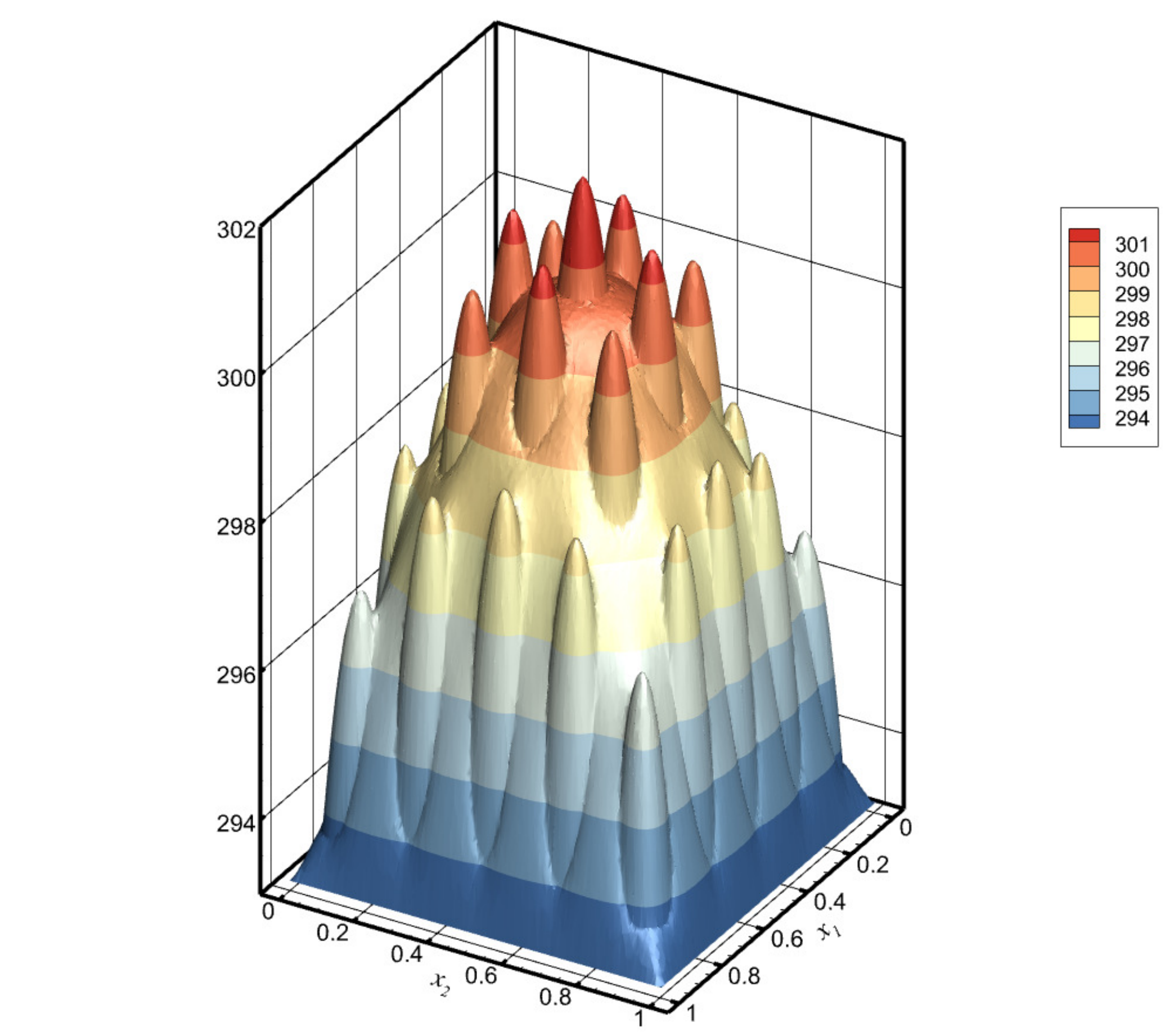}
			(d)
		\end{minipage}
		\caption{The temperature field in cross section $x_3=0.1 \mathrm{cm}$ at time $t=1.0\mathrm{s}$: (a) $T^{(0)}$; (b) $T^{(1,\epsilon)}$; (c) $T^{(2,\epsilon)}$; (d) $T_e$.}\label{f8}
	\end{figure}
	\begin{figure}[!htb]
		\centering
		\begin{minipage}[c]{0.24\textwidth}
			\centering
			\includegraphics[width=\linewidth]{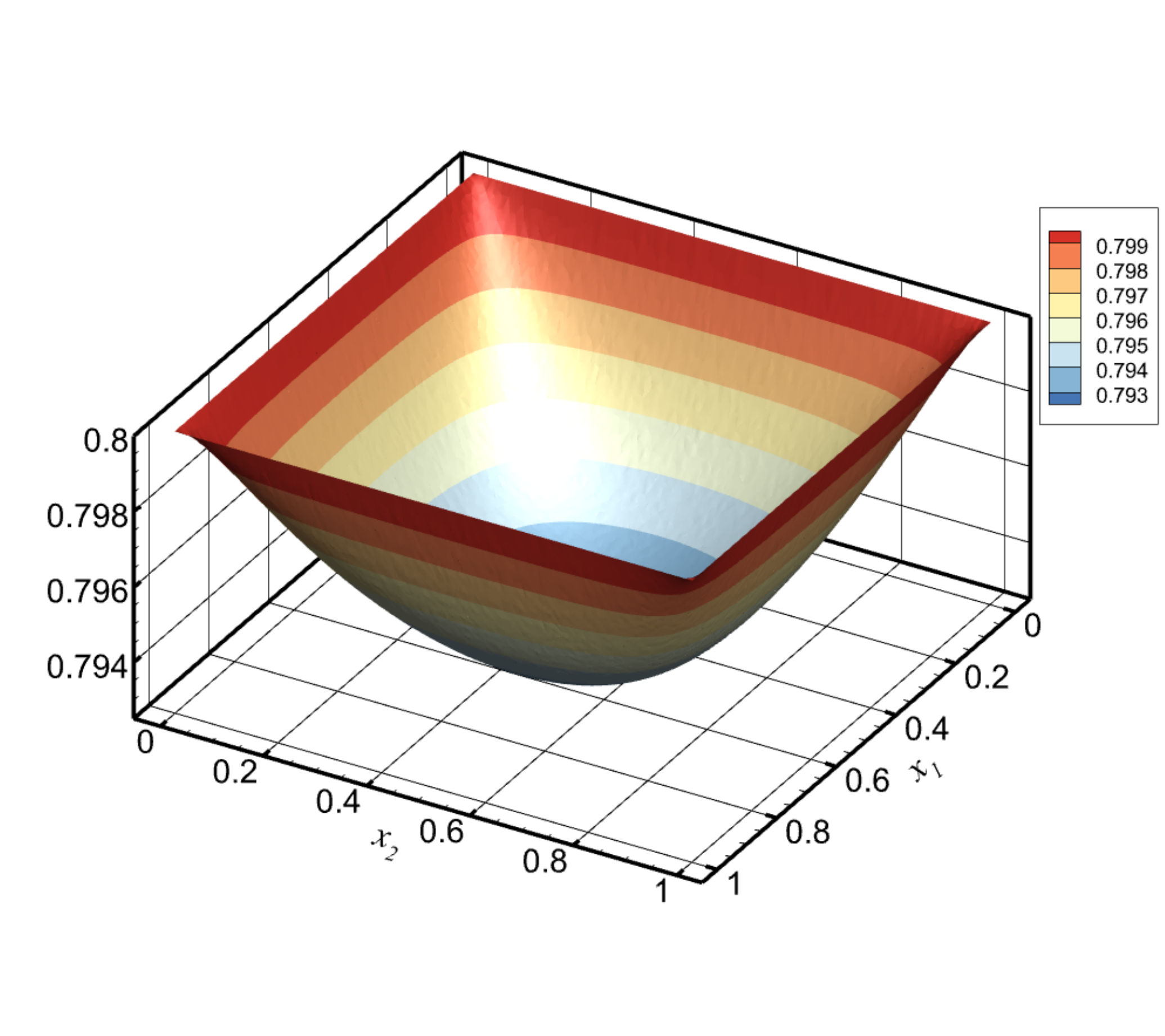}
			(a)
		\end{minipage}
		\hfill
		\begin{minipage}[c]{0.24\textwidth}
			\centering
			\includegraphics[width=\linewidth]{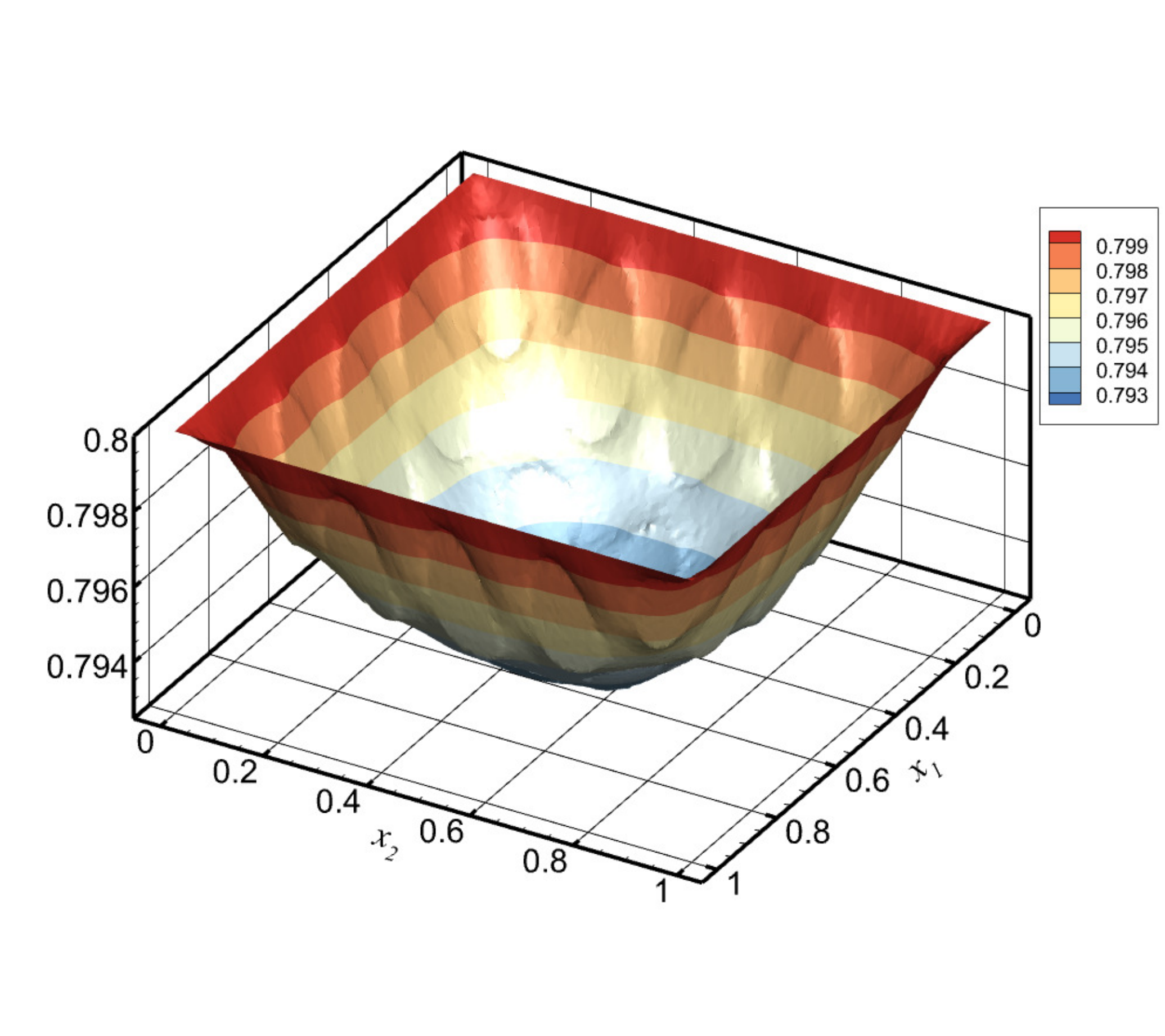}
			(b)
		\end{minipage}
		\hfill
		\begin{minipage}[c]{0.24\textwidth}
			\centering
			\includegraphics[width=\linewidth]{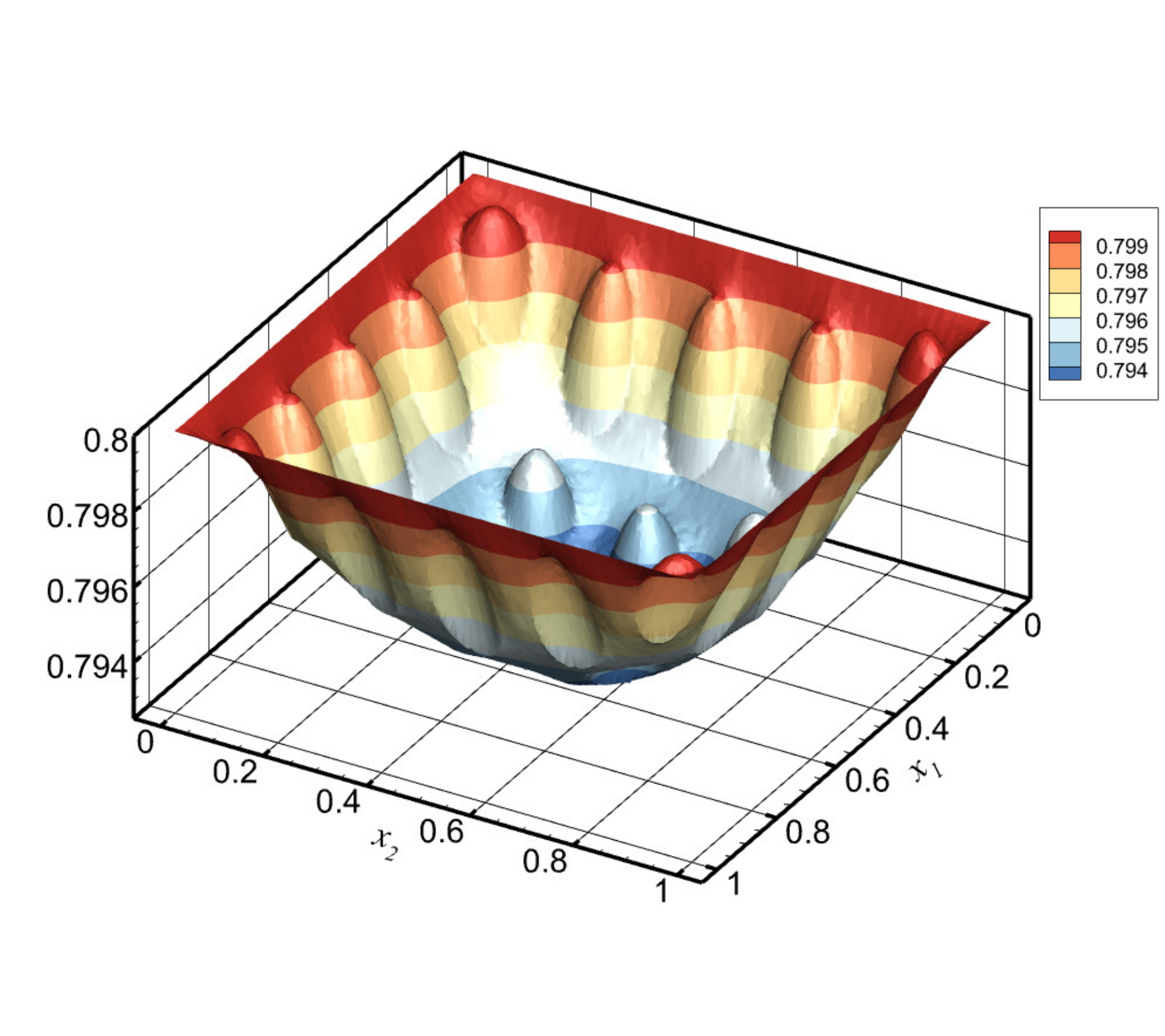}
			(c)
		\end{minipage}
		\hfill
		\begin{minipage}[c]{0.24\textwidth}
			\centering
			\includegraphics[width=\linewidth]{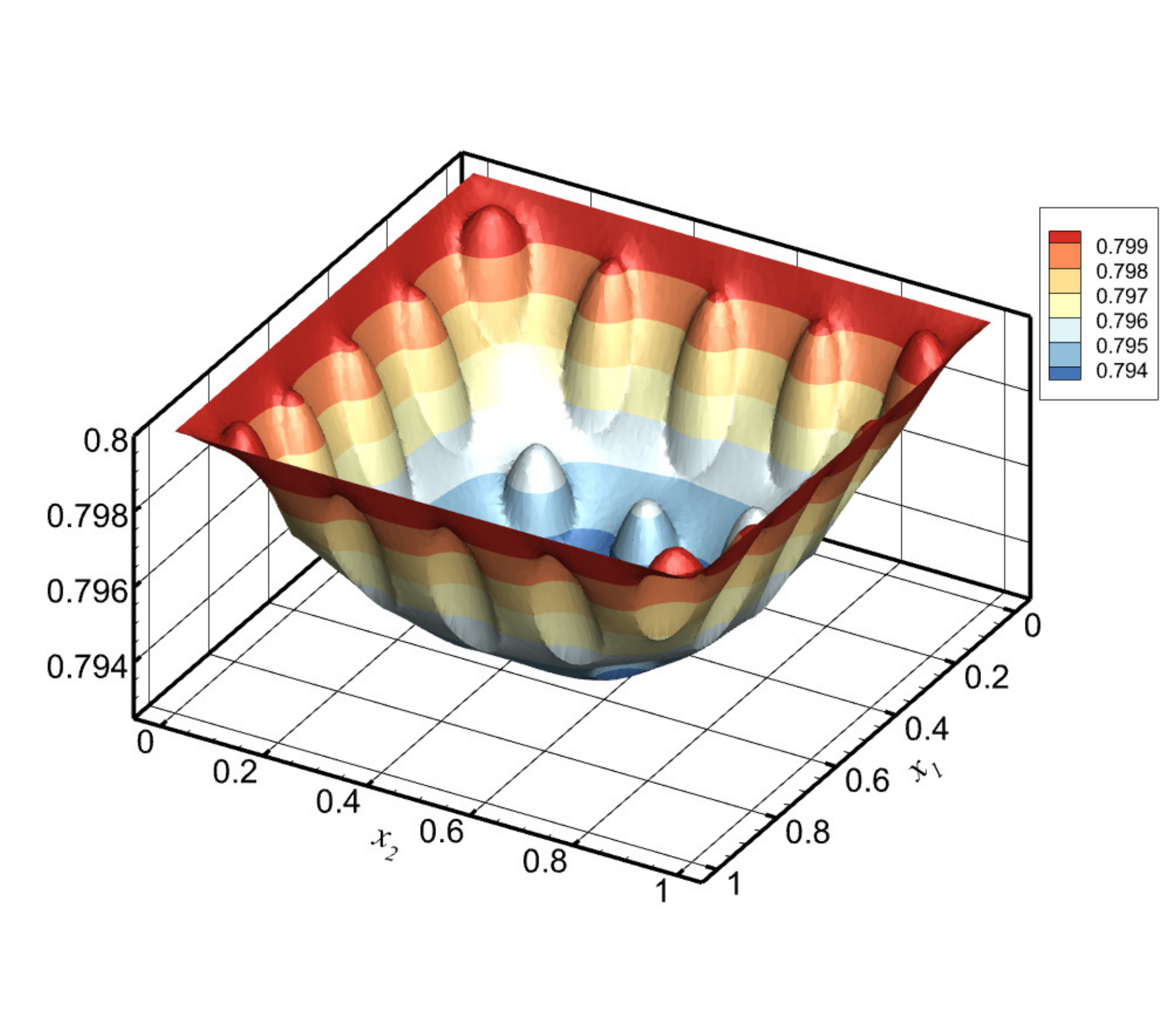}
			(d)
		\end{minipage}
		\caption{The moisture field in cross section $x_3=0.1 \mathrm{cm}$ at time $t=1.0\mathrm{s}$: (a) $\omega^{(0)}$; (b) $\omega^{(1,\epsilon)}$; (c) $\omega^{(2,\epsilon)}$; (d) $\omega_e$.}\label{f9}
	\end{figure}
	\begin{figure}[!htb]
		\centering
		\begin{minipage}[c]{0.24\textwidth}
			\centering
			\includegraphics[width=\linewidth]{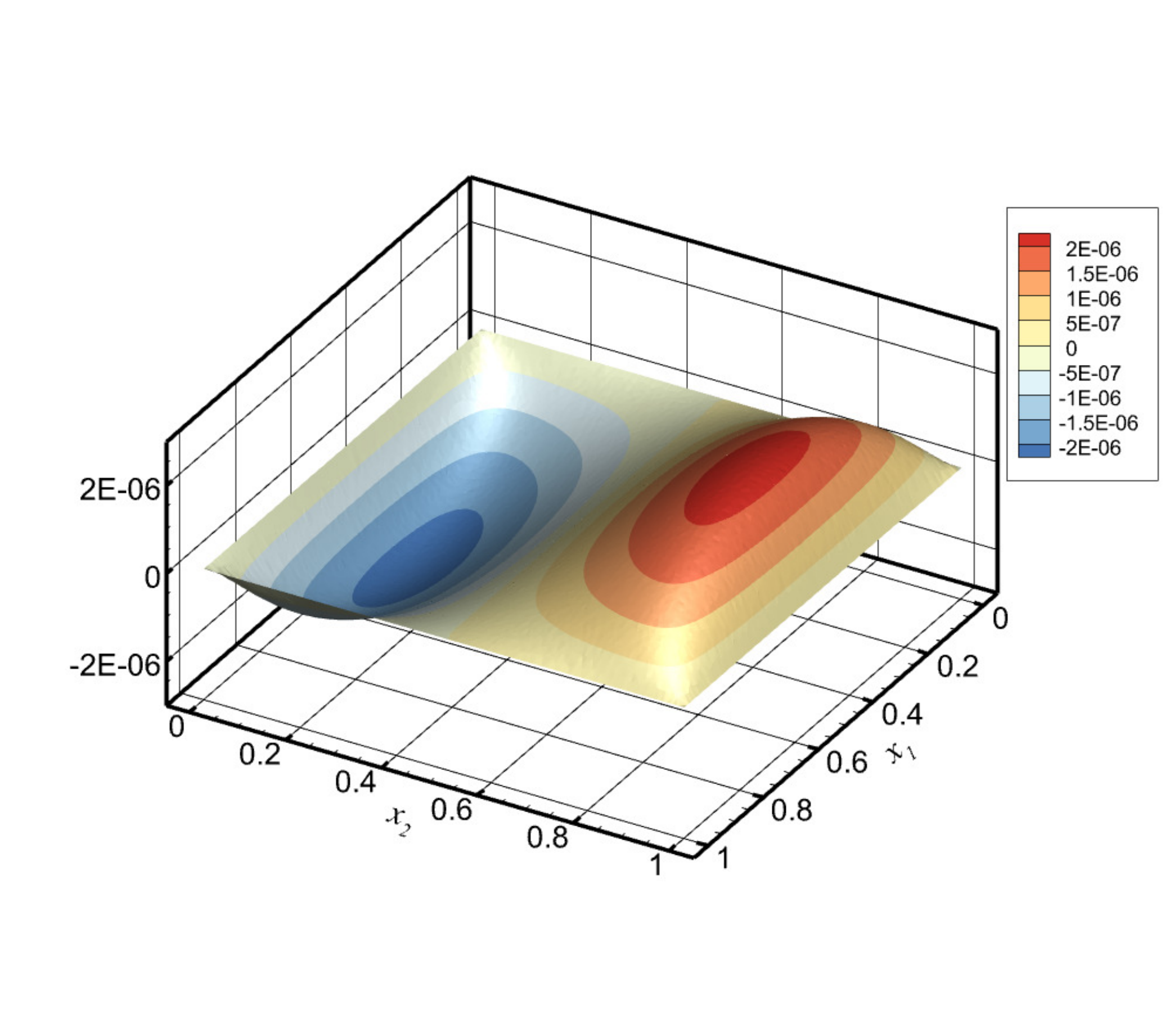}
			(a)
		\end{minipage}
		\hfill
		\begin{minipage}[c]{0.24\textwidth}
			\centering
			\includegraphics[width=\linewidth]{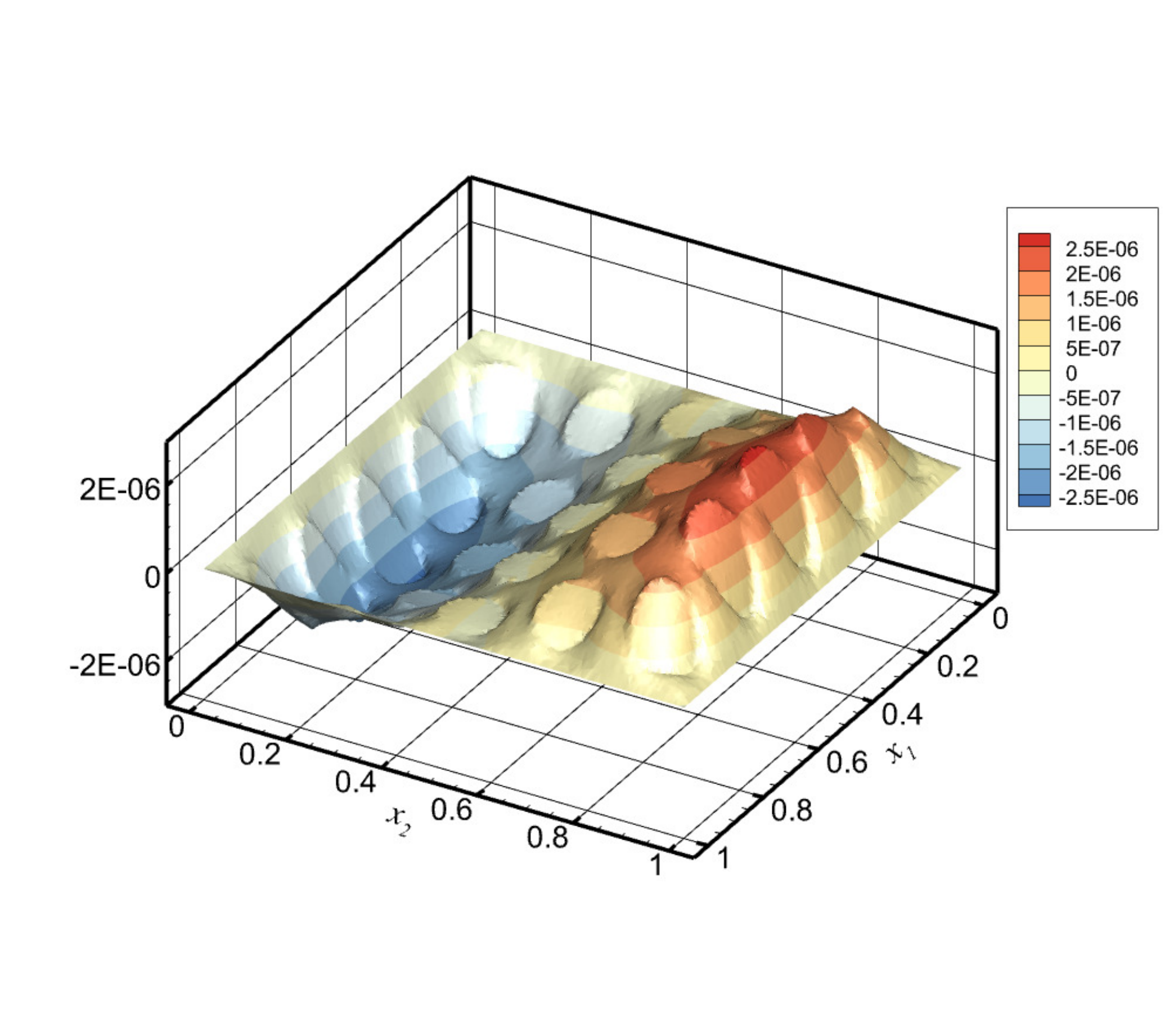}
			(b)
		\end{minipage}
		\hfill
		\begin{minipage}[c]{0.24\textwidth}
			\centering
			\includegraphics[width=\linewidth]{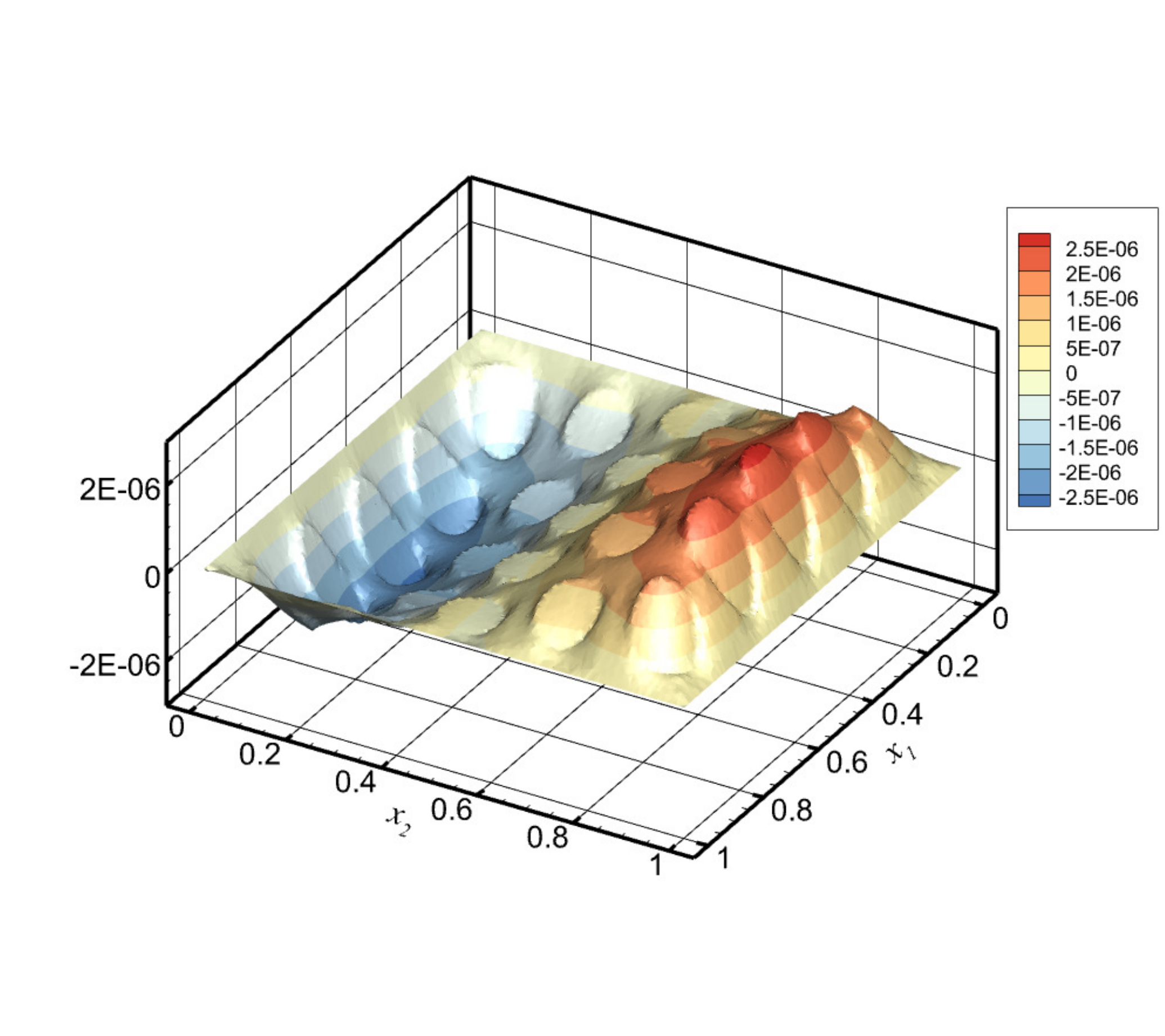}
			(c)
		\end{minipage}
		\hfill
		\begin{minipage}[c]{0.24\textwidth}
			\centering
			\includegraphics[width=\linewidth]{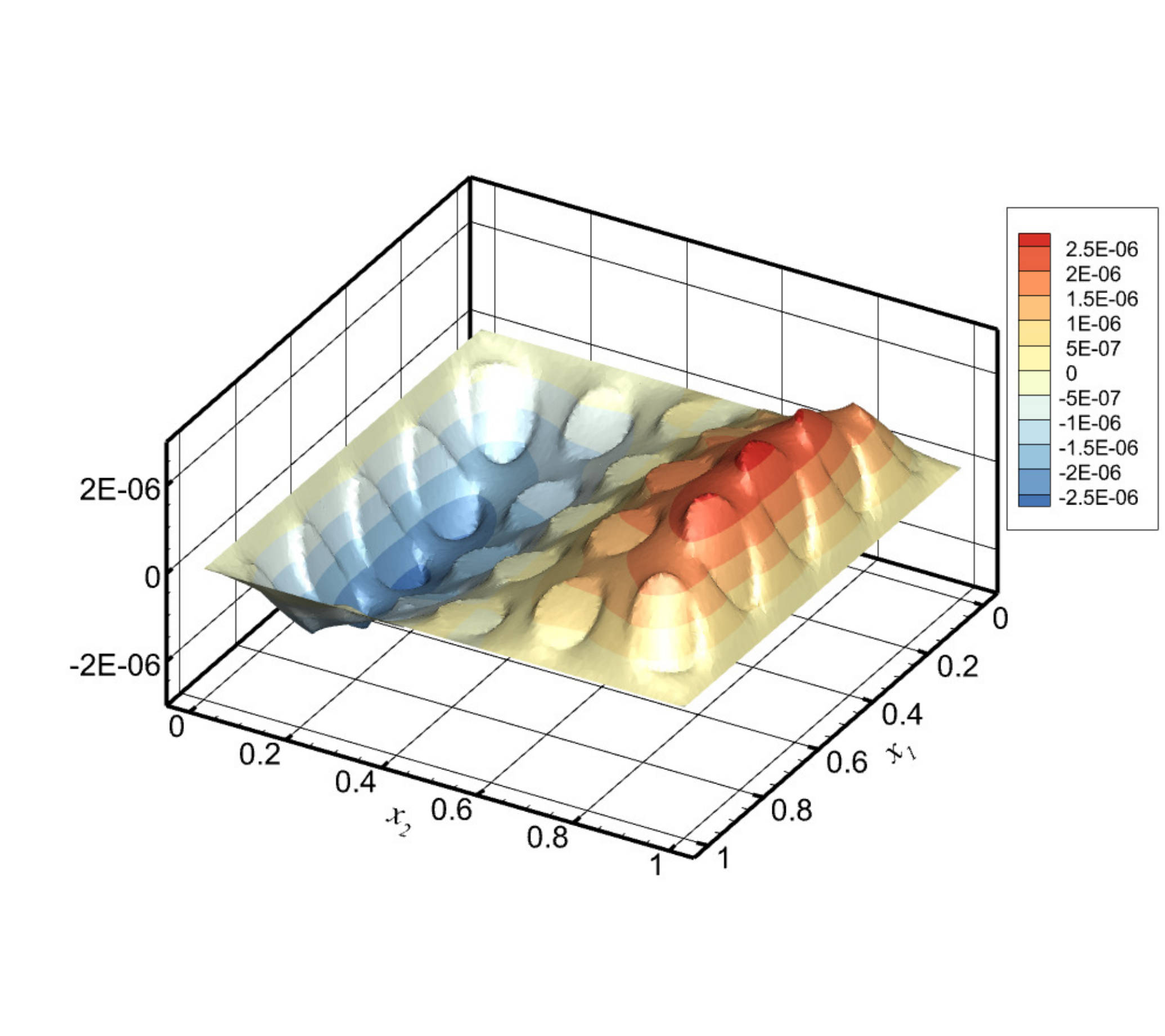}
			(d)
		\end{minipage}
		\caption{The second component of the displacement field in cross section $x_3=0.1 \mathrm{cm}$ at time $t=1.0\mathrm{s}$: (a) $u_2^{(0)}$; (b) $u_2^{(1,\epsilon)}$; (c) $u_2^{(2,\epsilon)}$; (d) $u_{2e}$.}\label{f11}
	\end{figure}
	\begin{figure}[!htb]
		\centering
		\begin{minipage}[c]{0.24\textwidth}
			\centering
			\includegraphics[width=\linewidth]{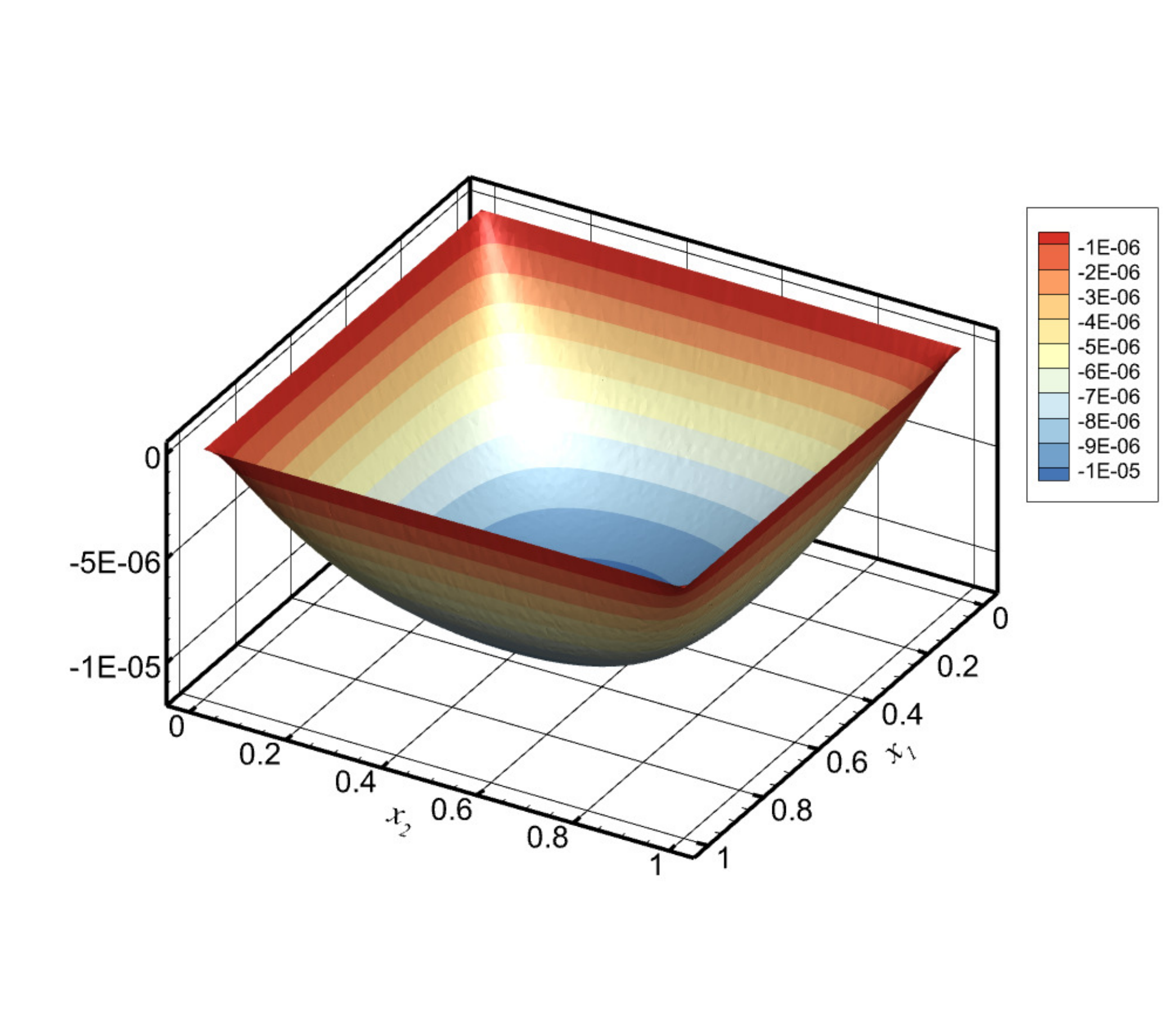}
			(a)
		\end{minipage}
		\hfill
		\begin{minipage}[c]{0.24\textwidth}
			\centering
			\includegraphics[width=\linewidth]{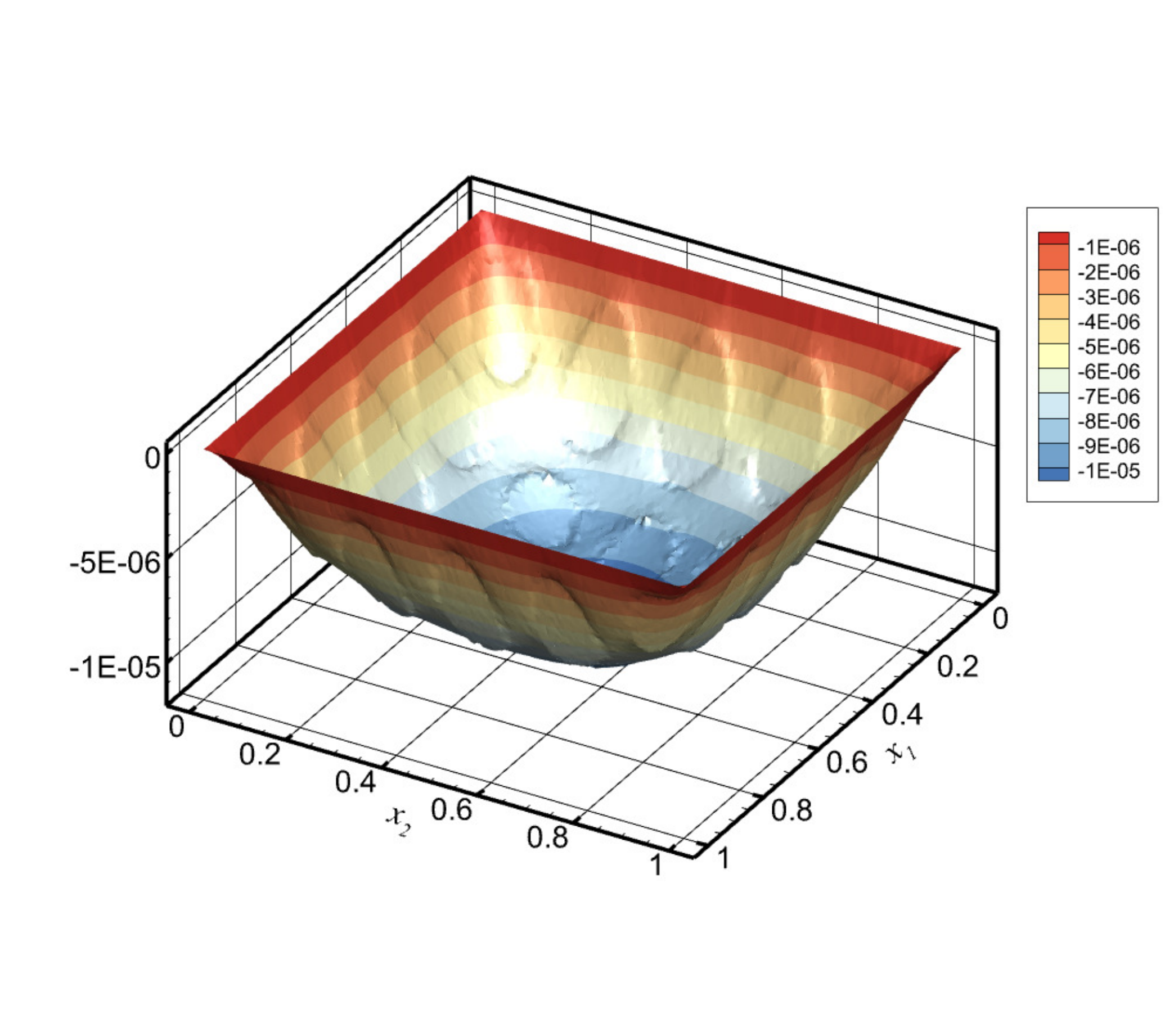}
			(b)
		\end{minipage}
		\hfill
		\begin{minipage}[c]{0.24\textwidth}
			\centering
			\includegraphics[width=\linewidth]{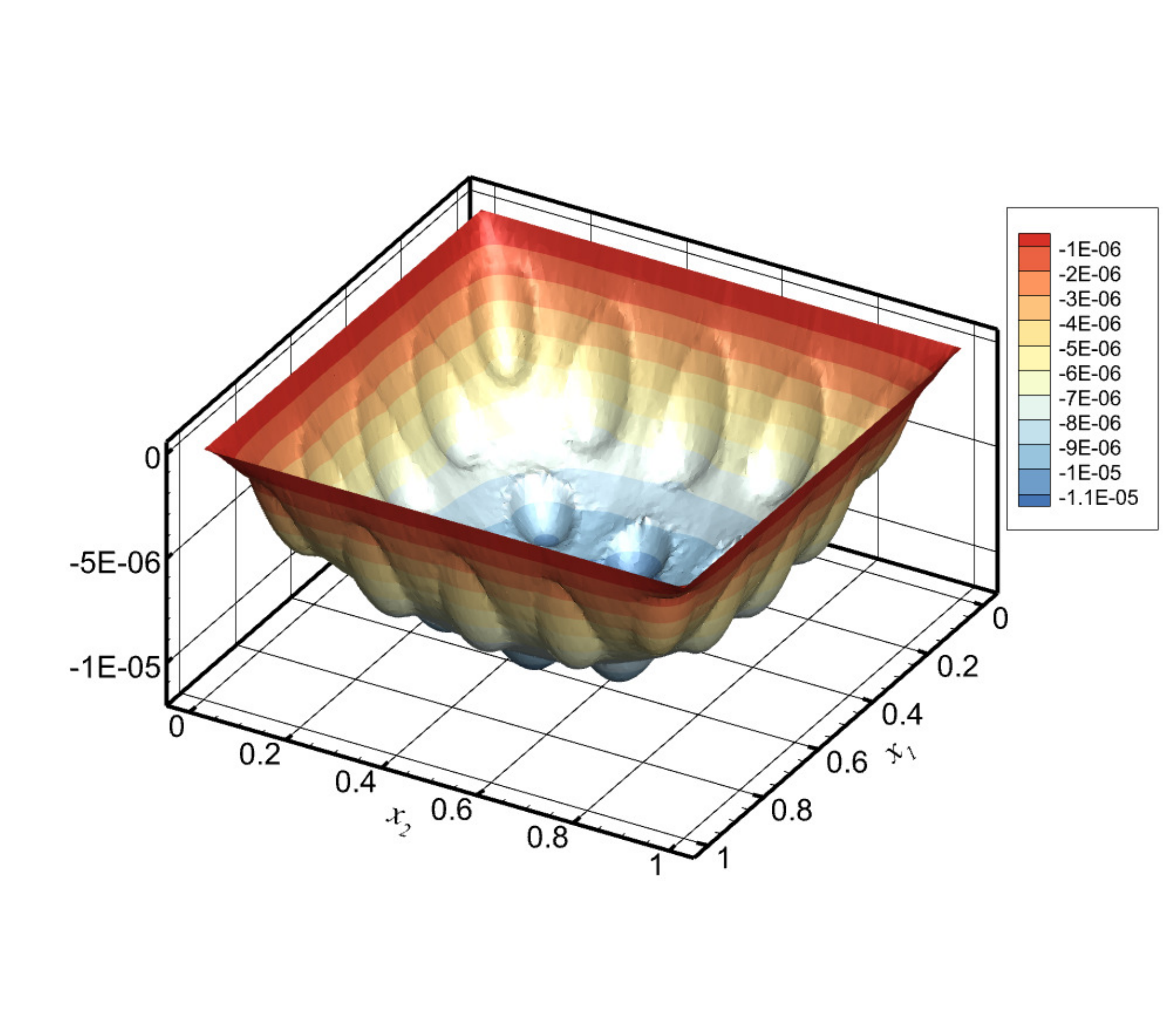}
			(c)
		\end{minipage}
		\hfill
		\begin{minipage}[c]{0.24\textwidth}
			\centering
			\includegraphics[width=\linewidth]{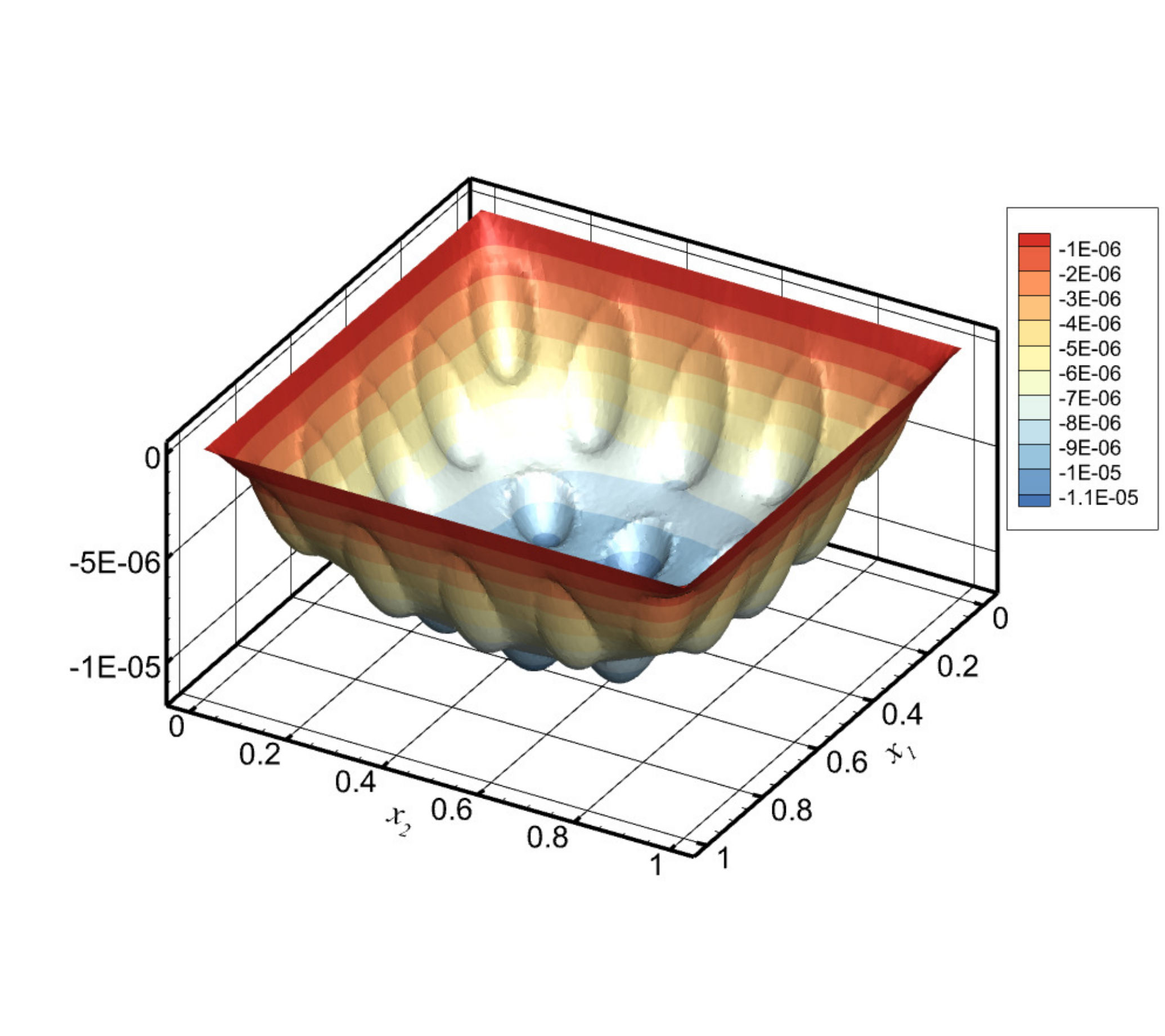}
			(d)
		\end{minipage}
		\caption{The third component of the displacement field in cross section $x_3=0.1 \mathrm{cm}$ at time $t=1.0\mathrm{s}$: (a) $u_3^{(0)}$; (b) $u_3^{(1,\epsilon)}$; (c) $u_3^{(2,\epsilon)}$; (d) $u_{3e}$.}\label{f12}
	\end{figure}
	\begin{figure}[!htb]
		\centering
		\begin{minipage}[c]{0.3\textwidth}
			\centering
			\includegraphics[width=\linewidth]{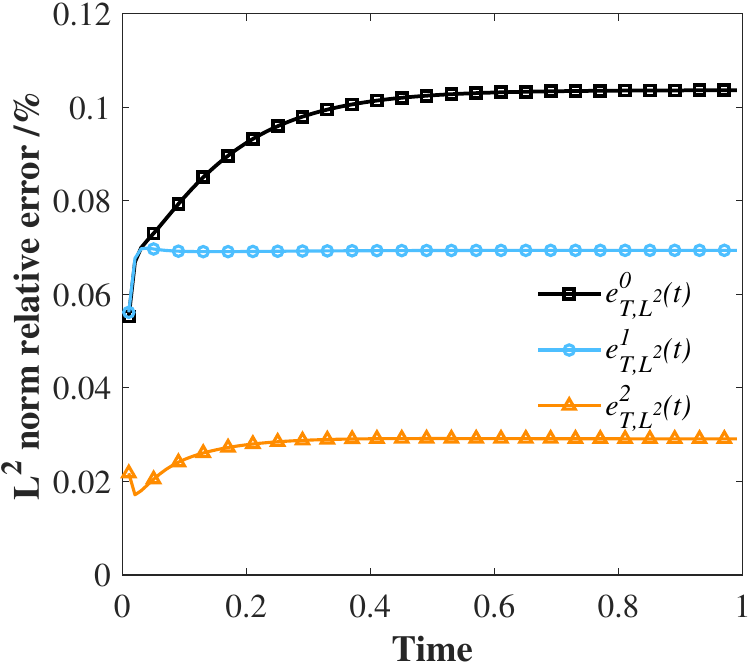}
			(a)
		\end{minipage}
		\hfill
		\begin{minipage}[c]{0.3\textwidth}
			\centering
			\includegraphics[width=\linewidth]{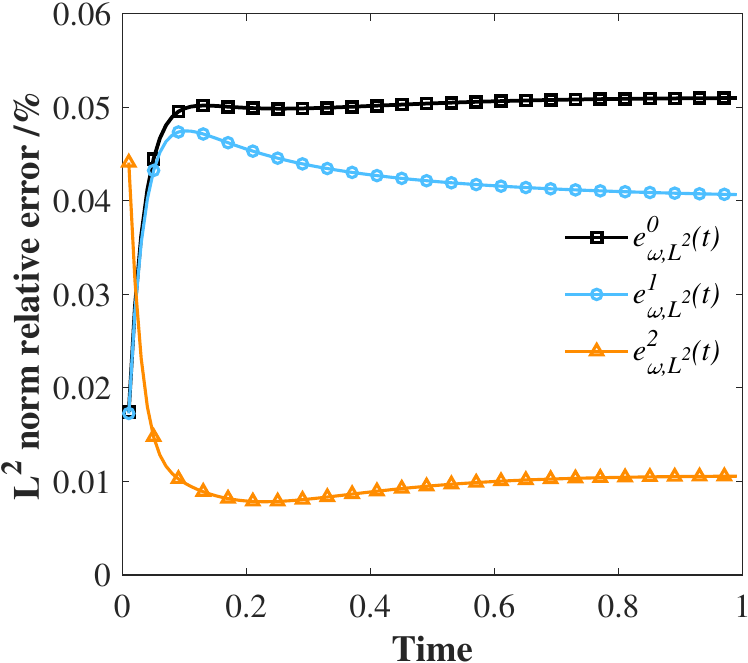}
			(b)
		\end{minipage}
		\hfill
		\begin{minipage}[c]{0.3\textwidth}
			\centering
			\includegraphics[width=\linewidth]{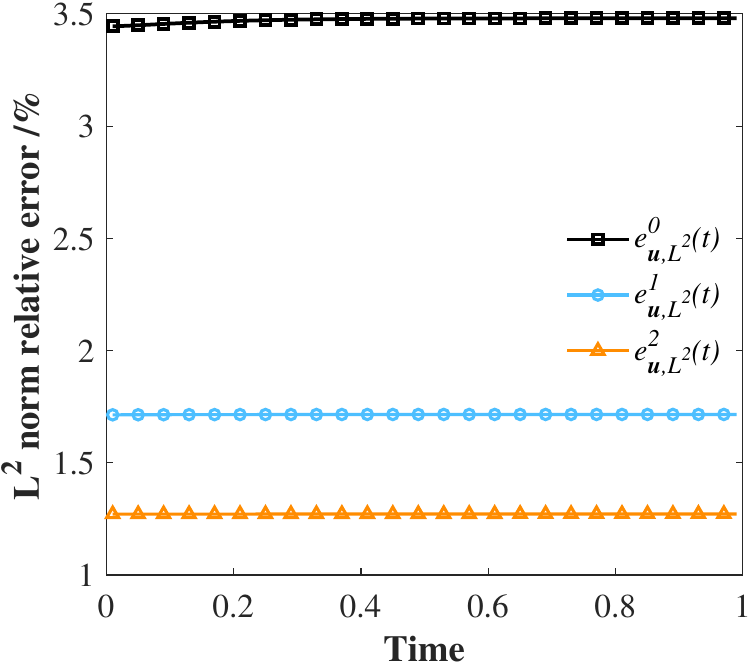}
			(c)
		\end{minipage}
		\hfill
		\begin{minipage}[c]{0.3\textwidth}
			\centering
			\includegraphics[width=\linewidth]{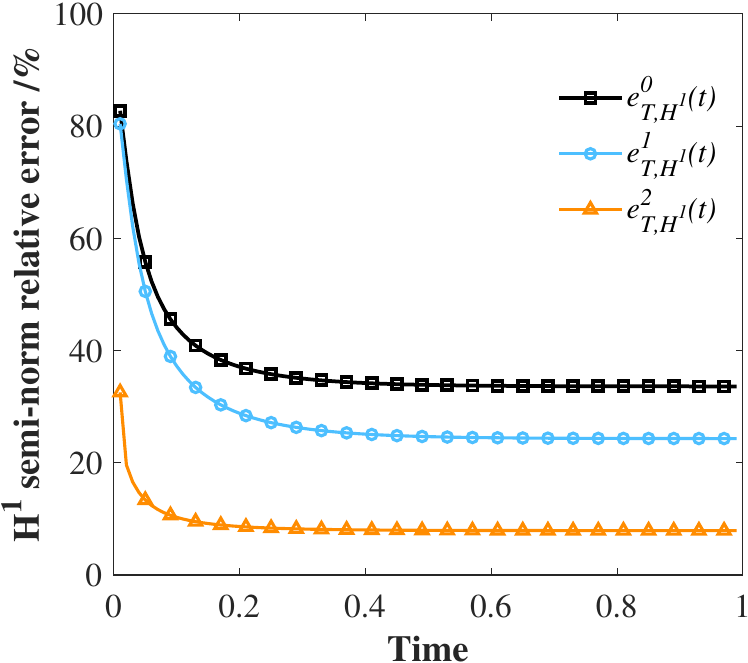}
			(d)
		\end{minipage}
		\hfill
		\begin{minipage}[c]{0.3\textwidth}
			\centering
			\includegraphics[width=\linewidth]{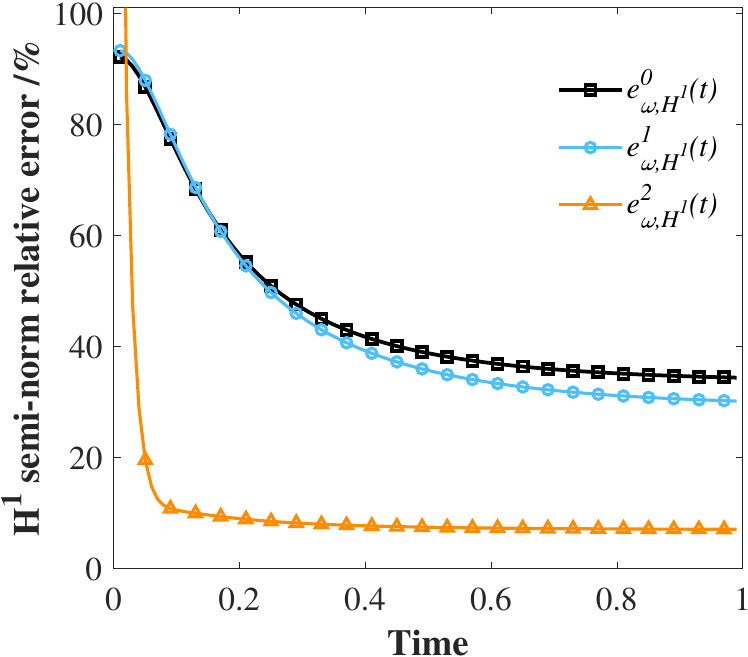}
			(e)
		\end{minipage}
		\hfill
		\begin{minipage}[c]{0.3\textwidth}
			\centering
			\includegraphics[width=\linewidth]{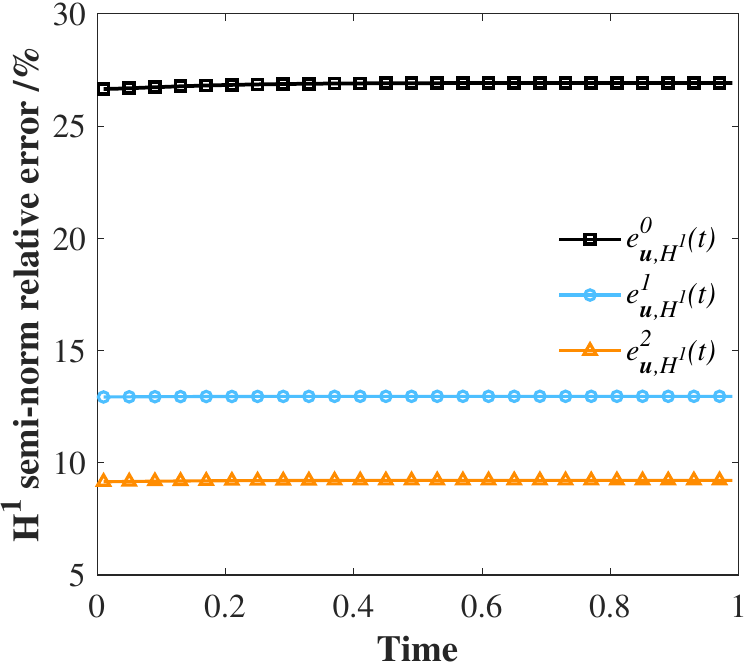}
			(f)
		\end{minipage}
		\caption{The evolutive relative errors of temperature, moisture and displacement fields: (a) $e_{T,L^2}(t)$; (b) $e_{\omega,L^2}(t)$; (c) $e_{\bm{u},L^2}(t)$; (d) $e_{T,H^1}(t)$; (E) $e_{\omega,H^1}(t)$; (F) $e_{\bm{u},H^1}(t)$.}\label{f13}
	\end{figure}
	
	Figs.~\ref{f8}-\ref{f12} demonstrate that only the HOMS method is capable of accurately reproducing the nonlinear hygro-thermo-mechanical coupling behavior of the 3D heterogeneous structure. The homogenized approach captures solely the macroscopic response, whereas the LOMS method offers only limited microscopic corrections. Notably, as evidenced in Fig.~\ref{f13}, the proposed HOMS approach can retain long-time numerical stability, as the blow-up phenomenon does not appear within the simulation time up to $t=1.0\mathrm{s}$.
	
	\subsection{Example 3: nonlinear hygro-thermo-mechanical coupling simulation of 3D heterogeneous plate with a large number of inclusions}
	\label{sec:53}
	The nonlinear dynamic hygro‑thermo‑mechanical behavior of a 3D heterogeneous plate is investigated. The plate consists of a matrix reinforced with particulate inclusions. Its macroscopic domain $\Omega$ and the corresponding microscopic unit cell $Y$ are illustrated in Fig.~\ref{f1:3Dplate}. As indicated in the figure, $\Omega= (x_1,x_2,x_3) = [0,1] \times [0,1] \times[0,1/4] \mathrm{cm}^3$ and the small periodic parameter is $\epsilon=1/32$. This heterogeneous plate represents a large‑scale engineering structure containing a high volume of spherical inclusions.
	\begin{figure}[!htb]
		\centering
		\begin{minipage}[c]{0.35\textwidth}
			\centering
			\includegraphics[width=47mm]{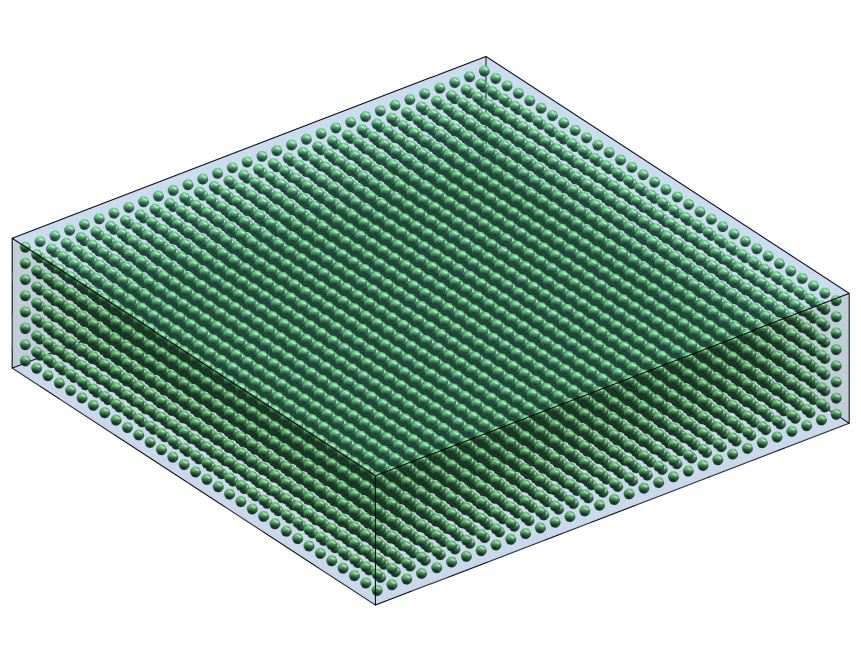}\\
			(a)
		\end{minipage}
		\begin{minipage}[c]{0.3\textwidth}
			\centering
			\includegraphics[width=37mm]{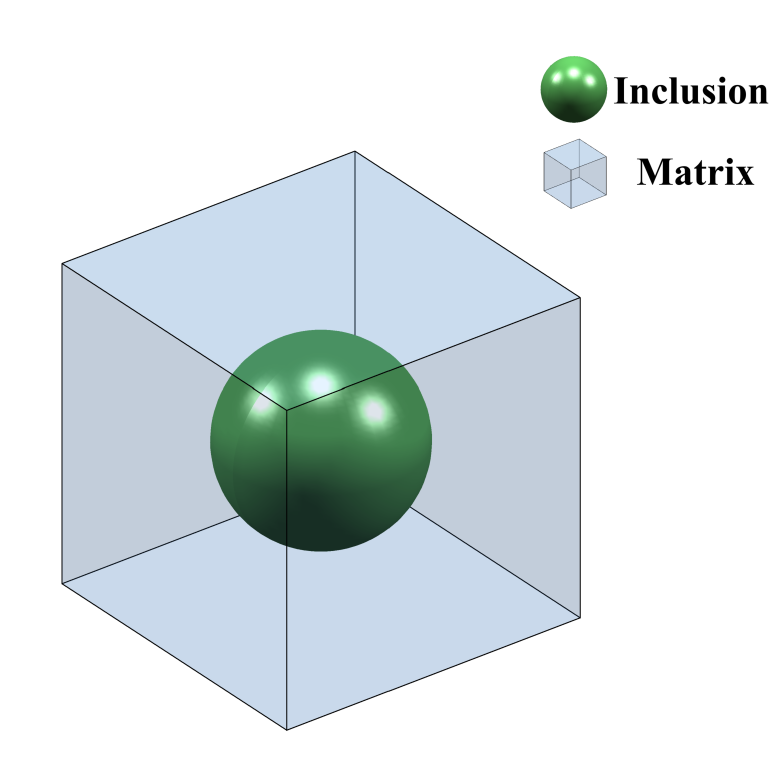}\\
			(b)
		\end{minipage}
		\begin{minipage}[c]{0.3\textwidth}
			\centering
			\includegraphics[width=47mm]{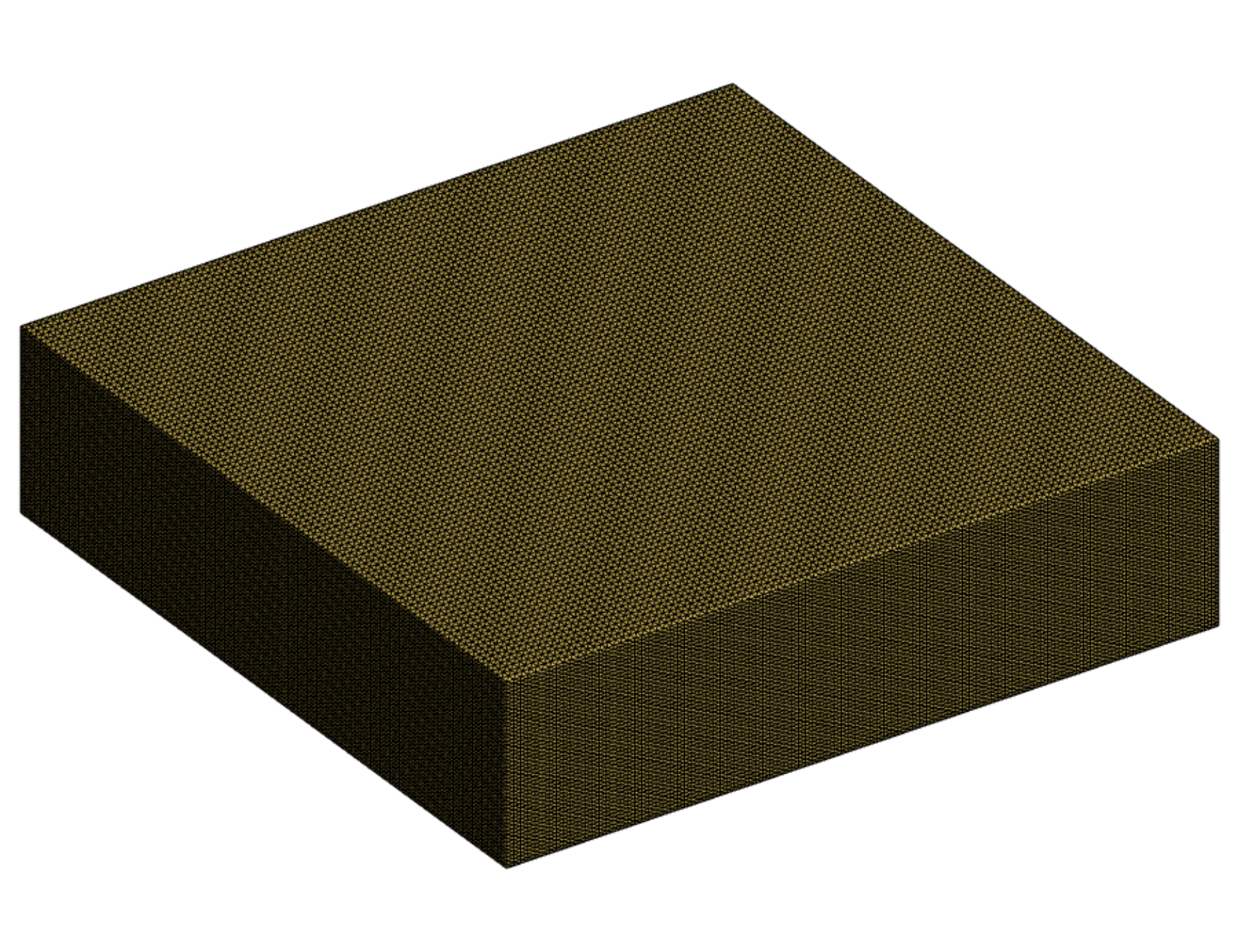}\\
			(c)
		\end{minipage}
		\caption{(a) The 3D composite plate structure $\Omega$; (b) PUC $Y$; (c) homogenized structure $\Omega$.}\label{f1:3Dplate}
	\end{figure}
	
	In this example, all material property parameters are the same as those listed in Table~\ref{t3} of Section~\ref{sec:52}, except that for the Matrix, Young's modulus $E^{\epsilon}$ is given by $1250.0-0.030435T-1.0\times10^{-12}T^2 \ \mathrm{GPa}$ and thermal conductivity $k^{\epsilon}_{ij}$ by $5000.0+0.5T+0.0025 \ \mathrm{W / (m \cdot K})$. In addition, the heat source is changed to $h=10000.0 \ \mathrm{J/(cm^{3}\cdot s)}$. All other source items, boundary conditions, and initial conditions remain the same as in Section~\ref{sec:52}.
	
	For the discretization, tetrahedral meshes are generated for the multi-scale nonlinear problem \eqref{eq:2.1}, the auxiliary cell problems, and the associated homogenized problem \eqref{eq:2.27}. The detailed mesh information, including the numbers of FEM elements and nodes, is documented in Table~\ref{t5}.
	\begin{table}[!t]
		\caption{Comparison of computational cost ($\Delta t=0.01\,$s, $t\in[0,1.0]\,$s).\label{t5}}
		\centering
		\begin{tabular}{cccc}
			\toprule
			& Cell equations & Homogenized equations & Multi-scale equations \\
			\midrule
			FEM nodes & 65420 & 520251 & 535920640 (estimated) \\
			FEM elements & 394701 & 3000000 & 3233390592 (estimated) \\
			\bottomrule
		\end{tabular}
	\end{table}
	
	For this larger-scale heterogeneous plate, we still prescribe 10 equidistant interpolation points each for macroscopic temperature and moisture within a single unit cell. The dynamic nonlinear hygro-thermo-mechanical responses are simulated over $t \in [0, 1.0] \mathrm{s}$ with $\Delta t = 0.01 \mathrm{s}$, during which the homogenized equations \eqref{eq:2.27} and the multi-scale equations \eqref{eq:2.1} are solved on-line. The resulting temperature, moisture and displacement fields at $t = 1.0 \mathrm{s}$ are displayed in Figs.~\ref{f15}-\ref{f19}.
	\begin{figure}[!htb]
		\centering
		\begin{minipage}[c]{0.32\textwidth}
			\centering
			\includegraphics[width=\linewidth]{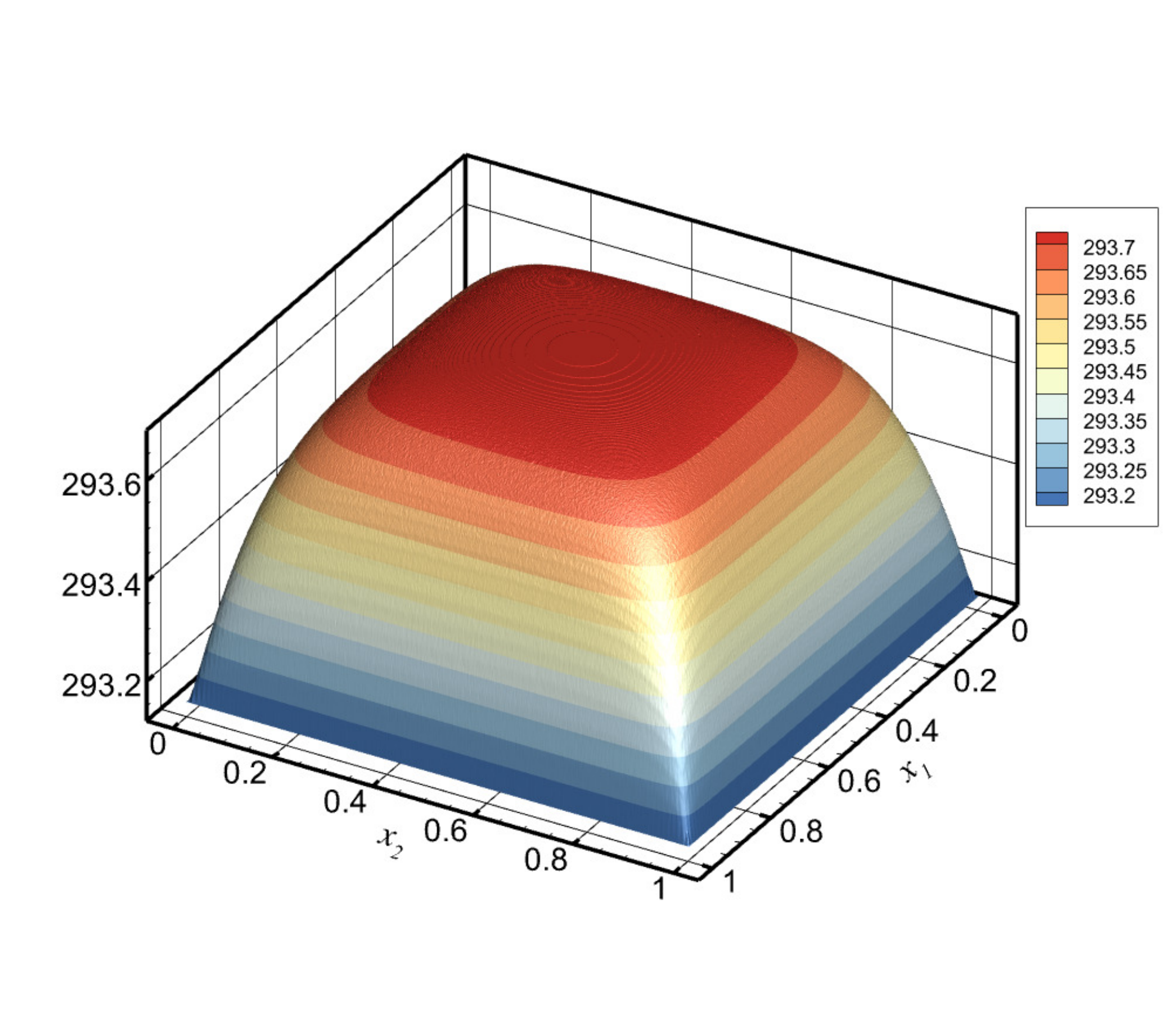}
			(a)
		\end{minipage}
		\hfill
		\begin{minipage}[c]{0.32\textwidth}
			\centering
			\includegraphics[width=\linewidth]{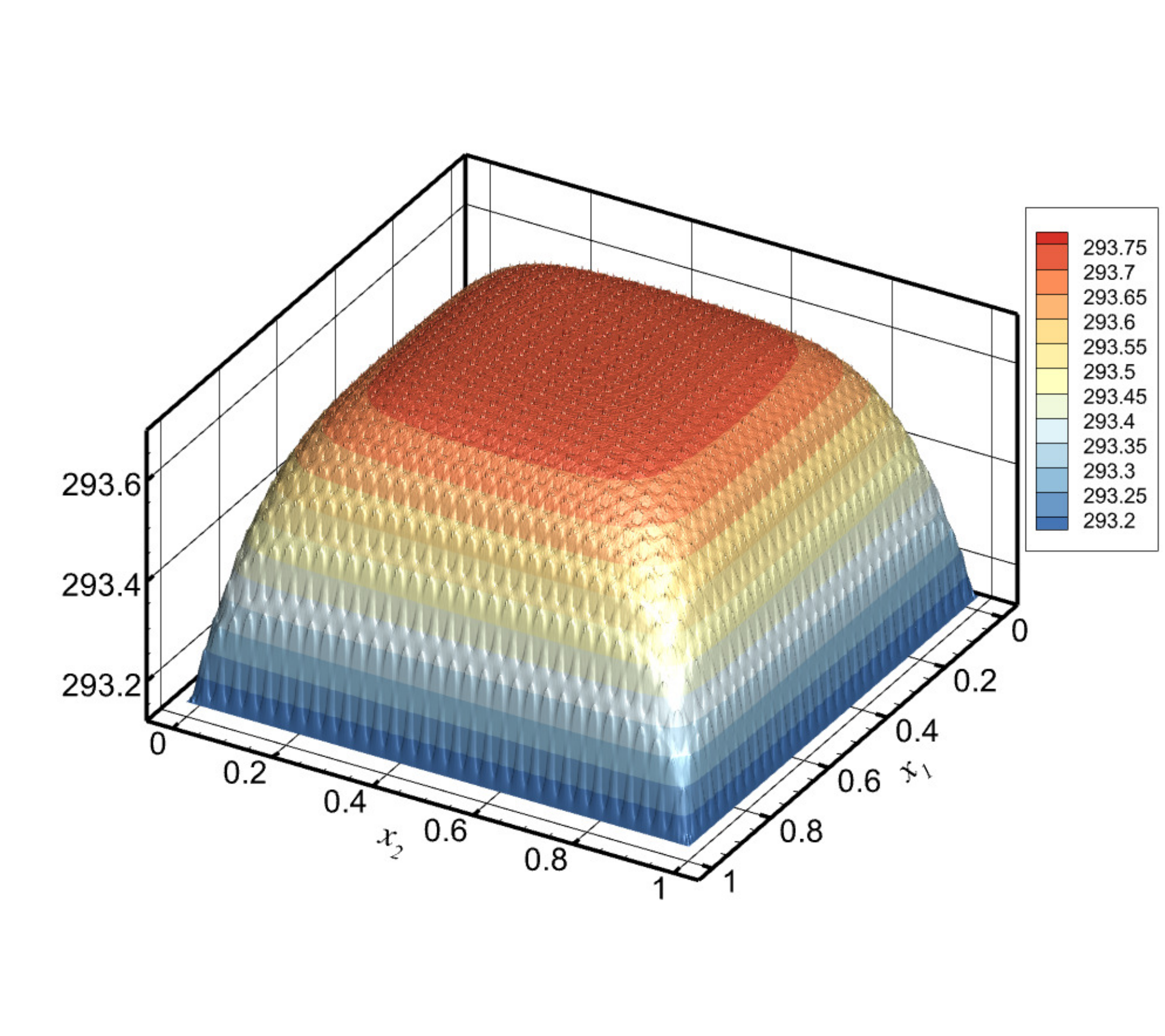}
			(b)
		\end{minipage}
		\hfill
		\begin{minipage}[c]{0.32\textwidth}
			\centering
			\includegraphics[width=\linewidth]{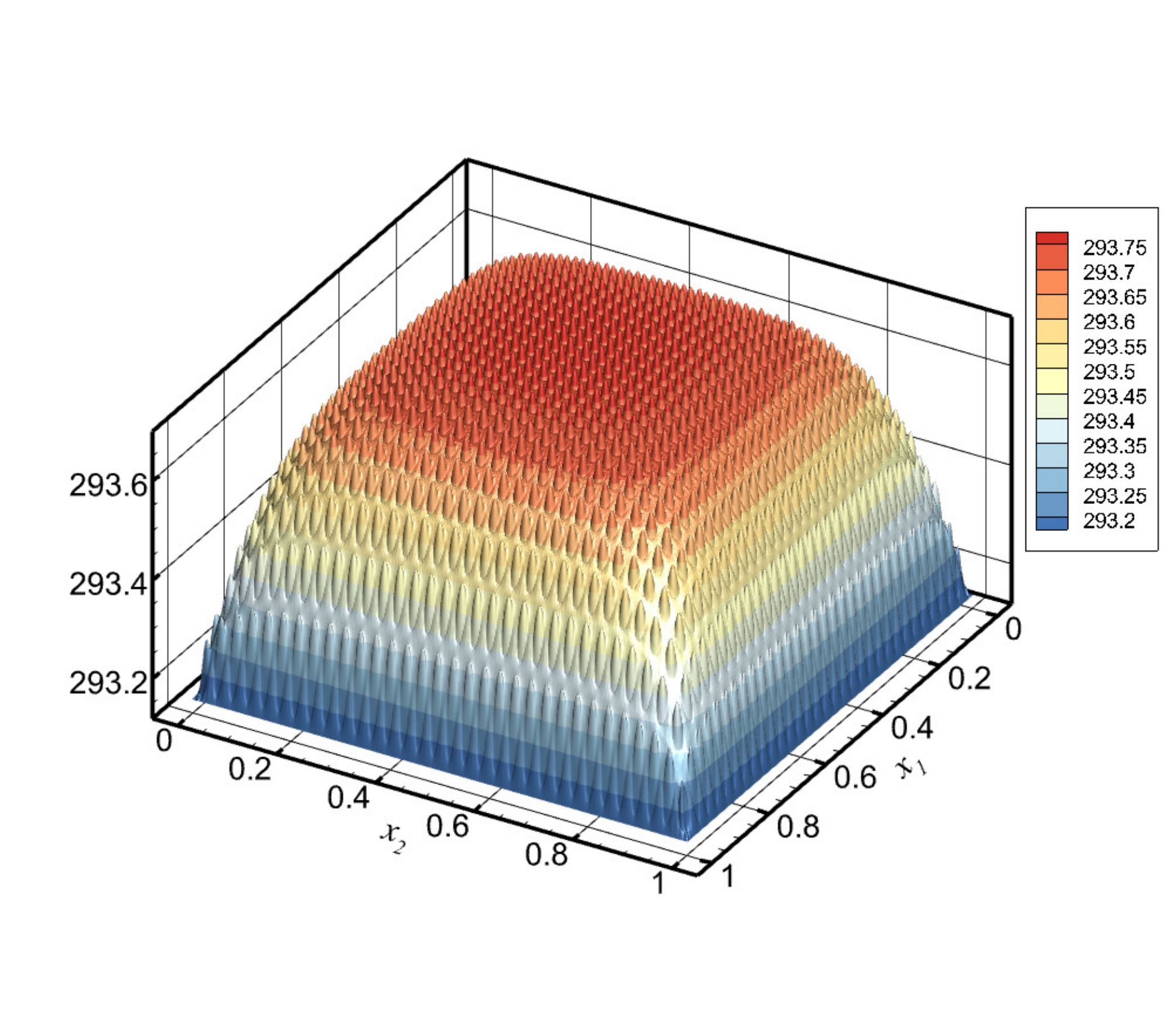}
			(c)
		\end{minipage}
		\caption{The temperature field in cross section $x_3=0.015625 \mathrm{cm}$ at time $t=1.0\mathrm{s}$: (a) $T^{(0)}$; (b) $T^{(1,\epsilon)}$; (c) $T^{(2,\epsilon)}$.}\label{f15}
	\end{figure}
	\begin{figure}[!htb]
		\centering
		\begin{minipage}[c]{0.32\textwidth}
			\centering
			\includegraphics[width=\linewidth]{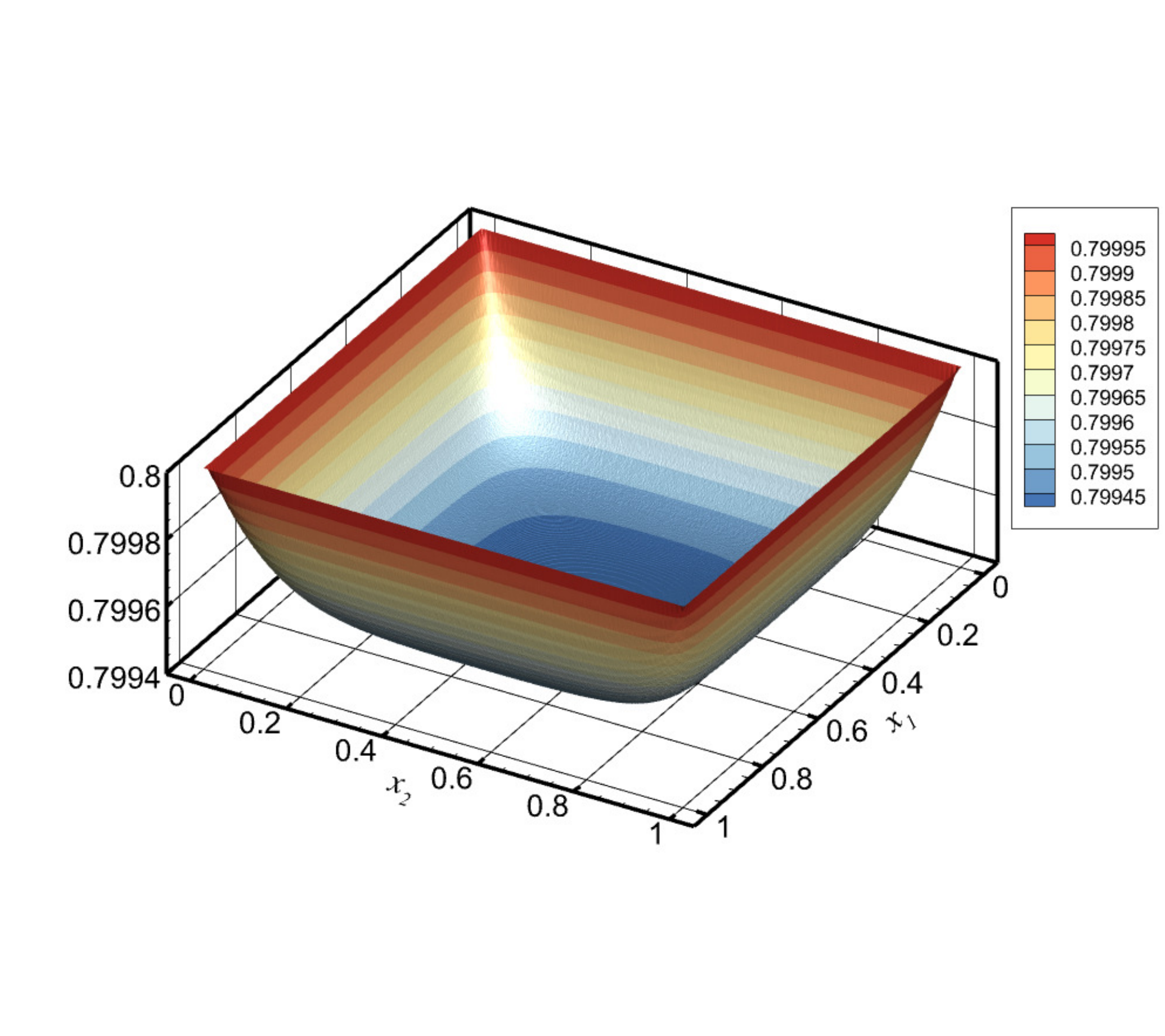}
			(a)
		\end{minipage}
		\hfill
		\begin{minipage}[c]{0.32\textwidth}
			\centering
			\includegraphics[width=\linewidth]{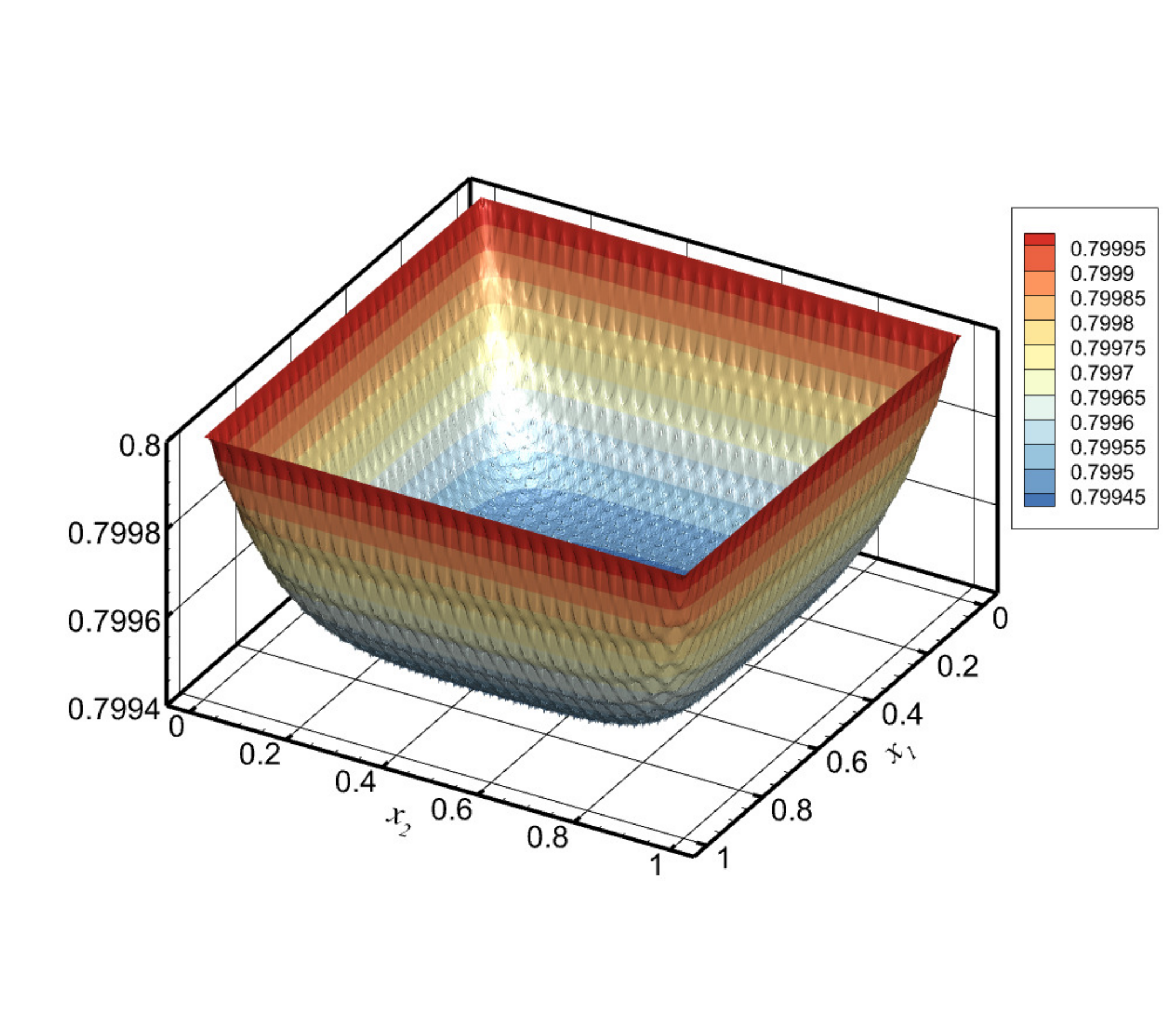}
			(b)
		\end{minipage}
		\hfill
		\begin{minipage}[c]{0.32\textwidth}
			\centering
			\includegraphics[width=\linewidth]{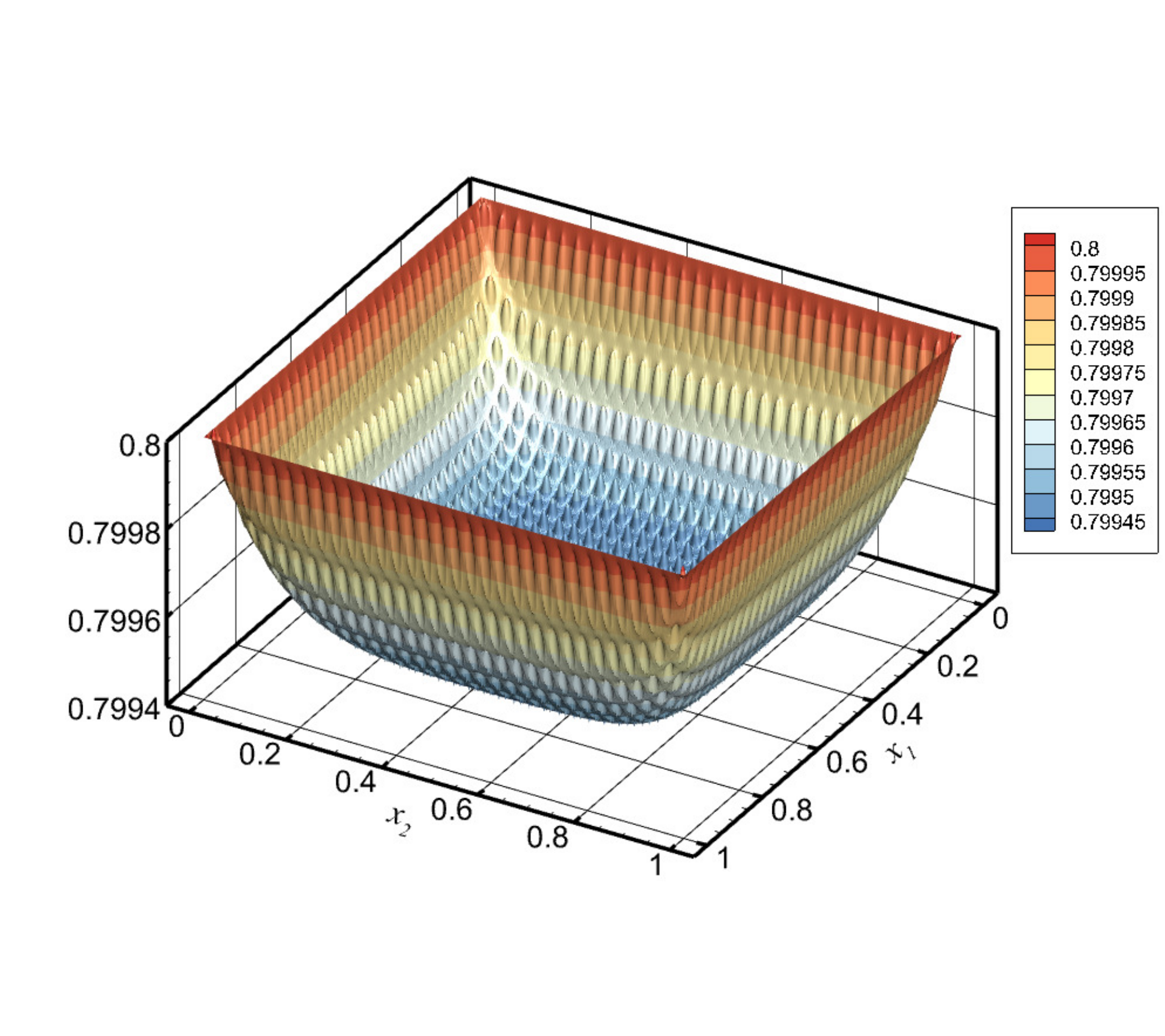}
			(c)
		\end{minipage}
		\caption{The moisture field in $x_3=0.015625 \mathrm{cm}$ at time $t=1.0\mathrm{s}$: (a) $\omega^{(0)}$; (b) $\omega^{(1,\epsilon)}$; (c) $\omega^{(2,\epsilon)}$.}\label{f16}
	\end{figure}
	\begin{figure}[!htb]
		\centering
		\begin{minipage}[c]{0.32\textwidth}
			\centering
			\includegraphics[width=\linewidth]{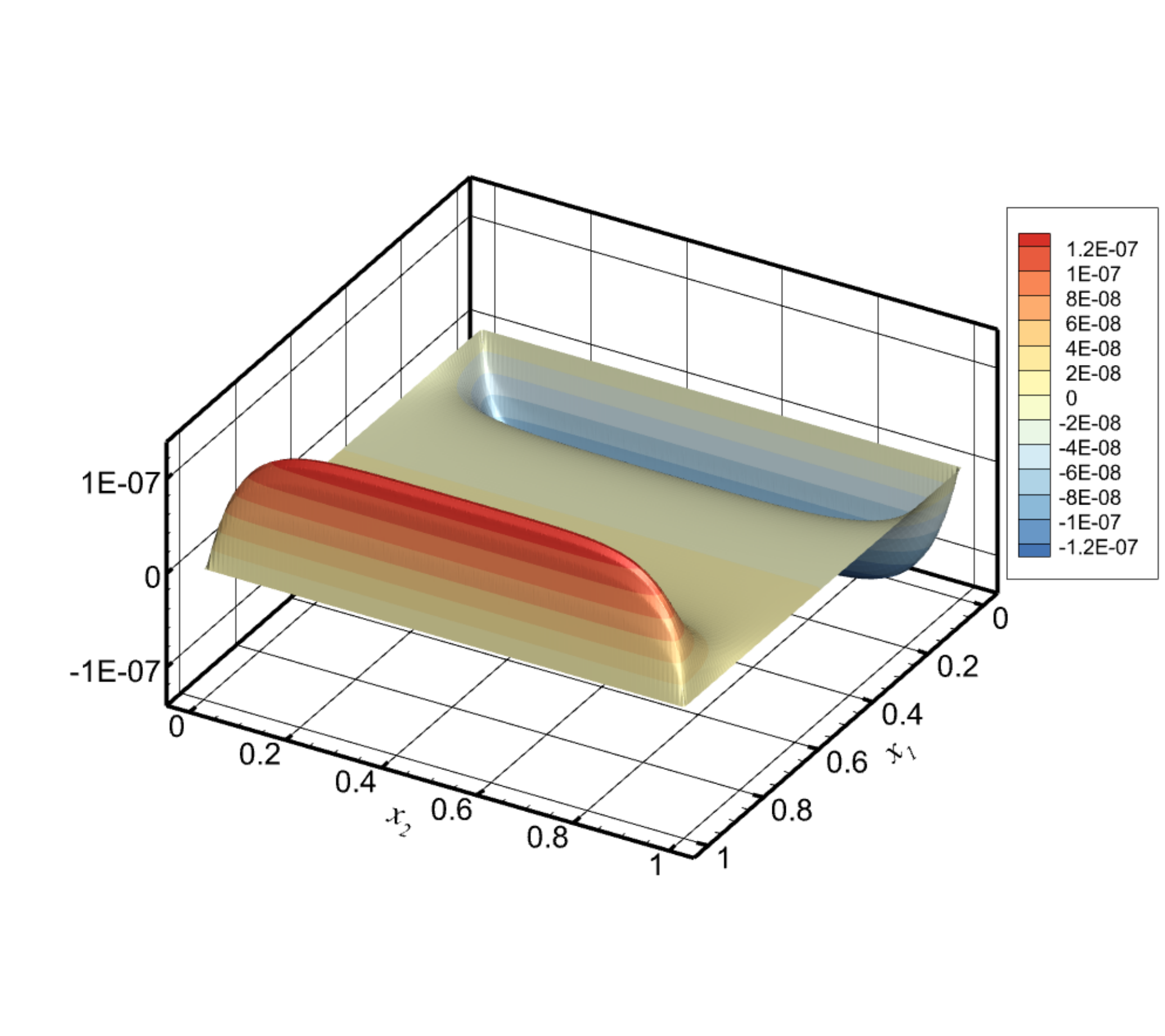}
			(a)
		\end{minipage}
		\hfill
		\begin{minipage}[c]{0.32\textwidth}
			\centering
			\includegraphics[width=\linewidth]{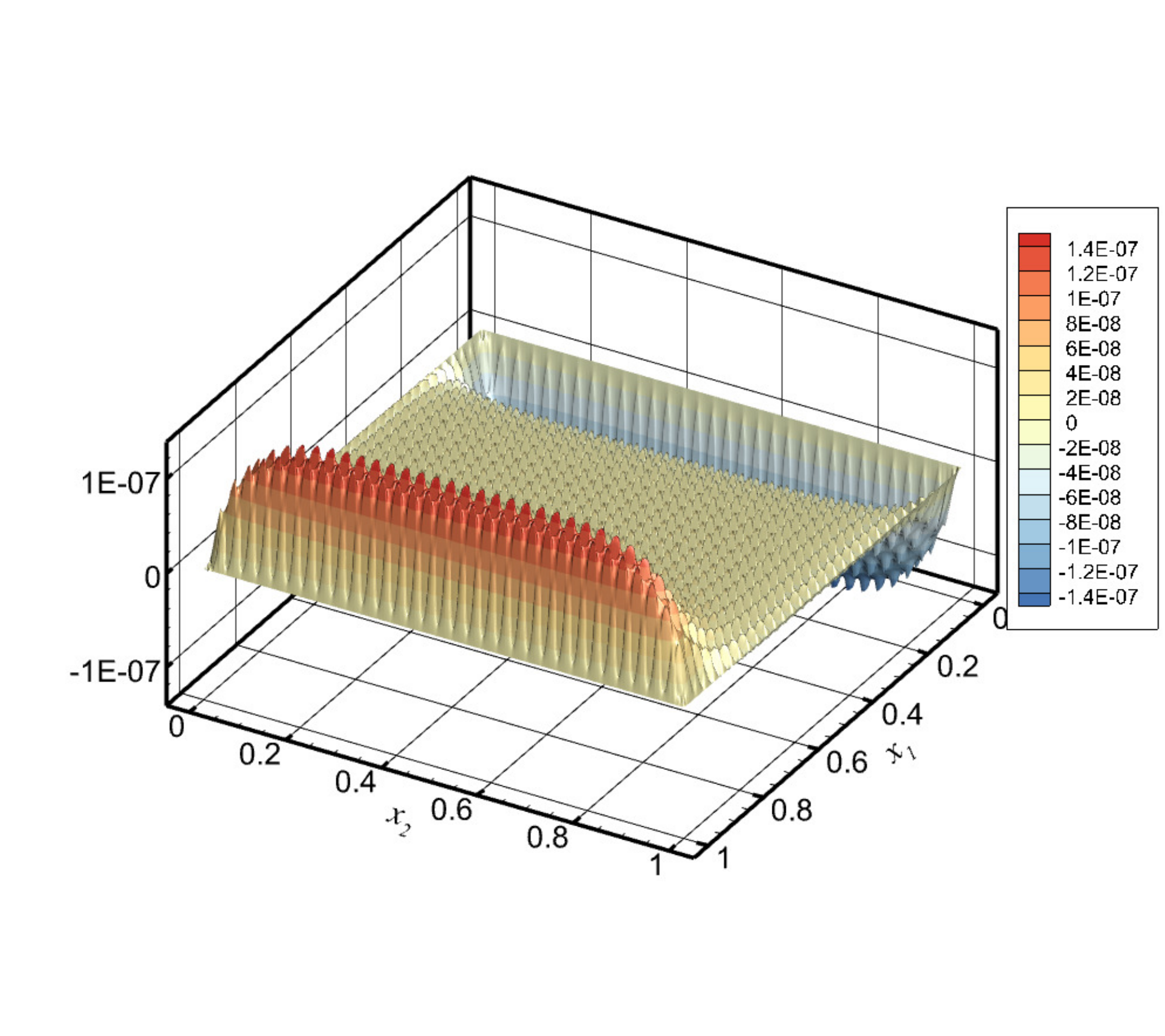}
			(b)
		\end{minipage}
		\hfill
		\begin{minipage}[c]{0.32\textwidth}
			\centering
			\includegraphics[width=\linewidth]{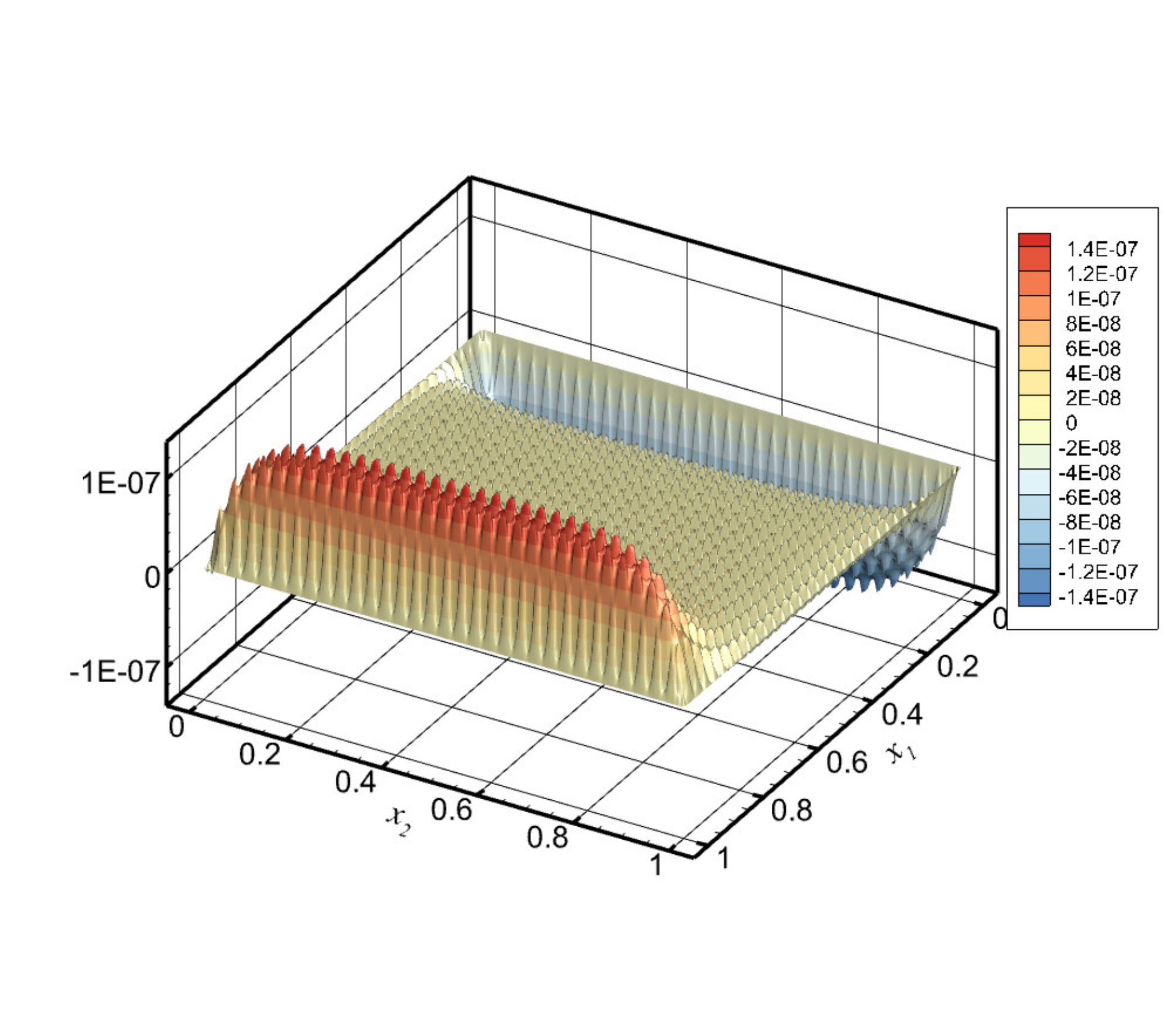}
			(c)
		\end{minipage}
		\caption{The first component of the displacement field in cross section $x_3=0.015625 \mathrm{cm}$ at time $t=1.0\mathrm{s}$: (a) $u_1^{(0)}$; (b) $u_1^{(1,\epsilon)}$; (c) $u_1^{(2,\epsilon)}$.}\label{f17}
	\end{figure}
	\begin{figure}[!htb]
		\centering
		\begin{minipage}[c]{0.32\textwidth}
			\centering
			\includegraphics[width=\linewidth]{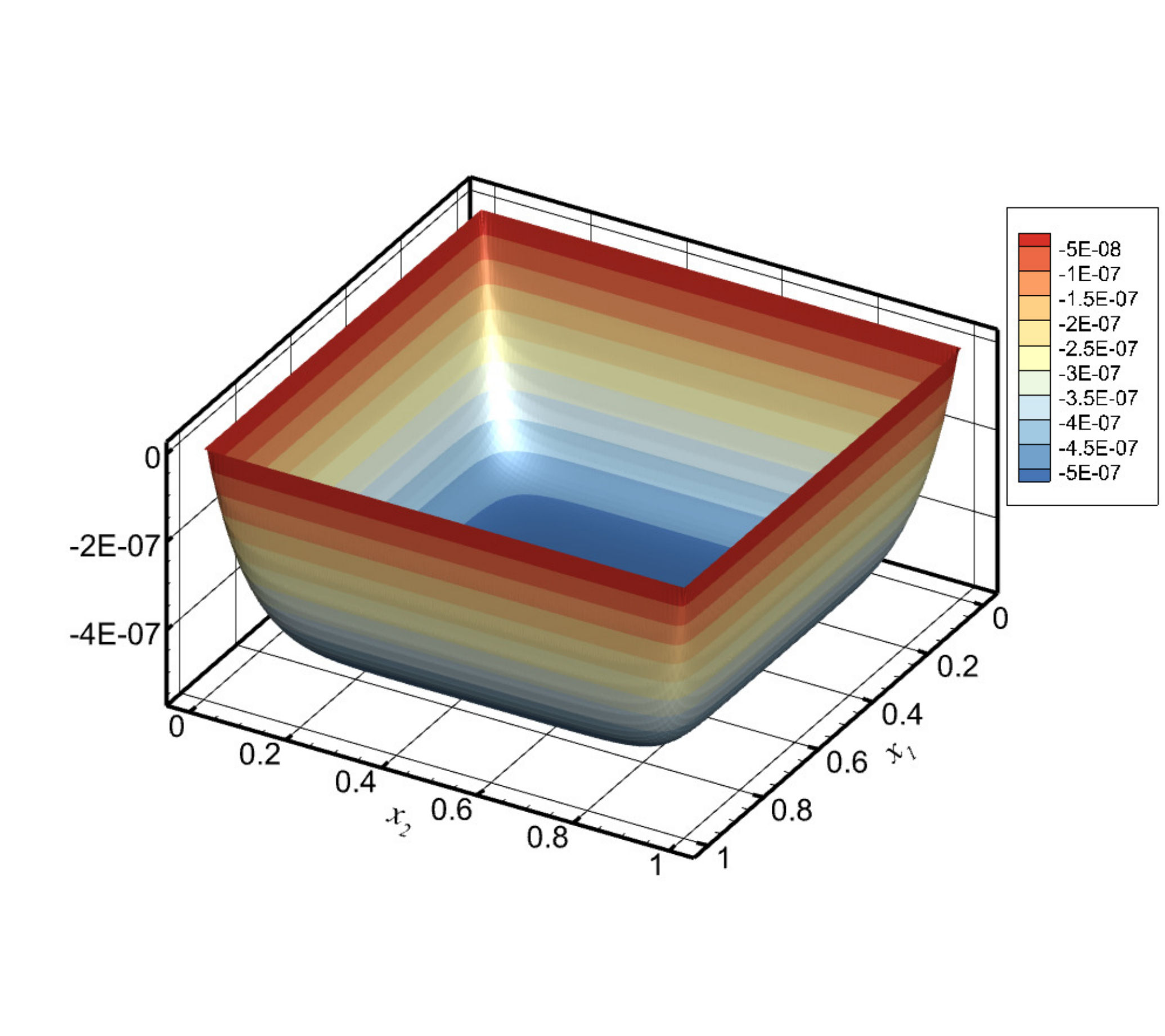}
			(a)
		\end{minipage}
		\hfill
		\begin{minipage}[c]{0.32\textwidth}
			\centering
			\includegraphics[width=\linewidth]{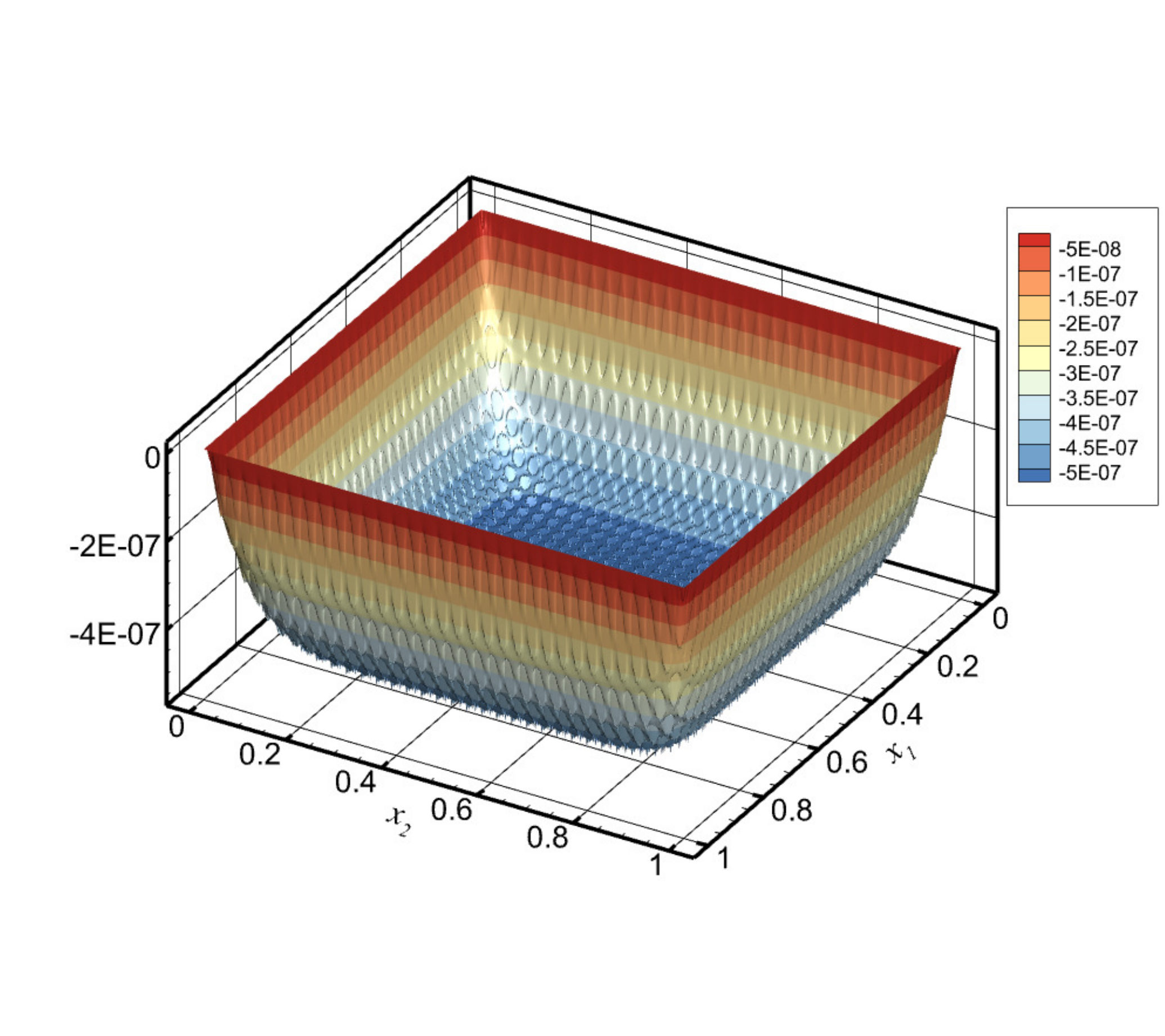}
			(b)
		\end{minipage}
		\hfill
		\begin{minipage}[c]{0.32\textwidth}
			\centering
			\includegraphics[width=\linewidth]{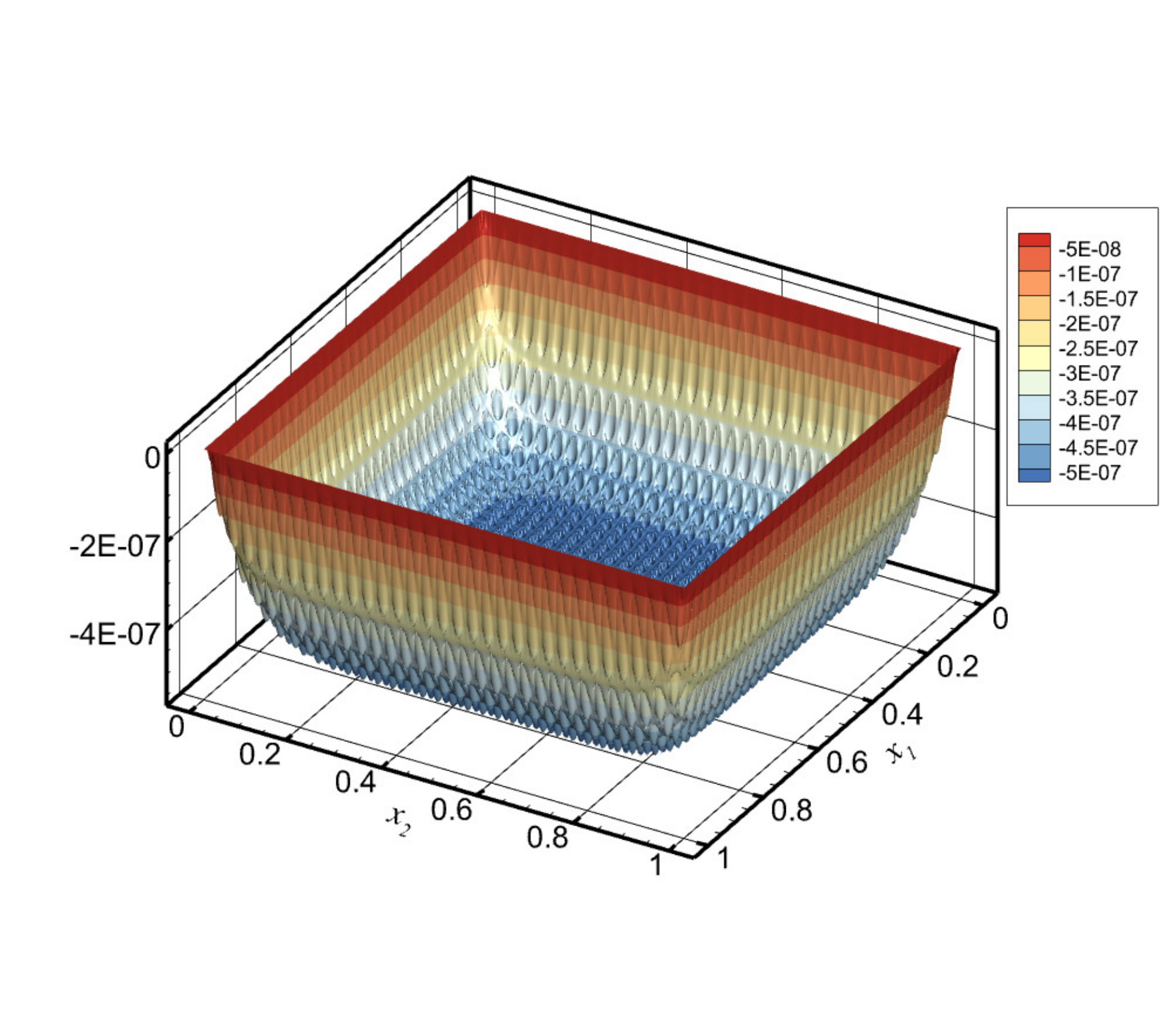}
			(c)
		\end{minipage}
		\caption{The third component of the displacement field in cross section $x_3=0.015625 \mathrm{cm}$ at time $t=1.0\mathrm{s}$: (a) $u_3^{(0)}$; (b) $u_3^{(1,\epsilon)}$; (c) $u_3^{(2,\epsilon)}$.}\label{f19}
	\end{figure}
	
	Due to the enormous mesh size of $O(10^8)$ nodes and $O(10^9)$ elements, as shown in Table~\ref{t5}, direct FEM simulation of this large-scale heterogeneous plate is practically infeasible. As shown in Figs.~\ref{f15}-\ref{f19}, the HOMS solutions effectively capture the steep microscopic fluctuations of the 3D heterogeneous plate with a large number of inclusions, whereas the homogenized solutions fail to capture any oscillations and the LOMS solutions capture only inadequate microscopic responses. Consequently, the proposed HOMS method offers a practical and resource-efficient way to simulate large-scale heterogeneous structures, capturing microscopic fluctuations while avoiding the prohibitive cost of direct FEM, which is of great significance in engineering computations.
	
\section{Conclusions}
\label{sec:6}
	This work presents a HOMS computational approach with low computational cost for the accurate simulation of nonlinear dynamic hygro-thermo-mechanical coupling problems in heterogeneous structures that possess microscopic heterogeneities. The principal contributions are threefold. First, new macro-micro correlation formulations that incorporate higher-order correction terms are established for such problems in heterogeneous structures with periodic microscopic configurations. Second, explicit error estimates are rigorously derived for the HOMS solutions in both local and global senses. Third, an efficient two- stage (off-line and on-line) multi-scale numerical algorithm is designed to overcome the limitation of prohibitive computational cost, and its convergence analysis is also provided. Numerical experiments demonstrate that the proposed HOMS method delivers excellent numerical accuracy and computational efficiency. Notably, for large-scale heterogeneous structures that are difficult to handle with classical finite element methods, the HOMS method offers an effective computational framework that enables high-accuracy, high-efficiency simulations of nonlinear dynamic hygro-thermo-mechanical problems.
	
	Future work will focus on three aspects. First, the computational efficiency of the off-line stage will be further improved through parallel computing and model reduction techniques. Second, the current two-scale framework will be extended to a three-scale (macro-meso-micro) formulation for hierarchical composites whose spatial configurations inherently involve more than two distinct scales, such as woven composites and porous materials. Third, the current deterministic framework can be generalized to account for random heterogeneous composites.

\appendix
\section{Supplemental expressions for Section~\ref{sec:31}}
\label{app:A}
	\begin{footnotesize}
		\begin{equation}
			\begin{aligned}
				& \Phi_0(\mathbf{x}, \mathbf{y}, t) = \bigl( \hat{S} - \rho^{(0)} c^{(0)} \bigr) \frac{\partial T^{(0)}}{\partial t} + \frac{\partial}{\partial y_i} \Bigl[ k_{ij}^{(0)} \frac{\partial}{\partial x_j} \Bigl( \mathcal{H}_{\alpha_1} \frac{\partial T^{(0)}}{\partial x_{\alpha_1}} \Bigr) \Bigr] \\
				& + \frac{\partial}{\partial y_i} \Bigl( k_{ij}^{(0)} \frac{\partial \mathcal{E}_{\alpha_1 \alpha_2}}{\partial y_j} \Bigr) \frac{\partial T^{(0)}}{\partial x_{\alpha_1}} \frac{\partial T^{(0)}}{\partial x_{\alpha_2}} - \frac{\partial}{\partial x_i} \Bigl( \hat{k}_{ij} \frac{\partial T^{(0)}}{\partial x_j} \Bigr) - \hat{Q}_{hyd} + Q_{hyd}^{(0)}.
			\end{aligned}
		\end{equation}
	\end{footnotesize}
	\begin{footnotesize}
		\begin{equation}
			\begin{aligned}
				& \Phi_1(\mathbf{x}, \mathbf{y}, t) = - \bigl( c^{(0)} \mathbf{D}^{(0,1)} \rho^{(0)} + \rho^{(0)} \mathbf{D}^{(0,1)} c^{(0)} \bigr) \mathcal{H}_{\alpha_1} \frac{\partial T^{(0)}}{\partial x_{\alpha_1}} \frac{\partial T^{(0)}}{\partial t} - \rho^{(0)} c^{(0)} \mathcal{H}_{\alpha_1} \frac{\partial}{\partial t} \Bigl( \frac{\partial T^{(0)}}{\partial x_{\alpha_1}} \Bigr) + \mathcal{H}_{\alpha_1} \frac{\partial T^{(0)}}{\partial x_{\alpha_1}} \mathbf{D}^{(0,1)} Q_{hyd}^{(0)} \\
				&+ \frac{\partial}{\partial x_i} \Bigl( k_{ij}^{(0)} \frac{\partial \mathcal{E}_{\alpha_1 \alpha_2}}{\partial y_j} \frac{\partial T^{(0)}}{\partial x_{\alpha_1}} \frac{\partial T^{(0)}}{\partial x_{\alpha_2}} \Bigr) + \frac{\partial}{\partial x_i} \Bigl[ k_{ij}^{(0)} \frac{\partial}{\partial x_j} \Bigl( \mathcal{H}_{\alpha_1} \frac{\partial T^{(0)}}{\partial x_{\alpha_1}} \Bigr) \Bigr] + \frac{\partial}{\partial y_i} \Bigl[ \mathcal{H}_{\alpha_2} \mathbf{D}^{(0,1)} k_{ij}^{(0)} \frac{\partial}{\partial x_j} \Bigl( \mathcal{H}_{\alpha_1} \frac{\partial T^{(0)}}{\partial x_{\alpha_1}} \Bigr) \Bigr] \frac{\partial T^{(0)}}{\partial x_{\alpha_2}}.
			\end{aligned}
		\end{equation}
	\end{footnotesize}
	\begin{footnotesize}
		\begin{equation}
			\begin{aligned}
				& \Psi_0(\mathbf{x}, \mathbf{y}, t) = \frac{\partial}{\partial y_i} \Bigl[ g_{ij}^{(0)} \frac{\partial}{\partial x_j} \Bigl( \mathcal{J}_{\alpha_1} \frac{\partial \omega^{(0)}}{\partial x_{\alpha_1}} \Bigr) \Bigr] + \frac{\partial}{\partial y_i} \Bigl( g_{ij}^{(0)} \frac{\partial \mathcal{F}_{\alpha_1 \alpha_2}}{\partial y_j} \Bigr) \frac{\partial \omega^{(0)}}{\partial x_{\alpha_1}} \frac{\partial \omega^{(0)}}{\partial x_{\alpha_2}} - \frac{\partial}{\partial x_i} \Bigl( \hat{g}_{ij} \frac{\partial \omega^{(0)}}{\partial x_j} \Bigr) + \hat{S}_{hyd} - S_{hyd}^{(0)}.
			\end{aligned}
		\end{equation}
	\end{footnotesize}
	\begin{footnotesize}
		\begin{equation}
			\begin{aligned}
				& \Psi_1(\mathbf{x}, \mathbf{y}, t) = - \frac{\partial}{\partial t} \Bigl( \mathcal{J}_{\alpha_1} \frac{\partial \omega^{(0)}}{\partial x_{\alpha_1}} \Bigr) + \frac{\partial}{\partial x_i} \Bigl( g_{ij}^{(0)} \frac{\partial \mathcal{F}_{\alpha_1 \alpha_2}}{\partial y_j} \frac{\partial \omega^{(0)}}{\partial x_{\alpha_1}} \frac{\partial \omega^{(0)}}{\partial x_{\alpha_2}} \Bigr) - \mathcal{H}_{\alpha_1} \frac{\partial T^{(0)}}{\partial x_{\alpha_1}} \mathbf{D}^{(0,1)} S_{hyd}^{(0)} \\
				& + \frac{\partial}{\partial x_i} \Bigl[ g_{ij}^{(0)} \frac{\partial}{\partial x_j} \Bigl( \mathcal{J}_{\alpha_1} \frac{\partial \omega^{(0)}}{\partial x_{\alpha_1}} \Bigr) \Bigr] + \frac{\partial}{\partial y_i} \Bigl[ \mathcal{J}_{\alpha_2} \mathbf{D}^{(0,1)} g_{ij}^{(0)} \frac{\partial}{\partial x_j} \Bigl( \mathcal{J}_{\alpha_1} \frac{\partial \omega^{(0)}}{\partial x_{\alpha_1}} \Bigr) \Bigr] \frac{\partial \omega^{(0)}}{\partial x_{\alpha_2}}.
			\end{aligned}
		\end{equation}
	\end{footnotesize}
	\begin{footnotesize}
		\begin{equation}
			\begin{aligned}
				& \Theta_{0i}(\mathbf{x}, \mathbf{y}, t) = \frac{\partial}{\partial y_j} \Bigl[ C_{ijkl}^{(0)} \frac{\partial}{\partial x_l} \Bigl( \mathcal{X}_{kh}^{\alpha_1} \frac{\partial u_h^{(0)}}{\partial x_{\alpha_1}} \Bigr) \Bigr] + \frac{\partial}{\partial y_j} \Bigl( C_{ijkl}^{(0)} \frac{\partial \mathcal{P}_{km}^{\alpha_1 \alpha_2}}{\partial y_l} \Bigr) \frac{\partial T^{(0)}}{\partial x_{\alpha_1}} \frac{\partial u_m^{(0)}}{\partial x_{\alpha_2}} - \frac{\partial}{\partial x_j} \Bigl( \hat{C}_{ijkl} \frac{\partial u_k^{(0)}}{\partial x_l} \Bigr) \\
				& - \frac{\partial}{\partial y_j} \Bigl( C_{ijkl}^{(0)} \frac{\partial}{\partial x_l} \bigl( \mathcal{M}_k (T^{(0)} - \tilde{T}) \bigr) \Bigr) - \frac{\partial}{\partial y_j} \Bigl( C_{ijkl}^{(0)} \frac{\partial \mathcal{A}_k^{\alpha_1}}{\partial y_l} \Bigr) \frac{\partial T^{(0)}}{\partial x_{\alpha_1}} (T^{(0)} - \tilde{T})\\
				& - \frac{\partial}{\partial y_j} \bigl( \alpha_{ij}^{(0)} \mathcal{H}_{\alpha_1} \bigr) \frac{\partial T^{(0)}}{\partial x_{\alpha_1}} + \frac{\partial}{\partial x_j} \bigl( \hat{\alpha}_{ij} (T^{(0)} - \tilde{T}) \bigr) - \frac{\partial}{\partial y_j} \Bigl( C_{ijkl}^{(0)} \frac{\partial}{\partial x_l} \bigl( \mathcal{N}_k (\omega^{(0)} - \tilde{\omega}) \bigr) \Bigr) \\
				& - \frac{\partial}{\partial y_j} \Bigl( C_{ijkl}^{(0)} \frac{\partial \mathcal{B}_k^{\alpha_1}}{\partial y_l} \Bigr) \frac{\partial T^{(0)}}{\partial x_{\alpha_1}} (\omega^{(0)} - \tilde{\omega}) - \frac{\partial}{\partial y_j} \bigl( \beta_{ij}^{(0)} \mathcal{J}_{\alpha_1} \bigr) \frac{\partial \omega^{(0)}}{\partial x_{\alpha_1}} + \frac{\partial}{\partial x_j} \bigl( \hat{\beta}_{ij} (\omega^{(0)} - \tilde{\omega}) \bigr).
			\end{aligned}
		\end{equation}
	\end{footnotesize}
	\begin{footnotesize}
		\begin{equation}
			\begin{aligned}
				& \Theta_{1i}(\mathbf{x}, \mathbf{y}, t) = \frac{\partial}{\partial x_j} \Bigl( C_{ijkl}^{(0)} \frac{\partial \mathcal{P}_{km}^{\alpha_1 \alpha_2}}{\partial y_l} \frac{\partial T^{(0)}}{\partial x_{\alpha_1}} \frac{\partial u_m^{(0)}}{\partial x_{\alpha_2}} \Bigr) + \frac{\partial}{\partial y_j} \Bigl[ \mathcal{H}_{\alpha_2} \frac{\partial T^{(0)}}{\partial x_{\alpha_2}} \mathbf{D}^{(0,1)} C_{ijkl}^{(0)} \frac{\partial}{\partial x_l} \Bigl( \mathcal{X}_{kh}^{\alpha_1} \frac{\partial u_h^{(0)}}{\partial x_{\alpha_1}} \Bigr) \Bigr] \\
				& + \frac{\partial}{\partial x_j} \Bigl[ C_{ijkl}^{(0)} \frac{\partial}{\partial x_l} \Bigl( \mathcal{X}_{kh}^{\alpha_1} \frac{\partial u_h^{(0)}}{\partial x_{\alpha_1}} \Bigr) \Bigr] - \frac{\partial}{\partial x_j} \Bigl( C_{ijkl}^{(0)} \frac{\partial}{\partial x_l} \bigl( \mathcal{M}_k (T^{(0)} - \tilde{T}) \bigr) \Bigr) - \frac{\partial}{\partial x_j} \Bigl( C_{ijkl}^{(0)} \frac{\partial \mathcal{A}_k^{\alpha_1}}{\partial y_l} \frac{\partial T^{(0)}}{\partial x_{\alpha_1}} (T^{(0)} - \tilde{T}) \Bigr) \\
				& - \frac{\partial}{\partial y_j} \Bigl( \mathcal{H}_{\alpha_2} \frac{\partial T^{(0)}}{\partial x_{\alpha_2}} \mathbf{D}^{(0,1)} C_{ijkl}^{(0)} \frac{\partial}{\partial x_l} \bigl( \mathcal{M}_k (T^{(0)} - \tilde{T}) \bigr) \Bigr) - \frac{\partial}{\partial x_j} \Bigl( \alpha_{ij}^{(0)} \mathcal{H}_{\alpha_1} \frac{\partial T^{(0)}}{\partial x_{\alpha_1}} \Bigr) \\
				& - \frac{\partial}{\partial y_j} \Bigl( \mathcal{H}_{\alpha_2} \frac{\partial T^{(0)}}{\partial x_{\alpha_2}} \mathbf{D}^{(0,1)} \alpha_{ij}^{(0)} \mathcal{H}_{\alpha_1} \frac{\partial T^{(0)}}{\partial x_{\alpha_1}} \Bigr) - \frac{\partial}{\partial x_j} \Bigl( C_{ijkl}^{(0)} \frac{\partial}{\partial x_l} \bigl( \mathcal{N}_k (\omega^{(0)} - \tilde{\omega}) \bigr) \Bigr) \\
				& - \frac{\partial}{\partial x_j} \Bigl( C_{ijkl}^{(0)} \frac{\partial \mathcal{B}_k^{\alpha_1}}{\partial y_l} \frac{\partial T^{(0)}}{\partial x_{\alpha_1}} (\omega^{(0)} - \tilde{\omega}) \Bigr) - \frac{\partial}{\partial y_j} \Bigl( \mathcal{H}_{\alpha_2} \frac{\partial T^{(0)}}{\partial x_{\alpha_2}} \mathbf{D}^{(0,1)} C_{ijkl}^{(0)} \frac{\partial}{\partial x_l} \bigl( \mathcal{N}_k (\omega^{(0)} - \tilde{\omega}) \bigr) \Bigr) \\
				& - \frac{\partial}{\partial x_j} \Bigl( \beta_{ij}^{(0)} \mathcal{J}_{\alpha_1} \frac{\partial \omega^{(0)}}{\partial x_{\alpha_1}} \Bigr) - \frac{\partial}{\partial y_j} \Bigl( \mathcal{H}_{\alpha_2} \frac{\partial T^{(0)}}{\partial x_{\alpha_2}} \mathbf{D}^{(0,1)} \beta_{ij}^{(0)} \mathcal{J}_{\alpha_1} \frac{\partial \omega^{(0)}}{\partial x_{\alpha_1}} \Bigr).
			\end{aligned}
		\end{equation}
	\end{footnotesize}
	\begin{footnotesize}
		\begin{equation}
			\begin{aligned}
				& \varphi(\mathbf{x}, \mathbf{y}, t) \!=\! \!\Bigl[ \frac{\partial}{\partial x_i} \!\Bigl(\! k_{ij}^{(0)} \frac{\partial \mathcal{S}}{\partial y_j} \!\Bigr)\! \!-\! \!\Bigl[ \rho^{(0)} c^{(0)} \mathcal{H}_{\alpha_1} \!-\! \frac{\partial}{\partial y_i} \bigl( k_{i\alpha_1}^{(0)} \mathcal{S} \bigr) \Bigr]\! \frac{\partial}{\partial t} \!\Bigl(\! \frac{\partial T^{(0)}}{\partial x_{\alpha_1}} \!\Bigr)\! \!+\! \frac{\partial}{\partial y_i} \!\Bigl(\! k_{ij}^{(0)} \frac{\partial \mathcal{S}}{\partial x_j} \!\Bigr)\! \Bigr]\! \frac{\partial T^{(0)}}{\partial t} \!+\! \!\Bigl[ \frac{\partial}{\partial y_i} \bigl( \mathcal{S} \mathbf{D}^{(0,1)} k_{i\alpha_1}^{(0)} \bigr) \\
				& \!+\! \frac{\partial}{\partial y_i} \Bigl( \mathcal{S} \mathbf{D}^{(0,1)} k_{ij}^{(0)} \frac{\partial \mathcal{H}_{\alpha_1}}{\partial y_j} \Bigr) \!-\! \rho^{(0)} \mathcal{H}_{\alpha_1} \mathbf{D}^{(0,1)} c^{(0)} \!-\! c^{(0)} \mathcal{H}_{\alpha_1} \mathbf{D}^{(0,1)} \rho^{(0)} \!+\! \frac{\partial}{\partial y_i} \Bigl( \mathcal{H}_{\alpha_1} \mathbf{D}^{(0,1)} k_{ij}^{(0)} \frac{\partial \mathcal{S}}{\partial y_j} \Bigr) \Bigr] \frac{\partial T^{(0)}}{\partial t} \frac{\partial T^{(0)}}{\partial x_{\alpha_1}} \\
				& \!+\! \Bigl[ \mathcal{H}_{\alpha_1} \mathbf{D}^{(0,1)} Q_{hyd}^{(0)} \!+\! \frac{\partial}{\partial y_i} \bigl( \mathbb{Q} \mathbf{D}^{(0,1)} k_{i\alpha_1}^{(0)} \bigr) \!+\! \frac{\partial}{\partial x_i} \Bigl( k_{ij}^{(0)} \frac{\partial \mathcal{H}_{\alpha_1}}{\partial x_j} \Bigr) \!+\! \frac{\partial}{\partial y_i} \Bigl( \mathbb{Q}  \mathbf{D}^{(0,1)} k_{ij}^{(0)} \frac{\partial \mathcal{H}_{\alpha_1}}{\partial y_j} \Bigr) \!+\! \frac{\partial}{\partial x_i} \Bigl( k_{ij}^{(0)} \frac{\partial \mathcal{R}_{\alpha_1}}{\partial y_j} \Bigr) \\
				& \!+\! \frac{\partial}{\partial y_i} \Bigl( k_{ij}^{(0)} \frac{\partial \mathcal{R}_{\alpha_1}}{\partial x_j} \Bigr) \!+\! \frac{\partial}{\partial y_i} \Bigl( \mathcal{H}_{\alpha_1} \mathbf{D}^{(0,1)} k_{ij}^{(0)} \frac{\partial \mathbb{Q} }{\partial y_j} \Bigr) \Bigr] \frac{\partial T^{(0)}}{\partial x_{\alpha_1}} \!+\! \Bigl[ \frac{\partial}{\partial y_i} \bigl( \mathcal{R}_{\alpha_1} \mathbf{D}^{(0,1)} k_{i\alpha_2}^{(0)} \bigr) \!+\! \frac{\partial}{\partial y_i} \Bigl( \mathcal{H}_{\alpha_2} \mathbf{D}^{(0,1)} k_{ij}^{(0)} \frac{\partial \mathcal{H}_{\alpha_1}}{\partial x_j} \Bigr) \\
				& \!+\! \frac{\partial}{\partial y_i} \Bigl( \mathcal{R}_{\alpha_1} \mathbf{D}^{(0,1)} k_{ij}^{(0)} \frac{\partial \mathcal{H}_{\alpha_2}}{\partial y_j} \Bigr) \!-\! \frac{\partial}{\partial y_i} \Bigl( k_{ij}^{(0)} \frac{\partial \mathcal{E}_{\alpha_1 \alpha_2}}{\partial x_j} \Bigr) \!+\! \frac{\partial}{\partial y_i} \Bigl( \mathcal{H}_{\alpha_1} \mathbf{D}^{(0,1)} k_{ij}^{(0)} \frac{\partial \mathcal{R}_{\alpha_2}}{\partial y_j} \Bigr) \Bigr] \frac{\partial T^{(0)}}{\partial x_{\alpha_1}} \frac{\partial T^{(0)}}{\partial x_{\alpha_2}}\\
				& \!-\! \Bigl[ \frac{\partial}{\partial y_i} \bigl( \mathcal{E}_{\alpha_1 \alpha_2} \mathbf{D}^{(0,1)} k_{i\alpha_3}^{(0)} \bigr) \!+\! \frac{\partial}{\partial y_i} \Bigl( \mathcal{E}_{\alpha_1 \alpha_2} \mathbf{D}^{(0,1)} k_{ij}^{(0)} \frac{\partial \mathcal{H}_{\alpha_3}}{\partial y_j} \Bigr) \!+\! \frac{\partial}{\partial y_i} \Bigl( \mathcal{H}_{\alpha_3} \mathbf{D}^{(0,1)} k_{ij}^{(0)} \frac{\partial \mathcal{E}_{\alpha_1 \alpha_2}}{\partial y_j} \Bigr) \Bigr] \frac{\partial T^{(0)}}{\partial x_{\alpha_1}} \frac{\partial T^{(0)}}{\partial x_{\alpha_2}} \frac{\partial T^{(0)}}{\partial x_{\alpha_3}} \\
				& \!+\! \Bigl[ k_{\alpha_2 j}^{(0)} \frac{\partial \mathcal{H}_{\alpha_1}}{\partial x_j} \!+\! \frac{\partial}{\partial x_i} \bigl( k_{i\alpha_2}^{(0)} \mathcal{H}_{\alpha_1} \bigr) \!+\! \frac{\partial}{\partial x_i} \Bigl( k_{ij}^{(0)} \frac{\partial \mathcal{H}_{\alpha_1 \alpha_2}}{\partial y_j} \Bigr) \!+\! \frac{\partial}{\partial y_i} \Bigl( k_{ij}^{(0)} \frac{\partial \mathcal{H}_{\alpha_1 \alpha_2}}{\partial x_j} \Bigr) \!+\! \frac{\partial}{\partial y_i} \bigl( k_{i\alpha_2}^{(0)} \mathcal{R}_{\alpha_1} \bigr) \Bigr] \frac{\partial^2 T^{(0)}}{\partial x_{\alpha_1} \partial x_{\alpha_2}} \\
				& \!+\! \Bigl[ k_{\alpha_2 \alpha_3}^{(0)} \mathcal{H}_{\alpha_1} \!+\! \frac{\partial}{\partial y_i} \bigl( k_{i\alpha_3}^{(0)} \mathcal{H}_{\alpha_1 \alpha_2} \bigr) \Bigr] \frac{\partial^3 T^{(0)}}{\partial x_{\alpha_1} \partial x_{\alpha_2} \partial x_{\alpha_3}} \!+\! \Bigl[ \frac{\partial}{\partial y_i} \bigl( \mathcal{H}_{\alpha_1 \alpha_2} \mathbf{D}^{(0,1)} k_{i\alpha_3}^{(0)} \bigr)  \!+\! \frac{\partial}{\partial y_i} \bigl( \mathcal{H}_{\alpha_3} \mathbf{D}^{(0,1)} k_{i\alpha_2}^{(0)} \mathcal{H}_{\alpha_1} \bigr)\\
				& \!+\! \frac{\partial}{\partial y_i} \Bigl( \mathcal{H}_{\alpha_1 \alpha_2} \mathbf{D}^{(0,1)} k_{ij}^{(0)} \frac{\partial \mathcal{H}_{\alpha_3}}{\partial y_j} \Bigr) \!-\! \frac{\partial k_{i\alpha_2}^{(0)}}{\partial y_i} \mathcal{E}_{\alpha_1 \alpha_3} \!-\! \frac{\partial k_{i\alpha_1}^{(0)}}{\partial y_i} \mathcal{E}_{\alpha_3 \alpha_2} \!+\! \frac{\partial}{\partial y_i} \Bigl( \mathcal{H}_{\alpha_3} \mathbf{D}^{(0,1)} k_{ij}^{(0)} \frac{\partial \mathcal{H}_{\alpha_1 \alpha_2}}{\partial y_j} \Bigr) \Bigr] \frac{\partial^2 T^{(0)}}{\partial x_{\alpha_1} \partial x_{\alpha_2}} \frac{\partial T^{(0)}}{\partial x_{\alpha_3}} \\
				& \!+\! \frac{1}{2} \Bigl[ \frac{\partial}{\partial y_i} \Bigl( \bigl( \mathcal{H}_{\alpha_1} \bigr)^2 \mathbf{D}^{(0,2)} k_{ij}^{(0)} \Bigr) \!+\! \frac{\partial}{\partial y_i} \Bigl( \bigl( \mathcal{H}_{\alpha_1} \bigr)^2 \mathbf{D}^{(0,2)} k_{ij}^{(0)} \frac{\partial \mathcal{H}_{\alpha_2}}{\partial y_j} \Bigr) \Bigr] \Bigl( \frac{\partial T^{(0)}}{\partial x_{\alpha_1}} \Bigr)^2 \frac{\partial T^{(0)}}{\partial x_{\alpha_2}} \!+\! \frac{\partial}{\partial x_i} \Bigl( k_{ij}^{(0)} \frac{\partial \mathbb{Q} }{\partial y_j} \Bigr) \!+\! \frac{\partial}{\partial y_i} \Bigl( k_{ij}^{(0)} \frac{\partial \mathbb{Q} }{\partial x_j} \Bigr).
			\end{aligned}
		\end{equation}
	\end{footnotesize}
	\begin{footnotesize}
		\begin{equation}
			\begin{aligned}
				& \psi(\mathbf{x}, \mathbf{y}, t) = \Bigl[ \frac{\partial}{\partial x_i} \Bigl( g_{ij}^{(0)} \frac{\partial \mathcal{J}_{\alpha_1}}{\partial x_j} \Bigr) - \frac{\partial}{\partial y_i} \bigl( \mathbb{S} \mathbf{D}^{(0,1)} g_{i\alpha_1}^{(0)} \bigr) - \frac{\partial}{\partial y_i} \Bigl( \mathbb{S} \mathbf{D}^{(0,1)} g_{ij}^{(0)} \frac{\partial \mathcal{J}_{\alpha_1}}{\partial y_j} \Bigr) + \frac{\partial}{\partial x_i} \Bigl( g_{ij}^{(0)} \frac{\partial \mathcal{I}_{\alpha_1}}{\partial y_j} \Bigr) \\
				& + \frac{\partial}{\partial y_i} \Bigl( g_{ij}^{(0)} \frac{\partial \mathcal{I}_{\alpha_1}}{\partial x_j} \Bigr) - \frac{\partial}{\partial y_i} \Bigl( \mathcal{J}_{\alpha_1} \mathbf{D}^{(0,1)} g_{ij}^{(0)} \frac{\partial \mathbb{S}}{\partial y_j} \Bigr) \Bigr] \frac{\partial \omega^{(0)}}{\partial x_{\alpha_1}} - \mathcal{H}_{\alpha_1} \mathbf{D}^{(0,1)} S_{hyd}^{(0)} \frac{\partial T^{(0)}}{\partial x_{\alpha_1}} - \mathcal{J}_{\alpha_1} \frac{\partial}{\partial t} \Bigl( \frac{\partial \omega^{(0)}}{\partial x_{\alpha_1}} \Bigr)\\
				& - \frac{\partial}{\partial x_i} \Bigl( g_{ij}^{(0)} \frac{\partial \mathbb{S}}{\partial y_j} \Bigr) - \frac{\partial}{\partial y_i} \Bigl( g_{ij}^{(0)} \frac{\partial \mathbb{S}}{\partial x_j} \Bigr) + \Bigl[ \frac{\partial}{\partial x_i} \Bigl( g_{ij}^{(0)} \frac{\partial \mathcal{F}_{\alpha_1 \alpha_2}}{\partial y_j} \Bigr) + \frac{\partial}{\partial y_i} \bigl( \mathcal{I}_{\alpha_1} \mathbf{D}^{(0,1)} g_{i\alpha_2}^{(0)} \bigr) + \frac{\partial}{\partial y_i} \Bigl( \mathcal{J}_{\alpha_2} \mathbf{D}^{(0,1)} g_{ij}^{(0)} \frac{\partial \mathcal{J}_{\alpha_1}}{\partial x_j} \Bigr) \\
				& \!+\! \frac{\partial}{\partial y_i} \!\Bigl(\! \mathcal{I}_{\alpha_1} \mathbf{D}^{(0,1)} g_{ij}^{(0)} \frac{\partial \mathcal{J}_{\alpha_2}}{\partial y_j} \!\Bigr)\! \!-\! \frac{\partial}{\partial x_i} \!\Bigl(\! g_{ij}^{(0)} \frac{\partial \mathcal{F}_{\alpha_1 \alpha_2}}{\partial y_j} \!\Bigr)\! \!-\! \frac{\partial}{\partial y_i} \!\Bigl(\! g_{ij}^{(0)} \frac{\partial \mathcal{F}_{\alpha_1 \alpha_2}}{\partial x_j} \!\Bigr)\! \!+\! \frac{\partial}{\partial y_i} \!\Bigl(\! \mathcal{J}_{\alpha_2} \mathbf{D}^{(0,1)} g_{ij}^{(0)} \frac{\partial \mathcal{I}_{\alpha_1}}{\partial y_j} \!\Bigr)\! \Bigr]\! \frac{\partial \omega^{(0)}}{\partial x_{\alpha_1}} \frac{\partial \omega^{(0)}}{\partial x_{\alpha_2}} \\
				& \!+\! \!\Bigl[ g_{\alpha_2 j}^{(0)} \frac{\partial \mathcal{J}_{\alpha_1}}{\partial x_j} \!+\! \frac{\partial}{\partial x_i} \bigl( g_{i\alpha_2}^{(0)} \mathcal{J}_{\alpha_1} \bigr) \!+\! \frac{\partial}{\partial x_i} \!\Bigl(\! g_{ij}^{(0)} \frac{\partial \mathcal{J}_{\alpha_1 \alpha_2}}{\partial y_j} \!\Bigr)\! \!+\! g_{\alpha_2 j}^{(0)} \frac{\partial \mathcal{I}_{\alpha_1}}{\partial y_j} \!+\! \frac{\partial}{\partial y_i} \!\Bigl(\! g_{ij}^{(0)} \frac{\partial \mathcal{J}_{\alpha_1 \alpha_2}}{\partial x_j} \!\Bigr)\! \!+\! \frac{\partial}{\partial y_i} \bigl( g_{i\alpha_2}^{(0)} \mathcal{I}_{\alpha_1} \bigr) \Bigr]\! \frac{\partial^2 \omega^{(0)}}{\partial x_{\alpha_1} \partial x_{\alpha_2}} \\
				& - \Bigl[ \frac{\partial}{\partial y_i} \bigl( \mathcal{F}_{\alpha_1 \alpha_2} \mathbf{D}^{(0,1)} g_{i\alpha_3}^{(0)} \bigr) + \frac{\partial}{\partial y_i} \Bigl( \mathcal{F}_{\alpha_1 \alpha_2} \mathbf{D}^{(0,1)} g_{ij}^{(0)} \frac{\partial \mathcal{J}_{\alpha_3}}{\partial y_j} \Bigr) + \frac{\partial}{\partial y_i} \Bigl( \mathcal{J}_{\alpha_3} \mathbf{D}^{(0,1)} g_{ij}^{(0)} \frac{\partial \mathcal{F}_{\alpha_1 \alpha_2}}{\partial y_j} \Bigr) \Bigr] \frac{\partial \omega^{(0)}}{\partial x_{\alpha_1}} \frac{\partial \omega^{(0)}}{\partial x_{\alpha_2}} \frac{\partial \omega^{(0)}}{\partial x_{\alpha_3}} \\
				& + \Bigl[ \frac{\partial}{\partial y_i} \bigl( \mathcal{J}_{\alpha_1 \alpha_2} \mathbf{D}^{(0,1)} g_{i\alpha_3}^{(0)} \bigr) + \frac{\partial}{\partial y_i} \bigl( \mathcal{J}_{\alpha_3} \mathbf{D}^{(0,1)} g_{i\alpha_2}^{(0)} \mathcal{J}_{\alpha_1} \bigr) + \frac{\partial}{\partial y_i} \Bigl( \mathcal{J}_{\alpha_1 \alpha_2} \mathbf{D}^{(0,1)} g_{ij}^{(0)} \frac{\partial \mathcal{J}_{\alpha_3}}{\partial y_j} \Bigr) - \frac{\partial}{\partial y_i} \bigl( g_{i\alpha_2}^{(0)} \mathcal{F}_{\alpha_1 \alpha_3} \bigr) \\
				& - \frac{\partial}{\partial y_i} \bigl( g_{i\alpha_1}^{(0)} \mathcal{F}_{\alpha_3 \alpha_2} \bigr) + \frac{\partial}{\partial y_i} \Bigl( \mathcal{J}_{\alpha_3} \mathbf{D}^{(0,1)} g_{ij}^{(0)} \frac{\partial \mathcal{J}_{\alpha_1 \alpha_2}}{\partial y_j} \Bigr) \Bigr] \frac{\partial^2 \omega^{(0)}}{\partial x_{\alpha_1} \partial x_{\alpha_2}} \frac{\partial \omega^{(0)}}{\partial x_{\alpha_3}} \\
				& + \Bigl[ g_{\alpha_2 \alpha_3}^{(0)} \mathcal{J}_{\alpha_1} + g_{\alpha_3 j}^{(0)} \frac{\partial \mathcal{J}_{\alpha_1 \alpha_2}}{\partial y_j} + \frac{\partial}{\partial y_i} \bigl( g_{i\alpha_3}^{(0)} \mathcal{J}_{\alpha_1 \alpha_2} \bigr) \Bigr] \frac{\partial^3 \omega^{(0)}}{\partial x_{\alpha_1} \partial x_{\alpha_2} \partial x_{\alpha_3}} \\
				& + \frac{1}{2} \Bigl[ \frac{\partial}{\partial y_i} \Bigl( \bigl( \mathcal{J}_{\alpha_1} \bigr)^2 \mathbf{D}^{(0,2)} g_{i\alpha_2}^{(0)} \Bigr) + \frac{\partial}{\partial y_i} \Bigl( \bigl( \mathcal{J}_{\alpha_1} \bigr)^2 \mathbf{D}^{(0,2)} g_{ij}^{(0)} \frac{\partial \mathcal{J}_{\alpha_3}}{\partial y_j} \Bigr) \Bigr] \Bigl( \frac{\partial \omega^{(0)}}{\partial x_{\alpha_1}} \Bigr)^2 \frac{\partial \omega^{(0)}}{\partial x_{\alpha_2}}.
			\end{aligned}
		\end{equation}
	\end{footnotesize}
	\begin{footnotesize}
		\begin{equation}
			\begin{aligned}
				& \theta_i(\mathbf{x},\mathbf{y},t) = \Bigl[ \frac{\partial}{\partial y_j}\bigl( \mathbb{Q}\mathbf{D}^{(0,1)} C_{ijh\alpha_1}^{(0)} \bigr) + \frac{\partial}{\partial x_j}\Bigl( C_{ijkl}^{(0)} \frac{\partial \mathcal{X}_{kh}^{\alpha_1}}{\partial x_l} \Bigr) + \frac{\partial}{\partial y_j}\Bigl( \mathbb{Q}\mathbf{D}^{(0,1)} C_{ijkl}^{(0)} \frac{\partial \mathcal{X}_{kh}^{\alpha_1}}{\partial y_l} \Bigr) + \frac{\partial}{\partial x_j}\Bigl( C_{ijkl}^{(0)} \frac{\partial \mathcal{Q}_{kh}^{\alpha_1}}{\partial y_l} \Bigr) + \frac{\partial}{\partial y_j}\Bigl( C_{ijkl}^{(0)} \frac{\partial \mathcal{Q}_{kh}^{\alpha_1}}{\partial x_l} \Bigr) \Bigr] \frac{\partial u_h^{(0)}}{\partial x_{\alpha_1}}\\
				& + \Bigl[ C_{i\alpha_2 kl}^{(0)} \frac{\partial \mathcal{X}_{kh}^{\alpha_1}}{\partial x_l} + \frac{\partial}{\partial x_j}\bigl( C_{ijk\alpha_2}^{(0)} \mathcal{X}_{kh}^{\alpha_1} \bigr) + \frac{\partial}{\partial x_j}\Bigl( C_{ijkl}^{(0)} \frac{\partial \mathcal{X}_{kh}^{\alpha_1\alpha_2}}{\partial y_l} \Bigr) + C_{i\alpha_2 kl}^{(0)} \frac{\partial \mathcal{Q}_{kh}^{\alpha_1}}{\partial y_l} + \frac{\partial}{\partial y_j}\Bigl( C_{ijkl}^{(0)} \frac{\partial \mathcal{X}_{kh}^{\alpha_1\alpha_2}}{\partial x_l} \Bigr) + \frac{\partial}{\partial y_j}\bigl( C_{ijk\alpha_2}^{(0)} \mathcal{Q}_{kh}^{\alpha_1} \bigr) \Bigr] \frac{\partial^2 u_h^{(0)}}{\partial x_{\alpha_1}\partial x_{\alpha_2}} \\
				& + \Bigl[ C_{i\alpha_2 k\alpha_3}^{(0)} \mathcal{X}_{kh}^{\alpha_1} + C_{i\alpha_3 kl}^{(0)} \frac{\partial \mathcal{X}_{kh}^{\alpha_1\alpha_2}}{\partial y_l} + \frac{\partial}{\partial y_j}\bigl( C_{ijk\alpha_3}^{(0)} \mathcal{X}_{kh}^{\alpha_1\alpha_2} \bigr) \Bigr] \frac{\partial^3 u_h^{(0)}}{\partial x_{\alpha_1}\partial x_{\alpha_2}\partial x_{\alpha_3}} - \Bigl[ \frac{\partial}{\partial y_j}\Bigl( \mathcal{S}\mathbf{D}^{(0,1)} C_{ijkl}^{(0)} \frac{\partial \mathcal{M}_k}{\partial y_l} \Bigr)\\
				& + \frac{\partial}{\partial y_j}\bigl( \mathcal{S}\mathbf{D}^{(0,1)} \alpha_{ij}^{(0)} \bigr) \Bigr] \frac{\partial T^{(0)}}{\partial t} (T^{(0)} - \tilde T) - \Bigl[ \frac{\partial}{\partial x_j}\Bigl( C_{ijkl}^{(0)} \frac{\partial \mathcal{M}_k}{\partial x_l} \Bigr) + \frac{\partial}{\partial y_j}\Bigl( \mathbb{Q}\mathbf{D}^{(0,1)} C_{ijkl}^{(0)} \frac{\partial \mathcal{M}_k}{\partial y_l} \Bigr) + \frac{\partial}{\partial x_j}\Bigl( C_{ijkl}^{(0)} \frac{\partial \mathcal{W}_k}{\partial y_l} \Bigr) \\
				& + \frac{\partial}{\partial y_j}\Bigl( C_{ijkl}^{(0)} \frac{\partial \mathcal{W}_k}{\partial x_l} \Bigr) + \frac{\partial}{\partial y_j}\bigl( \mathbb{Q}\mathbf{D}^{(0,1)} \alpha_{ij}^{(0)} \bigr) \Bigr] (T^{(0)} - \tilde T) - \frac{\partial}{\partial y_j} \bigl( \alpha_{ij}^{(0)} \mathcal{S} \bigr) \frac{\partial T^{(0)}}{\partial t} - \Bigl[ C_{i\alpha_1 kl}^{(0)} \frac{\partial \mathcal{M}_k}{\partial x_l} + \frac{\partial}{\partial x_j}\bigl( C_{ijk\alpha_1}^{(0)} \mathcal{M}_k \bigr) \\
				& + C_{i\alpha_1 kl}^{(0)} \frac{\partial \mathcal{W}_k}{\partial y_l} + \frac{\partial}{\partial x_j}\Bigl( C_{ijkl}^{(0)} \frac{\partial \mathcal{Z}_k^{\alpha_1}}{\partial y_l} \Bigr) + \frac{\partial}{\partial y_j}\bigl( C_{ijk\alpha_1}^{(0)} \mathcal{W}_k \bigr) + \frac{\partial}{\partial y_j}\Bigl( C_{ijkl}^{(0)} \frac{\partial \mathcal{Z}_k^{\alpha_1}}{\partial x_l} \Bigr) + \frac{\partial}{\partial x_j}\bigl( \alpha_{ij}^{(0)} \mathcal{H}_{\alpha_1} \bigr) + \frac{\partial}{\partial y_j}\bigl( \alpha_{ij}^{(0)} \mathcal{R}_{\alpha_1} \bigr) \Bigr] \frac{\partial T^{(0)}}{\partial x_{\alpha_1}} \\
				& - \Bigl[ \frac{\partial}{\partial y_j}\Bigl( \mathcal{R}_{\alpha_1}\mathbf{D}^{(0,1)} C_{ijkl}^{(0)} \frac{\partial \mathcal{M}_k}{\partial y_l} \Bigr) + 2\frac{\partial}{\partial x_j}\Bigl( C_{ijkl}^{(0)} \frac{\partial \mathcal{A}_k^{\alpha_1}}{\partial y_l} \Bigr) - \frac{\partial}{\partial y_j}\Bigl( C_{ijkl}^{(0)} \frac{\partial \mathcal{A}_k^{\alpha_1}}{\partial x_l} \Bigr) + \frac{\partial}{\partial y_j}\Bigl( \mathbf{D}^{(0,1)} C_{ijkl}^{(0)} \mathcal{H}_{\alpha_1} C_{ijkl}^{(0)} \frac{\partial \mathcal{W}_k}{\partial y_l} \Bigr) \\
				& + \frac{\partial}{\partial y_j}\bigl( \mathcal{R}_{\alpha_1}\mathbf{D}^{(0,1)} \alpha_{ij}^{(0)} \bigr) + \frac{\partial}{\partial y_j}\Bigl( \mathcal{H}_{\alpha_1}\mathbf{D}^{(0,1)} C_{ijkl}^{(0)} \frac{\partial \mathcal{M}_k}{\partial x_l} \Bigr) \Bigr] \frac{\partial T^{(0)}}{\partial x_{\alpha_1}} (T^{(0)} - \tilde T) - \Bigl[ C_{i\alpha_1 k\alpha_2}^{(0)} \mathcal{M}_k + C_{i\alpha_2 kl}^{(0)} \frac{\partial \mathcal{Z}_k^{\alpha_1}}{\partial y_l} + \frac{\partial}{\partial y_j}\bigl( C_{ijk\alpha_2}^{(0)} \mathcal{Z}_k^{\alpha_1} \bigr)\\
				& + \alpha_{i\alpha_2}^{(0)} \mathcal{H}_{\alpha_1} + \frac{\partial}{\partial y_j}\bigl( \alpha_{ij}^{(0)} \mathcal{H}_{\alpha_1 \alpha_2} \bigr) \Bigr] \frac{\partial^2 T^{(0)}}{\partial x_{\alpha_1}\partial x_{\alpha_2}} + \Bigl[ \frac{\partial}{\partial y_j}\bigl( C_{ijk\alpha_2}^{(0)} \mathcal{A}_k^{\alpha_1} \bigr) - \frac{\partial}{\partial y_j}\Bigl( \mathbf{D}^{(0,1)} C_{ijkl}^{(0)} \mathcal{H}_{\alpha_1} C_{ijkl}^{(0)} \frac{\partial \mathcal{Z}_k^{\alpha_2}}{\partial y_l} \Bigr) - \frac{\partial}{\partial y_j}\bigl( \mathcal{H}_{\alpha_1}\mathbf{D}^{(0,1)} \alpha_{ij}^{(0)} \mathcal{H}_{\alpha_2} \bigr) 
			\end{aligned}
		\end{equation}
	\end{footnotesize}
	\begin{footnotesize}
		\begin{equation}
			\begin{aligned}
				& + \frac{\partial}{\partial y_j}\bigl( \alpha_{ij}^{(0)} \mathcal{E}_{\alpha_1 \alpha_2} \bigr) - \frac{\partial}{\partial y_j}\bigl( \mathcal{H}_{\alpha_2}\mathbf{D}^{(0,1)} C_{ijk\alpha_1}^{(0)} \mathcal{M}_k \bigr) \Bigr] \frac{\partial T^{(0)}}{\partial x_{\alpha_1}} \frac{\partial T^{(0)}}{\partial x_{\alpha_2}} - \Bigl[ \frac{\partial}{\partial y_j}\Bigl( \mathcal{H}_{\alpha_1 \alpha_2}\mathbf{D}^{(0,1)} C_{ijkl}^{(0)} \frac{\partial \mathcal{M}_k}{\partial y_l} \Bigr) - \frac{\partial}{\partial y_j}\bigl( C_{ijk\alpha_2}^{(0)} \mathcal{A}_k^{\alpha_1} \bigr)\\
				& + \frac{\partial}{\partial y_j}\bigl( \mathcal{H}_{\alpha_1 \alpha_2}\mathbf{D}^{(0,1)} \alpha_{ij}^{(0)} \bigr) \Bigr] \frac{\partial^2 T^{(0)}}{\partial x_{\alpha_1}\partial x_{\alpha_2}} (T^{(0)} - \tilde T) + \Bigl[ \frac{\partial}{\partial y_j}\Bigl( \mathcal{E}_{\alpha_1 \alpha_2}\mathbf{D}^{(0,1)} C_{ijkl}^{(0)} \frac{\partial \mathcal{M}_k}{\partial y_l} \Bigr) + \frac{\partial}{\partial y_j}\Bigl( \mathbf{D}^{(0,1)} C_{ijkl}^{(0)} \mathcal{H}_{\alpha_1} C_{ijkl}^{(0)} \frac{\partial \mathcal{A}_k^{\alpha_2}}{\partial y_l} \Bigr) \\
				& + \frac{\partial}{\partial y_j}\bigl( \mathcal{E}_{\alpha_1 \alpha_2}\mathbf{D}^{(0,1)} \alpha_{ij}^{(0)} \bigr) \Bigr] \frac{\partial T^{(0)}}{\partial x_{\alpha_1}} \frac{\partial T^{(0)}}{\partial x_{\alpha_2}} (T^{(0)} - \tilde T) - \frac{1}{2} \Bigl[ \frac{\partial}{\partial y_j}\Bigl( \bigl( \mathcal{H}_{\alpha_1} \bigr)^2 \mathbf{D}^{(0,2)} C_{ijkl}^{(0)} \frac{\partial \mathcal{M}_k}{\partial y_l} \Bigr)\\
				& + \frac{\partial}{\partial y_j}\Bigl( \bigl( \mathcal{H}_{\alpha_1} \bigr)^2 \mathbf{D}^{(0,2)} \alpha_{ij}^{(0)} \Bigr) \Bigr] \Bigl( \frac{\partial T^{(0)}}{\partial x_{\alpha_1}} \Bigr)^2 (T^{(0)} - \tilde T) - \Bigl[ \frac{\partial}{\partial y_j}\Bigl( \mathcal{S}\mathbf{D}^{(0,1)} C_{ijkl}^{(0)} \frac{\partial \mathcal{N}_k}{\partial y_l} \Bigr) + \frac{\partial}{\partial y_j}\bigl( \mathcal{S}\mathbf{D}^{(0,1)} \beta_{ij}^{(0)} \bigr) \Bigr] \frac{\partial T^{(0)}}{\partial t} (\omega^{(0)} - \tilde\omega)\\
				& - \Bigl[ \frac{\partial}{\partial y_j}\Bigl( \mathcal{R}_{\alpha_1}\mathbf{D}^{(0,1)} C_{ijkl}^{(0)} \frac{\partial \mathcal{N}_k}{\partial y_l} \Bigr) - \frac{\partial}{\partial y_j}\Bigl( C_{ijkl}^{(0)} \frac{\partial \mathcal{B}_k^{\alpha_1}}{\partial x_l} \Bigr) + \frac{\partial}{\partial y_j}\Bigl( \mathbf{D}^{(0,1)} C_{ijkl}^{(0)} \mathcal{H}_{\alpha_1} C_{ijkl}^{(0)} \frac{\partial \mathcal{V}_k}{\partial y_l} \Bigr) + \frac{\partial}{\partial y_j}\bigl( \mathcal{R}_{\alpha_1}\mathbf{D}^{(0,1)} \beta_{ij}^{(0)} \bigr)\\
				& + \frac{\partial}{\partial y_j}\Bigl( \mathcal{H}_{\alpha_1}\mathbf{D}^{(0,1)} C_{ijkl}^{(0)} \frac{\partial \mathcal{N}_k}{\partial x_l} \Bigr) \Bigr] \frac{\partial T^{(0)}}{\partial x_{\alpha_1}} (\omega^{(0)} - \tilde\omega) + \Bigl[ \frac{\partial}{\partial y_j}\bigl( C_{ijk\alpha_2}^{(0)} \mathcal{B}_k^{\alpha_1} \bigr) - \frac{\partial}{\partial y_j}\Bigl( \mathbf{D}^{(0,1)} C_{ijkl}^{(0)} \mathcal{H}_{\alpha_1} C_{ijkl}^{(0)} \frac{\partial \mathcal{G}_k^{\alpha_2}}{\partial y_l} \Bigr) \\
				& - \frac{\partial}{\partial y_j}\bigl( \mathcal{H}_{\alpha_1}\mathbf{D}^{(0,1)} \beta_{ij}^{(0)} \mathcal{J}_{\alpha_2} \bigr) - \frac{\partial}{\partial y_j}\bigl( \mathcal{H}_{\alpha_1}\mathbf{D}^{(0,1)} C_{ijk\alpha_2}^{(0)} \mathcal{N}_k \bigr) \Bigr] \frac{\partial T^{(0)}}{\partial x_{\alpha_1}} \frac{\partial \omega^{(0)}}{\partial x_{\alpha_2}} - \Bigl[ \frac{\partial}{\partial y_j}\Bigl( \mathcal{H}_{\alpha_1 \alpha_2}\mathbf{D}^{(0,1)} C_{ijkl}^{(0)} \frac{\partial \mathcal{N}_k}{\partial y_l} \Bigr) \\
				& - \frac{\partial}{\partial y_j}\bigl( C_{ijk\alpha_2}^{(0)} \mathcal{B}_k^{\alpha_1} \bigr) + \frac{\partial}{\partial y_j}\bigl( \mathcal{H}_{\alpha_1 \alpha_2}\mathbf{D}^{(0,1)} \beta_{ij}^{(0)} \bigr) \Bigr] \frac{\partial^2 T^{(0)}}{\partial x_{\alpha_1}\partial x_{\alpha_2}} (\omega^{(0)} - \tilde\omega) + \Bigl[ \frac{\partial}{\partial y_j}\Bigl( \mathcal{E}_{\alpha_1 \alpha_2}\mathbf{D}^{(0,1)} C_{ijkl}^{(0)} \frac{\partial \mathcal{N}_k}{\partial y_l} \Bigr)\\
				& + \frac{\partial}{\partial y_j}\Bigl( \mathbf{D}^{(0,1)} C_{ijkl}^{(0)} \mathcal{H}_{\alpha_1} C_{ijkl}^{(0)} \frac{\partial \mathcal{B}_k^{\alpha_2}}{\partial y_l} \Bigr) + \frac{\partial}{\partial y_j}\bigl( \mathcal{E}_{\alpha_1 \alpha_2}\mathbf{D}^{(0,1)} \beta_{ij}^{(0)} \bigr) \Bigr] \frac{\partial T^{(0)}}{\partial x_{\alpha_1}} \frac{\partial T^{(0)}}{\partial x_{\alpha_2}} (\omega^{(0)} - \tilde\omega) + \frac{\partial}{\partial y_j}\bigl( \beta_{ij}^{(0)} \mathcal{F}_{\alpha_1 \alpha_2} \bigr) \frac{\partial \omega^{(0)}}{\partial x_{\alpha_1}} \frac{\partial \omega^{(0)}}{\partial x_{\alpha_2}}\\
				& - \frac{1}{2} \Bigl[ \frac{\partial}{\partial y_j}\Bigl( \bigl( \mathcal{H}_{\alpha_1} \bigr)^2 \mathbf{D}^{(0,2)} C_{ijkl}^{(0)} \frac{\partial \mathcal{N}_k}{\partial y_l} \Bigr) + \frac{\partial}{\partial y_j}\Bigl( \bigl( \mathcal{H}_{\alpha_1} \bigr)^2 \mathbf{D}^{(0,2)} \beta_{ij}^{(0)} \Bigr) \Bigr] \Bigl( \frac{\partial T^{(0)}}{\partial x_{\alpha_1}} \Bigr)^2 (\omega^{(0)} - \tilde\omega) - \Bigl[ \frac{\partial}{\partial x_j}\Bigl( C_{ijkl}^{(0)} \frac{\partial \mathcal{N}_k}{\partial x_l} \Bigr) \\
				& + \frac{\partial}{\partial y_j}\Bigl( \mathbb{Q}\mathbf{D}^{(0,1)} C_{ijkl}^{(0)} \frac{\partial \mathcal{N}_k}{\partial y_l} \Bigr) + \frac{\partial}{\partial x_j}\Bigl( C_{ijkl}^{(0)} \frac{\partial \mathcal{V}_k}{\partial y_l} \Bigr) + \frac{\partial}{\partial y_j}\Bigl( C_{ijkl}^{(0)} \frac{\partial \mathcal{V}_k}{\partial x_l} \Bigr) + \frac{\partial}{\partial y_j}\bigl( \mathbb{Q}\mathbf{D}^{(0,1)} \beta_{ij}^{(0)} \bigr) \Bigr] (\omega^{(0)} - \tilde\omega)\\
				& - \Bigl[ C_{i\alpha_1 kl}^{(0)} \frac{\partial \mathcal{N}_k}{\partial x_l} + \frac{\partial}{\partial x_j}\bigl( C_{ijk\alpha_1}^{(0)} \mathcal{N}_k \bigr) + C_{i\alpha_1 kl}^{(0)} \frac{\partial \mathcal{V}_k}{\partial y_l} + \frac{\partial}{\partial x_j}\Bigl( C_{ijkl}^{(0)} \frac{\partial \mathcal{G}_k^{\alpha_1}}{\partial y_l} \Bigr) + \frac{\partial}{\partial y_j}\bigl( C_{ijk\alpha_1}^{(0)} \mathcal{V}_k \bigr) + \frac{\partial}{\partial y_j}\Bigl( C_{ijkl}^{(0)} \frac{\partial \mathcal{G}_k^{\alpha_1}}{\partial x_l} \Bigr) + \frac{\partial}{\partial x_j}\bigl( \beta_{ij}^{(0)} \mathcal{J}_{\alpha_1} \bigr)\\
				& + \frac{\partial}{\partial y_j}\bigl( \beta_{ij}^{(0)} \mathcal{I}_{\alpha_1} \bigr) \Bigr] \frac{\partial \omega^{(0)}}{\partial x_{\alpha_1}} - \Bigl[ C_{i\alpha_1 k\alpha_2}^{(0)} \mathcal{N}_k + C_{i\alpha_2 kl}^{(0)} \frac{\partial \mathcal{G}_k^{\alpha_1}}{\partial y_l} + \frac{\partial}{\partial y_j}\bigl( C_{ijk\alpha_2}^{(0)} \mathcal{G}_k^{\alpha_1} \bigr) + \beta_{i\alpha_2}^{(0)} \mathcal{J}_{\alpha_1} + \frac{\partial}{\partial y_j}\bigl( \beta_{ij}^{(0)} \mathcal{J}_{\alpha_1 \alpha_2} \bigr) \Bigr] \frac{\partial^2 \omega^{(0)}}{\partial x_{\alpha_1}\partial x_{\alpha_2}}\\
				& + \Bigl[ \frac{\partial}{\partial y_j}\bigl( \mathcal{S}\mathbf{D}^{(0,1)} C_{ijh\alpha_1}^{(0)} \bigr) + \frac{\partial}{\partial y_j}\Bigl( \mathcal{S}\mathbf{D}^{(0,1)} C_{ijkl}^{(0)} \frac{\partial \mathcal{X}_{kh}^{\alpha_1}}{\partial y_l} \Bigr) \Bigr] \frac{\partial T^{(0)}}{\partial t} \frac{\partial u_h^{(0)}}{\partial x_{\alpha_1}} + \Bigl[ \frac{\partial}{\partial y_j}\bigl( \mathcal{R}_{\alpha_1}\mathbf{D}^{(0,1)} C_{ijh\alpha_2}^{(0)} \bigr) + \frac{\partial}{\partial y_j}\Bigl( \mathcal{H}_{\alpha_1}\mathbf{D}^{(0,1)} C_{ijkl}^{(0)} \frac{\partial \mathcal{X}_{kh}^{\alpha_2}}{\partial x_l} \Bigr) \\
				& + \frac{\partial}{\partial y_j}\Bigl( \mathcal{R}_{\alpha_1}\mathbf{D}^{(0,1)} C_{ijkl}^{(0)} \frac{\partial \mathcal{X}_{kh}^{\alpha_2}}{\partial y_l} \Bigr) - \frac{\partial}{\partial y_j}\Bigl( C_{ijkl}^{(0)} \frac{\partial \mathcal{P}_{kh}^{\alpha_1\alpha_2}}{\partial x_l} \Bigr) + \frac{\partial}{\partial y_j}\Bigl( \mathbf{D}^{(0,1)} C_{ijkl}^{(0)} \mathcal{H}_{\alpha_1} C_{ijkl}^{(0)} \frac{\partial \mathcal{Q}_{kh}^{\alpha_2}}{\partial y_l} \Bigr) \Bigr] \frac{\partial T^{(0)}}{\partial x_{\alpha_1}} \frac{\partial u_h^{(0)}}{\partial x_{\alpha_2}} \\
				& + \Bigl[ \frac{\partial}{\partial y_j}\bigl( \mathcal{H}_{\alpha_1}\mathbf{D}^{(0,1)} C_{ijk\alpha_3}^{(0)} \mathcal{X}_{kh}^{\alpha_2} \bigr) - \frac{\partial}{\partial y_j}\bigl( C_{ijk\alpha_3}^{(0)} \mathcal{P}_{kh}^{\alpha_1\alpha_2} \bigr) + \frac{\partial}{\partial y_j}\Bigl( \mathbf{D}^{(0,1)} C_{ijkl}^{(0)} \mathcal{H}_{\alpha_1} C_{ijkl}^{(0)} \frac{\partial \mathcal{X}_{kh}^{\alpha_2\alpha_3}}{\partial y_l} \Bigr) \Bigr] \frac{\partial T^{(0)}}{\partial x_{\alpha_1}} \frac{\partial^2 u_h^{(0)}}{\partial x_{\alpha_2}\partial x_{\alpha_3}} \\
				& + \frac{1}{2} \Bigl[ \frac{\partial}{\partial y_j}\Bigl( \bigl( \mathcal{H}_{\alpha_1} \bigr)^2 \mathbf{D}^{(0,2)} C_{ijh\alpha_2}^{(0)} \Bigr) + \frac{\partial}{\partial y_j}\Bigl( \bigl( \mathcal{H}_{\alpha_1} \bigr)^2 \mathbf{D}^{(0,2)} C_{ijkl}^{(0)} \frac{\partial \mathcal{X}_{kh}^{\alpha_2}}{\partial y_l} \Bigr) \Bigr] \Bigl( \frac{\partial T^{(0)}}{\partial x_{\alpha_1}} \Bigr)^2 \frac{\partial u_h^{(0)}}{\partial x_{\alpha_2}} - \Bigl[ \frac{\partial}{\partial y_j}\bigl( \mathcal{E}_{\alpha_1 \alpha_2}\mathbf{D}^{(0,1)} C_{ijh\alpha_3}^{(0)} \bigr)\\
				& + \frac{\partial}{\partial y_j}\Bigl( \mathcal{E}_{\alpha_1 \alpha_2}\mathbf{D}^{(0,1)} C_{ijkl}^{(0)} \frac{\partial \mathcal{X}_{kh}^{\alpha_3}}{\partial y_l} \Bigr) + \frac{\partial}{\partial y_j}\Bigl( \mathbf{D}^{(0,1)} C_{ijkl}^{(0)} \mathcal{H}_{\alpha_1} C_{ijkl}^{(0)} \frac{\partial \mathcal{P}_{kh}^{\alpha_2\alpha_3}}{\partial y_l} \Bigr) \Bigr] \frac{\partial T^{(0)}}{\partial x_{\alpha_1}} \frac{\partial T^{(0)}}{\partial x_{\alpha_2}} \frac{\partial u_h^{(0)}}{\partial x_{\alpha_3}} - \frac{\partial}{\partial y_j}\bigl( \alpha_{ij}^{(0)}\mathbb{Q} \bigr) + \frac{\partial}{\partial y_j}\bigl( \beta_{ij}^{(0)}\mathbb{S} \bigr) \\
				& + \Bigl[ \frac{\partial}{\partial y_j}\bigl( \mathcal{H}_{\alpha_1 \alpha_2}\mathbf{D}^{(0,1)} C_{ijh\alpha_3}^{(0)} \bigr) + \frac{\partial}{\partial y_j}\Bigl( \mathcal{H}_{\alpha_1 \alpha_2}\mathbf{D}^{(0,1)} C_{ijkl}^{(0)} \frac{\partial \mathcal{X}_{kh}^{\alpha_3}}{\partial y_l} \Bigr) - \frac{\partial}{\partial y_j}\bigl( C_{ijk\alpha_2}^{(0)} \mathcal{P}_{kh}^{\alpha_1\alpha_3} \bigr) \Bigr] \frac{\partial^2 T^{(0)}}{\partial x_{\alpha_1}\partial x_{\alpha_2}} \frac{\partial u_h^{(0)}}{\partial x_{\alpha_3}}.
			\end{aligned}
		\end{equation}
	\end{footnotesize}
	
	\section{Supplemental expressions for Section~\ref{sec:32}}
	\label{app:B}
	\begin{footnotesize}
	\begin{equation}
		\begin{aligned}
			&\sigma_{TY}(T^{(2, \epsilon)}) = n_i k_{ij}(\mathbf{y},T^\epsilon) \frac{\partial T^{(2, \epsilon)}}{\partial x_j} = n_i k_{ij}(\mathbf{y},T^\epsilon) \Bigl( \frac{\partial}{\partial x_j} + \frac{1}{\epsilon} \frac{\partial}{\partial y_j} \Bigr) \Bigl[ T^{(0)} + \epsilon \mathcal{H}_{\alpha_1}(\mathbf{y},T^{(0)}) \frac{\partial T^{(0)}}{\partial x_{\alpha_1}} + \epsilon^2 \mathcal{S}(\mathbf{y},T^{(0)}) \frac{\partial T^{(0)}}{\partial t} \\
			& + \epsilon^2 \mathcal{H}_{\alpha_1 \alpha_2}(\mathbf{y},T^{(0)}) \frac{\partial^2 T^{(0)}}{\partial x_{\alpha_1} \partial x_{\alpha_2}} + \epsilon^2 \mathcal{R}_{\alpha_1}(\mathbf{y},T^{(0)}) \frac{\partial T^{(0)}}{\partial x_{\alpha_1}} - \epsilon^2 \mathcal{E}_{\alpha_1 \alpha_2}(\mathbf{y},T^{(0)}) \frac{\partial T^{(0)}}{\partial x_{\alpha_1}} \frac{\partial T^{(0)}}{\partial x_{\alpha_2}} + \epsilon^2 \mathbb{Q}(\mathbf{y},T^{(0)}) \Bigr] \\
			&= n_i k_{ij}(\mathbf{y},T^\epsilon) \frac{\partial}{\partial x_j} \Bigl[ T^{(0)} + \epsilon \mathcal{H}_{\alpha_1}(\mathbf{y},T^{(0)}) \frac{\partial T^{(0)}}{\partial x_{\alpha_1}} + \epsilon^2 \mathcal{S}(\mathbf{y},T^{(0)}) \frac{\partial T^{(0)}}{\partial t} + \epsilon^2 \mathcal{H}_{\alpha_1 \alpha_2}(\mathbf{y},T^{(0)}) \frac{\partial^2 T^{(0)}}{\partial x_{\alpha_1} \partial x_{\alpha_2}} \\
			& + \epsilon^2 \mathcal{R}_{\alpha_1}(\mathbf{y},T^{(0)}) \frac{\partial T^{(0)}}{\partial x_{\alpha_1}} - \epsilon^2 \mathcal{E}_{\alpha_1 \alpha_2}(\mathbf{y},T^{(0)}) \frac{\partial T^{(0)}}{\partial x_{\alpha_1}} \frac{\partial T^{(0)}}{\partial x_{\alpha_2}} + \epsilon^2 \mathbb{Q}(\mathbf{y},T^{(0)}) \Bigr] + \sigma_{TY}(\mathcal{H}_{\alpha_1}) \frac{\partial T^{(0)}}{\partial x_{\alpha_1}} \\
			& + \epsilon \sigma_{TY}(\mathcal{S}) \frac{\partial T^{(0)}}{\partial t} + \epsilon \sigma_{TY}(\mathcal{H}_{\alpha_1 \alpha_2}) \frac{\partial^2 T^{(0)}}{\partial x_{\alpha_1} \partial x_{\alpha_2}} + \epsilon \sigma_{TY}(\mathcal{R}_{\alpha_1}) \frac{\partial T^{(0)}}{\partial x_{\alpha_1}} - \epsilon \sigma_{TY}(\mathcal{E}_{\alpha_1 \alpha_2}) \frac{\partial T^{(0)}}{\partial x_{\alpha_1}} \frac{\partial T^{(0)}}{\partial x_{\alpha_2}} + \epsilon \sigma_{TY}(\mathbb{Q}).
		\end{aligned}
	\end{equation}
	\end{footnotesize}
	\begin{footnotesize}
	\begin{equation}
		\begin{aligned}
			&\sigma_{\omega Y}(\omega^{(2, \epsilon)}) = n_i g_{ij}(\mathbf{y},\omega^\epsilon) \frac{\partial \omega^{(2, \epsilon)}}{\partial x_j} = n_i g_{ij}(\mathbf{y},\omega^\epsilon) \Bigl( \frac{\partial}{\partial x_j} + \frac{1}{\epsilon} \frac{\partial}{\partial y_j} \Bigr) \Bigl[ \omega^{(0)} + \epsilon \mathcal{J}_{\alpha_1}(\mathbf{y},\omega^{(0)}) \frac{\partial \omega^{(0)}}{\partial x_{\alpha_1}} \\
			& + \epsilon^2 \mathcal{J}_{\alpha_1 \alpha_2}(\mathbf{y},\omega^{(0)}) \frac{\partial^2 \omega^{(0)}}{\partial x_{\alpha_1} \partial x_{\alpha_2}} + \epsilon^2 \mathcal{I}_{\alpha_1}(\mathbf{y},\omega^{(0)}) \frac{\partial \omega^{(0)}}{\partial x_{\alpha_1}} - \epsilon^2 \mathcal{F}_{\alpha_1\alpha_2}(\mathbf{y},\omega^{(0)}) \frac{\partial \omega^{(0)}}{\partial x_{\alpha_1}} \frac{\partial \omega^{(0)}}{\partial x_{\alpha_2}} - \epsilon^2\mathbb{S} (\mathbf{y},T^{(0)}) \Bigr] \\
			&= n_i g_{ij}(\mathbf{y},\omega^\epsilon) \frac{\partial}{\partial x_j} \Bigl[ \omega^{(0)} + \epsilon \mathcal{J}_{\alpha_1}(\mathbf{y},\omega^{(0)}) \frac{\partial \omega^{(0)}}{\partial x_{\alpha_1}} + \epsilon^2 \mathcal{J}_{\alpha_1 \alpha_2}(\mathbf{y},\omega^{(0)}) \frac{\partial^2 \omega^{(0)}}{\partial x_{\alpha_1} \partial x_{\alpha_2}} \\
			& + \epsilon^2 \mathcal{I}_{\alpha_1}(\mathbf{y},\omega^{(0)}) \frac{\partial \omega^{(0)}}{\partial x_{\alpha_1}} - \epsilon^2 \mathcal{F}_{\alpha_1\alpha_2}(\mathbf{y},\omega^{(0)}) \frac{\partial \omega^{(0)}}{\partial x_{\alpha_1}} \frac{\partial \omega^{(0)}}{\partial x_{\alpha_2}} - \epsilon^2 \mathbb{S} (\mathbf{y},T^{(0)}) \Bigr] + \sigma_{\omega Y}(\mathcal{J}_{\alpha_1}) \frac{\partial \omega^{(0)}}{\partial x_{\alpha_1}} \\
			& + \epsilon \sigma_{\omega Y}(\mathcal{J}_{\alpha_1 \alpha_2}) \frac{\partial^2 \omega^{(0)}}{\partial x_{\alpha_1} \partial x_{\alpha_2}} + \epsilon \sigma_{\omega Y}(\mathcal{I}_{\alpha_1}) \frac{\partial \omega^{(0)}}{\partial x_{\alpha_1}} - \epsilon \sigma_{\omega Y}(\mathcal{F}_{\alpha_1\alpha_2}) \frac{\partial \omega^{(0)}}{\partial x_{\alpha_1}} \frac{\partial \omega^{(0)}}{\partial x_{\alpha_2}} - \epsilon \sigma_{\omega Y}(\mathbb{S}).
		\end{aligned}
	\end{equation}
	\end{footnotesize}
	\begin{footnotesize}
	\begin{equation}
		\begin{aligned}
			&\sigma_{iY}(\bm{u}^{(2, \epsilon)}) = n_j C_{ijkl}(\mathbf{y}, T^\epsilon) \frac{\partial u_k^{(2, \epsilon)}}{\partial x_l} = n_j C_{ijkl}(\mathbf{y}, T^\epsilon) \Bigl( \frac{\partial }{\partial x_l} + \frac{1}{\epsilon} \frac{\partial }{\partial y_l} \Bigr) \Bigl[ u_i^{(0)} + \epsilon \mathcal{X}_{ih}^{\alpha_1}(\mathbf{y}, T^{(0)}) \frac{\partial u_h^{(0)}}{\partial x_{\alpha_1}} \\
			& - \epsilon \mathcal{M}_i(\mathbf{y}, T^{(0)}) ( T^{(0)} - \tilde{T} ) - \epsilon \mathcal{N}_i(\mathbf{y}, T^{(0)}) ( \omega^{(0)} - \tilde{\omega} ) + \epsilon^2 \mathcal{X}_{ih}^{\alpha_1 \alpha_2}(\mathbf{y}, T^{(0)}) \frac{\partial^2 u_h^{(0)}}{\partial x_{\alpha_1} \partial x_{\alpha_2}} + \epsilon^2 \mathcal{Q}_{ih}^{\alpha_1}(\mathbf{y}, T^{(0)}) \frac{\partial u_h^{(0)}}{\partial x_{\alpha_1}} \\
			& - \epsilon^2 \mathcal{P}_{ih}^{\alpha_1 \alpha_2}(\mathbf{y}, T^{(0)}) \frac{\partial T^{(0)}}{\partial x_{\alpha_1}} \frac{\partial u_h^{(0)}}{\partial x_{\alpha_2}} - \epsilon^2 \mathcal{W}_i(\mathbf{y}, T^{(0)}) ( T^{(0)} - \tilde{T} ) - \epsilon^2 \mathcal{Z}_i^{\alpha_1}(\mathbf{y}, T^{(0)}) \frac{\partial T^{(0)}}{\partial x_{\alpha_1}} + \epsilon^2 \mathcal{A}_i^{\alpha_1}(\mathbf{y}, T^{(0)}) \frac{\partial T^{(0)}}{\partial x_{\alpha_1}} ( T^{(0)} - \tilde{T} ) \\
			& - \epsilon^2 \mathcal{V}_i(\mathbf{y}, T^{(0)}) ( \omega^{(0)} - \tilde{\omega} ) - \epsilon^2 \mathcal{G}_i^{\alpha_1}(\mathbf{y}, T^{(0)}, \omega^{(0)}) \frac{\partial \omega^{(0)}}{\partial x_{\alpha_1}} + \epsilon^2 \mathcal{B}_i^{\alpha_1}(\mathbf{y}, T^{(0)}) \frac{\partial T^{(0)}}{\partial x_{\alpha_1}} ( \omega^{(0)} - \tilde{\omega} ) \Bigr] \\
			&= n_j C_{ijkl}(\mathbf{y}, T^\epsilon) \frac{\partial }{\partial x_l} \Bigl[ u_i^{(0)} + \epsilon \mathcal{X}_{ih}^{\alpha_1}(\mathbf{y}, T^{(0)}) \frac{\partial u_h^{(0)}}{\partial x_{\alpha_1}} - \epsilon \mathcal{M}_i(\mathbf{y}, T^{(0)}) ( T^{(0)} - \tilde{T} ) - \epsilon \mathcal{N}_i(\mathbf{y}, T^{(0)}) ( \omega^{(0)} - \tilde{\omega} ) \\
			& + \epsilon^2 \mathcal{X}_{ih}^{\alpha_1 \alpha_2}(\mathbf{y}, T^{(0)}) \frac{\partial^2 u_h^{(0)}}{\partial x_{\alpha_1} \partial x_{\alpha_2}} + \epsilon^2 \mathcal{Q}_{ih}^{\alpha_1}(\mathbf{y}, T^{(0)}) \frac{\partial u_h^{(0)}}{\partial x_{\alpha_1}} - \epsilon^2 \mathcal{P}_{ih}^{\alpha_1 \alpha_2}(\mathbf{y}, T^{(0)}) \frac{\partial T^{(0)}}{\partial x_{\alpha_1}} \frac{\partial u_h^{(0)}}{\partial x_{\alpha_2}} - \epsilon^2 \mathcal{W}_i(\mathbf{y}, T^{(0)}) ( T^{(0)} - \tilde{T} ) \\
			& - \epsilon^2 \mathcal{Z}_i^{\alpha_1}(\mathbf{y}, T^{(0)}) \frac{\partial T^{(0)}}{\partial x_{\alpha_1}} + \epsilon^2 \mathcal{A}_i^{\alpha_1}(\mathbf{y}, T^{(0)}) \frac{\partial T^{(0)}}{\partial x_{\alpha_1}} ( T^{(0)} - \tilde{T} ) - \epsilon^2 \mathcal{V}_i(\mathbf{y}, T^{(0)}) ( \omega^{(0)} - \tilde{\omega} ) - \epsilon^2 \mathcal{G}_i^{\alpha_1}(\mathbf{y}, T^{(0)}, \omega^{(0)}) \frac{\partial \omega^{(0)}}{\partial x_{\alpha_1}} \\
			& + \epsilon^2 \mathcal{B}_i^{\alpha_1}(\mathbf{y}, T^{(0)}) \frac{\partial T^{(0)}}{\partial x_{\alpha_1}} ( \omega^{(0)} - \tilde{\omega} ) \Bigr] + \sigma_{iY}(\mathbcal{X}_h^{\alpha_1}) \frac{\partial u_h^{(0)}}{\partial x_{\alpha_1}} - \sigma_{iY}(\mathbcal{M}) ( T^{(0)} - \tilde{T} ) - \sigma_{iY}(\mathbcal{N}) ( \omega^{(0)} - \tilde{\omega} ) \\
			& + \epsilon \sigma_{iY}(\mathbcal{X}_h^{\alpha_1\alpha_2}) \frac{\partial^2 u_h^{(0)}}{\partial x_{\alpha_1} \partial x_{\alpha_2}} + \epsilon \sigma_{iY}(\mathbcal{Q}_h^{\alpha_1}) \frac{\partial u_h^{(0)}}{\partial x_{\alpha_1}} - \epsilon \sigma_{iY}(\mathbcal{P}_h^{\alpha_1\alpha_2}) \frac{\partial T^{(0)}}{\partial x_{\alpha_1}} \frac{\partial u_h^{(0)}}{\partial x_{\alpha_2}} - \epsilon \sigma_{iY}(\mathbcal{W}) ( T^{(0)} - \tilde{T} ) - \epsilon \sigma_{iY}(\mathbcal{Z}^{\alpha_1}) \frac{\partial T^{(0)}}{\partial x_{\alpha_1}} \\
			& + \epsilon \sigma_{iY}(\mathbcal{A}^{\alpha_1}) \frac{\partial T^{(0)}}{\partial x_{\alpha_1}} ( T^{(0)} - \tilde{T} ) - \epsilon \sigma_{iY}(\mathbcal{V}) ( \omega^{(0)} - \tilde{\omega} ) - \epsilon \sigma_{iY}(\mathbcal{G}^{\alpha_1}) \frac{\partial \omega^{(0)}}{\partial x_{\alpha_1}} + \epsilon \sigma_{iY}(\mathbcal{B}^{\alpha_1}) \frac{\partial T^{(0)}}{\partial x_{\alpha_1}} ( \omega^{(0)} - \tilde{\omega}).
		\end{aligned}
	\end{equation}
	\end{footnotesize}

\section*{Acknowledgments}
This research was supported by the National Natural Science Foundation of China (No.~12471387), Xidian University Specially Funded Project for Interdisciplinary Exploration (No.~TZJH2024008), Fundamental Research Funds for the Central Universities (No.~QTZX25082), Innovation Capability Support Program of Shaanxi Province (No.~2024RS-CXTD-88), and also supported by the Center for high performance computing of Xidian University.

\bibliographystyle{abbrvnat}
\bibliography{paper}   

\end{document}